\documentclass[12pt]{article}
\usepackage{a4wide}
\usepackage{amsmath,amssymb,amsthm}
\usepackage[mathscr]{eucal}
\usepackage{tikz}
\usetikzlibrary{positioning,automata}
\usetikzlibrary{arrows,calc}
\usetikzlibrary{decorations.markings,plotmarks}
\usepackage{graphics}
\usepackage{color}
\usepackage{hyperref}
\usepackage{enumerate}
\usepackage{mathtools}
\usepackage{cite}

\newtheorem{theorem}{Theorem}
\newtheorem{proposition}[theorem]{Proposition}
\newtheorem{corollary}[theorem]{Corollary}
\newtheorem{lemma}[theorem]{Lemma}

\theoremstyle{definition}
\newtheorem{definition}[theorem]{Definition}

\newtheorem{problem}[theorem]{Problem}

\newtheorem{conjecture}[theorem]{Conjecture}
\newtheorem{remark}[theorem]{Remark}
\newtheorem*{notation}{Notation}

\newtheorem{obs}[theorem]{Observation}

\numberwithin{theorem}{section}

\DeclareMathOperator{\Aut}{Aut}
\DeclareMathOperator{\Iso}{Iso}
\DeclareMathOperator{\Sym}{Sym}
\DeclareMathOperator{\dist}{dist}
\DeclareMathOperator{\supp}{supp}
\DeclareMathOperator{\mindeg}{mindeg}
\DeclareMathOperator{\motion}{motion}

\newcommand{\eee}{\mathrm{e}}

\title{On the automorphism groups of
 rank-4 primitive coherent configurations}
\author{\vspace{0.3cm}Bohdan Kivva\\
University of Chicago\\
{ bkivva@uchicago.edu}}

\begin{document}
\maketitle

\vspace{-0.8cm}
\begin{abstract}
The minimal degree of a permutation group $G$ is the minimum number of points not fixed by non-identity elements of $G$. Lower bounds on the minimal degree have strong structural consequences on $G$. Babai conjectured that if a primitive coherent configuration 
   with $n$ vertices is not a Cameron scheme, then its automorphism group has
   minimal degree $\ge cn$ for some constant $c>0$. In 2014, Babai proved the desired lower bound on the minimal degree of the automorphism groups of strongly regular graphs, thus confirming the conjecture for primitive coherent configurations of rank~3. 
   
   In this paper, we extend Babai's result to primitive coherent configurations of rank 4, confirming the conjecture in this special case.   The proofs combine structural and spectral methods.

\end{abstract}

\vspace{4cm}

\section*{Acknowledgments}

The author is grateful to his advisor, Professor L\'aszl\'o Babai, for introducing him to the problems discussed in the paper and suggesting possible ways of approaching them, his invaluable assistance in framing the results, and constant support and encouragement.

\pagebreak

\tableofcontents

\pagebreak

\section{Introduction}

\subsection{Minimal degree. Motion. Our main result}

Let $\sigma$ be a permutation of a set $\Omega$. The number of points not fixed by $\sigma$ is called the \textit{degree} of the permutation $\sigma$. Let $G$ be a permutation group on the set $\Omega$. The minimum of the degrees of non-identity elements in $G$ is called the \textit{minimal degree}\footnote{For the identity permutation group on the set $\Omega$, we define its minimal degree to be $|\Omega|$, i.e., the largest possible value.} of $G$ and is denoted by $\mindeg(G)$. One of the classical problems in the theory of permutation groups is to classify the primitive permutation groups whose minimal degree is small (see \cite{Wielandt-classical}). The study of minimal degree goes back to works of Jordan \cite{Jordan}  and Bochert \cite{Bochert} in 19th century. In particular, Bochert \cite{Bochert} proved that a doubly transitive permutation group of degree $n$ has minimal degree $\geq n/4-1$ with trivial exceptions. More recently, using the Classification of Finite Simple Groups (CFSG), Liebeck and Saxl \cite{Liebeck, Liebeck-Saxl} characterized primitive permutation groups of degree $n$ with minimal degree $<n/3$ (see Theorem~\ref{Liebeck-Saxl} below).

The \textit{thickness} $\theta(G)$ of a group $G$ is the greatest $t$ for which the alternating group $A_t$ is involved as a quotient group of a subgroup of $G$ (the term was coined in \cite{Babai-str-reg}). Lower bounds on the minimal degree have strong structural consequences on $G$. In 1934 Wielandt \cite{Wielandt} showed that a linear lower bound on the minimal degree implies a logarithmic upper bound on the thickness of the group. Babai, Cameron and P\'alfy showed in \cite{thickness-primitive} that primitive permutation groups with bounded thickness have polynomially bounded order.

Switching from symmetry assumptions to regularity, in 2014 Babai \cite{Babai-str-reg, Babai-str-reg-2} proved that the minimal degree of the automorphism group of a strongly regular graph is linear in the number of vertices, with known exceptions. 

\begin{definition}\label{def-motion}
Following \cite{RS-motion}, for a combinatorial structure $\mathcal{X}$ we use term \textit{motion} to denote the minimal degree of the automorphism group $\Aut(\mathcal{X})$ 
\begin{equation}
\motion(\mathcal{X}) = \mindeg(\Aut(\mathcal{X})).
\end{equation}
\end{definition}

\begin{theorem}[Babai]\label{babai-str-reg-thm} 
Let $X$ be a strongly regular graph on $n\geq 29$ vertices. Then either
\[ \motion(X)\geq \frac{n}{8},\]
or $X$ or its complement is a triangular graph, a lattice graph, or a disjoint union of cliques of equal size.
\end{theorem}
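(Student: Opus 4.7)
The plan is to argue the contrapositive: assume some $\sigma\in\Aut(X)\setminus\{1\}$ has $|\supp(\sigma)|<n/8$, and show that $X$ or $\bar X$ lies in the exceptional list. Being an SRG, motion, and the exceptional list are all complementation-invariant, and a disconnected $k$-regular SRG must be a disjoint union of $(k{+}1)$-cliques of equal size. Hence I may reduce to the case where $X$ is primitive (both $X$ and $\bar X$ are connected) with parameters $(n,k,\lambda,\mu)$.

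Let $F:=\mathrm{Fix}(\sigma)$, so $|F|>7n/8$ and $\sigma$ fixes $F$ pointwise. For every moved vertex $v$, the fact that $\sigma$ preserves adjacency and fixes $F$ pointwise forces $N(v)\cap F=N(\sigma(v))\cap F$; thus $v$ and $\sigma(v)$ are \emph{$F$-twins}. Regularity together with $|V\setminus F|<n/8$ gives $|N(v)\cap F|>k-n/8$ for every vertex, and since $N(u)\cap F=N(v)\cap F\subseteq N(v)$ for any $F$-twin pair $\{u,v\}$,
\[
|N(u)\cap N(v)| \;\geq\; |N(u)\cap N(v)\cap F| \;=\; |N(u)\cap F| \;>\; k-\tfrac{n}{8}.
\]
But in an SRG $|N(u)\cap N(v)|\in\{\lambda,\mu\}$ according to adjacency. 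Hence the existence of any $F$-twin pair (in particular $\{v,\sigma(v)\}$) forces $\lambda>k-n/8$ or $\mu>k-n/8$; if neither holds, then $\sigma=1$, contradicting the assumption. Passing to $\bar X$ if necessary (the transformation $\mu\mapsto k+1-\mu$ on $\bar k-\bar\lambda$ makes this harmless), I may assume $\lambda>k-\tfrac{n}{8}$.

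The plan from here is to exploit the near-extremality $k-\lambda<n/8$ to identify $X$ combinatorially. The standard SRG identity $k(k-\lambda-1)=\mu(n-k-1)$ combined with this pinch severely restricts $\mu$, and integrality of the eigenvalue multiplicities cuts the admissible $(n,k,\lambda,\mu)$ down to a short list. On each admissible family I would invoke a Metsch/Neumaier-style clique-geometry criterion: with $\lambda$ close to $k$, each edge lies in a large Delsarte clique, and these cliques partition the edge set into a partial linear space whose rigidity (via the Bose--Laskar / Metsch characterization) identifies $X$ as a triangular graph $T(m)$ or a lattice graph $L_2(m)$.

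The main obstacle, I expect, is ruling out the sporadic feasible parameter sets satisfying $k-\lambda<n/8$ that are not realized by $T(m)$ or $L_2(m)$. The single-pair estimate above, though enough to trigger the parameter pinch, is too coarse to do this by itself; I anticipate needing a refined bound that sums over the entire orbit structure of $\sigma$ (yielding $|\supp(\sigma)|$ at least linear in $k-\lambda$ or in a spectral gap quantity), and then combining this with the hypothesis $n\geq 29$ to eliminate the finitely many low-order exceptional parameter vectors that survive feasibility. The precise constant $1/8$ in the theorem is presumably exactly where this uniform argument becomes tight against these sporadic cases.
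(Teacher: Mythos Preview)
This theorem is not proved in the present paper; it is quoted from Babai's papers and used as a black box. The paper does, however, describe Babai's method in Section~\ref{sec-tools}: it combines the distinguishing-number bound $\motion(X)\ge D_{\min}(X)$ (Observation~\ref{obs1}) with the spectral bound $\motion(X)\ge n(k-q-\xi)/k$ coming from the Expander Mixing Lemma (Lemma~\ref{mixing-lemma-tool}), where $q=\max(\lambda,\mu)$ and $\xi$ is the zero-weight spectral radius. When \emph{both} bounds fall below $n/8$, the resulting system of inequalities on the SRG parameters forces the smallest eigenvalue of $X$ or of $\bar X$ to be at least $-2$, and Seidel's classification (Theorem~\ref{geom-eig-2}) then identifies the exceptions.

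Your $F$-twin argument is a rederivation of the distinguishing-number bound, and in fact a slightly lossy one: Observation~\ref{obs1} applied to an SRG gives $D_{\min}=2\min(k-\lambda,\,k-\mu+1)$, so $\motion(X)<n/8$ already yields $\max(\lambda,\mu)>k-n/16$, a factor of~$2$ sharper than your $k-n/8$. The genuine gap is that this parameter pinch alone does not isolate the exceptions. Plugging $k-\lambda<n/16$ into $k(k-\lambda-1)=\mu(n-k-1)$ only gives $\mu<kn/\bigl(16(n-k-1)\bigr)$, which for $k$ near $n/2$ is merely $\mu\lesssim k/8$; that is nowhere near the $\lambda>3\mu$ threshold needed to invoke Metsch with $m=2$, and infinite families of non-exceptional feasible parameter sets survive (so the ``finitely many sporadics'' picture you anticipate does not materialise). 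Your own closing remark that one needs ``a refined bound that sums over the entire orbit structure of $\sigma$'' is exactly right: that bound is the spectral one, Lemma~\ref{mixing-lemma-tool}, and it is effective precisely in the parameter range where the distinguishing bound is slack. Without it, or something of equivalent strength, the argument cannot be closed, let alone with the specific constant~$1/8$. The Metsch route you sketch is not how the SRG case is handled; in Babai's proof the endgame goes through the eigenvalue constraint and Seidel's theorem, and Metsch's criterion enters only in this paper's rank-$4$ analysis.
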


%We are interested in proving lower bounds of the form $\motion(\mathcal{X})\geq cn$ for some constant $c>0$. Here and throughout this paper, $n$ will stand for the number of vertices.

Strongly regular graphs can be seen as a special case of a more general class of highly regular combinatorial structures called \textit{coherent configurations}, combinatorial generalizations of the orbital structure of permutation groups; the case of orbitals is called ``Schurian coherent configurations" (see Section~\ref{sec-prelim} for definitions). More specifically, strongly regular graphs are essentially coherent configurations of rank 3.

Hence, Babai's theorem confirms the following conjecture in the case of rank 3. (It was proven in \cite{Babai-annals} that a strongly regular tournament has at most $n^{O(\log(n))}$ automorphisms, and the motion is at least $n/4$.)

\begin{conjecture}[Babai]\label{conj:motion}
For every $r\geq 3$ there exists $\gamma_r>0$ such that for every  primitive coherent configuration $\mathfrak{X}$ of rank $r$ on $n$ vertices either 
$$\motion(\mathfrak{X})\geq \gamma_r n,$$
 or $\mathfrak{X}$ is a Cameron scheme.
\end{conjecture}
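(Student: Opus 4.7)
The target is: for any primitive coherent configuration $\mathfrak{X}$ of rank~4 on $n$ vertices that is not a Cameron scheme, $\motion(\mathfrak{X}) \geq \gamma_4 n$. I would mirror Babai's strategy for rank~3 but account for the richer structure afforded by three non-diagonal constituents. Write the basis relations as $R_0, R_1, R_2, R_3$ with $R_0$ diagonal and $R_i$ of out-degree $k_i$, so $k_1+k_2+k_3 = n-1$.

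The first step is to split on symmetry. If some $R_i$ is not symmetric, then $R_i^\top$ is also a basis relation; symmetrizing collapses this pair and yields a primitive rank-3 symmetric scheme. Since $\Aut(\mathfrak{X}) \subseteq \Aut(\mathrm{symm}(\mathfrak{X}))$, a linear lower bound on the motion of the symmetrization via Theorem~\ref{babai-str-reg-thm} transfers back to $\mathfrak{X}$, provided the symmetrization is not a rank-3 Cameron scheme. The small residual subcase, in which the symmetrization is a triangular or lattice graph, needs direct handling, checking that no primitive rank-4 refinement of such a scheme exists outside the explicit list of Cameron schemes. This reduces the problem to the \emph{symmetric} case, a primitive symmetric association scheme of class~3.

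In the symmetric case I would assume for contradiction that a non-identity $\sigma \in \Aut(\mathfrak{X})$ fixes a set $F$ with $|F| > (1-\gamma_4)n$, and run a two-pronged argument keyed to the eigenvalues of the constituents. For each $G_i = (V, R_i)$ let $\lambda_i$ be the second-largest eigenvalue in absolute value. In the \emph{spectral prong}, when each ratio $\lambda_i / k_i$ is small, the expander mixing lemma forces the number of $R_i$-edges between $F$ and $V \setminus F$ to be close to its expected value in every color simultaneously; combining this with the fact that $\sigma$ preserves each $R_i$ and acts trivially on $F$ yields strong constraints that $\sigma$ cannot satisfy unless $|\supp(\sigma)| \geq \gamma_4 n$. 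In the \emph{clique-geometric prong}, activated when some $\lambda_i / k_i$ is large, a Metsch-type geometricity criterion applied to $R_i$ produces a system of Delsarte cliques whose geometry forces the intersection numbers $p_{jk}^{\ell}$ into the Johnson or Hamming parameter set, so that $\mathfrak{X}$ is either $J(m,3)$ or $H(3,q)$, exactly the rank-4 Cameron schemes, contradicting our assumption.

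The principal obstacle is the \emph{intermediate regime}, where the eigenvalue ratios are neither small enough to close the spectral argument nor large enough to trigger a clique geometry. I expect this to be where the bulk of the paper lives: one must combine an individualization-and-refinement style argument (individualize some $v \in F$, track how $\sigma$ is forced to permute the three relation-classes around $v$, and iterate on the stabilizer) with a delicate case analysis on the intersection numbers $p_{ij}^k$ to rule out sporadic parameter sets. Managing this case analysis exhaustively, and ensuring that the constant $\gamma_4$ does not deteriorate across the cases, is the principal technical challenge, because unlike in the rank-3 setting the intersection parameters of a rank-4 scheme are not confined to a two-parameter family.
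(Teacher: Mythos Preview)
Your high-level dichotomy (spectral prong versus clique-geometric prong) matches the paper's toolkit, and your treatment of the oriented case is essentially what the paper does. But there is a structural gap in the symmetric case that would derail the argument.

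You omit the split on \emph{diameter}. A rank-4 symmetric primitive scheme either has a constituent of diameter~3, in which case that constituent is distance-regular and the scheme is its metric scheme, or every constituent has diameter~2. These two regimes behave completely differently. The paper handles the diameter-3 branch by invoking the author's separate classification of distance-regular graphs of bounded diameter with sublinear motion (Theorem~\ref{thm:main-motion-drg}), and \emph{all} rank-4 Cameron exceptions, namely $\mathfrak{J}(m,3)$ and $\mathfrak{H}(3,m)$, live in this branch: they are diameter-3 metric schemes. Your plan folds this branch into the general symmetric case and expects the clique geometry to produce $J(m,3)$ or $H(3,q)$; it will not, because in the diameter-2 branch there are no Cameron schemes to find.

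Consequently your clique-geometric prong is mis-aimed. In the diameter-2 case, Metsch applied to a constituent $X_i$ (or to $X_{1,2}$) with $m=2$ gives, via Seidel's classification of edge-regular graphs with smallest eigenvalue $-2$, that $X_i$ is $T(s)$, $L_2(s)$, or the line graph of a triangle-free regular graph. None of these make $\mathfrak{X}$ a Cameron scheme; instead the paper spends Section~\ref{sec-subsec-str-reg} (Theorems~\ref{assoc-x3-strongly-reg}, \ref{assoc-x2-strongly-reg}, \ref{assoc-x1-strongly-reg}, \ref{assoc-line-triangle}) showing in each subcase that the motion is still linear, using further spectral estimates on $X_2$ or $X_{1,2}$, a clique-distinguishing lemma (Lemma~\ref{sun-wilmes-tool}) when a triangular graph appears, and a bespoke claw-counting argument (Lemma~\ref{assoc-claw-proof-bound}) to bound $k_2/k_1$ when $X_1$ is strongly regular. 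This is not an ``intermediate regime'' cleanup; it is the main body of the proof, and it does not use individualization-and-refinement in the way you sketch.

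You are also missing a key preliminary reduction: before any spectral analysis, the paper shows (Lemma~\ref{assoc-k2-large}) that unless one degree $k_3$ overwhelmingly dominates, the distinguishing-number bound already gives linear motion. Only after forcing $k_1,k_2\leq \varepsilon k_3$ do the eigenvalue approximations (Propositions~\ref{assoc-spectral-radius}, \ref{assoc-x12-approx}) become tractable.
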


A stronger version of this conjecture, Conjecture~\ref{conj-3}, can be viewed as a combinatorial extension of the Liebeck--Saxl classification of primitive permutation groups~\cite{Liebeck-Saxl} (see Theorem~\ref{Liebeck-Saxl} in this paper). The proof of the Liebeck--Saxl result rests on the Classification of Finite Simple Groups through the
   O'Nan--Scott characterization of primitive groups.
   This tool is not available in our combinatorial setting.

The main result of this paper confirms Conjecture~\ref{conj:motion} for rank $4$.

%{
%\renewcommand{\thetheorem}{\ref{main-coherent}}
\begin{theorem}[Main]\label{main-coherent}
There exists an absolute constant $\gamma_4>0$ such that for every  primitive coherent configuration $\mathfrak{X}$ of rank $4$ on $n$ vertices either 
$$\motion(\mathfrak{X})\geq \gamma_4 n,$$
 or $\mathfrak{X}$ is a Johnson scheme, or a Hamming scheme.
\end{theorem}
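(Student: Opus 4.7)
The plan is to extend Babai's spectral and combinatorial approach for rank 3 (Theorem~\ref{babai-str-reg-thm}) to the rank-4 setting by first performing a structural case split on the three non-diagonal basis relations $R_1, R_2, R_3$ of the configuration $\mathfrak{X}$. Since non-symmetric relations come in transposed pairs, there are only two possibilities: (a) all three $R_i$ are symmetric, so $\mathfrak{X}$ is a 3-class symmetric association scheme; or (b) exactly one $R_i$ is symmetric and the other two form a pair $R_j = R_k^{\top}$. Within case (a) I would further distinguish the \emph{metric} subcase, where one constituent is a distance-regular graph of diameter 3 and the others are its distance-2 and distance-3 graphs, from the \emph{non-metric} subcase (e.g.\ pseudocyclic schemes).

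The common engine in each case is a motion dichotomy in the style of Babai's argument for strongly regular graphs: if $\motion(\mathfrak{X}) < \gamma_4 n$, a pigeonhole on $\Aut(\mathfrak{X})$ produces a non-identity automorphism $\sigma$ fixing at least $(1-\epsilon)n$ vertices. Coherence then forces the support of $\sigma$ to be highly structured with respect to each constituent, since for every fixed vertex $v$ and every colour $i$ the number of $R_i$-neighbours of $v$ in the support is constrained by the intersection numbers $p_{ij}^{k}$. Combining this with Hoffman-type upper bounds on cliques and cocliques in each constituent, derived from a spectral gap between the valency and the second-largest eigenvalue, rules out such a $\sigma$ unless the intersection array of $\mathfrak{X}$ belongs to a very restricted list. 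The Johnson scheme $J(n,3)$ and the Hamming scheme $H(3,q)$ should appear precisely as the extremal arrays, matching the statement.

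In case (b) I would apply symmetrization: merging the non-symmetric pair yields a symmetric rank-3 coherent configuration, i.e.\ a (pair of complementary) strongly regular graph(s), and $\Aut(\mathfrak{X}) \leq \Aut(\mathrm{sym}(\mathfrak{X}))$. Theorem~\ref{babai-str-reg-thm} then immediately gives linear motion unless the symmetrization is a triangular graph, a lattice graph, or a disjoint union of equal cliques. The disjoint-cliques case is excluded by primitivity of $\mathfrak{X}$, and for triangular and lattice graphs one checks by hand that no primitive rank-4 refinement exists with the non-symmetric constituents oriented consistently with the intersection numbers, or that the few surviving refinements have automorphism group small enough that the bound holds directly. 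In the metric subcase of (a), standard eigenvalue inequalities for distance-regular graphs of diameter 3 (feasibility conditions on the intersection array, Terwilliger-type bounds) either produce the required spectral gap on one of the constituents or pin the array down to that of $J(n,3)$ or $H(3,q)$.

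The main obstacle I anticipate is the non-metric subcase of case (a), where neither an underlying distance-regular graph nor an immediate reduction to rank 3 is available, and where the spectral control on the three constituents must be extracted purely from the rank-4 intersection numbers via the Bose--Mesner algebra. In particular, pseudocyclic 3-class schemes, whose constituents all have the same eigenvalue multiplicities, require a dedicated argument: here I expect to use the fact that the three adjacency matrices are simultaneously diagonalizable together with tight constraints on the valencies to show that at least one constituent has second eigenvalue bounded away from its valency by a constant factor, which is then enough to drive the motion lemma to completion. Once this spectral gap is established across all cases, the same final step (motion lemma plus coherence) closes the argument and identifies $J(n,3)$ and $H(3,q)$ as the only possible exceptions.
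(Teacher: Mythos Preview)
Your top-level case split is correct and matches the paper: the oriented-pair case reduces to rank~3 via the symmetric constituent being strongly regular, and the symmetric case splits into metric (distance-regular of diameter~3, handled by Theorem~\ref{thm:main-motion-drg}) versus non-metric (all constituents of diameter~2). The metric subcase is not dispatched by ``standard eigenvalue inequalities''; it requires the separate, substantial results of \cite{kivva-drg,kivva-geometric}, but since those are citeable this is not a gap.

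The genuine gap is in the non-metric diameter-2 subcase, which is where almost all of the paper's work lies. Your ``common engine'' of pigeonholing a small-support automorphism and then appealing to Hoffman-type clique/coclique bounds is not how the argument runs and is unlikely to close on its own. The paper instead bounds motion directly via the distinguishing-number bound $\motion(\mathfrak{X})\ge D_{\min}(\mathfrak{X})$ (Observation~\ref{obs1}) and the spectral bound $\motion(X_i)\ge n(k_i-\xi(X_i)-q(X_i))/k_i$ (Lemma~\ref{mixing-lemma-tool}), and the real difficulty is that neither tool works by itself: a constituent can have $q(X_i)$ close to $k_i$ (large $\lambda$), killing the spectral bound, while the distinguishing number is also small. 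The paper's key structural idea for this bad range, absent from your plan, is Metsch's clique-geometry criterion (Theorem~\ref{Metsch}): when $\lambda$ is large relative to $\mu$, the offending constituent (or $X_{1,2}$) is forced to be a line graph with smallest eigenvalue $\ge -2$, and then the classification in Theorems~\ref{geom-eig-more2}--\ref{geom-eig-2} pins it down to $T(s)$, $L_2(s)$, or the line graph of a triangle-free regular graph. Each of these outcomes then requires a lengthy dedicated analysis (Theorems~\ref{assoc-x3-strongly-reg}--\ref{assoc-line-triangle}), including delicate bounds relating $k_1$ and $k_2$ via a claw-counting argument (Lemma~\ref{assoc-claw-proof-bound}). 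Your remark about pseudocyclic schemes also misidentifies the hard instances: schemes with comparable constituent degrees are disposed of immediately by Lemma~\ref{assoc-k2-large}; the difficult cases are those with one overwhelmingly dominant degree and a near-geometric minority constituent.

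In the oriented case, your claim that one ``checks by hand that no primitive rank-4 refinement exists'' over $T(s)$ or $L_2(s)$ is not correct: such refinements can exist, and the paper handles them by explicit distinguishing-number estimates and a clique-splitting argument (Lemma~\ref{sun-wilmes-tool}) rather than by ruling them out.
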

%\addtocounter{theorem}{-1}
%}

% From this  point of view, Theorem~\ref{babai-str-reg-thm} is a step toward a characterization of the coherent configurations with automorphism groups of small minimal degree.

We proved Conjecture~\ref{conj:motion} for the class of metric schemes in \cite{kivva-drg}, \cite{kivva-geometric} (see Theorem~\ref{thm:main-motion-drg} in this paper). Metric schemes are coherent configurations with edge colors induced by a graph metric, and are essentially equivalent to distance-regular graphs.

\subsection{Coherent configurations: history and motivation}

The history of coherent configurations goes back to Schur's paper \cite{Schur} in 1933. Schur used coherent configurations to study permutation groups through their orbital configurations. Later Bose and Shimamoto \cite{Bose-Shimamoto} studied a special class of coherent configurations, called \textit{association schemes}. Coherent configurations in their full generality were independently introduced by Weisfeiler and Leman \cite{Weisfeiler}, \cite{Weisfeiler-Leman} and D. Higman \cite{Higman} around 1968. Higman developed the representation theory of coherent configurations and applied it to the permutation groups. At the same time, a related algebraic theory of coherent configurations, called ``cellular algebras,'' was introduced by Weisfeiler and Leman. Special classes of association schemes such as strongly regular graphs and, more generally, distance-regular graphs have been the subject of intensive study in algebraic combinatorics. A combinatorial study of coherent configurations was initiated by Babai in \cite{Babai-annals}. Coherent configurations play an important role in the study of the Graph Isomorphism problem, adding combinatorial divide-and-conquer tools to the arsenal. This approach was used by Babai in his recent breakthrough~\cite{Babai-GI} to prove that the Graph Isomorphism problem can be solved in quasi-polynomial time. Recently, the representation theory of coherent configurations found unexpected applications in theory of computing, specifically to the complexity of matrix multiplication \cite{Cohn-Umans}.

``Primitive coherent configurations'' are combinatorial generalizations of the orbital configurations of primitive permutation groups (see Section \ref{sec-prelim}).

Primitive permutation groups of large order were classified by Cameron \cite{Cameron} in 1981 using the Classification of the Finite Simple Groups. In particular Cameron proved that all primitive groups of order greater than $\displaystyle{n^{\log_2(n)+1}}$ are what are now called \textit{Cameron groups} (see Def. \ref{def-Cameron-groups}). \textit{Cameron schemes} are orbital configurations of Cameron groups.

A few years after Cameron's classification of large primitive groups, Liebeck \cite{Liebeck} obtained a classification of primitive groups with small minimal degree. He proved that all primitive groups of degree $n$ with minimal degree less than $n/(9\log_2 n)$ are  Cameron groups. In 1991 Liebeck and Saxl~\cite{Liebeck-Saxl} impoved the lower bound on the minimal degree of primitive non-Cameron groups to $n/3$.

\subsection{Coherent configurations: related conjectures}
In the light of the connection between primitive coherent configurations and primitive permutation groups, Babai conjectured that the following combinatorial analog of Cameron's classification should be true.

\begin{conjecture}[Babai \cite{Babai-str-reg}]\label{conj-1}
For every $\varepsilon>0$, there exists $N_{\varepsilon}$ such that if $\mathfrak{X}$ is a primitive coherent configuration on $n> N_\varepsilon$ vertices and $|\Aut(\mathfrak{X})|\geq\exp(n^{\varepsilon})$, then $\mathfrak{X}$ is a Cameron scheme.
\end{conjecture}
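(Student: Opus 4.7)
The plan is to reduce Conjecture~\ref{conj-1} to the motion conjecture (Conjecture~\ref{conj:motion}) via classical group-theoretic bounds on permutation groups of large minimal degree. Suppose $\mathfrak{X}$ is a primitive coherent configuration on $n$ vertices with $|\Aut(\mathfrak{X})|\geq \exp(n^\varepsilon)$; I would first argue that this forces sublinear motion. Wielandt's 1934 theorem asserts that a permutation group of degree $n$ with $\mindeg\geq cn$ has every non-abelian alternating composition factor $A_k$ of order $k=O_c(\log n)$, so its thickness is $O_c(\log n)$. Combined with the Babai--Cameron--P\'alfy bound, stating that a primitive group of degree $n$ with thickness at most $t$ has order at most $n^{O(t)}$, this gives $|\Aut(\mathfrak{X})|\leq \exp(O_c(\log^2 n))$ whenever $\motion(\mathfrak{X})\geq cn$. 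Since $\log^2 n = o(n^\varepsilon)$, the hypothesis rules out $\motion(\mathfrak{X})\geq cn$ for any fixed $c>0$ once $n$ is sufficiently large; hence $\motion(\mathfrak{X})/n \to 0$.

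Next, assuming the motion conjecture has been established at the rank of $\mathfrak{X}$, sublinear motion forces $\mathfrak{X}$ to be a Cameron scheme, which would complete the proof. For rank $3$ this step is Babai's Theorem~\ref{babai-str-reg-thm}, and for rank $4$ it is Theorem~\ref{main-coherent} of this paper; the same reduction would deliver Conjecture~\ref{conj-1} at any rank at which Conjecture~\ref{conj:motion} is known, so at this writing Conjecture~\ref{conj-1} follows for rank $\leq 4$.

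The central obstacle is therefore the motion conjecture itself, which remains open for $r\geq 5$. A route to Conjecture~\ref{conj-1} that bypasses motion would have to be a combinatorial analogue of Cameron's classification of large primitive groups, but without access to CFSG or the O'Nan--Scott theorem on which Cameron's argument rests. A plausible strategy is induction on rank: show that a PCC with $|\Aut(\mathfrak{X})|$ very large either admits a merging of color classes that drops its rank without destroying primitivity, or already exhibits a product or subset structure identifying it as a Johnson, Hamming, grassmannian, or other Cameron scheme. The technical engine, following Babai at rank $3$ and this paper at rank $4$, would be spectral-gap estimates for the constituent graphs combined with intersection-number analysis; the hard part is that at higher rank the spectrum alone no longer pins down a single graph, so one must exploit the interplay among multiple constituents simultaneously to extract the combinatorial ``skeleton'' of a Cameron scheme.
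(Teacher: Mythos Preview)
This statement is Conjecture~\ref{conj-1}, which the paper presents as open; there is no proof in the paper to compare against. Your proposal is not a proof but a conditional reduction to the motion conjecture, and you correctly flag that the motion conjecture itself is the central obstacle.

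The reduction has a genuine gap. You invoke the Babai--Cameron--P\'alfy theorem to pass from a logarithmic thickness bound to a quasipolynomial order bound, but BCP is a statement about \emph{primitive permutation groups}, and $\Aut(\mathfrak{X})$ need not be primitive---indeed, need not even be transitive---merely because the coherent configuration $\mathfrak{X}$ is primitive. The paper is careful on exactly this point: after Conjecture~\ref{conj-3} it claims only that polylogarithmic thickness follows (via Wielandt's Theorem~\ref{Wielandt-thm}), not quasipolynomial order, and hence not Conjecture~\ref{conj-1}. Closing this gap would require either showing that a sufficiently large $\Aut(\mathfrak{X})$ must act primitively, or bypassing BCP entirely---for instance via Lemma~\ref{Babai-disting-order-group}, which however bounds the order in terms of $D_{\min}(\mathfrak{X})$ rather than $\motion(\mathfrak{X})$, and these are related only by the one-sided inequality of Observation~\ref{obs1}.

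A second, smaller issue: Conjecture~\ref{conj-1} is not rank-restricted, so the threshold $N_\varepsilon$ must be uniform in the rank. Your argument as written uses the rank-dependent constant $\gamma_r$ from Conjecture~\ref{conj:motion}, which could tend to zero as $r\to\infty$; what the reduction actually needs is the uniform constant of Conjecture~\ref{conj-3}.
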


 The first progress on this conjecture was made in Babai's 1981 seminal paper \cite{Babai-annals}, where the conjecture was confirmed for all $\varepsilon>1/2$. As a byproduct, he solved a then 100-year-old problem on primitive, but not doubly transitive groups, giving a nearly tight bound on their order. Recently (2015), Sun and Wilmes \cite{Sun-Wilmes} made a first significant progress since that time, confirming the conjecture for all $\varepsilon>1/3$. In the case of rank 3, Chen et al. \cite{Chen-Sun-Teng} confirmed the conjecture for $\varepsilon>9/37$.

 In fact, it is believed \cite{Babai-ICM} that a much stronger version of Conjecture \ref{conj-1} is true. 
 
 \begin{conjecture}[Babai {\cite[Conjecture 12.1]{Babai-ICM}}]
There exists a polynomial $p$ such that for any  non-Cameron primitive coherent configuration $\mathfrak{X}$ on $n$ vertices the following holds.
\vspace{-0.15cm}
\begin{enumerate}[(a)]
\setlength{\itemsep}{-3pt}
 \item  $\theta(\Aut(\mathfrak{X}))\le p(\log n)$ \qquad (thickness is polylogarithmic). 
 \item  $|\Aut(\mathfrak{X})|\le \exp(p(\log n))$ \qquad (order is quasipolynomial).
\end{enumerate}
\end{conjecture}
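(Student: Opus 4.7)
The plan is to reduce both (a) and (b) to the motion conjecture, Conjecture~\ref{conj:motion}. Suppose one could show that every non-Cameron primitive coherent configuration $\mathfrak{X}$ on $n$ vertices satisfies $\motion(\mathfrak{X}) \geq \gamma n$ for an absolute constant $\gamma > 0$. Then part~(a) follows from Wielandt's 1934 theorem cited in the introduction: a linear lower bound on $\mindeg(G)$ forces $\theta(G) = O(\log n)$, which is in fact better than the polylogarithmic bound required. Part~(b) then follows from the Babai--Cameron--P\'alfy theorem that primitive permutation groups of thickness at most $t$ have order at most $n^{f(t)}$ for some function $f$; substituting $t = O(\log n)$ yields the quasipolynomial bound $|\Aut(\mathfrak{X})| \leq \exp(p(\log n))$ for an appropriate polynomial $p$. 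So it suffices to attack Conjecture~\ref{conj:motion}.

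To prove the motion conjecture, I would proceed by induction on the rank $r$ of $\mathfrak{X}$. The base cases $r = 3$ (Theorem~\ref{babai-str-reg-thm}) and $r = 4$ (Theorem~\ref{main-coherent}) are already in hand, and the case of metric schemes is handled in the author's companion work. For the inductive step I would follow Babai's two-pronged template: a spectral argument when some constituent relation of $\mathfrak{X}$ has a non-trivial eigenvalue gap, and a structural argument otherwise. In the spectral regime, a hypothetical automorphism $\sigma$ moving fewer than $\gamma n$ vertices would produce a large $\sigma$-invariant set whose induced equitable partition yields an imprimitive system of blocks, contradicting primitivity of $\mathfrak{X}$. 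In the structural regime, when all constituents look like ``geometric'' association schemes, I would attempt to quotient out or restrict to a primitive subconfiguration of strictly smaller rank and invoke the inductive hypothesis, while isolating the Johnson, Hamming, and remaining Cameron schemes as the unavoidable exceptional families.

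The main obstacle, I expect, is controlling the structural regime at unbounded rank. The rank-$3$ and rank-$4$ arguments enumerate the possible constituent valencies and dispatch a bounded list of sub-cases; this case analysis appears to grow unmanageably with $r$, and it is not a priori clear which configurations should appear as genuine exceptions. A secondary obstacle is maintaining a uniform constant $\gamma$ across all ranks: a naive induction produces $\gamma_r \to 0$ with $r$, which would be too weak to yield a single polynomial $p$ valid in all ranks as the conjecture demands. A uniform bound seems to require a classification-type dichotomy playing the role of the O'Nan--Scott theorem in the permutation-group setting; devising a combinatorial analog of O'Nan--Scott --- or finding a principled way to bypass it --- is, I believe, the true hard core of the conjecture, and is precisely what forced the ad hoc rank-by-rank approach used for ranks $3$ and $4$.
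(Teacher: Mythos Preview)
This statement is a \emph{conjecture}; the paper does not prove it and presents it as open. There is therefore no proof in the paper to compare against. What the paper does say, immediately after stating the conjecture, is that part~(a) trivially follows from part~(b), and that part~(a) would follow from the uniform motion conjecture (Conjecture~\ref{conj-3}) via Wielandt's theorem (Theorem~\ref{Wielandt-thm}). Your first paragraph recovers exactly this reduction for~(a), and goes one step further by noting that~(b) would also follow, via the Babai--Cameron--P\'alfy bound $|G|\le n^{O(\theta(G))}$ for primitive $G$. That extra observation is correct and slightly extends the paper's own discussion.

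One point to tighten: you open by citing Conjecture~\ref{conj:motion}, which only promises a rank-dependent constant $\gamma_r$, but your reduction requires a single $\gamma$ independent of rank (otherwise no single polynomial $p$ works across all $\mathfrak{X}$). You do flag this in your final paragraph, but the correct target from the outset is Conjecture~\ref{conj-3}, not Conjecture~\ref{conj:motion}.

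The remainder of your proposal is not a proof but an honest discussion of obstacles, which is appropriate for an open conjecture. The obstacles you name --- the explosion of the case analysis with rank, the degradation $\gamma_r\to 0$, and the absence of a combinatorial analogue of O'Nan--Scott --- are genuine and match the paper's own Section~\ref{sec-summary}, which singles out the spectral analysis for rank $r\ge 5$ as the bottleneck and isolates the ``no dominant degree'' case (Conjecture~\ref{conj-bound-degree}) as a test problem. Your middle paragraph's inductive sketch, however, is too speculative to assess: there is no known mechanism to ``quotient to a primitive subconfiguration of smaller rank'' in general, and the spectral/structural dichotomy you invoke is precisely what the paper says it cannot currently establish beyond rank~$4$.
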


Clearly, (a) follows from (b). As discussed above, Babai proved (a) for rank-3 coherent configurations and our Theorem \ref{main-coherent} extends this to rank 4. Using Wielandt's result (see Theorem~\ref{Wielandt-thm}),
 part~(a) follows from the following stronger version of Conjecture~\ref{conj:motion}, which is a combinatorial analog of the Liebeck-Saxl classification (see Theorem~\ref{Liebeck-Saxl}).

\begin{conjecture}[Babai]\label{conj-3}
There exists $\gamma>0$ such that if $\mathfrak{X}$ is a primitive coherent configuration on $n$ vertices and $\motion(\mathfrak{X})<\gamma n$, then $\mathfrak{X}$ is a Cameron scheme.
\end{conjecture}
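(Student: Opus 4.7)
The plan is to proceed by induction on the rank $r$ of the primitive coherent configuration $\mathfrak{X}$. The base cases are $r = 3$, handled by Theorem~\ref{babai-str-reg-thm} of Babai, and $r = 4$, handled by our Theorem~\ref{main-coherent}. For the inductive step I would fix a non-identity $\sigma \in \Aut(\mathfrak{X})$ of degree less than $\gamma n$ and analyze the non-diagonal constituents of $\mathfrak{X}$ one at a time, aiming either to identify $\mathfrak{X}$ directly as a Cameron scheme or to reduce to a primitive coherent configuration of smaller rank through a suitable merging of color classes.

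The central tool would be the spectral estimate underlying the rank-$3$ case: for a $k$-regular graph on $n$ vertices with second-largest eigenvalue $\xi$, any automorphism that moves $s$ vertices satisfies a bound of the form $s \geq c\, n (k - \xi)/k$ for some absolute $c > 0$. Applying this simultaneously to each non-diagonal constituent $X_i$ of $\mathfrak{X}$, and using that each $X_i$ is preserved by $\sigma$, the hypothesis $\motion(\mathfrak{X}) < \gamma n$ forces $(k_i - \xi_i)/k_i$ to be uniformly small. In other words, every constituent must have a small normalized spectral gap.

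I would then try to argue combinatorially that a primitive coherent configuration in which every constituent has small normalized spectral gap must be a Cameron scheme. Here I see two complementary sub-strategies. First, if some non-diagonal constituent is metric, i.e.\ induces the distance relation of a distance-regular graph, invoke the motion bound of \cite{kivva-drg, kivva-geometric} to conclude that $\mathfrak{X}$ is a Johnson or Hamming scheme. Otherwise, I would attempt to merge several color classes to produce a primitive coherent configuration of strictly smaller rank and invoke the inductive hypothesis; the delicate point is to verify that the motion lower bound transfers faithfully under merging and that the merged configuration is not itself a Cameron scheme unless $\mathfrak{X}$ already was.

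The main obstacle, in my view, is the absence of a sharp structural characterization of primitive coherent configurations all of whose constituents have small spectral gap. For ranks $3$ and $4$ the classification proceeds through a finite case analysis of the intersection-number parameters; for general rank no such short parameter list is available, and substantial new structural input seems necessary. Overcoming this would likely require either a uniform combinatorial invariant controlling motion across all ranks, or a local-to-global principle strong enough to show that small spectral gap in every constituent forces the global Cameron structure directly.
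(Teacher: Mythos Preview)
The statement you are attempting to prove is labeled a \emph{Conjecture} in the paper, and the paper does not claim to prove it; Section~\ref{sec-summary} discusses it explicitly as open, and even the weaker rank-by-rank version (Conjecture~\ref{conj:motion}) is open for $r\geq 5$. So there is no paper's proof to compare against, and your proposal must be read as a strategy toward an open problem.

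That strategy has several concrete gaps. First, your spectral bound is misstated: Lemma~\ref{mixing-lemma-tool} gives $\motion(X) \geq n(k - \xi - q)/k$, not $c\,n(k-\xi)/k$; the $q$ term (the maximum number of common neighbors of two distinct vertices) is essential and cannot be absorbed into the constant. Small motion therefore does not force $(k_i - \xi_i)/k_i$ small, only $(k_i - \xi_i - q_i)/k_i$ small, a much weaker constraint that does not by itself yield structure. Second, merging color classes of a coherent configuration does not in general produce a coherent configuration; fusions are coherent only in special situations, and there is no mechanism here guaranteeing a primitive coherent fusion of strictly smaller rank, so the inductive reduction step does not go through. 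Third, even if every step worked, an induction on rank with base cases $r=3,4$ would at best yield the rank-dependent constants $\gamma_r$ of Conjecture~\ref{conj:motion}, not the single uniform $\gamma$ demanded by Conjecture~\ref{conj-3}; the paper separates these two conjectures precisely because the uniform version is strictly stronger. You have essentially identified the core difficulty yourself in your final paragraph: no structural characterization of the kind you need is known, and that is exactly why the conjecture is open.
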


\subsection{Motion lower bounds: the main tools}\label{sec-tools} 

We follow Babai's approach \cite{Babai-str-reg}. In that paper, Babai used a combination of an old combinatorial tool \cite{Babai-annals} and an introduced  spectral tool.  We used similar approach in \cite{kivva-drg, kivva-geometric}  to get bounds on the motion of distance-regular graphs of bounded diameter.

\begin{definition}
 A \textit{configuration} $\mathfrak
 {X}$ of rank $r$ on the set $V$ is a pair $(V, c)$, where $c$ is a map $c:V\times V \rightarrow \{0, 1,..., r-1\}$ such that 
\begin{enumerate}[(i)]
\item $c(v,v) \neq c(u,w)$, for every $v,u,w\in V$ with $u\neq w$,
\item for any $i<r$, there is $i^{*}<r$, such that $c(u,v) = i$ implies $c(v,u) = i^{*}$, for all $u,v \in V$.
\end{enumerate}
\end{definition}

\subsubsection{Combinatorial tool}

\begin{definition}[Babai \cite{Babai-annals}]\label{def-disting}
In a configuration $\mathfrak{X}$ a pair of vertices $u$ and $v$ is \textit{distinguished} by a vertex $x$ if the colors $c(x,u)$ and $c(x,v)$ are distinct. Define
\[ D(u,v) = \left\vert \{x\mid u,\ v \text{ are distinguished by } x\} \right\vert.\]
\end{definition}

\begin{definition}
Define the \textit{minimal distinguishing number} $D_{\min}(\mathfrak{X})$ of the configuration $\mathfrak{X} = (V, c)$ to be 
\[D_{\min}(\mathfrak{X}) = \min\limits_{u\neq v\in V} D(u, v).\]
\end{definition}

 The first tool we use for establishing motion lower bounds is the following observation.
 
\begin{obs}\label{obs1}
Let $\mathfrak{X}$ be  a configuration with $n$ vertices. Then \[\motion(\mathfrak{X})\geq D_{\min}(\mathfrak{X}).\]
\end{obs}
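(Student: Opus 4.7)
The plan is to argue contrapositively at the level of a single non-identity automorphism: I would fix an arbitrary $\sigma\in\Aut(\mathfrak{X})$ with $\sigma\neq\mathrm{id}$ and show directly that its degree (the number of non-fixed points) is at least $D_{\min}(\mathfrak{X})$. Taking the minimum over such $\sigma$ then yields the desired inequality, since $\motion(\mathfrak{X})$ is by definition the minimum degree over non-identity automorphisms.

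First I would pick a vertex $u$ moved by $\sigma$, and let $v=\sigma(u)\neq u$. The pair $(u,v)$ is then a specific pair of distinct vertices, so by definition $D(u,v)\geq D_{\min}(\mathfrak{X})$. The key step is to observe that every vertex $x$ that distinguishes $u$ and $v$ must itself be moved by $\sigma$. Indeed, if $\sigma(x)=x$, then applying the fact that $\sigma$ preserves the coloring $c$ gives
\[
c(x,u)=c(\sigma(x),\sigma(u))=c(x,v),
\]
contradicting the assumption that $c(x,u)\neq c(x,v)$. Hence the set of points moved by $\sigma$ contains the set of distinguishers of $(u,v)$, so the degree of $\sigma$ is at least $D(u,v)\geq D_{\min}(\mathfrak{X})$.

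Since this lower bound holds for every non-identity $\sigma\in\Aut(\mathfrak{X})$, taking the minimum gives $\motion(\mathfrak{X})\geq D_{\min}(\mathfrak{X})$, as claimed. There is no real obstacle here; the observation hinges entirely on the color-preservation property of automorphisms applied to a single pair $(u,\sigma(u))$, and the argument works uniformly for any configuration (primitivity and rank are not needed).
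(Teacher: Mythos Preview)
Your argument is correct and is essentially identical to the paper's own proof: both pick a non-identity automorphism $\sigma$, a moved vertex $u$, and observe that every fixed point of $\sigma$ fails to distinguish $u$ from $\sigma(u)$, whence the degree of $\sigma$ is at least $D(u,\sigma(u))\geq D_{\min}(\mathfrak{X})$. You have simply written out explicitly the one-line verification that a fixed point cannot distinguish, which the paper leaves implicit.
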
 
\begin{proof} Indeed, let $\sigma\in \Aut(\mathfrak{X})$ be any non-trivial automorphism of $\mathfrak{X}$. Let $u$ be a vertex not fixed by $\sigma$. No fixed point of $\sigma$ distinguishes $u$ and $\sigma(u)$, so the degree of $\sigma$ is at least $ D(u, \sigma(u))\geq D_{\min}(\mathfrak{X})$.
\end{proof}

\begin{definition}
A set $S$ of vertices of a configuration $\mathfrak{X}$ is \textit{distinguishing} if every pair of distinct vertices in $\mathfrak{X}$ is distinguished by at least one element of $S$. 
\end{definition}

 Note that the pointwise stabilizer of a distinguishing set
%in $\Aut(\mathfrak{X})$ 
is trivial. Thus, if $S$ is a distinguishing set of $\mathfrak{X} = (V, c)$, then $|\Aut(\mathfrak{X})|\leq n^{|S|}$, where $|V| = n$.

 In \cite{Babai-annals} Babai showed that the minimal distinguishing number can be used to bound the order of the automorphism group of a configuration.  He used the lemma below to make a dramatic improvement of the bound on the order of an uniprimitive group in terms of its degree. 
 
 \begin{lemma}[Babai {\cite[Lemma~5.4]{Babai-annals}} ]\label{Babai-disting-order-group} Let $\mathfrak{X}$ be a configuration. Then there exists a distinguishing  set of size at most $\left(2n\log n/D_{\min}(\mathfrak{X})\right)+1$. Therefore, in particular, the order of the automorphism group of $\mathfrak{X}$ satisfies $\displaystyle{|\Aut(\mathfrak{X})| \leq n^{1+2n\log(n)/D_{\min}(\mathfrak{X})}}$. 
\end{lemma}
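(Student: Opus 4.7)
The plan is a greedy construction of a distinguishing set driven by an averaging argument, followed by a routine stabilizer bound.

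First I would observe the key averaging fact: for every pair $\{u,v\}$ of distinct vertices, the hypothesis on $D_{\min}$ gives that at least $D_{\min} := D_{\min}(\mathfrak{X})$ vertices distinguish $\{u,v\}$. Hence if $P$ is any collection of currently undistinguished pairs, the double count
\[ \sum_{x\in V}\#\{\{u,v\}\in P : x \text{ distinguishes } \{u,v\}\} \ge D_{\min}\,|P|\]
shows that some single vertex $x\in V$ distinguishes at least $(D_{\min}/n)\,|P|$ pairs from~$P$.

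Next I would build $S$ greedily. Set $S_0=\emptyset$ and let $P_k$ denote the set of pairs not distinguished by any element of $S_k$, so $|P_0|=\binom{n}{2}$. By the averaging observation, at each step there is a vertex $x_{k+1}$ such that $|P_{k+1}|\le (1-D_{\min}/n)\,|P_k|$. Iterating and using $1-t\le \eee^{-t}$,
\[ |P_k|\le \binom{n}{2}\left(1-\frac{D_{\min}}{n}\right)^{k}\le \binom{n}{2}\exp\!\left(-\frac{k\,D_{\min}}{n}\right).\]
Since $\binom{n}{2}<n^2$, choosing $k=\lceil 2n\log n/D_{\min}\rceil$ forces $|P_k|<1$, i.e.\ $|P_k|=0$, so $S_k$ is a distinguishing set with $|S_k|\le 2n\log n/D_{\min}+1$.

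For the automorphism bound I would argue that an automorphism of $\mathfrak{X}$ is determined by its restriction to any distinguishing set $S$: if two automorphisms $\sigma_1,\sigma_2$ agree pointwise on $S$, then $\tau=\sigma_2^{-1}\sigma_1$ fixes $S$ pointwise, and if $\tau(u)=v\neq u$ for some $u$, any $x\in S$ distinguishing $\{u,v\}$ would satisfy $c(x,u)\neq c(x,v)=c(\tau(x),\tau(u))=c(x,\tau(u))$, contradicting that $\tau$ preserves colors. Hence the map $\sigma\mapsto \sigma|_S$ is injective, giving $|\Aut(\mathfrak{X})|\le n^{|S|}\le n^{1+2n\log n/D_{\min}}$.

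There is no real obstacle here beyond keeping the averaging estimate and the logarithm bookkeeping tight; the only mild care needed is in the last step, where one must verify that the greedy bound $\lceil 2n\log n/D_{\min}\rceil$ indeed gives $|P_k|<1$ for all $n\ge 2$, which it does because $\log\binom{n}{2}<2\log n$.
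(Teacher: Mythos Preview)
Your proof is correct. Note, however, that the paper does not actually supply a proof of this lemma: it is quoted from Babai~\cite{Babai-annals} without proof, and the paper merely records, just before the statement, the observation that the pointwise stabilizer of a distinguishing set is trivial (which is precisely your final paragraph). Your greedy/averaging argument is the standard one from Babai's original paper, so there is nothing to compare against here beyond confirming that you have reproduced the intended proof faithfully.
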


\subsubsection{Spectral tool} 

For a $k$-regular graph $X$ let $k = \xi_1\geq \xi_2\geq...\geq \xi_n$ denote the eigenvalues of the adjacency matrix of $X$. We call $\xi = \xi(X) = \max\{ |\xi_i|: 2\leq i\leq n\}$ the \textit{zero-weight spectral radius} of~$X$. The second tool is based on the Expander Mixing Lemma.

\begin{lemma}[Babai, {\cite[Proposition~12]{Babai-str-reg}}]\label{mixing-lemma-tool}
Let $X$ be a regular graph of degree $k$ on $n$ vertices with the zero-weight spectral radius $\xi$. Suppose every pair of vertices in $X$ has at most $q$ common neighbors. Then 
\[\motion(X)\geq n\cdot \frac{(k-\xi-q)}{k}.\]
\end{lemma}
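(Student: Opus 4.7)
The plan is to fix an arbitrary non-identity automorphism $\sigma \in \Aut(X)$, let $A \subseteq V$ be the set of vertices moved by $\sigma$, and show directly that $|A| \ge n(k-\xi-q)/k$. Since the degree of $\sigma$ equals $|A|$, taking the minimum over all non-identity $\sigma$ gives the claimed lower bound on $\motion(X)$.

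The first step is a local observation: each vertex of $A$ has at most $q$ neighbors in the fixed set $F := V \setminus A$. Indeed, if $u \in A$ and $w \in F$ is adjacent to $u$, then applying $\sigma$ yields $w = \sigma(w)$ adjacent to $\sigma(u)$, so $w$ is a common neighbor of the two distinct vertices $u$ and $\sigma(u)$. By hypothesis there are at most $q$ such $w$, so $u$ has at most $q$ neighbors in $F$ and therefore at least $k-q$ neighbors inside $A$. Summing over $u \in A$ gives $e(A,A) \ge (k-q)|A|$, where $e(A,A) := \sum_{u,v \in A} \mathbf{1}[uv \in E]$ counts each edge inside $A$ twice.

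The second step is a global spectral estimate. The Expander Mixing Lemma for a $k$-regular graph with zero-weight spectral radius $\xi$ provides
\[
e(A,A) \le \frac{k|A|^{2}}{n} + \xi|A|\left(1-\frac{|A|}{n}\right) \le \frac{k|A|^{2}}{n} + \xi|A|.
\]
Combining this with the lower bound $(k-q)|A| \le e(A,A)$ and dividing through by $|A| > 0$ yields $k - q - \xi \le k|A|/n$, i.e., $|A| \ge n(k-\xi-q)/k$.

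The argument is short, and there is no serious technical obstacle once both ingredients are recognized; the only conceptual point to identify is that the combinatorial parameter $q$ controls the ``leakage'' of edges from $A$ into the fixed set $F$, which forces $A$ to contain many internal edges, and this is precisely the regime where the Expander Mixing Lemma forbids $A$ from being small. (Note also that when $k - \xi - q \le 0$ the bound is vacuous, so we may implicitly assume $k - \xi - q > 0$ throughout.)
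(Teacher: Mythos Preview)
Your proof is correct. The paper does not actually supply a proof of this lemma---it only cites Babai~\cite{Babai-str-reg}---but the paper explicitly introduces the lemma with the sentence ``The second tool is based on the Expander Mixing Lemma,'' and your argument is precisely the standard derivation along these lines: bound the number of internal edges of the support from below via the common-neighbor hypothesis, bound it from above via the Expander Mixing Lemma, and compare. This matches the intended approach.
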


Note that this spectral tool gives trivial bound for bipartite graphs, as $\xi(X)= k$ for a $k$-regular bipartite graph $X$. A bipartite version of this lemma is established in~\cite{kivva-drg}.

\subsubsection{Structural tool}

Together with the two tools mentioned, an important ingredient of many of our proofs is Metsch's geometricity criteria (see Theorem~\ref{Metsch} and Section~\ref{sec-geom} for more details). Intuitively, Metsch's criteria states that if the number $\lambda$ of common neighbors of a pair of adjacent vertices is much larger than the number $\mu$ of common neighbors of a pair of vertices at distance 2, then the graph has a  clique geometry (see Definition \ref{def:clique-geom}).

\subsection{Outline of the proof of  the main theorem}

Primitive coherent configurations of rank 4 naturally split into three classes: configurations induced by a primitive distance-regular graph of diameter 3,  association schemes of diameter $2$ (see Definition \ref{def-assoc-diam}), and primitive coherent configurations with one undirected color and two oriented colors. The case of distance-regular graphs of diameter 3 follows from the results we proved in~\cite{kivva-geometric, kivva-drg}. Define a \textit{crown graph} to be a regular complete biprtite graph with one perfect matching deleted.

\begin{theorem}[{\cite{kivva-drg,kivva-geometric}}]\label{thm:main-motion-drg}
For any $d\geq 3$ there exists $\gamma'_d>0$, such that for any distance-regular graph $X$ of diameter $d$ with $n$ vertices either
$$\motion(X)\geq \gamma'_d n,$$
or $X$ is a Johnson graph $J(s, d)$, or a Hamming graph $H(d, s)$, or a crown graph.

\end{theorem}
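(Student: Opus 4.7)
My plan is to split by parameter regime and combine the three tools described in Section~\ref{sec-tools}: the distinguishing-number bound (Observation~\ref{obs1}), the spectral mixing bound (Lemma~\ref{mixing-lemma-tool}), and Metsch's geometricity criterion. Throughout, I write $k = b_0$, $\lambda = a_1$, and $\mu = c_2$ for the usual intersection numbers of a distance-regular graph $X$ of diameter $d$.

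First, I would handle the \emph{sparse regime}, where $\mu$ is small relative to $k$. For any pair $u,v$ at distance $2$, at most $\mu$ common neighbors of $u$ fail to distinguish the pair via the partition into color classes induced by $c(\cdot,u)$ and $c(\cdot,v)$, so Observation~\ref{obs1} yields $\motion(X) \geq k - \mu$. A similar accounting of the intersection numbers at distances $i \geq 2$ shows that, whenever $\mu/k$ is bounded away from $1$, we already get $D_{\min}(X) \geq \gamma'_d n$ with a constant depending only on $d$.

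Second, in the \emph{dense regime}, where $\lambda$ is sufficiently large relative to $\mu$, Metsch's criterion applies to the distance-$1$ graph of $X$ and produces a clique geometry. I would then invoke classification-type results for geometric distance-regular graphs (in the spirit of Bang, Koolen, Terwilliger and others) to force $X$ to be a Johnson graph $J(s,d)$ or a Hamming graph $H(d,s)$, modulo finitely many parameter exceptions that can be checked by hand using either $D_{\min}$ or an explicit eigenvalue computation. In the \emph{intermediate regime}, I would apply the spectral tool to the distance-$1$ graph, using that for a distance-regular graph the zero-weight spectral radius is the largest nontrivial eigenvalue of the tridiagonal intersection matrix; standard estimates in the relevant parameter window give $k - \xi \geq c_d\,k$ while any two vertices have at most $\max(\lambda,\mu)$ common neighbors, so Lemma~\ref{mixing-lemma-tool} yields the required linear lower bound.

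The main obstacle is twofold. First, the structural step after Metsch's criterion is delicate: one must rule out all geometric distance-regular graphs of diameter $d$ other than the Johnson and Hamming schemes, and the argument becomes more parameter-heavy as $d$ grows, in particular requiring care in the borderline case $d = 3$ where many sporadic possibilities must be excluded. Second, the spectral tool is vacuous for bipartite $X$ (where $\xi = k$), so the bipartite case must be treated separately, either by passing to the halved graphs (whose diameters drop by a factor of two but whose parameters inherit regularity) or by invoking a bipartite analog of Lemma~\ref{mixing-lemma-tool}; the crown graph then surfaces as precisely the exceptional diameter-$3$ bipartite configuration for which both specializations fail to produce a nontrivial mixing bound, and its appearance in the statement is thereby forced.
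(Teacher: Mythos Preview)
The paper does not prove this theorem. Theorem~\ref{thm:main-motion-drg} is quoted from the author's earlier papers \cite{kivva-drg,kivva-geometric} and is invoked as a black box in Section~\ref{sec-coherent-thm-subsec}; no argument for it appears anywhere in the present paper. There is therefore nothing here to compare your proposal against.

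For what it is worth, your outline is consistent with how the paper \emph{describes} the approach of \cite{kivva-drg,kivva-geometric} in Section~\ref{sec-tools}: the combination of the distinguishing-number bound (Observation~\ref{obs1}), the spectral tool (Lemma~\ref{mixing-lemma-tool}), and Metsch's criterion (Theorem~\ref{Metsch}) is precisely the toolkit the author says was used there, and the paper explicitly notes that a bipartite version of Lemma~\ref{mixing-lemma-tool} is established in~\cite{kivva-drg}, matching your remark that the bipartite case must be handled separately and that the crown graph arises there. Be aware, though, that your sketch is very high-level: the claim that ``whenever $\mu/k$ is bounded away from $1$ we already get $D_{\min}(X)\ge \gamma'_d n$'' is not obvious and does not follow from the one-line accounting you give, and the structural step after Metsch (ruling out all geometric distance-regular graphs other than the Johnson and Hamming families) is the bulk of the actual work in \cite{kivva-geometric}. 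If you want to verify your proposal, you will need to consult those papers directly.
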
 

So we need to deal with the other two classes. It is not hard to see that in the case of two oriented colors, the undirected constituent is strongly regular. Thus, by Babai's result (Theorem \ref{babai-str-reg-thm}), if the number of vertices $n\geq 29$, the only possibility for $\mathfrak{X}$ to have motion less than $n/8$ is when the undirected constituent is the triagular graph $T(s)$, or the lattice graph $L_{2}(s)$, or their complement. In the latter case, we prove that the motion is linear in $n$ using the generalization of an argument appearing in the proof of {\cite[Lemma 3.5]{Sun-Wilmes}} (see Lemma~\ref{sun-wilmes-tool} in this paper).

Hence, we need to concentrate on the case of primitive association schemes of rank 4 with constituents of diameter 2. 
%The general strategy is similar to the one used in the case of distance-regular graphs. 
As the first step, we show that either we have a constituent with a $(1-\delta)$-dominant degree, or every pair of vertices can be distinguished by $\varepsilon n$ vertices (see Lemma~\ref{assoc-k2-large}). The latter directly implies that the motion is at least $\varepsilon n$. On the other hand, the fact that one of the constituents, say $X_3$, has large degree implies that some intersection numbers are quite small (see Proposition \ref{assoc-param-ineq}). This allows us to approximate the eigenvalues of the constituents $X_1$ and $X_2$, and so to approximate their zero-weight spectral radii with simple expressions involving the intersection numbers (see Lemma~\ref{assoc-spectral-radius}). We aim to apply Lemma \ref{mixing-lemma-tool} to the constituents $X_1$ and $X_2$. Considering cases how their degrees $k_1$ and $k_2$ can differ, we obtain that either the motion of $\mathfrak{X}$ is linear in $n$, or one of the graphs $X_J$ is a line graph, where $J\in\{1, 2, \{1,2\}\}$. By definition, $X_{1,2}$ is the complement of $X_3$. Since $X_1$ and $X_2$ are edge-regular, we use the classification of edge-regular and co-edge-regular graphs with smallest eigenvalue $-2$ (see Theorem \ref{geom-eig-2}). The classification tells us that either $X_i$ is strongly regular with smallest eigenvalue $-2$, or it is the line graph of a triangle-free regular graph (see Theorem~\ref{assoc-diam2} for a more precise statement). If $X_J$ is strongly regular with smallest eigenvalue $-2$, then $X_J$ is a triangular graph $T(s)$, or a lattice graph $L_2(s)$,  or has at most 28 vertices.

If one of the constituents is a line graph, this allows us to obtain more precise bounds on the intersection numbers. In particular, we  approximate the zero-weight spectral radius of the graph $X_{1,2}$ with a relatively simple expression as well. At this point, our main goal becomes to get constraints on the intersection numbers, that will allow us to apply Lemma~\ref{mixing-lemma-tool} effectively to one of the graphs $X_1$, $X_2$ or $X_{1,2}$. We consider four cases. Three of them are defined by which of the graphs $X_1$, $X_2$ or $X_{1,2}$ is strongly regular. In the fourth case, one of the constituents is the line graph of a triangle-free regular graph.  For the ranges of the parameters when Lemma \ref{mixing-lemma-tool} cannot be used effectively we use Lemma~\ref{sun-wilmes-tool}. Roughly speaking, it says that if a triangular graph $T(s)$ is a union of several constituents of a coherent configuration $\mathfrak{X}$, then $\Aut(\mathfrak{X})$ is small if the following holds for every Delsarte clique: if we look on the configuration induced on the Delsarte clique, then each pair of vertices is distinguished by a constant fraction of the vertices of the clique. The hardest case in the analysis is the case when the constituent of the smallest degree, $X_1$, is strongly regular. This case is settled in Theorem~\ref{assoc-x1-strongly-reg} and requires preparatory work with several new ideas. In particular, we use an analog of the argument from the proof of Metsch's criteria to get a constant upper bound on the fraction $k_2/k_1$ in a certain range of parameters.

\subsection{Organization of the paper} 

The paper is organized as follows. In Section \ref{sec-prelim} we introduce definitions and concepts that will be used throughout the paper.

 Section \ref{sec-coh-approx} provides bounds on the eigenvalues of the constituents of an association scheme of rank~$4$.  In Section \ref{sec-coh-reduction} we reduce the problem to association schemes with a constituent which is a line graph (and is strongly regular in most cases). Such association schemes are treated in  Section \ref{sec-subsec-str-reg}. 
 
 In Section~\ref{sec-coherent-thm-subsec} we combine all the pieces into the proof of our main result, Theorem~\ref{main-coherent}. 

Finally, Section~\ref{sec-summary} discusses some open problems related to Conjecture~\ref{conj-3}.

\section{Preliminaries}\label{sec-prelim}
In this section we introduce coherent configurations, distance-regular graphs and other related concepts that will be used throughout the paper. For more about coherent configurations we refer to \cite{Babai-GI}.

\subsection{Basic concepts and notation for graphs and groups}

\subsubsection{Graphs}

Denote $[m] =\{1,2,..., m\}$. Let $X$ be a graph. We always denote by $n$ the number of vertices of $X$, and if $X$ is regular we denote by $k$ its degree. Let $N(v)$ be the set of neighbors of a vertex $v$ in $X$ and $N_i(v) = \{ w\in X| \dist(v,w) = i\}$ be the set of vertices at distance $i$ from $v$ in the graph $X$. We denote the diameter of $X$ by $d$. If the graph is disconnected, then its diameter is defined to be $\infty$.

Denote by $\lambda = \lambda(X)$ the minimum number of common neighbors of a pair of adjacent vertices of $X$. Denote by $\mu = \mu(X)$ the maximum number of common neighbors of a pair of vertices at distance 2. Denote by $q(X)$ the maximum number of common neighbors of two distinct vertices in $X$.

Let $A$ be the adjacency matrix of the graph $X$. Suppose that $X$ is $k$-regular. Then the all-ones vector is an eigenvector of $A$ with eigenvalue $k$. We call them the \textit{trivial eigenvector} and the \textit{trivial eigenvalue}. All other eigenvalues of $A$ have absolute value not greater than $k$. We call them \textit{non-trivial} eigenvalues.

\begin{definition}
For a graph $X = (V(X), E(X))$ the \textit{line graph} $L(X)$ is defined as the graph on the vertex set $E(X)$, for which $e_1, e_2\in E(X)$ are adjacent in $L(X)$ if and only if they are incident to a common vertex in $X$.  
\end{definition}

\subsubsection{Groups}

Let $G$ be a transitive permutation group on a set $\Omega$. A $G$-invariant partition $\Omega = B_1 \sqcup B_2\sqcup\ldots\sqcup B_t$ is called a \textit{system of imprimitivity} of $G$. Any permutation group $G\leq \Sym(\Omega)$ admits two trivial $G$-invariant partitions: the particion consisting of $\Omega$ only, and the partition of $\Omega$ into singletons.  

\begin{definition}
A transitive permutation group is called \textit{primitive} if it does not admit any non-trivial system of imprimitivity.
\end{definition}

\subsection{Coherent configurations}
Our terminology follows \cite{Babai-GI}.

\noindent Let $V$ be a finite set, elements of which will be called vertices of a configuration.
\begin{definition}\label{def-conf} A \textit{configuration} $\mathfrak{X}$ of rank $r$ on the set $V$ is a pair $(V, c)$, where $c$ is a map $c:V\times V \rightarrow \{0, 1,..., r-1\}$ such that 
\begin{enumerate}[(i)]
\item\label{item-i-def-cc} $c(v,v) \neq c(u,w)$, for any $v,u,w\in V$ with $u\neq w$,
\item\label{item-ii-def-cc} for any $i<r$, there is $i^{*}<r$, such that $c(u,v) = i$ implies $c(v,u) = i^{*}$, for all $u,v \in V$.
\end{enumerate}
\end{definition}
The value $c(u,v)$ is called the \textit{color} of a pair $(u,v)$. The color $c(u,v)$ is a \textit{vertex color} if $u = v$, and is an \textit{edge color} if $u\neq v$. Then condition (i) says that edge colors are different from vertex colors, and condition (ii) says that the color of a pair $(u,v)$ determines the color of $(v,u)$. 

For every $i<r$ consider the set $R_{i} = \{(u,v): c(u,v) = i\}$ of pairs of color $i$ and consider the digraph $X_i = (V, R_i)$. We refer to both $R_i$ and $X_i$ as  the color-$i$ \textit{constituent} of $\mathfrak{X}$.
There are two possibilities: if $i = i^*$, then color $i$ and the corresponding constituent $X_i$ are called \textit{undirected}; if $i\neq i^*$, then $(i^*)^* = i$ and color $i$ together with the corresponding constituent $X_i$ are called \textit{oriented}. Clearly, $\{R_i\}_{i< r}$ forms a partition of $V\times V$. 

 We denote the adjacency matrix of the digraph $X_i$ by $A_i$. The adjacency matrices of the constituents satisfy
\begin{equation}\label{eq-constit-sum}
\sum\limits_{i = 0}^{r-1} A_i = J_{|V|} = J,
\end{equation}  
where $J$ denotes the all-ones matrix.

Note that conditions (\ref{item-i-def-cc}) and (\ref{item-ii-def-cc}) of Definition \ref{def-conf} in the matrix language mean the following. There exists a set $\mathcal{D}$ of colors, such that the identity matrix can be represented as a sum $\sum\limits_{i\in \mathcal{D}}A_i = I$. And for every color $i$, $A_i^T = A_{i^*}$.

For a set of colors $\mathcal{I}$ we denote by $X_{\mathcal{I}}$ the digraph on the set of vertices $V$, where an arc $(x,y)$ is in $X_{\mathcal{I}}$ if and only if $c(x,y) \in \mathcal{I}$. For small sets we omit braces, for example, $X_{1,2}$ will be written in place of $X_{\{1,2\}}$.
\begin{definition}
A configuration $\mathfrak{X}$ is \textit{coherent} if 
\begin{enumerate}[(i)]
\item[(iii)] for all $i,j,t<r$, there is an \textit{intersection number} $p_{i,j}^{t}$ such that, for all $u,v\in V$, if $c(u,v) = t$, then there exist exactly $p_{i,j}^{t}$ vertices $w\in V$ with $c(u,w) = i$ and $c(w,v) = j$.
\end{enumerate}
\end{definition}

The definition of a coherent configuration has several simple, but important, consequences. Let $\mathfrak{X}$ be a coherent configuration. Then every edge color is aware of the colors of its tail and head. That is, for every edge color $i$, there exist vertex colors $i_{-}$ and $i_{+}$ such that if $c(u,v) = i$, then $c(u,u) = i_{-}$ and $c(v,v) = i_{+}$. Indeed, they are the only colors for which $p_{i,i_{+}}^{i}$ and $p_{i_{-}, i}^{i}$ are non-zero. Moreover, for every color $i$ its \textit{in-degree} and \textit{out-degree} are well-defined as $k_{i}^{-} = p_{i^{*}, i}^{i_{+}}$ and $k_{i}^{+} = p_{i, i^{*}}^{i_{-}}$, respectively.

Observe that the existence of intersection numbers is equivalent to the following conditions on the adjacency matrices of the constituent digraphs.
\begin{equation}\label{eq-int-num}
A_iA_j = \sum\limits_{t = 0}^{r-1}p_{i,j}^{t}A_{t} \quad \text{ for all } i,j<r. 
 \end{equation}
 
Hence, $\{A_i: 0\leq i\leq r-1\}$ form a basis of an $r$-dimensional algebra with structure constants $p_{i,j}^t$. In particular, every $A_i$ has minimal polynomial of degree at most $r$.

\begin{definition}
A configuration $\mathfrak{X}$ is \textit{homogeneous} if $c(u,u) = c(v,v)$ for every $u,v\in V$.
\end{definition}

Unless specified otherwise, we always assume that $0$ is the vertex color of a homogeneous configuration. The constituent which corresponds to the vertex color is also referred as the \textit{diagonal constituent}. 

In a homogeneous coherent configuration we have $k_{i}^{+} = k_{i}^{-}$ for every color $i$. We denote this common value by $k_i$.

The intersection numbers of a homogeneous coherent configuration satisfy the following relations.

\begin{equation}\label{eq-sum-param}
 \sum\limits_{j = 0}^{r-1} p_{ij}^{t} = k_i \quad \text{and} \quad p_{i,j}^s k_s = p_{s,j^{*}}^{i} k_i.
 \end{equation}

Let $i,j<r$ be colors. Take $u, v\in V$ with $c(u,v) = j$. Define $\dist_i(u,v)$ to be the length $\ell$ of a shortest path $u_0 = u, u_1, ..., u_{\ell} = v$ such that $c(u_{t-1}, u_t) = i$ for $t\in [\ell]$. We claim that $\dist_{i}(j) = \dist_{i}(u,v)$ is well defined, i.e., does not depend on the choice of $u,v$, but only on the colors $j$ and $i$. 

Indeed, let $c(u,v) = c(u', v') = j$ and suppose there exist a path $u_0 = u, u_1, ..., u_{\ell} = v$ of length $\ell$, such that $c(u_{t-1}, u_t) = i$. Denote by $e_{t} = c(u_{t}, v)$. Then we know that $p_{i, e_{t}}^{e_{t-1}}\neq 0$ for $t\in [\ell-1]$. Let $u_0' = u'$. Then, as $p_{i, e_{t}}^{e_{t-1}}\neq 0$, by induction, there exists a $u_{t}'$ such that $c(u_{t-1}', u_{t}') = i$ and $c(u_{t}, v) = e_{t}$ for all $t\in [\ell-1]$. Hence, $\dist(u',v')\leq \dist(u,v)$ and similarly $\dist(u,v)\leq \dist(u', v')$. Therefore, $\dist_i(j)$ is well-defined.

\begin{obs}\label{obs-color-dist}
If $\dist_{i}(j)$ is finite, then $\dist_{i}(j)\leq r-1$.
\end{obs}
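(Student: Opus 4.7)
The plan is to track, for each integer $m \ge 0$, the set
$$S_m = \{t < r : \dist_i(t) \le m\}$$
of colors reachable from the diagonal by $i$-monochromatic paths of length at most $m$. Clearly $S_0 = \{0\}$ (only the vertex color has $i$-distance zero) and $S_m \subseteq S_{m+1} \subseteq \{0,1,\ldots,r-1\}$, so the chain is eventually constant, and its eventual value is exactly the set of colors $t$ with $\dist_i(t) < \infty$. The whole statement reduces to showing the chain stabilizes by step $r-1$.

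The key step is the dichotomy: if $S_m = S_{m+1}$, then $S_{m+1} = S_{m+2}$, and hence by induction the chain is constant from step $m$ on. To see this, I would take $t \in S_{m+2}$ witnessed by a pair $(u_0,v)$ with $c(u_0,v)=t$ and an $i$-colored path $u_0,u_1,\ldots,u_\ell = v$ of length $\ell \le m+2$. If $\ell \le m+1$ there is nothing to do; otherwise set $t' = c(u_1,v)$, so that the tail $u_1,\ldots,u_\ell$ witnesses $\dist_i(t') \le m+1$, giving $t' \in S_{m+1} = S_m$. Now I would invoke the well-definedness of $\dist_i$ (proved immediately above the observation) applied to the specific pair $(u_1,v)$: since $\dist_i(t') \le m$, there is an $i$-path of length $\le m$ from $u_1$ to $v$. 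Prepending the $i$-edge $u_0 \to u_1$ yields an $i$-path of length $\le m+1$ from $u_0$ to $v$, so $t \in S_{m+1}$, as desired.

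With this dichotomy in hand, the conclusion follows from a one-line pigeonhole count: the sequence $(|S_m|)_{m \ge 0}$ is a non-decreasing sequence of integers with $|S_0| = 1$ and $|S_m| \le r$, and by the dichotomy it strictly increases until its first plateau. Hence it must plateau no later than step $m = r-1$, so $S_{m} = S_{r-1}$ for every $m \ge r-1$, and any color $t$ with $\dist_i(t) < \infty$ already lies in $S_{r-1}$, i.e., $\dist_i(t) \le r-1$.

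There is no real obstacle; the only subtle point is that in the inductive step one must not merely transport the given short path from a fixed witnessing pair, but use the well-definedness of $\dist_i$ to obtain a short $i$-path for the particular pair $(u_1,v)$ arising in the argument. This is exactly the non-trivial content supplied by the intersection-number identities for a \emph{coherent} configuration, which is why the same bound would fail for general (non-coherent) configurations.
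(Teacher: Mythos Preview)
Your proof is correct and rests on the same underlying idea as the paper's---namely, that the well-definedness of $\dist_i$ (just established) lets one shorten $i$-paths---though the paper's presentation is more direct: it simply takes a shortest $i$-path $u_0,\ldots,u_\ell=v$, observes that the colors $e_t = c(u_t,v)$ for $0\le t\le \ell-1$ must all be distinct edge colors (otherwise the path could be shortened via the same well-definedness argument you invoke), and concludes $\ell \le r-1$ immediately. Your chain-stabilization/pigeonhole argument is an equivalent reformulation of this, just unrolled one step at a time rather than applied in one shot.

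One minor remark: your claim $S_0=\{0\}$ tacitly assumes the configuration is homogeneous (a single vertex color); in general $S_0$ is the set of all vertex colors. This does not affect the argument, since $|S_0|\ge 1$ is all the pigeonhole step needs.
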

\begin{proof}
 Suppose that $\dist_{i}(j)$ is finite, then for $c(u,v) = j$ there exists a shortest path $u_0 = u, u_1, ..., u_{\ell} = v$ with $c(u_{t-1}, u_t) = i$. Denote by $e_{t} = c(u_{t}, v)$ for $0\leq t\leq \ell-1$. Then, all $e_{t}$ are distinct edge colors, or the path can be shortened. Thus $\ell \leq r-1$.
\end{proof}

\begin{definition}
A coherent configuration is called an \textit{association scheme} if $c(u,v) = c(v,u)$ for any $u,v\in V$. 
\end{definition}

\begin{corollary}
Any association scheme is a homogeneous configuration.
\end{corollary}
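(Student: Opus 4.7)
The plan is to leverage a structural fact about coherent configurations already noted in the excerpt, namely that every edge color $i$ determines a unique ``tail'' vertex color $i_-$ and a unique ``head'' vertex color $i_+$ such that $c(x,y)=i$ forces $c(x,x)=i_-$ and $c(y,y)=i_+$. Combined with the defining symmetry $c(u,v)=c(v,u)$ of an association scheme, this should give homogeneity essentially for free.

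Concretely, I would argue as follows. The cases $|V|\le 1$ are trivial, so fix two distinct vertices $u,v\in V$ and set $i=c(u,v)$; since $\mathfrak{X}$ is an association scheme, $c(v,u)=i$ as well. The tail/head fact applied to the arc $(u,v)$ yields $c(u,u)=i_-$, while the same fact applied to the arc $(v,u)$ yields $c(v,v)=i_-$. Hence $c(u,u)=c(v,v)$, and since the pair of distinct vertices was arbitrary, all diagonal colors coincide, so $\mathfrak{X}$ is homogeneous.

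The only real obstacle is confirming the tail/head fact itself, but this is already justified in the paragraph of the excerpt following the definition of a coherent configuration: for each edge color $i$, the vertex color $i_-$ is characterized as the unique diagonal color $a$ with $p_{a,i}^{i}\neq 0$. Uniqueness of $a$ follows from the coherence axiom together with the fact that the diagonal constituents sum to $I$ (so that for any $x$ with $c(x,y)=i$, exactly one matrix $A_a$ indexed by a diagonal color has a nonzero $(x,x)$ entry). Once this is in hand, the proof reduces to the single observation in the previous paragraph and no further calculation is needed.
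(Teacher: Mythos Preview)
Your argument is correct and is essentially the paper's own proof, just spelled out in more detail: both use the fact (established just above in the text) that an edge color $i$ determines unique tail and head vertex colors $i_-,i_+$, and then observe that symmetry $c(u,v)=c(v,u)=i$ forces $c(u,u)=i_-=c(v,v)$ for every pair of distinct vertices.
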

\begin{proof}
Since in a coherent configuration color of every edge is aware of the colors of its head and tail vertices, these vertices have the same color for every edge. 
\end{proof}

Note, for an association scheme every constituent digraph is a graph. Thus, for an association scheme and $i\neq 0$, the $i$-th constituent $X_i$  is a $k_i$-regular graph with $\lambda(X_i) = p_{i,i}^{i}$.
Moreover, it is clear that $p_{i,j}^s = p_{j,i}^s$ for association schemes.

\begin{definition}
A homogeneous coherent configuration is called \textit{primitive} if every constituent is strongly connected.
\end{definition}

It is not hard to check that every constituent graph of a homogeneous coherent configuration is stongly connected if and only if it is weakly connected.

Note, that by Observation \ref{obs-color-dist}, we have $\dist_{i}(j)\leq r-1$ for any edge colors $i,j$ of a primitive coherent configuration.

The following definition will be useful.

\begin{definition}\label{def-assoc-diam} We say that an association scheme has \textit{diameter} $d$ if every non-diagonal constituent has diameter at most $d$ and there exists a non-diagonal constituent of diameter~$d$.
\end{definition}

Note, that if an association scheme has finite diameter, then in particular it is  primitive. Alternatively, every primitive association scheme of rank $r$ has diameter $\leq r-1$. 

\begin{definition}
A regular graph is called \textit{edge-regular} if every pair of adjacent vertices has the same number of common neighbors. A graph is called \textit{co-edge-regular} if its complement is edge-regular. 
\end{definition}

Observe, that for every undirected color $i$ the constituent $X_i$ is an edge-regular graph.
We also introduce the following definition.

\begin{definition}
We say that a homogeneous coherent configuration $\mathfrak{X}$ of rank $r$ \textit{has constituents ordered by degree}, if color $0$ corresponds to the diagonal constituent and the degrees of non-diagonal constituents satisfy $k_1\leq k_2\leq ...\leq k_{r-1}$. 
\end{definition}

\subsection{Triangle inequality for distinguishing numbers}\label{sec-dist-number-gen-drg}

It is easy to see that for a homogeneous coherent configuration $\mathfrak{X}$, the number $D(u,v)$ of vertices which distinguish $u$ and $v$  (see Def.~\ref{def-disting}) depends only on the color $i$ between $u$ and $v$. So one can define $D(i) = D(u,v)$. We need the following lemma by Babai \cite{Babai-annals}.

\begin{lemma}[Babai {\cite[Proposition 6.4]{Babai-annals}}]\label{babai-dist}
Let $\mathfrak{X}$ be a homogeneous coherent configuration of rank $r$. Then for any colors $1\leq i, j\leq r-1$ the inequality $D(j)\leq \dist_{i}(j) D(i)$ holds.
\end{lemma}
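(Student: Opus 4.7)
The plan is to argue by a straightforward path-and-pigeonhole estimate, combined with the contrapositive characterization of when a vertex fails to distinguish a pair.

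First I would observe, as the text already notes, that for a homogeneous coherent configuration the distinguishing count $D(u,v)$ depends only on the color $c(u,v)$, so the symbols $D(i)$ and $D(j)$ are well defined. Fix any pair $u,v \in V$ with $c(u,v)=j$; by definition of $\dist_i$, there exists a path
\[
u=u_0,\ u_1,\ \ldots,\ u_\ell=v, \qquad c(u_{t-1},u_t)=i \text{ for } t\in[\ell],
\]
with $\ell=\dist_i(j)$.

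The key step is the following \emph{path-distinguishing observation}: if a vertex $x\in V$ distinguishes the pair $(u_0,u_\ell)$, then $x$ must distinguish at least one of the consecutive pairs $(u_{t-1},u_t)$. Indeed, if $x$ did not distinguish any of them, then $c(x,u_{t-1})=c(x,u_t)$ for every $t\in[\ell]$, and by transitivity $c(x,u_0)=c(x,u_\ell)$, contradicting the assumption that $x$ distinguishes $(u_0,u_\ell)$. Thus the set of vertices distinguishing $(u,v)$ is contained in the union of the sets of vertices distinguishing the $\ell$ consecutive color-$i$ pairs along the path.

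Each of those consecutive pairs has color $i$ and is therefore distinguished by exactly $D(i)$ vertices. Applying the union bound,
\[
D(j) \;=\; D(u,v) \;\le\; \sum_{t=1}^{\ell} D(u_{t-1},u_t) \;=\; \ell\cdot D(i) \;=\; \dist_i(j)\cdot D(i),
\]
which is the desired inequality.

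I do not anticipate a real obstacle here: the argument is essentially a triangle-inequality style union bound. The only subtlety worth flagging is the contrapositive step, which relies on the trivial but crucial fact that equality of colors $c(x,u_{t-1})=c(x,u_t)$ along a walk composes transitively, so that non-distinguishing of all consecutive pairs propagates to non-distinguishing of the endpoints. Everything else is bookkeeping, and no coherence axiom beyond the well-definedness of $D(i)$ and $\dist_i(j)$ (already established in the preceding discussion) is needed.
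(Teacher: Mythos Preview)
Your proof is correct and is essentially the same as the paper's: the paper simply invokes the triangle inequality $D(u,v)\leq D(u,w)+D(w,v)$ and iterates it along a shortest color-$i$ path, which is exactly your union-bound argument spelled out in detail. The contrapositive step you highlight is precisely the content of that triangle inequality.
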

\begin{proof}
Since the constituent $X_i$ is connected, statement follows from the triangle inequality  $D(u,v)\leq D(v,w)+D(w, u)$ for any  vertices $u,v,w$.
\end{proof}

\subsection{Distance-regular graphs}
\begin{definition}
 A connected graph $X$ of diameter $d$ is called \textit{distance-regular} if for every $0\leq i\leq d$ there exist constants $a_i, b_i, c_i$ such that for all $v\in X$ and all $w\in N_i(v)$ the number of edges between $w$ and $N_i(v)$ is $a_i$, between $w$ and $N_{i+1}(v)$ is $b_i$, and between $w$ and $N_{i-1}(v)$  is $c_i$. The sequence
 \[\iota(X) = \{b_0, b_1,\ldots, b_{d-1}; c_1, c_2,\ldots, c_d\}\]
 is called the \textit{intersection array} of $X$.
\end{definition}

Note, that for a distance-regular graph $b_d = c_0 = 0$, $b_0 = k$, $c_1 = 1$, $\lambda = a_1$ and $\mu = c_2$. By edge counting, the following straightforward properties of the parameters of a distance-regular graph hold.
\begin{enumerate}
\item $a_i+b_i+c_i = k$, \quad  for every $0\leq i\leq d$,
\item $|N_i(v)|b_i = |N_{i+1}(v)|c_{i+1}$, \quad $\Rightarrow \quad $ $k_i := |N_i(v)|$ does not depend on vertex $v\in X$.
\item $b_{i+1} \leq b_{i}$ and $c_{i+1}\geq c_{i}$, \quad for $0\leq i\leq d-1$.
\end{enumerate}

With any graph of diameter $d$ we can naturally associate matrices $A_{i}$, where rows and columns are indexed by vertices, with $(A_i)_{u,v} = 1$ if and only if $\dist(u,v) = i$, and $(A_i)_{u,v} = 0$ otherwise. That is, $A_i$ is the adjacency matrix of the distance-$i$ graph $X_i$ of $X$. For a  distance-regular graph these matrices satisfy the following relations
\begin{equation}
A_0 = I,\quad A_1 =: A,\quad \sum\limits_{i=0}^d A_i = J,
\end{equation}
\begin{equation}\label{eq-rec}
 AA_i = c_{i+1}A_{i+1}+a_iA_i+b_{i-1}A_{i-1}\quad \text{for } 0\leq i\leq d,
 \end{equation}  
where $c_{d+1} = b_{-1} = 0$ and $A_{-1} = A_{d+1} = 0$. Clearly, Eq.~\eqref{eq-rec} implies that for every $0\leq i\leq d$ there exists a polynomial $\nu_i$ of degree exactly $i$, such that $A_i = \nu_i(A)$. Moreover, minimal polynomial of $A$ has degree exactly $d+1$. Hence, since $A$ is symmetric, $A$ has exactly $d+1$ distinct real eigenvalues. Additionally, we conclude that for all $0\leq i,j,s\leq d$ there exist \textit{intersection numbers} $p_{i,j}^{s}$, such that 
\[A_iA_j = \sum\limits_{s = 0}^{d}p_{i,j}^{s} A_s.\]
Recalling the definition of $A_i$, this implies that for any $u,v\in X$ with $\dist(u,v) = s$ there exist exactly $p_{i,j}^{s}$ vertices at distance $i$ from $u$ and distance $j$ from $v$, i.e., $|N_i(u)\cap N_j(v)| = p_{i,j}^{s}$. 

Therefore, every distance-regular graph $X$ of diameter $d$ induces an association scheme $\mathfrak{X}$ of rank $d+1$, where vertices are connected by an edge of color $i$ in $\mathfrak{X}$ if and only if they are at distance $i$ in $X$, for $0\leq i\leq d$.  Hence, we get the following statement.
\begin{lemma}\label{lem:drg-metric}
If a graph $X$ is distance-regular of diameter $d$, then the distance-$i$ graphs $X_i$ form constituents of an association scheme $\mathfrak{X}$ of rank $d+1$ and diameter $d$. In the opposite direction, if an association scheme of rank $d+1$ has a constituent of diameter $d$, then this constituent is distance-regular. 
\end{lemma}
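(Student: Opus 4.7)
The lemma asserts a two-way correspondence, so the plan splits into a forward direction (distance-regular $\Rightarrow$ rank-$(d+1)$ association scheme) and a reverse direction (rank-$(d+1)$ scheme with a diameter-$d$ constituent $\Rightarrow$ that constituent is distance-regular). I will handle them independently.

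\textbf{Forward direction.} Most of the work is already laid out in the preamble. The three-term recursion
\[
A A_i \;=\; c_{i+1} A_{i+1} + a_i A_i + b_{i-1} A_{i-1}
\]
shows inductively that $A_i = \nu_i(A)$ for a polynomial $\nu_i$ of degree exactly $i$ in $A = A_1$, so $A_0, \ldots, A_d$ are linearly independent and their linear span is closed under multiplication. The $(u,v)$-entry of $A_i A_j$ counts the vertices at distance $i$ from $u$ and distance $j$ from $v$, and by distance-regularity this count depends only on $\dist(u,v)$. Combining these two observations yields the intersection numbers $p_{ij}^s$, which is the coherence axiom; the remaining association-scheme axioms are immediate from $A_0 = I$ and the symmetry of graph distance. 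The rank is $d+1$, and $X_1 = X$ itself witnesses diameter $d$.

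\textbf{Reverse direction.} Suppose $\mathfrak{X}$ has rank $d+1$ with some constituent of diameter $d$, which I relabel as $X_1$. The plan is to show that graph-distance in $X_1$ is a color invariant and that the resulting map from colors to distances is a bijection; distance-regularity then follows by reading off intersection numbers. Since $A_1^k$ lies in the coherent algebra spanned by $A_0, \ldots, A_d$, its $(u,v)$-entry depends only on $c(u,v)$, hence $\dist_1(u,v) = \min\{k : (A_1^k)_{uv} > 0\}$ is a well-defined function $\phi(c(u,v))$. Because $X_1$ is connected with diameter exactly $d$, any shortest $u$--$v$ path of length $d$ realizes every intermediate distance $0, 1, \ldots, d$, so $\phi:\{0,1,\ldots,d\}\to\{0,1,\ldots,d\}$ is surjective, and between two sets of equal size $d+1$ surjectivity forces bijectivity. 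Setting $i_s = \phi^{-1}(s)$, for any pair $(u,v)$ at $X_1$-distance $s$ we have $c(u,v) = i_s$, and the three standard parameters
\[
c_s = p_{1,\,i_{s-1}}^{i_s}, \qquad a_s = p_{1,\,i_s}^{i_s}, \qquad b_s = p_{1,\,i_{s+1}}^{i_s}
\]
depend only on $s$, establishing distance-regularity.

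The only non-routine step I anticipate is the bijectivity of $\phi$. A priori, several distinct colors could correspond to pairs at the same $X_1$-distance, and then one would get strictly fewer distance classes than colors and no way to read distance-regularity off of a single color per distance. What saves the argument is the tight matching of cardinalities: $d+1$ colors on one side, $d+1$ distance classes on the other, together with the hypothesis that the diameter equals $d$ and not less, forcing pigeonhole to pin down exactly one color per distance.
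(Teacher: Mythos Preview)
Your proof is correct and aligns with the paper's treatment. The forward direction is exactly the argument the paper gives in the paragraphs immediately preceding the lemma statement (the three-term recursion yielding $A_i=\nu_i(A)$ and hence the intersection numbers $p_{ij}^s$); the paper does not write out the reverse direction explicitly, but your color-invariance of $\dist_1$ is precisely the paper's earlier observation that $\dist_i(j)$ is well-defined, and the pigeonhole bijection between the $d+1$ colors and the $d+1$ distance classes is the standard and intended step.
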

%Note, that 
%\[p_{1,i}^{i-1} = b_{i-1},\quad p_{1,i}^{i} = a_i,\quad p_{1,i}^{i+1} = c_{i+1}\] 

\subsection{Weisfeiler-Leman refinement}

Let $\mathfrak{X} = (V, c)$ be a configuration. A \textit{refinement} of the coloring $c$ is a new coloring $c'$ also defined on $V\times V$, such that if $c'(x) = c'(y)$ for $x,y\in V\times V$, then $c(x) = c(y)$. If coloring $c'$ satisfies condition (ii) of Definition \ref{def-conf}, then $\mathfrak{X}' = (V, c')$ is a refined configuration.

An important example of a refinement was introduced by Weisfeiler and Leman in \cite{Weisfeiler-Leman} in 1968. The \textit{Weisfeiler-Leman refinement} proceeds in rounds. On each round it takes a configuration $\mathfrak{X}$ of rank $r$ and for each pair $(x,y)\in V\times V$ it encodes in a new color $c'(x, y)$ the following information: the color $c(x,y)$,  and the numbers $wl_{i,j}(x, y) = |\{z: c(x,z) = i, c(z,y) = j\}|$ for all $i,j\leq r$. That is, pairs $(x_1,y_1)$ and $(x_2, y_2)$ receive the same color if and only if their colors and all the numbers $wl_{i,j}$ were equal. It is easy to check that for the refined coloring $c'$, the structure $\mathfrak{X}' = (V, c')$ is a configuration as well. The refinement process applied to a configuration $\mathfrak{X}$ takes $\mathfrak{X}$ as an input on the first round, and on every subsequent round in takes as an input the output of the previous round . The refinement process stops when it reaches a stable configuration (i.e, $\mathfrak{Y}' = \mathfrak{Y}$). It is easy to see that the process will always stop as the number of colors increases after a refinement round applied to a non-stable configuration. One can check that configurations that are stable under this refinement process are precisely coherent configurations. Therefore, the Weisfeiler-Leman refinement process takes any configuration and refines it to a coherent configuration.

The Weisfeiler-Leman refinement is \textit{canonical} in the following sense. Let $\mathfrak{X}$ and $\mathfrak{Y}$ be configurations and $\mathfrak{X}^{wl}, \mathfrak{Y}^{wl}$ be the refined configurations outputted by the process. Then the sets of isomorphisms are equal
\[\Iso(\mathfrak{X}, \mathfrak{Y}) = \Iso(\mathfrak{X}^{wl}, \mathfrak{Y}^{wl}) \subseteq \Sym(V).\]

Another important procedure that can be applied to a configuration is individualization. We say that a configuration $\mathfrak{X}^* = (V, c^*)$ is an \textit{individualization} of $\mathfrak{X}$ at pair $(x,y)\in V\times V$, if $c^*$ is obtained from $c$ by replacing $c(x,y)$ and $c(y,x)$ with new (possibly equal) colors $c^*(x,y)$ and $c^{*}(y,x)$, which were not in the image of $c$. A configuration individualized at a pair $(x,x)$ for $x\in V$ is said to be individualized at the vertex $x$. Similarly, we say that $\mathfrak{X}$ is individualized at a set of vertices $S\subseteq V$ if it is subsequently individualized in each vertex from $S$.

\begin{definition}
Let $\mathfrak{X}$ be a configuration and $S\subseteq V$ be a subset of vertices. Denote by $\mathfrak{X}'$ the configuration obtained from $\mathfrak{X}$ by individualization at the set $S$. We say that $S$ \textit{splits $\mathfrak{X}$ completely} with respect to some canonical refinement process $\mathfrak{r}$, if this refinement process applied to $\mathfrak{X}'$ stabilizes only when every vertex from $V$ gets a unique color.
\end{definition}

We will use this definition only with respect to the Weisfeiler-Leman refinement process, so we will omit mentioning that in the future.

Observe, that if $S$ splits $\mathfrak{X}$ completely, then the pointwise stabilizer $\Aut(\mathfrak{X})_{(S)}$ is the identity group. Thus, in particular $|\Aut(\mathfrak{X})|\leq n^{|S|}$, where $n = |V|$. Hence, individualization/refinement techniques can be used to bound the order of the automorphism group.

\subsection{Johnson, Hamming and Cameron schemes}\label{sec-subsec-exceptions}

In this subsection we define families of graphs and coherent configurations with huge automorphism groups. We show that for a certain range of parameters they have motion sublinear in the number of vertices.
\subsubsection{Johnson schemes}
\begin{definition}
Let $d\geq 2$ and $\Omega$ be a set of $m\geq 2d$ points. The \textit{Johnson graph} $J(m,d)$ is the graph on the set $V(J(m,d)) = \binom{\Omega}{d}$ of $n = \binom{m}{d}$ vertices, for which two vertices are adjacent if and only if the corresponding subsets $U_1, U_2\subseteq \Omega$ differ by exactly one element, i.e., $|U_1\setminus U_2| = |U_2\setminus U_1| = 1$.
\end{definition}

It is not hard to check that $J(m, d)$ is a distance-regular graph of diameter $d$ with the intersection numbers
\[b_i = (d-i)(m-d-i) \quad \text{and} \quad c_{i+1} = (i+1)^{2}, \quad \text{for } 0\leq i<d.\] 
In particular, $J(m, d)$ is regular of degree $k = d(m-d)$ with $\lambda = m-2$ and $\mu = 4$.  The eigenvalues of $J(m,d)$ are
\[ \xi_j = (d-j)(m-d-j)-j \quad \text{with multiplicity}\quad \binom{m}{j} - \binom{m}{j-1}, \, \text{for } 0\leq j\leq d.\]

The automorphism group of $J(m,d)$ for $m\geq 2d+1$ is the induced symmetric group $S_m^{(d)}$, which acts on $\binom{\Omega}{d}$ via the induced action of $S_m$ on $\Omega$. Indeed, it is clear, that $S^{(d)}_m\leq \Aut(J(m,d))$. The opposite inclusion can be derived from the Erd\H{o}s-Ko-Rado theorem.

Thus, for a fixed $d$ we get that the order is $|\Aut(J(m,d)| = m! = \Omega(\exp(n^{1/d}))$, the thickness satisfies $\theta(\Aut(J(m, d))) = m = \Omega(n^{1/d})$ and  
%$2\binom{m-2}{t-1} 
$$\motion(J(m,d)) = O(n^{1-1/d}).$$

\begin{definition} The association scheme $\mathfrak{J}(m,d)$ induced by $J(m,d)$ is called \textit{Johnson scheme}.
\end{definition}

One can check that $\mathfrak{J}(m,d)$ is primitive for $m\geq 2d+1$ and $\Aut(\mathfrak{J}(m,d)) =\Aut(J(m,d))$. 

\begin{definition}
 The Johnson graph $J(s,2)$ is called \textit{triangular graph} and is denoted by $T(s)$, where $s\geq 4$.
\end{definition} 

\subsubsection{Hamming schemes}

\begin{definition}
Let $\Omega$ be a set of $m\geq 2$ points. The \textit{Hamming graph} $H(d, m)$ is the graph on the set $V(H(d, m)) = \Omega^{d}$ of $n = m^d$ vertices, for which a pair of vertices is adjacent if and only if the corresponding $d$-tuples $v_1, v_2$ differ in precisely one position (in other words, if the Hamming distance $d_H(v_1, v_2)$ for the corresponding tuples equals 1).
\end{definition}
Again, it is not hard to check that $H(d,m)$ is a distance-regular graph of diameter $d$ with the intersection numbers
\[ b_i =(d-i)(m-1) \quad \text{and} \quad c_{i+1} = i+1, \quad \text{for } 0\leq i\leq d-1.\]
In particular, $H(d,m)$ is regular of degree $k = d(m-1)$ with $\lambda = m-2$ and $\mu = 2$. The eigenvalues of $H(d, m)$ are
\[ \xi_j = d(m-1) - jm \quad \text{with multiplicity}\quad \binom{d}{j}(m-1)^{j}, \, \text{for } 0\leq j\leq d.\]

The automorphism group of $H(d, m)$ is isomorphic to the wreath product $S_m\wr S_d$. Hence,  its order is $|\Aut(H(d, m))| = (m!)^d d!$, the thickness satisfies $\theta(H(d,m)) \geq m = n^{1/d}$ and $$\motion(H(d, m))\leq 2m^{d-1} = O(n^{1-1/d}).$$

\begin{definition}
The association scheme $\mathfrak{H}(d,m)$ induced by the Hamming graph $H(d,m)$ is called \textit{Hamming scheme}.
\end{definition}

%\begin{definition}
%Let $\Omega$ be a set of $m\geq 2$ points. The \textit{Hamming scheme} $\mathfrak{H}(s, m)$ is an association scheme of rank $s+1$ on a set of $n = m^s$ vertices $V = \Omega^{s}$. Two vertices in $\mathfrak{H}(s,m)$ are connected by an edge of color $j$ if and only if the corresponding $s$-tuples $v_1, v_2$ differ in precisely $i$ positions, or in other words, if the Hamming distance $d_H(v_1, v_2)$ for the corresponding tuples is equal $j$.
%\end{definition}

Again, one can check that $\mathfrak{H}(d,m)$ is a primitive coherent configuration for $m\geq 3$ and $\Aut(\mathfrak{H}(d, m)) = \Aut(H(d,m))$.

As for Johnson graphs, the graph $H(2, m)$ has a special name and notation.
\begin{definition}
The Hamming graph $H(2, m)$ is called the \textit{lattice graph} and is denoted by $L_2(m)$, where $m\geq 2$.
\end{definition}

\subsubsection{Cameron groups and schemes}

Let $G\leq \Sym(\Omega)$ be a permutation group. The orbits of the $G$-action on $\Omega\times\Omega$ are called \textit{orbitals} of $G$. 

\begin{definition}\label{def-orbital}
For $G\leq Sym(\Omega)$ with orbitals $R_1, R_2, ..., R_k$ define \textit{orbital configuration} as $\mathfrak{X}(G) = (\Omega, \{R_1, R_2, ..., R_k\})$. A configuration is called \textit{Schurian} if it is equal to $\mathfrak{X}(G)$ for some permutation group $G$.
\end{definition}

Observe that the configuration $\mathfrak{X}(G)$ is coherent. By the discussion above, Johnson and Hamming schemes are Schurian configurations.

Using the Classification of the Finite Simple Groups, Cameron in \cite{Cameron} classified all primitive groups of degree $n$ of order at least $n^{c\log\log n}$. We state here Mar\'oti's refined version of this result.

\begin{definition}[Cameron groups]\label{def-Cameron-groups}
Let $G$ be a primitive group. Let $m,k,d$ be positive integers and $(A_{m}^{(k)})^{d}\leq G\leq S_{m}^{(k)}\wr S_d$, where $S_{m}^{(k)}\wr S_d$ has the product action on $\binom{m}{k}^d$ elements. Then the group $G$ is called a \textit{Cameron group}.

 The corresponding orbital configuration $\mathfrak{X}(G)$ is called a \textit{Cameron scheme}.

\end{definition}

\begin{theorem}[Cameron \cite{Cameron}, Mar\'oti \cite{Maroti}]\label{cameron-maroti}
If $G$ is a primitive permutation group of degree $n>24$, then one of the following is true.
\begin{enumerate}
\item $G$ is a Cameron group.
\item $|G|\leq n^{1+\log(n)}$. 
\end{enumerate}
\end{theorem}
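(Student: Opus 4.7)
The plan is to derive this classification from the O'Nan--Scott theorem together with the Classification of Finite Simple Groups (CFSG); no proof is known that avoids this machinery. Given primitive $G\le \Sym(\Omega)$ with $|\Omega|=n>24$, O'Nan--Scott partitions $G$ by socle type into five classes: affine (HA), almost simple (AS), simple diagonal (SD), product action (PA), and twisted wreath (TW). The overall strategy is to show that outside AS and PA the order bound $|G|\le n^{1+\log n}$ is automatic, and that within those classes the only violators are exactly the Cameron groups.

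First I would dispose of HA, SD, and TW. If $G\le\mathrm{AGL}(d,p)$ is affine with $n=p^d$, then $|G|\le n\cdot|\mathrm{GL}(d,p)|\le n^{d+1}$, and since $d=\log_p n\le \log_2 n$ the desired bound follows immediately. In the SD case the socle is $T^k$ for $T$ a non-abelian simple group and $n=|T|^{k-1}$, giving $|G|\le|T|^k\cdot k!\cdot|\mathrm{Out}(T)|$, which is far below $n^{1+\log n}$. The TW case is handled similarly and produces even smaller orders.

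Next, for the AS case with socle $T$, I would invoke CFSG and analyze each family of finite simple groups. For sporadic $T$, direct inspection rules out violators once $n>24$. For exceptional Lie type $T$, Liebeck's theorem on maximal subgroups bounds $|G|$ polynomially in any faithful primitive degree $n$. For classical $T$, the Kleidman--Liebeck description of maximal subgroups yields $|G|\le n^c$ for an absolute constant $c$, except for certain subspace-type actions that fall into the Cameron framework. The delicate case is $T=A_m$: if the primitive action of $G$ on $\Omega$ is the induced action on $k$-subsets of an $m$-set (in the appropriate parameter range), then $G$ is by definition a Cameron group; otherwise, any faithful primitive action of $A_m$ that is neither a subset nor a partition action forces $m$ to be small relative to $n$, in fact $m=O(\log n)$, from which Stirling yields $|G|\le m!\le n^{1+\log n}$.

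Finally, the PA case $G\le H\wr S_d$ with $\Omega=\Delta^d$ and $H\le\Sym(\Delta)$ almost simple reduces to the AS analysis applied to $H$: either $H$ on $\Delta$ is of Cameron type, whence $G$ itself is a Cameron group; or $|H|\le|\Delta|^{1+\log|\Delta|}$, in which case $|G|\le|H|^d\cdot d!\le n^{1+\log n}$ after an elementary estimate using $n=|\Delta|^d$. The main obstacle is the quantitative refinement in the AS case: obtaining Mar\'oti's clean exponent $1+\log n$, rather than Cameron's original $n^{c\log\log n}$, requires sharp order bounds inside each family of finite simple groups and a careful analysis of their non-standard primitive actions---substantially more subtle than the qualitative classification itself.
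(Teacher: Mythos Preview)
The paper does not prove this theorem at all: it is stated as a background result, attributed to Cameron and Mar\'oti, and used without proof. There is therefore no ``paper's own proof'' to compare your proposal against.

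That said, your sketch is a reasonable outline of how the proof actually goes in the literature: O'Nan--Scott plus CFSG, with the HA/SD/TW cases dispatched by elementary order counts, and the real content lying in the almost simple case (especially $T=A_m$ and classical groups) and its propagation through product action. Your identification of the main obstacle---that Mar\'oti's sharp exponent $1+\log n$ requires finer quantitative control than Cameron's original $c\log\log n$---is accurate. For the purposes of this paper, however, a full proof is neither expected nor appropriate; a citation suffices.
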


The Cameron groups appear as an exceptions in other similar classifications of ``large'' primitive groups. We mention the classification result by Liebeck and Saxl \cite{Liebeck-Saxl}.

%\begin{definition}
%Set $S$ is called a \textit{base} of a permutation group $G$ if pointwise stabilizer of $S$ in $G$ is trivial. The smallest size of a base of $G$ is denoted by $b(G)$.
%\end{definition}

\begin{theorem}[Liebeck, Saxl \cite{Liebeck-Saxl}]\label{Liebeck-Saxl}
If $G$ is a primitive permutation group of degree $n$, then one of the following is true.
\begin{enumerate}
\item $G$ is a Cameron group.
\item $\mindeg(G)\geq n/3$. 	 
\end{enumerate}
\end{theorem}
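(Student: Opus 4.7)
This is a deep classical result relying on the Classification of Finite Simple Groups (CFSG), accessed through the O'Nan--Scott theorem; I will sketch the strategy rather than execute the many case analyses. The plan is to apply O'Nan--Scott to partition primitive permutation groups into a small number of structural types (affine, almost simple, simple diagonal, product action, twisted wreath), then handle each type separately, showing that either $G$ is a Cameron group or $\mindeg(G) \geq n/3$.

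For the affine case, $G = V \rtimes G_0$ with $V = \mathbb{F}_p^d$ and $G_0 \leq \mathrm{GL}(V)$; a non-identity element $(A,b)$ fixes the affine subspace $\{v : (A-I)v = -b\}$, so when $A \neq I$ its fixed set has size at most $|V|/p \leq |V|/2$, while pure translations move every vector. Hence $\mindeg(G) \geq n/2$ in this type, and no Cameron group arises. For the twisted wreath type the socle acts regularly, so $\mindeg(G) = n$ trivially. For the simple diagonal and product action types one has wreath-like structure governing the action on a set $\Omega$ of size $|T|^{k-1}$ or $m^k$ respectively; any non-identity element either permutes the $k$ coordinates non-trivially (supporting a positive fraction of $\Omega$) or reduces to a non-identity factor-action on a single coordinate. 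A short induction on $k$, combined with Bochert-type lower bounds on the base factor action, recovers the $n/3$ bound unless the factor action is the induced almost-simple action $S_m^{(k)}$ on $\binom{[m]}{k}$, which by Definition~\ref{def-Cameron-groups} is precisely the Cameron case.

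The substantial case is the almost simple one, $T \leq G \leq \Aut(T)$ for $T$ a non-abelian finite simple group; here CFSG enters and one walks through the families. For $T = A_m$ the primitive actions are classified, and among them only the action on $k$-subsets yields Cameron groups, while the actions on partitions, on ordered tuples modulo appropriate symmetries, and on right cosets of non-transitive subgroups all admit motion estimates $\geq n/3$ by direct combinatorial arguments. For $T$ a classical group one uses Aschbacher's classification of maximal subgroups $M < T$, and for each Aschbacher class $\mathcal{C}_1,\ldots,\mathcal{C}_8,\mathcal{S}$ bounds the fixed-point proportion of a non-identity element via a count of conjugates of $M$ containing it. For $T$ of exceptional Lie type or sporadic, the analysis is essentially finite but requires explicit work with character tables and maximal subgroup lattices.

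The main obstacle is the almost simple case with classical socle, where both the number of Aschbacher subclasses and the arithmetic needed for each are substantial. Liebeck's earlier work~\cite{Liebeck} established the weaker bound $\mindeg(G) \geq n/(9\log_2 n)$ with the Cameron groups as exceptions, and the central contribution of \cite{Liebeck-Saxl} is the sharpening from the logarithmic-denominator bound to the uniform $n/3$; this is where the most delicate case-by-case estimates on subgroup indices and fixed-point counts are required.
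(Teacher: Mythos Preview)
The paper does not prove this theorem: it is quoted as a result of Liebeck and Saxl \cite{Liebeck-Saxl}, with no proof supplied, and serves only as background motivation (the paper's aim is a combinatorial analogue for coherent configurations, not a reproof of the group-theoretic statement). So there is no ``paper's own proof'' to compare your proposal against.

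That said, your sketch is a fair high-level outline of the actual Liebeck--Saxl strategy: reduce via O'Nan--Scott, dispatch the affine, twisted wreath, diagonal, and product cases by elementary fixed-point counts or induction on the number of factors, and concentrate the CFSG-dependent work in the almost simple case using Aschbacher's subgroup classification. You correctly identify that the improvement over \cite{Liebeck} lies in the sharper fixed-point-ratio estimates for classical groups. As a sketch this is accurate; as a proof it is of course incomplete, since the almost simple case is where all the substance lies and you have (deliberately) not carried it out.
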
 

The next lemma shows that in a certain range of parameters Cameron groups have sublinear minimal degree.

\begin{lemma}\label{cameron-min-deg}
Let $G$ be a Cameron group with $(A_m^{k})^d\leq G\leq S_m^{(k)}\wr S_d$ which acts on $n = \binom{m}{k}^d$ points, where $k\leq m/2$. Then as $m\rightarrow \infty$, the following holds uniformly in $d$: we have $\mindeg(G) = o(n)$ if and only if $k = o(m)$. 
\end{lemma}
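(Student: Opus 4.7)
My plan is to prove both directions of the iff. For the forward direction ($k=o(m)\Rightarrow\mindeg(G)=o(n)$), it suffices to exhibit a single nontrivial $\sigma\in G$ whose fixed-point fraction tends to $1$. Since $(A_m^{(k)})^d\le G$, I would take a $3$-cycle $\tau=(a\,b\,c)\in A_m$ and let $\sigma=(\tau,e,\ldots,e)$ act on $\binom{[m]}{k}^d$. A $k$-subset $U\subseteq[m]$ is $\tau$-invariant iff $|U\cap\{a,b,c\}|\in\{0,3\}$, so the $\sigma$-fixed tuples number $\bigl(\binom{m-3}{k}+\binom{m-3}{k-3}\bigr)\binom{m}{k}^{d-1}$; the fixed fraction is
\[
\frac{(m-k)(m-k-1)(m-k-2)+k(k-1)(k-2)}{m(m-1)(m-2)},
\]
which tends to $1$ uniformly in $d$ whenever $k/m\to 0$.

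For the reverse direction, I would prove the contrapositive: assume $k\ge cm$ for some fixed $c>0$ (and $k\le m/2$), and show $\mindeg(G)\ge(\min(c,1)-o(1))\,n$ uniformly in $d$. Write any nontrivial $\sigma\in G\le S_m^{(k)}\wr S_d$ as $\sigma=(\pi_1,\ldots,\pi_d;\rho)$ and split on whether $\rho$ is trivial. If $\rho\ne e$, a fixed tuple $(U_1,\ldots,U_d)$ must satisfy $U_i=\pi_i(U_{\rho^{-1}(i)})$, so within each $\rho$-cycle the coordinate $k$-subsets are determined by any one of them; hence at most $\binom{m}{k}^{d-1}$ tuples are fixed, and since $k\ge cm$ forces $\binom{m}{k}\to\infty$, the moved fraction is $\ge 1-o(1)$.

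If instead $\rho=e$, then some $\pi_j\ne e$; WLOG $\pi_1\ne e$. The fixed set is a Cartesian product, so the fixed fraction equals $\prod_i f_i/\binom{m}{k}\le f_1/\binom{m}{k}$, where $f_i$ counts $\pi_i$-fixed $k$-subsets. I would use the standard bound that every nontrivial $\pi\in S_m$ moves at least $2\binom{m-2}{k-1}$ subsets in $\binom{[m]}{k}$ when $k\le m/2$: picking $x$ with $\pi(x)=y\ne x$, any $k$-subset $U$ containing exactly one of $\{x,y\}$ is moved, since if $x\in U$ and $y\notin U$ then $\pi(U)\ni y\notin U$, and the $y\in U$, $x\notin U$ case follows by applying the same argument to $\pi^{-1}$. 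Applied to $\pi_1$ this gives $f_1/\binom{m}{k}\le 1-\tfrac{2k(m-k)}{m(m-1)}\le 1-c+o(1)$, hence moved fraction $\ge c-o(1)$, as claimed.

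The only nonroutine input is the $2\binom{m-2}{k-1}$ bound on the number of moved $k$-subsets of a nontrivial $\pi\in S_m$, and the short symmetry argument sketched above settles it immediately. The rest is elementary binomial bookkeeping, and uniformity in $d$ is automatic because none of the bounds above depend on $d$.
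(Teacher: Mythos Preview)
Your argument is correct. The forward direction coincides with the paper's: both exhibit the $3$-cycle (or $2$-cycle) acting in a single coordinate and compute its fixed fraction. The reverse direction, however, is handled differently. The paper simply \emph{asserts} that the minimal degree of $G$ is attained by a $2$- or $3$-cycle in one coordinate, then reads off the exact formulas and checks that these are $\Theta(n)$ when $k\asymp m$. You instead prove a lower bound valid for \emph{every} nontrivial $\sigma\in S_m^{(k)}\wr S_d$, splitting on whether the top permutation $\rho$ is trivial: if $\rho\neq e$ the cycle structure forces at most $\binom{m}{k}^{d-1}$ fixed tuples, while if $\rho=e$ you invoke the elementary bound that any nontrivial $\pi\in S_m$ moves at least $2\binom{m-2}{k-1}$ of the $k$-subsets. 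Your route is more self-contained (it does not rely on knowing where the minimum is attained) and makes the uniformity in $d$ transparent; the paper's route has the advantage of giving the exact value of $\mindeg(G)$, which it later exploits when citing ``the proof of Lemma~\ref{cameron-min-deg}'' in Proposition~\ref{prop-camer-nlogn}.
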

\begin{proof}
It is not hard to see that the minimal degree of $G$ is realized by the induced action of a cycle of length 2 or 3 (in $S_m$ or $A_m$) on $k$-subsets in just one of $d$ coordinates.
If there is a 2-cycle action in a coordinate, then the minimal degree of $G$ is
\[\left(\binom{m}{k} - \binom{m-2}{k} - \binom{m-2}{k-2}\right)\binom{m}{k}^{d-1},\]
otherwise, the minimal degree of $G$ is
\[\left(\binom{m}{k} - \binom{m-3}{k} - \binom{m-3}{k-3}\right)\binom{m}{k}^{d-1}.\]
As $m\rightarrow \infty$ these expressions are equal
\[ n\cdot\left(1-\frac{(m-k)^2+k^2}{m^2}+o(1)\right) \quad \text{and} \quad  n\cdot \left(1-\frac{(m-k)^3+k^3}{m^3}+o(1)\right),\]
respectively. Clearly, each of these expressions is $o(n)$ if and only if $k = o(m)$. 

\end{proof}

\subsection{Wielandt's upper bound for thickness}

In 1934, Wielandt~\cite{Wielandt} proved that linear lower bounds for minimal degree of permutation groups imply logarithmic upper bound for the thickness of a group.

\begin{theorem}[Wielandt \cite{Wielandt}, see {\cite[Theorem 6.1]{Babai-doublytransitive}}]\label{Wielandt-thm}
Let $n>k>\ell$ be positive integers, $k\geq 7$, and let $0<\alpha<1$. Suppose that $G$ is a permutation group of degree $n$ and minimal degree at least $\alpha n$. If
\[\ell(\ell-1)(\ell-2)\geq (1-\alpha)k(k-1)(k-2),\]
and $\theta(G)\geq k$, then $n\geq \binom{k}{\ell}$.
\end{theorem}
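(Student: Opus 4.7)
The plan is to use the hypothesis $\theta(G)\ge k$ to extract an $A_k$-quotient inside $G$, then double-count the action of $3$-cycles of $A_k$ on $\Omega$, and compare with a structural lower bound on the number of $3$-cycles contained in a low-index subgroup of $A_k$.

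I would first fix subgroups $H\trianglelefteq K\le G$ with $K/H\cong A_k$ and decompose $\Omega$ into the $K$-orbits $\Omega_1,\dots,\Omega_t$ of sizes $n_i$. Within each $\Omega_i$ the $H$-orbits all share a common size $h_i$ (since $K$ is transitive on $\Omega_i$ and normalizes $H$), and the quotient $A_k=K/H$ acts transitively on the set $\mathcal{O}_i$ of these $H$-orbits. Let $M_i\le A_k$ denote the stabilizer of a chosen $H$-orbit in $\Omega_i$, so that $[A_k:M_i]=n_i/h_i\le n_i$. I write $N_3=k(k-1)(k-2)/3$ for the total number of $3$-cycles of $A_k$, and $T(M)$ for the number of $3$-cycles in a subgroup $M\le A_k$.

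For the central double count, given any $3$-cycle $\tau\in A_k$ and any lift $\sigma\in\tau H\subset K$, one has $\sigma\notin H$, whence $\sigma\ne e$ and $|\mathrm{Fix}(\sigma)|\le(1-\alpha)n$ by hypothesis. Since $\sigma$ moves every point of any $H$-orbit that $\tau$ does not fix, summing over $i$ gives $\sum_i h_i|\mathrm{Fix}_{\mathcal{O}_i}(\tau)|\le(1-\alpha)n$. Summing over the $N_3$ three-cycles $\tau$ and using the standard identity $\sum_{\tau}|\mathrm{Fix}_{\mathcal{O}_i}(\tau)|=|\mathcal{O}_i|\cdot T(M_i)$ on each orbit (a $3$-cycle fixes an element of $\mathcal{O}_i$ iff it lies in the corresponding conjugate of $M_i$) yields the key estimate
$$\sum_{i=1}^{t} n_i\,T(M_i)\;\le\;(1-\alpha)\,n\,N_3.$$
Now, assuming for contradiction that $n<\binom{k}{\ell}$, every $[A_k:M_i]\le n_i<\binom{k}{\ell}$, and the contradiction would follow from the structural lemma: \emph{every subgroup $M\le A_k$ of index $<\binom{k}{\ell}$ satisfies $T(M)>(1-\alpha)N_3$, whenever $\ell(\ell-1)(\ell-2)\ge(1-\alpha)k(k-1)(k-2)$.} Applied to each $M_i$, this forces $\sum_i n_iT(M_i)>(1-\alpha)nN_3$, contradicting the displayed inequality, and hence $n\ge\binom{k}{\ell}$.

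The main obstacle is the proof of this lemma. The subgroup $N:=\langle\text{$3$-cycles of } M\rangle$ is a direct product $\prod_j A_{C_j}$ over the $N$-orbits $C_j\subseteq[k]$ of size $\ge 3$, so $T(M)=\sum_j |C_j|(|C_j|-1)(|C_j|-2)/3$; since $M$ normalizes $N$, it is contained in $\mathrm{Norm}_{A_k}(N)$, whose order can be written explicitly as $\tfrac12\prod_j|C_j|!\cdot\prod_s r_s!\cdot(k-\sum_j|C_j|)!$, where $r_s$ counts the parts of size $s$. If $T(M)\le(1-\alpha)N_3$ then the hypothesis on $\ell$ gives $\sum_j|C_j|(|C_j|-1)(|C_j|-2)\le\ell(\ell-1)(\ell-2)$, and a combinatorial case analysis (whose extremal instance is a single part of size exactly $\ell$, matching the canonical setwise stabilizer of an $\ell$-subset) yields $[A_k:M]\ge[A_k:\mathrm{Norm}_{A_k}(N)]\ge\binom{k}{\ell}$. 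Verifying this index estimate uniformly across all admissible partitions $\{C_j\}$ is the core technical step; the assumption $k\ge 7$ enters to rule out sporadic low-rank exceptions in the normalizer computation.
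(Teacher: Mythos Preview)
The paper does not supply a proof of this theorem; it is quoted from Wielandt (with a pointer to Babai's exposition) and invoked only through Corollary~\ref{Wielandt-cor}. So there is no in-paper argument to compare against. Your outline is the classical one, and the structural lemma you isolate at the end (subgroups of $A_k$ of index $<\binom{k}{\ell}$ contain more than a $(1-\alpha)$-fraction of all $3$-cycles) is indeed the combinatorial heart of the matter.

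That said, your double count has a genuine gap. From ``$\sigma$ moves every point of any $H$-orbit that $\tau$ does not fix'' you obtain only $\mathrm{Fix}(\sigma)\subseteq F(\tau)$, where $F(\tau)$ denotes the union of $\tau$-fixed $H$-orbits; hence
\[
|\mathrm{Fix}(\sigma)|\ \le\ \sum_i h_i\,|\mathrm{Fix}_{\mathcal{O}_i}(\tau)|\,.
\]
Together with $|\mathrm{Fix}(\sigma)|\le(1-\alpha)n$ this does \emph{not} imply $\sum_i h_i|\mathrm{Fix}_{\mathcal{O}_i}(\tau)|\le(1-\alpha)n$; the two inequalities point the same way and cannot be chained. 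What you actually need is a lift $\sigma\in\tau H$ with $\mathrm{Fix}(\sigma)\supseteq F(\tau)$, i.e.\ one that fixes \emph{pointwise} every $H$-orbit fixed setwise by $\tau$. This is not automatic: for an arbitrary lift $\sigma_0$ and a $\tau$-fixed $H$-orbit $O$, the restriction $\sigma_0|_O$ lies in the image of the setwise stabilizer $K_{\{O\}}$ in $\Sym(O)$, which may strictly contain $H|_O$, so multiplying by $h\in H$ need not force $\sigma_0 h|_O=\mathrm{id}$; and even where it can be arranged orbit-by-orbit, a single $h$ must work for all $\tau$-fixed orbits simultaneously. Producing such a lift (or replacing this step by an averaging/counting argument that bounds the right quantity) is exactly where additional group-theoretic input is needed, and it should be made explicit rather than absorbed into the one-line deduction.
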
 

\begin{corollary}\label{Wielandt-cor}
Let $G$ be a permutation group of degree $n$. Suppose $\mindeg(G)\geq \alpha n$. Then the thickness $\theta(G)$ of $G$ satisfies $\displaystyle{\theta(G)\leq \frac{3}{\alpha}\ln(n)}$.
\end{corollary}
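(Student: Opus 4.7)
The plan is to apply Theorem~\ref{Wielandt-thm} in its contrapositive form. Fix any integer $k$ with $7 \leq k \leq \theta(G)$; the goal is to force $k \leq (3/\alpha)\ln n$. The strategy is to choose $\ell < k$ just barely satisfying the cubic hypothesis of Theorem~\ref{Wielandt-thm}, extract the conclusion $n \geq \binom{k}{\ell}$, and then show that this binomial coefficient is too large unless $k$ itself is small.

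The key arithmetic inputs are the elementary bounds $(\ell-2)^3 \leq \ell(\ell-1)(\ell-2)$ (valid for $\ell \geq 2$) and $k(k-1)(k-2) \leq k^3$, which together simplify the cubic hypothesis of Theorem~\ref{Wielandt-thm} to the cleaner requirement $(\ell-2)^3 \geq (1-\alpha) k^3$, and the tangent-line bound $(1-\alpha)^{1/3} \leq 1-\alpha/3$ on $(0,1]$, which comes from the concavity of $t \mapsto t^{1/3}$. Setting $\ell := \lceil (1-\alpha)^{1/3} k \rceil + 2$ satisfies the simplified cubic inequality by construction, and via the tangent bound gives $j := k - \ell \geq \alpha k/3 - 3$ and $\ell < k$ in the range of interest. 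Theorem~\ref{Wielandt-thm} then delivers $n \geq \binom{k}{\ell} = \binom{k}{j}$.

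To finish, combine this with the estimate $\binom{k}{j} \geq (k/j)^{j}$, which follows from $(k-i)/(j-i) \geq k/j$ for $0 \leq i < j \leq k$. Taking logarithms and using $k/j \geq 3/\alpha - O(1/k)$ yields
\[
\ln n \;\geq\; j \ln(k/j) \;\geq\; \Bigl(\tfrac{\alpha k}{3} - 3\Bigr) \ln \tfrac{3}{\alpha}.
\]
Since $\alpha \in (0,1)$ forces $\ln(3/\alpha) > \ln 3 > 1$, this logarithmic factor comfortably absorbs the $O(1/\alpha)$ additive slack coming from the ``$-3$'', and rearranging produces the desired inequality $k \leq (3/\alpha)\ln n$.

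The main obstacle is really sharp-constant bookkeeping rather than anything conceptual: the target coefficient $3$ is pinned by the tangent bound $(1-\alpha)^{1/3} \leq 1 - \alpha/3$, which is sharp at $\alpha = 0$, so each elementary inequality along the way must be used at essentially full strength, and any weaker substitute would produce a worse constant. Secondary but routine checks include verifying the hypotheses $n > k > \ell$ and $k \geq 7$ of Theorem~\ref{Wielandt-thm} (both automatic once $n$ and $\alpha$ are not tiny, since $\mindeg(G) \geq \alpha n$ rules out $G \supseteq A_n$ as soon as $\alpha > 3/n$), and disposing of the few boundary cases in which $k < 7$ or $n$ is small, where the target bound is either vacuous or immediate.
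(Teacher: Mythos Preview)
Your overall strategy is sound, but the step where you assert $k/j \geq 3/\alpha - O(1/k)$ is incorrect, and the displayed inequality $j\ln(k/j) \geq (\alpha k/3 - 3)\ln(3/\alpha)$ does not follow from your reasoning.

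The problem is a direction mismatch. The tangent bound $(1-\alpha)^{1/3} \leq 1 - \alpha/3$ gives you a \emph{lower} bound $j = k - \ell \geq \alpha k/3 - 3$. To obtain $k/j \geq 3/\alpha$ you would need the \emph{upper} bound $j \leq \alpha k/3$. But with $\ell = \lceil (1-\alpha)^{1/3}k\rceil + 2$ one has $j \approx \bigl(1 - (1-\alpha)^{1/3}\bigr)k$, and the tangent inequality says precisely that $1-(1-\alpha)^{1/3} \geq \alpha/3$, with strict inequality for every $\alpha \in (0,1]$. Concretely, at $\alpha = 1/2$ you get $j \approx 0.206\,k$ versus $\alpha k/3 \approx 0.167\,k$, so $k/j \approx 4.85 < 6 = 3/\alpha$. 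Your two bounds on $j$ point in opposite directions and cannot be combined as written.

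A clean repair for moderate $\alpha$: the map $x \mapsto x\ln(k/x)$ is increasing on $(0, k/e)$, so whenever $j \leq k/e$ (equivalently $(1-\alpha)^{1/3} \geq 1 - 1/e$, i.e.\ $\alpha \lesssim 0.75$), the lower bound $j \geq j' := \alpha k/3 - 3$ alone already yields $j\ln(k/j) \geq j'\ln(k/j') \geq j'\ln(3/\alpha)$, which is the inequality you want. For $\alpha$ close to $1$ your choice of $\ell$ is inefficient (it makes $\ell$ small and hence $\binom{k}{\ell}$ small); there one should instead take, e.g., $\ell = \lfloor k/2\rfloor$, which satisfies the cubic hypothesis once $1-\alpha < 1/8$, and then $n \geq \binom{k}{\lfloor k/2\rfloor}$ gives $k \leq (1+o(1))\log_2 n < 3\ln n \leq (3/\alpha)\ln n$ directly. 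Your final ``comfortably absorbs the $O(1/\alpha)$ slack'' remark is likewise fragile when $\alpha$ is near $1$, since $\ln(3/\alpha)$ is then only barely above $1$; splitting off large $\alpha$ as above sidesteps this issue too.

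(The paper states this corollary without proof, so there is nothing there to compare against.)
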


\subsection{Metsch's criteria for clique geometry}\label{sec-geom}

In this section we discuss graphs that contain a special rich combinatorial structure, called clique geometry.

\begin{definition}\label{def:clique-geom}
We say that a graph $X$ contains a \textit{clique geometry}, if there exists a collection $\mathcal{C}$ of maximal (by inclusion) cliques, such that every edge is contained in exactly one clique of $\mathcal{C}$.
The cliques of $\mathcal{C}$ sometimes are called \textit{lines}.
\end{definition}

 Metsch proved that any connected graph satisfying quite simple constraints contains a clique geometry.

\begin{theorem}[Metsch {\cite[Result 2.2]{Metsch}}]\label{Metsch}
Let $\mu\geq 1$, $\lambda^{(1)}, \lambda^{(2)}$ and $m$ be integers. Assume that $X$ is a connected graph with the following properties.
\begin{enumerate}
\item Every pair of adjacent vertices has at least $\lambda^{(1)}$ and at most $\lambda^{(2)}$ common neighbors;
\item Every pair of non-adjacent vertices has at most $\mu$ common neighbors;
\item $2\lambda^{(1)}-\lambda^{(2)}>(2m-1)(\mu-1)-1$;
\item Every vertex has degree less than $(m+1)(\lambda^{(1)}+1)-\frac{1}{2}m(m+1)(\mu-1)$.
\end{enumerate}

Define a \emph{line} to be a maximal clique $C$ satisfying $|C|\geq \lambda^{(1)}+2-(m-1)(\mu-1)$. Then every vertex is in at most $m$ lines, and every pair of adjacent vertices lies in a unique line. 

\end{theorem}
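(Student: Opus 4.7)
My plan proceeds in four stages: (i) for each edge $\{x,y\}$, define a canonical candidate line $L(x,y)$ via a common-neighborhood threshold; (ii) verify $L(x,y)$ is a clique of size at least $s := \lambda^{(1)}+2-(m-1)(\mu-1)$; (iii) bound the number of maximal cliques of size $\geq s$ through any vertex by $m$ using inclusion--exclusion against condition~4; (iv) conclude uniqueness of the line through each edge. Throughout, write $N(x,y) := N(x) \cap N(y)$.

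For stage (i), I would take
\[
L(x,y) := \{x,y\} \cup \bigl\{\, z \in N(x,y) : |N(z) \cap N(x,y)| \geq s-3 \,\bigr\},
\]
with the threshold $s-3$ capturing the number of line-mates inside $N(x,y)$ a vertex on a clique of size $\geq s$ through $\{x,y\}$ must see. To verify the clique property in stage (ii), I would argue by pigeonhole: two non-adjacent $z, w \in L(x,y) \setminus \{x,y\}$ each have $\geq s-3$ neighbors in $N(x,y)$, and since $|N(x,y)| \leq \lambda^{(2)}$ inclusion--exclusion yields $|N(z) \cap N(w) \cap N(x,y)| \geq 2(s-3) - \lambda^{(2)}$; including $x,y$ and unpacking condition~3 (which is exactly engineered for this), the total $|N(z) \cap N(w)|$ strictly exceeds $\mu$, contradicting condition~2. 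The size bound $|L(x,y)| \geq s$ would then follow from a short packing argument showing that the ``bad'' common neighbors (those failing the threshold) number at most $(m-1)(\mu-1)$ via conditions 1--2, so $|L(x,y)| \geq 2 + \lambda^{(1)} - (m-1)(\mu-1) = s$.

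For stage (iii), suppose $v$ lies in distinct maximal cliques $C_1,\ldots,C_{m+1}$, each of size $\geq s$. Working \emph{before} establishing line-uniqueness, I would exploit maximality to find non-adjacent $u_i \in C_i \setminus C_j$ and $u_j \in C_j \setminus C_i$ for every $i \neq j$; any element of $C_i \cap C_j \setminus \{v\}$ is then a common neighbor of $u_i, u_j$ other than $v$, so condition~2 gives $|C_i \cap C_j \setminus \{v\}| \leq \mu - 1$. The naive inclusion--exclusion lower bound
\[
\deg(v) \geq (m+1)(s-1) - \binom{m+1}{2}(\mu-1)
\]
is short of the condition~4 bound by precisely $(m-1)(m+1)(\mu-1)$, so the crucial refinement is to upgrade $|C_i| - 1 \geq s - 1$ to $|C_i| - 1 \geq \lambda^{(1)} + 1$. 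I would aim to obtain this upgrade either by sharpening the $L$-analysis (showing that all but a vanishing number of vertices of $N(x,y)$ lie in $L(x,y)$, so $|L(x,y)| \geq \lambda^{(1)}+2$ in fact holds), or by replacing each $C_i$ with an auxiliary enlargement $\widetilde{C}_i$ of size $\geq \lambda^{(1)}+2$ whose pairwise intersections outside $v$ are still controlled by $\mu-1$ through the same non-adjacent witness pair. Once in place, inclusion--exclusion delivers
\[
\deg(v) \;\geq\; (m+1)(\lambda^{(1)}+1) - \binom{m+1}{2}(\mu-1),
\]
contradicting condition~4 and forcing at most $m$ maximal cliques of size $\geq s$ through $v$.

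Finally for stage (iv), once a vertex is known to lie in at most $m$ such cliques, the uniqueness of the line through each edge $\{x,y\}$ follows from the now-standard $L(x,y)$ argument: any maximal clique $C \supseteq \{x,y\}$ with $|C| \geq s$ has each $z \in C \setminus \{x,y\}$ adjacent to the remaining $|C|-3 \geq s-3$ members of $C$ inside $N(x,y)$, so $z \in L(x,y)$; hence $C \subseteq L(x,y)$, and by clique-ness of $L(x,y)$ and maximality of $C$ we conclude $C = L(x,y)$. The main obstacle will be the clique-size upgrade in stage~(iii) --- synchronizing the inclusion--exclusion output precisely with the form of condition~4 --- while the other three stages flow routinely from conditions 1--3 and the correct choice of threshold $s - 3$ in $L(x,y)$.
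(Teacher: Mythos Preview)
The paper does not prove this theorem; it is quoted as a known result of Metsch \cite{Metsch} and used as a black box throughout Sections~\ref{sec-coh-reduction}--\ref{sec-subsec-str-reg}. There is therefore no in-paper proof to compare your proposal against.

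That said, your outline follows the standard architecture of Metsch's original argument, so a brief assessment is still in order. Stages~(i), (ii), (iv) capture the right ideas, but the numerics in stage~(ii) do not close as written. With your threshold $s-3$, two non-adjacent $z,w\in L(x,y)\setminus\{x,y\}$ yield
\[
|N(z)\cap N(w)| \;\geq\; 2(s-3)-\lambda^{(2)}+2 \;=\; 2\lambda^{(1)}-\lambda^{(2)}-2(m-1)(\mu-1),
\]
and condition~3 gives only that this quantity exceeds $\mu-2$, i.e.\ $|N(z)\cap N(w)|\geq \mu-1$; this does not contradict condition~2. Metsch's actual construction of the candidate clique is more delicate than a single threshold on $|N(z)\cap N(x,y)|$: one builds the clique greedily from a carefully chosen anchor vertex and controls the ``bad'' vertices iteratively, which is what makes condition~3 exactly sufficient. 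Your stage~(ii) packing claim that the bad vertices number at most $(m-1)(\mu-1)$ likewise needs this more careful bookkeeping.

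For stage~(iii) you already flag the obstacle honestly: the naive inclusion--exclusion with $|C_i|\geq s$ falls short of condition~4, and your two suggested repairs (upgrading the clique size, or enlarging to auxiliary $\widetilde{C}_i$) are plausible but not established. In Metsch's proof this is resolved not by upgrading each $|C_i|$ separately but by running the counting argument against the neighborhood $N(v)$ directly, using that for each neighbor $w$ of $v$ the set $\{v,w\}\cup N(v,w)$ already has size $\geq \lambda^{(1)}+2$ and intersects the other candidate lines in a controlled way; the bound $(m+1)(\lambda^{(1)}+1)-\binom{m+1}{2}(\mu-1)$ then emerges from partitioning $N(v)$ rather than from inclusion--exclusion on abstract maximal cliques. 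Without that refinement, your stage~(iii) remains a genuine gap.
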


\begin{lemma}[see {\cite[Prop. 9.8]{Koolen-survey}}]\label{geom-smallest-eigenvalue} Suppose that $X$ satisfies the conditions of the previous theorem. Then the smallest eigenvalue of $X$ is at least $-m$.
\end{lemma}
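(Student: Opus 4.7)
The plan is to use the clique geometry that Metsch's Theorem~\ref{Metsch} guarantees, and then invoke the standard argument bounding the least eigenvalue of a graph admitting such a geometry via the positive semidefiniteness of a Gram matrix.

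First, I would invoke Theorem~\ref{Metsch} under the stated hypotheses to obtain a collection $\mathcal{L}$ of cliques (the ``lines'') such that every pair of adjacent vertices lies in a \emph{unique} line of $\mathcal{L}$, and every vertex is contained in \emph{at most} $m$ lines. (Note that isolated vertices, if any, contribute nothing to the spectrum beyond a $0$ eigenvalue, so we may assume $\mathcal{L}$ covers all edges and we extend it by singletons if necessary; alternatively, every edge being in some line already suffices for what follows.)

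Next I would introduce the vertex-line incidence matrix $N \in \{0,1\}^{V(X)\times \mathcal{L}}$ defined by $N_{v,L}=1$ iff $v\in L$, and compute $M := NN^{T}$. By the two key properties of the geometry,
\begin{equation}
M_{u,v} \;=\; |\{L\in\mathcal{L} : u,v\in L\}| \;=\;
\begin{cases}
1, & \text{if } uv \in E(X),\\
0, & \text{if } u\neq v \text{ and } uv\notin E(X),\\
\ell(v), & \text{if } u=v,
\end{cases}
\end{equation}
where $\ell(v)\le m$ denotes the number of lines through $v$. Hence
\[
NN^{T} \;=\; A + D,
\]
where $A$ is the adjacency matrix of $X$ and $D$ is a diagonal matrix with $D_{v,v}=\ell(v)\le m$.

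Since $NN^{T}$ is positive semidefinite, for every vector $x\in\mathbb{R}^{V(X)}$ we have $x^{T}(A+D)x\ge 0$, and therefore
\[
x^{T}Ax \;\ge\; -\,x^{T}Dx \;\ge\; -m\,\|x\|^{2}.
\]
Applying this to an eigenvector for the smallest eigenvalue $\theta_{\min}(X)$ yields $\theta_{\min}(X)\ge -m$, as required.

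There is essentially no serious obstacle here: the only point that needs a careful sentence is confirming that the two combinatorial properties pulled from Metsch's theorem (unique line through each edge, at most $m$ lines through each vertex) are precisely what makes the off-diagonal of $NN^{T}$ match $A$ exactly, while keeping the diagonal bounded by $m$. Once that identification is made, the positive semidefiniteness of $NN^{T}$ finishes the proof in one line.
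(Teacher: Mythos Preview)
Your proposal is correct and essentially identical to the paper's own proof: both form the vertex-line incidence matrix $N$, observe $NN^{T}=A+D$ with $D$ diagonal and $D_{v,v}\le m$, and deduce $\theta_{\min}(X)\ge -m$ from positive semidefiniteness. The paper phrases the final step as ``$A+mI = NN^{T}+(mI-D)$ is positive semidefinite,'' whereas you use the Rayleigh quotient, but this is the same argument.
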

\begin{proof}
Let $\mathcal{C}$ be the collection of lines of $X$. Consider $|V|\times |\mathcal{C}|$ vertex-clique incidence matrix $N$. That is, $(N)_{v,C} = 1$ if and only if $v\in C$ for $v\in X$ and $C\in \mathcal{C}$. Since every edge belongs to exactly one line, we get $NN^{T} = A+D$, where $A$ is the adjacency matrix of $X$ and $D$ is a diagonal matrix. Moreover, $(D)_{v,v}$ equals to the number of lines that contain $v$. By the previous theorem, $D_{v,v} \leq m$ for every $v\in X$. Thus, 
\[A+mI = NN^{T}+(mI-D)\] 
is positive semidefinite, so all eigenvalues of $A$ are at least $-m$.
\end{proof}

The following lemma follows from e.g. \cite{regular-classif}, we include an elementary proof for completeness.
\begin{lemma}\label{m2-line}
Suppose $X$ is a non-complete regular graph that satisfies the conditions of Theorem~\ref{Metsch} with $m=2$. Then $X$ is a line graph $L(Y)$ for some graph $Y$.
\end{lemma}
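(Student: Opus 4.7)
The plan is to apply Theorem~\ref{Metsch} with $m=2$ to obtain an edge-partition of $X$ into cliques in which every vertex lies in at most two cliques, and then realize $X$ as a line graph via the classical construction of Krausz.

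First, I would invoke Theorem~\ref{Metsch} with $m=2$. This produces a family $\mathcal{C}$ of maximal cliques (the ``lines'') such that every pair of adjacent vertices of $X$ lies in a unique line, and every vertex lies in at most two lines. In particular, the lines form a partition of $E(X)$. Since $X$ is non-complete but regular, we may assume its degree $k\geq 1$ (the case $k=0$ is trivial, as an edgeless graph is the line graph of any matching), so every vertex of $X$ is contained in at least one line.

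Next, I would construct $Y$ explicitly. Set
\[V(Y) \;:=\; \mathcal{C} \,\cup\, \{w_v : v\in V(X) \text{ lies in exactly one line}\},\]
where the $w_v$ are fresh ``pendant'' vertices, one for each such $v$. To each vertex $v \in V(X)$ associate an edge $e_v$ of $Y$ as follows: if $v$ lies in two lines $C, C'\in \mathcal{C}$, put $e_v := \{C, C'\}$; if $v$ lies in the unique line $C$, put $e_v := \{C, w_v\}$. Let $E(Y) := \{e_v : v\in V(X)\}$ and $\phi : V(X) \to E(Y)$, $\phi(v) := e_v$.

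Finally, I would verify that $\phi$ is an isomorphism $X \cong L(Y)$. For distinct $u, v \in V(X)$ we have $uv \in E(X)$ iff $u$ and $v$ lie together in some (unique) line $C \in \mathcal{C}$, iff the edges $e_u, e_v$ share the endpoint $C$ in $Y$; here the last equivalence uses that the pendant vertices $w_v$ are, by construction, distinct for distinct $v$, so two edges $e_u \ne e_v$ can share only a $\mathcal{C}$-endpoint. All the combinatorial substance is supplied by Metsch's theorem --- the conclusion ``at most two lines per vertex'' is precisely Krausz's characterization of line graphs. The only point demanding genuine care is the bookkeeping in the definition of $Y$: without the pendant vertices $w_v$, two vertices lying in a single common line with no second line would collapse to the same ``edge'' of $Y$ and $\phi$ would fail to be injective. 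Everything else is a direct unwinding of the definitions.
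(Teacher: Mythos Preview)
Your argument is correct, but it takes a somewhat different route from the paper's. The paper first uses the regularity and non-completeness hypotheses to show that every vertex lies in \emph{exactly} two lines: if some $v$ lay in only one line $C$, then $N(v)\cup\{v\}\subseteq C$, and regularity then forces $C$ to be a connected component, contradicting that $X$ is connected and non-complete. With that in hand, the paper takes $V(Y)=\mathcal{C}$ directly, with no pendant vertices. Your Krausz-style construction with the extra vertices $w_v$ is more general --- it would work for any graph admitting such a clique partition, not just regular ones --- but it does not exploit the hypotheses of the lemma, and the pendant case is in fact vacuous here.

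One small point you should make explicit: the injectivity of $\phi$ in the two-line case. If $u\neq v$ both lie in exactly two lines and $e_u=e_v=\{C,C'\}$, then $u,v\in C\cap C'$, so the edge $uv$ lies in two distinct lines, contradicting the uniqueness clause of Metsch's theorem. You implicitly rely on this when you write $E(Y):=\{e_v:v\in V(X)\}$ as a set and then treat $\phi$ as a bijection, but it deserves a sentence.
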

\begin{proof}
Let $\mathcal{C}$ be a collection of lines of $X$. Observe that since $X$ is regular and non-complete, every vertex is in at least two elements of $\mathcal{C}$. Indeed, if $v$ belongs to the unique $C\in \mathcal{C}$, then $N(v) = C$, as every edge of $X$ lies in the unique element of $\mathcal{C}$. Thus, regularity of $X$ implies that $N(v)$ is a connected component of $X$, which by assumptions is connected and non-complete. Therefore, by Theorem \ref{Metsch} every vertex of $X$ is in exactly two lines.

Define a graph $Y$ with the set of vertices $V(Y) = \mathcal{C}$. Two vertices $C_1, C_2 \in \mathcal{C}$ in $Y$ are adjacent if and only if $C_1\cap C_2 \neq \emptyset$. We claim that $L(Y) \cong X$. Indeed, since every edge of $X$ is in exactly one line, $|C_1\cap C_2| \leq 1$, so there is a well-defined map $f:E(Y)\rightarrow V(X)$. Moreover, since every vertex of $X$ is in exactly two lines, the map $f$ is bijective. Additionally, a pair of edges in $Y$ shares the same vertex if and only if there is an edge between the corresponding vertices in $X$. Hence, $L(Y) \cong X$. 
\end{proof}

In the case of distance-regular graphs, a special class of graphs with a clique geometry is distinguished. Let $X$ be a distance-regular graph, and $\theta_{\min}$ be its smallest eigenvalue. Delsarte proved in \cite{Delsarte} that every clique $C$ in $X$ satisfies $|C|\leq 1-k/\theta_{\min}$. A clique in $X$ of size $1-k/\theta_{\min}$ is called a \textit{Delsarte clique}.

\begin{definition}
A distance-regular graph $X$ is called \textit{geometric} if there exists a clique geometry $\mathcal{C}$ such that every clique $C\in \mathcal{C}$ is a Delsarte clique.
\end{definition}

More on geometric distance-regular graphs can be found in~\cite{Koolen-survey}.

Existance of a clique geometry provides the following useful bound on the $\mu(X)$.
\begin{lemma}\label{clique-mu-bound}
Let $X$ be a graph. Let $\mathcal{C}$ be a collection of cliques in $X$, such that every edge lies in a unique clique from $\mathcal{C}$ and every vertex is in at most $m$ cliques from $\mathcal{C}$. Then $\mu(X)\leq m^2$. 
\end{lemma}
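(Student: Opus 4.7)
The plan is to fix any two vertices $u,v$ with $\dist(u,v)=2$ and to bound the number of common neighbors by injecting them into the set of ordered pairs $(C_1,C_2)$ of cliques of $\mathcal{C}$ with $u\in C_1$ and $v\in C_2$.

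Given a common neighbor $w$ of $u$ and $v$, the edge $uw$ lies in a unique clique $C_1=C_1(w)\in\mathcal{C}$, and the edge $wv$ lies in a unique clique $C_2=C_2(w)\in\mathcal{C}$. The first step is to observe that $C_1\neq C_2$: otherwise $u$ and $v$ would both belong to a common clique of $\mathcal{C}$ and hence be adjacent, contradicting $\dist(u,v)=2$. Thus $w\mapsto(C_1(w),C_2(w))$ is a well-defined map from the set of common neighbors of $u,v$ to ordered pairs of distinct cliques in $\mathcal{C}$ with $u\in C_1$ and $v\in C_2$.

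The second step is to show this map is injective. Suppose $w,w'$ are common neighbors with $C_1(w)=C_1(w')=C_1$ and $C_2(w)=C_2(w')=C_2$. Then $w,w'\in C_1$ (both edges $uw,uw'$ lie in $C_1$) and similarly $w,w'\in C_2$. If $w\neq w'$, the edge $ww'$ would then lie in both $C_1$ and $C_2$, contradicting the assumption that every edge lies in a unique clique of $\mathcal{C}$. Hence $w=w'$.

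Finally, since $u$ lies in at most $m$ cliques of $\mathcal{C}$, there are at most $m$ choices for $C_1$; similarly at most $m$ for $C_2$. So the number of common neighbors of $u$ and $v$ is at most $m^2$, giving $\mu(X)\leq m^2$. No substantive obstacle arises; the only thing to verify carefully is that the two uses of ``uniqueness'' (distinctness of $C_1$ and $C_2$, and injectivity of the map) are both consequences of the single hypothesis that each edge lies in exactly one clique of $\mathcal{C}$.
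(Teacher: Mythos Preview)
Your proof is correct and follows essentially the same approach as the paper: both arguments use that distinct cliques of $\mathcal{C}$ meet in at most one vertex (the paper states this directly, you phrase it as injectivity of $w\mapsto(C_1(w),C_2(w))$), and both bound $|N(u)\cap N(v)|$ by the product $m_u m_v\le m^2$ of the numbers of cliques through $u$ and through $v$.
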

\begin{proof}
Let $u,v\in X$ be a pair of vertices at distance $2$. By the assumptions of the lemma we can write $N(u) = \bigcup\limits_{i = 1}^{m_u} C_i^u$ and $N(v) = \bigcup\limits_{i = 1}^{m_v} C_i^v$, where $C_i^u, C_j^v\in \mathcal{C}$. Since $\dist(u,v) = 2$, all cliques are distinct. Observe, that any pair of distinct cliques in $\mathcal{C}$ intersect each other in at most one vertex. Hence, $N(u)\cap N(v)\leq m_u m_v$, so $\mu(X)\leq m^2$. 
\end{proof}

Our approach is to show that the graph $X$, for which Lemma~\ref{mixing-lemma-tool} and Observation~\ref{obs1} are not applicable, satisfies Metsch's criteria for small $m$. This gives a hard structural constraints on $X$. For instance, if $m=2$, then $X$ is necessarily a line graph, as shown in Lemma~\ref{m2-line}. Moreover, by Lemma~\ref{geom-smallest-eigenvalue}, the smallest eigenvalue of $X$ is at least $-m$. 

Therefore, the problem of classifying graphs with certain level of regularity and bounded smallest eigenvalue is of great importance for our technique. One of the first important results of this flavor is due to Seidel, who characterized the strongly regular graphs with smallest eigenvalue $-2$. We state here a generalization of Seidel's result given by Brouwer, Cohen and Neumaier, which will be used later in Section \ref{sec-coh-reduction}.

\begin{definition}
 A graph $X$ is called an \textit{$m\times n$-grid} if it is the line graph of the complete bipartite graph $K_{m,n}$.
\end{definition}

\begin{theorem}[{\cite[Corollary 3.12.3]{BCN}}]\label{geom-eig-more2}
Let $X$ be a connected regular graph with smallest eigenvalue $>-2$. Then $X$ is a complete graph, or $X$ is an odd polygon.  
\end{theorem}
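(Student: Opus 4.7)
The plan is to separate the argument by degree and, for $k \ge 3$, reduce to the classification of graphs with smallest eigenvalue $\ge -2$ due to Cameron, Goethals, Seidel, and Shult (CGSS). Both families in the conclusion are easily verified admissible: $\lambda_{\min}(K_n) = -1$ and $\lambda_{\min}(C_{2\ell+1}) = -2\cos(\pi/(2\ell+1)) > -2$.

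Low degrees can be disposed of directly. For $k \le 1$, $X$ is $K_1$ or $K_2$. For $k = 2$, $X$ is a cycle $C_n$ with eigenvalues $2\cos(2\pi j/n)$, and the value $-2$ occurs precisely when $n$ is even, so the strict inequality forces $n$ to be odd. From now on assume $k \ge 3$ and aim to show $X = K_{k+1}$.

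By CGSS, any graph with $\lambda_{\min} \ge -2$ is either a generalized line graph $L(G; a_1, \ldots, a_m)$ or one of a finite list of ``exceptional'' graphs on at most $36$ vertices; direct computation shows that every regular graph in this finite list has $\lambda_{\min} = -2$ exactly, so all such exceptional graphs are excluded by the strict inequality. Any generalized line graph with a nonzero petal parameter $a_i > 0$ contains a substructure forcing $\lambda_{\min} = -2$, so $X = L(G)$ must be an ordinary line graph. The incidence-matrix identity $A(L(G)) + 2I = N^T N$, with $N$ the $|V(G)| \times |E(G)|$ vertex-edge incidence matrix of $G$, reduces strict positive-definiteness of $A(L(G)) + 2I$ to the triviality of $\ker N$; a short linear-algebra argument (Doob's theorem) shows this holds iff every connected component of $G$ is a tree or an odd unicyclic graph.

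Finally, the line-graph degree identity $\deg_{L(G)}(uv) = \deg_G(u) + \deg_G(v) - 2$ forces $\deg_G(u) + \deg_G(v) = k + 2$ on every edge $uv$ of $G$. A short case analysis among connected trees and odd unicyclic graphs satisfying this edge-uniform degree condition leaves only $G \in \{K_2, K_{1,m}, C_{2\ell+1}\}$: in the tree case, assuming diameter $\ge 3$ and applying the identity along a leaf edge and then an adjacent internal edge forces a contradiction; in the unicyclic case, the presence of any tree branch similarly collapses. Taking line graphs yields $L(K_2) = K_1$, $L(K_{1,m}) = K_m$, and $L(C_{2\ell+1}) = C_{2\ell+1}$, all complete graphs or odd polygons, completing the proof. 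The main obstacle is justifying the CGSS reduction cleanly: a self-contained forbidden-subgraph proof seems hard, as the Petersen graph illustrates ($\lambda_{\min} = -2$ with no induced $C_4$ and no induced $K_{1,4}$).
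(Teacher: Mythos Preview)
The paper does not prove this statement; it is quoted verbatim from \cite[Corollary~3.12.3]{BCN} and used as a black box in Lemma~\ref{assoc-clique-geom}. So there is no ``paper's own proof'' to compare against. Your route via the Cameron--Goethals--Seidel--Shult classification is the standard one, and the endgame (Doob's kernel criterion for $N^T N$, then the edge-degree identity on trees and odd-unicyclic graphs) is correct and cleanly executed.

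One step is not justified as written. The claim that ``any generalized line graph with a nonzero petal parameter $a_i>0$ contains a substructure forcing $\lambda_{\min}=-2$'' is false without the regularity hypothesis: $L(K_2;1,0)\cong P_3$ has $\lambda_{\min}=-\sqrt{2}$. What actually rules out petals is the rank bound. The root-system representation of $L(G;a)$ lives in $\mathbb{R}^{|V(G)|+\sum a_i}$, so $A+2I$ positive definite forces
\[
|E(G)|+2\sum_i a_i \;\le\; |V(G)|+\sum_i a_i,
\]
hence for connected $G$ either $G$ is a tree with $\sum a_i\le 1$, or $G$ is unicyclic with all $a_i=0$. In the tree-with-one-petal case, the petal vertex has degree $\deg_G(v)$ while the edge $vw$ has degree $\deg_G(v)+\deg_G(w)-1$, so regularity forces $\deg_G(w)=1$ for every neighbour $w$ of $v$; then $G$ is a star and one checks the resulting graph is not regular. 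This is the argument you should give. The same rank trick (now in $E_8$, dimension~$8$) also disposes of all exceptional graphs on more than $8$ vertices at once, so your ``direct computation'' for the exceptional regular graphs reduces to a genuinely tiny residual check rather than the full list of $187$.
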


\begin{theorem}[Seidel \cite{Seidel}; Brouwer-Cohen-Neumaier {\cite[Theorem 3.12.4]{BCN}}]\label{geom-eig-2}
Let $X$ be a connected regular graph on $n$ vertices with smallest eigenvalue $-2$.
\begin{enumerate}[(i)]
\item If $X$ is strongly regular, then $X = L_{2}(s)$, or $X = T(s)$ for some $s$, or $n\leq 28$.
\item If $X$ is edge-regular, then $X$ is strongly regular or the line graph of a regular triangle-free graph.
\item If $X$ is co-edge-regular, then $X$ is strongly regular, an $m_1\times m_2$-grid, or one of the two regular subgraphs of the Clebsh graph with $8$ and $12$ vertices, respectively.
\end{enumerate}
\end{theorem}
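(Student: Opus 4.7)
The plan is to follow the classical root-system approach. Since the smallest eigenvalue of the adjacency matrix $A$ is $-2$, the matrix $M := A + 2I$ is positive semidefinite with diagonal entries equal to $2$ and off-diagonal entries in $\{0,1\}$. Factor $M = B^T B$ and view the columns $\{v_i\}_{i \in V}$ of $B$ as vectors in Euclidean space of squared norm $2$ with pairwise inner products in $\{0,1\}$. The set $\Phi = \{\pm v_i : i \in V\}$ then consists of norm-$\sqrt{2}$ vectors with integer mutual inner products, and by the Cameron--Goethals--Seidel--Shult theorem any such set is contained in a direct sum of irreducible root systems of type $A_m$, $D_m$, $E_6$, $E_7$, or $E_8$. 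Connectedness of $X$ forces $\Phi$ (after a suitable choice of signs) to lie in a single irreducible component.

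The next step is to translate, for each ambient irreducible root system $R$ containing $\Phi$, the possible shapes of $X$, and then intersect the answer with the regularity hypothesis. If $R = A_{m-1}$, the roots are $e_i - e_j$, and a subset of these with the prescribed inner products produces the triangular graph $T(m)$ (or an induced subgraph thereof). If $R = D_m$, the roots are $\pm e_i \pm e_j$, and the relevant subsets give line graphs of bipartite graphs; under regularity these specialize to $L_2(s)$ (the line graph of $K_{s,s}$) or to $m_1 \times m_2$-grids (line graphs of $K_{m_1,m_2}$). The exceptional cases $R \subseteq E_8$ give only finitely many configurations; a direct enumeration using the $240$ roots of $E_8$ and its subsystems pins down the remaining small exceptions: the strongly regular exceptions with $n \le 28$ in part (i), and the two regular subgraphs of the Clebsch graph in part (iii).

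Finally, the three regularity levels are compared. Strong regularity is so rigid that beyond the infinite families $T(s)$ and $L_2(s)$ only the finitely many $E_8$-exceptions survive, yielding (i). Weakening to edge-regularity allows line graphs $L(Y)$ of regular triangle-free graphs, since $A(L(Y)) = N^T N - 2I$ (with $N$ the vertex-edge incidence matrix of $Y$) automatically has smallest eigenvalue $\geq -2$ and $\lambda(L(Y)) = \deg(Y) - 2$ is constant; triangle-freeness of $Y$ prevents $L(Y)$ from being strongly regular in general, and the root-system analysis shows no other new infinite families appear. This gives (ii). Dually, co-edge-regularity admits the $m_1 \times m_2$-grids (line graphs of complete bipartite graphs) together with the two Clebsch subgraphs, giving (iii). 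The main technical obstacle is the $E_8$ case analysis: one must verify by hand that no further regular graph with smallest eigenvalue exactly $-2$ embeds in $E_8$ beyond the listed exceptions. This is the delicate combinatorial core of the argument and is where the bound $n \le 28$ (roughly $240/2 \cdot \tfrac{1}{\text{mult.}}$ type counting) ultimately comes from; everything else is relatively mechanical once the root-system reduction is in place.
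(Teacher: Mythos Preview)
The paper does not prove this theorem at all: it is stated with attribution to Seidel and to Brouwer--Cohen--Neumaier \cite[Theorem~3.12.4]{BCN} and then used as a black box in the analysis of Section~\ref{sec-coh-reduction}. So there is no ``paper's own proof'' to compare your proposal against.

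That said, your outline is essentially the standard root-system route used in the literature (Cameron--Goethals--Seidel--Shult), and it is the right framework. A few points where your sketch is imprecise and would need tightening in an actual proof: the $A_{m-1}$ case yields \emph{all} line graphs $L(H)$ (not just $T(m)=L(K_m)$), and regularity of $L(H)$ forces $H$ to be regular or semiregular bipartite; the $D_m$ case gives generalized line graphs (including cocktail party graphs), not only line graphs of bipartite graphs; and the refinement to edge-regular versus co-edge-regular in parts~(ii) and~(iii) requires a separate argument on top of the root-system classification, not merely ``intersecting with the regularity hypothesis.'' The $E_8$ enumeration is indeed the finite combinatorial core, but the bound $n\le 36$ (at most $36$ vertices for the exceptional graphs) comes first, and the sharper $n\le 28$ for the strongly regular case is a further restriction obtained by checking which of those finitely many graphs are strongly regular. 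Your parenthetical heuristic for the $28$ is not how the bound actually arises.
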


\subsection{Approximation tool}
In Section \ref{sec-coh-approx} we will use the following results from the approximation theory which allow us to estimate the roots of a perturbed polynomial.

\begin{theorem}[{\cite[Appendix A]{Ostrowski}}]\label{poly-approx}
Let $n\geq 1$ be an integer. Consider two polynomials of degree $n$
\[ f(x) = a_0x^n+...+a_{n-1}x+a_n, \quad g(x) = b_0x^n+...+b_{n-1}x+b_n,\]
where $a_0 = b_0 = 1$. Denote $M = \max  \{|a_i|^{1/i}, |b_i|^{1/i}:  0\leq i\leq n\}$ and
\[\varepsilon = 2n \left( \sum\limits_{i = 1}^{n} |b_i - a_i| (2M)^{n-i}\right)^{1/n}.\]
Let $x_1, x_2, ..., x_n$ denote the roots of $f$ and $y_1, y_2, ..., y_n$ denote the roots of $g$. Then, there exists a permutation $\sigma \in S_n$ such that for every $1\leq i\leq n$
\[|x_i - y_{\sigma(i)}|\leq \varepsilon.\]

\end{theorem}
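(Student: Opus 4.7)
The plan is to combine a Cauchy-type localization of the roots with an evaluation of one polynomial at the roots of the other, and then extract a root matching from the resulting pointwise estimate. First I would confine every root of $f$ and $g$ to the closed disk $|z|\leq 2M$; evaluating $g$ at a root $x$ of $f$ via the identity $g(x)=g(x)-f(x)$ would then force some $y_j$ to lie within distance $\Delta^{1/n}$ of $x$, where $\Delta:=\sum_{k=1}^{n}|b_k-a_k|(2M)^{n-k}$; finally, I would upgrade this pointwise proximity to a permutation by a matching argument, with the factor $2n$ in $\varepsilon$ absorbing the combinatorial slack.

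For the localization, the definition of $M$ yields $|a_k|\leq M^k$ for $k\geq 1$, so for any $|z|\geq 2M$ one has
$$\sum_{k=1}^{n}|a_k||z|^{n-k}\leq|z|^n\sum_{k=1}^{n}(M/|z|)^k\leq|z|^n(1-2^{-n})<|z|^n,$$
hence $|f(z)|\geq|z|^n-\sum_{k=1}^n|a_k||z|^{n-k}>0$ (the case $M=0$ being trivial, and the argument for $g$ identical). For the evaluation, if $x$ is a root of $f$ then $f(x)=0$ together with $|x|\leq 2M$ gives
$$|g(x)|=\left|\sum_{k=1}^{n}(b_k-a_k)x^{n-k}\right|\leq\Delta,$$
and the factorization $g(x)=\prod_{j=1}^n(x-y_j)$ implies $\min_j|x-y_j|\leq\Delta^{1/n}=\varepsilon/(2n)$; by symmetry, the analogous bound holds at each root of $g$.

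To produce the matching, set $\delta:=\Delta^{1/n}$ and form the bipartite graph $H$ on $\{x_1,\ldots,x_n\}\sqcup\{y_1,\ldots,y_n\}$ joining $x_i$ to $y_j$ whenever $|x_i-y_j|\leq 2n\delta=\varepsilon$; a perfect matching in $H$ furnishes the desired permutation $\sigma$. The plan is to verify Hall's condition for $H$: if some set $S$ of $x$-vertices has $|N(S)|=t<|S|=:s$, then for each $x_i\in S$ the bound $\prod_j|x_i-y_j|\leq\delta^n$ combined with $|x_i-y_j|>2n\delta$ for $j\notin N(S)$ forces
$$\prod_{j\in N(S)}|x_i-y_j|\leq\delta^{t}/(2n)^{n-t}.$$
Multiplying these $s$ inequalities and comparing with the analogous estimates obtained by applying the evaluation step at each $y_j\in N(S)$ (and controlling the remaining factors with the crude estimate $|x-y|\leq 4M$ coming from the localization) should yield a numerical contradiction forced by $t<s$. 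This matching step is the main obstacle: the two symmetric product estimates and the outside-factor bounds have to balance precisely enough to force the contradiction, and the factor $2n$ is exactly the slack demanded by the arithmetic; the localization and evaluation steps themselves are routine consequences of the triangle inequality and of the factorization of $g$.
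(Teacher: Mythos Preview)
The paper does not supply its own proof of this theorem; it is quoted from Ostrowski and used as a black box. So there is no ``paper's proof'' to compare with, and your attempt must stand on its own.

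Your localization step (all roots lie in $|z|\le 2M$) and your evaluation step (for each root $x$ of $f$ one has $\prod_j|x-y_j|=|g(x)|\le\Delta$, hence $\min_j|x-y_j|\le\delta:=\Delta^{1/n}$, and symmetrically) are correct and are exactly the right opening moves.

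The gap is in the matching step, and you flag it yourself. The Hall-condition argument you sketch does not close. From a violating set $S$ with $|N(S)|=t<s=|S|$ you correctly derive, for each $i\in S$,
\[
\prod_{j\in N(S)}|x_i-y_j|<\delta^{t}(2n)^{-(n-t)},
\]
and symmetrically, for each $j\notin N(S)$,
\[
\prod_{i\notin S}|x_i-y_j|<\delta^{n-s}(2n)^{-s}.
\]
But these are all \emph{upper} bounds on disjoint sub-products of the resultant $\prod_{i,j}|x_i-y_j|$; to force a contradiction you would need a \emph{lower} bound somewhere, and the crude estimate $|x_i-y_j|\le 4M$ you propose only gives further upper bounds. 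No amount of multiplying upper bounds together produces the inequality you need, and I do not see how to repair this line without importing a new idea.

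The missing idea (and Ostrowski's actual route) is a continuity argument. Set $p_t=(1-t)f+tg$ for $t\in[0,1]$; the coefficients of $p_t$ still satisfy $|c_k|\le M^k$, so every root of every $p_t$ lies in $|z|\le 2M$, and your evaluation step applied to $f$ at a root $z$ of $p_t$ gives $|f(z)|=t|g(z)-f(z)|\le\Delta$, hence $\min_i|z-x_i|\le\delta$. Thus all roots of all $p_t$ stay in $\bigcup_i\overline{B}(x_i,\delta)$. By continuity of roots in $t$, each connected component $K$ of this union contains the same number of roots of $p_0=f$ as of $p_1=g$. If $K$ contains $k$ of the $x_i$, their pairwise distances are at most $2(k-1)\delta$, and each $y_j\in K$ lies within $\delta$ of one of them; hence any matching within components has $|x_i-y_{\sigma(i)}|\le(2k-1)\delta\le(2n-1)\delta<\varepsilon$. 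This replaces your Hall step entirely and is where the factor $2n$ genuinely enters.
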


\section{Approximation of the eigenvalues of the constituents}\label{sec-coh-approx}

In this subsection we provide technical lemmas that allow us to approximate the zero-weight spectral radius of $X_1$, $X_2$ and $X_{1,2}$ under quite modest assumptions.

\begin{lemma}\label{rank-4-roots}
Let $\mathfrak{X}$ be an association scheme of rank 4. Let $\eta$ be a non-trivial eigenvalue of $A_1$. Then $\eta$ satisfies cubic polynomial equation $\eta^3+a_1\eta^2+a_2\eta+a_3 = 0$, where
\[a_1 =  - (p_{1,1}^{1}+p_{1,2}^{2}-p_{1,1}^{3}-p_{1,2}^{3})\quad \quad a_3 = \left( (p_{1,2}^{2}-p_{1,2}^{3})(k_1-p_{1,1}^{3})+(p^{2}_{1,1} - p_{1,1}^{3})p_{1,2}^{3}\right)\]
\[a_2 = \left( (p_{1,2}^{2}-p_{1,2}^{3})(p^{1}_{1,1} - p_{1,1}^{3}) - (p^{2}_{1,1} - p_{1,1}^{3})(p_{1,2}^{1}-p_{1,2}^{3})- (k_1-p^{3}_{1,1})\right)\]
\end{lemma}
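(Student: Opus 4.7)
The plan is to exploit the fact that in an association scheme the adjacency matrices $A_0=I, A_1, A_2, A_3$ are symmetric and pairwise commute (symmetry of the intersection numbers $p_{i,j}^t=p_{j,i}^t$ gives $A_iA_j=A_jA_i$), so they admit a common orthogonal eigenbasis. For the given non-trivial eigenvalue $\eta$ of $A_1$, I will pick a joint eigenvector $v$ in the orthogonal complement of the all-ones vector with $A_1v=\eta v$. Writing $A_iv=\eta_iv$, we have $\eta_0=1$, $\eta_1=\eta$, and since $A_0+A_1+A_2+A_3=J$ and $Jv=0$, we immediately get the linear constraint $\eta_3=-1-\eta-\eta_2$.

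Next I will apply the two structural identities $A_1^2=\sum_t p_{1,1}^tA_t$ and $A_1A_2=\sum_t p_{1,2}^tA_t$ to $v$. Using $p_{1,1}^0=k_1$ and $p_{1,2}^0=0$ (the latter holds in any association scheme because $c(u,v)=0$ forces $u=v$, and then $c(w,u)=c(w,v)$ rules out one of the colors being $1$ and the other $2$), then substituting $\eta_3=-1-\eta-\eta_2$, I obtain the two identities
\begin{equation*}
(p_{1,1}^2-p_{1,1}^3)\,\eta_2 \;=\; \eta^2-(p_{1,1}^1-p_{1,1}^3)\,\eta-(k_1-p_{1,1}^3),
\end{equation*}
\begin{equation*}
\bigl(\eta-(p_{1,2}^2-p_{1,2}^3)\bigr)\,\eta_2 \;=\; (p_{1,2}^1-p_{1,2}^3)\,\eta - p_{1,2}^3.
\end{equation*}

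These are two linear relations in $\eta_2$ with $\eta$ as a parameter, both of which hold simultaneously. Eliminating $\eta_2$ by cross-multiplication (equivalently, by taking the resultant with respect to $\eta_2$) yields a polynomial identity in $\eta$ alone. Expanding and collecting the coefficients of $\eta^3,\eta^2,\eta^1,\eta^0$ reproduces exactly the cubic $\eta^3+a_1\eta^2+a_2\eta+a_3=0$ with the $a_i$ stated in the lemma.

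The work is essentially just bookkeeping; the only conceptual point to mention is the degenerate case in which both coefficients of $\eta_2$ vanish simultaneously (so the cross-multiplication is vacuous), but this case is harmless because the derived cubic is an algebraic consequence of the two relations regardless, and it is the content of the statement that $\eta$ is a root of this polynomial, not that the polynomial is non-trivial. I do not anticipate any real obstacle beyond careful arithmetic in the expansion step.
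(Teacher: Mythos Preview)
Your proposal is correct and follows essentially the same elimination strategy as the paper: use $\sum_i A_i=J$ (equivalently $Jv=0$) to remove $A_3$, then use a second structure relation to remove $A_2$, leaving a cubic in $A_1$ (resp.\ $\eta$) alone. The paper carries this out at the matrix level, computing $A_1^2$ and then $A_1^3$ and cancelling the $A_2$-terms; you instead pass to a joint eigenvector at the outset and use the two scalar relations coming from $A_1^2$ and $A_1A_2$, eliminating $\eta_2$ by cross-multiplication. These are equivalent manipulations, and your expansion reproduces the stated $a_1,a_2,a_3$ exactly. One small remark: your worry about the ``degenerate case'' is unnecessary, since from $A\eta_2=B$ and $C\eta_2=D$ one gets $AD=AC\eta_2=CA\eta_2=CB$ unconditionally, with no case split required.
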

\begin{proof}
By Eq. \eqref{eq-int-num} for intersection numbers we have
\[A_1^{2} = p^{1}_{1,1}A_1+p_{1,1}^{2}A_2+p_{1,1}^{3}A_3+k_1I.\]
We can eliminate $A_3$ using Eq. \eqref{eq-constit-sum}.
\begin{equation}\label{eq-eig-comp-1}
A_1^{2}= (p^{1}_{1,1} - p_{1,1}^{3})A_1+(p_{1,1}^{2}-p_{1,1}^{3})A_2+(k_1-p_{1,1}^{3})I+p_{1,1}^{3}J.
\end{equation}
Let us multiply previous equation by $A_1$ and use Eq. \eqref{eq-int-num}.
\begin{equation}\label{eq-eig-comp-2}
\begin{gathered}
A_1^{3} = (p^{1}_{1,1} - p_{1,1}^{3})A_1^{2}+(k_1-p^{3}_{1,1})A_1+p_{1,1}^{3}k_1J+\\
+(p^{2}_{1,1} - p_{1,1}^{3})((p_{1,2}^{1}-p_{1,2}^{3})A_1+(p_{1,2}^{2}-p_{1,2}^{3})A_2+p_{1,2}^{3}J-p_{1,2}^{3}I).
\end{gathered}
\end{equation}
Combining Eq. \eqref{eq-eig-comp-1} and \eqref{eq-eig-comp-2} we eliminate $A_2$ as well.

\[A_1^{3} - (p_{1,2}^{2}-p_{1,2}^{3})A_1^2 = (p^{1}_{1,1} - p_{1,1}^{3})A_1^{2}+(k_1-p^{3}_{1,1})A_1+p_{1,1}^{3}k_1J -  \]
\[ - (p_{1,2}^{2}-p_{1,2}^{3})(p^{1}_{1,1} - p_{1,1}^{3})A_1 - (p_{1,2}^{2}-p_{1,2}^{3})((k_1-p_{1,1}^{3})I+p_{1,1}^{3}J)+\]
\[+(p^{2}_{1,1} - p_{1,1}^{3})(p_{1,2}^{1}-p_{1,2}^{3})A_1+(p^{2}_{1,1} - p_{1,1}^{3})(p_{1,2}^{3}J-p_{1,2}^{3}I).\]
Suppose that $v$ is an eigenvector of $A_1$, which is different from the all-ones vector, and let $\eta$ be the corresponding eigenvalue. Then $Jv = 0$ and $A_1v = \eta v$, so the non-trivial eigenvalue $\eta$ is a root of the polynomial 
$\eta^3+a_1\eta^2+a_2\eta+a_3$.

\end{proof}

\begin{proposition}\label{assoc-spectral-radius}
Fix $\varepsilon>0$. Let $\mathfrak{X}$ be an association scheme of rank 4. Suppose that the parameters of $\mathfrak{X}$ satisfy ${1}/{\varepsilon}\leq k_1$ and $p^{3}_{1,i}\leq \varepsilon k_1$ for $i = 1,2$. Then the zero-weight spectral radius $\xi(X_1)$ of $X_1$ satisfies
\[\xi(X_1)\leq \frac{p_{1,1}^{1}+p_{1,2}^{2}+\sqrt{(p_{1,1}^{1} - p_{1,2}^{2})^2+4p_{1,1}^{2}p_{1,2}^{1}}}{2}+25\varepsilon^{1/3} k_1. \]
\end{proposition}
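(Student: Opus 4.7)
The plan is to apply Theorem~\ref{poly-approx} to the cubic $P(\eta) = \eta^3 + a_1\eta^2 + a_2\eta + a_3$ furnished by Lemma~\ref{rank-4-roots}, whose roots are the three non-trivial eigenvalues of $A_1$, comparing it to the reference polynomial
\[R(\eta) = \eta\bigl(\eta^2 - (p_{1,1}^1 + p_{1,2}^2)\eta + p_{1,1}^1 p_{1,2}^2 - p_{1,1}^2 p_{1,2}^1\bigr).\]
The nonzero roots of $R$ are the two eigenvalues $\eta_\pm$ of the $2\times 2$ matrix $B$ with entries $B_{t,j} = p_{1,j}^t$ for $t,j \in \{1,2\}$, and the larger of them,
\[\eta_+ = \tfrac{1}{2}\Bigl(p_{1,1}^1 + p_{1,2}^2 + \sqrt{(p_{1,1}^1 - p_{1,2}^2)^2 + 4 p_{1,1}^2 p_{1,2}^1}\Bigr),\]
is exactly the main term of the claimed bound. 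Moreover, $\eta_+ + \eta_- = p_{1,1}^1 + p_{1,2}^2 \geq 0$ and $\eta_+ \geq \eta_-$ together yield $\eta_+ \geq |\eta_-|$, so $\max(0, |\eta_+|, |\eta_-|) = \eta_+$.

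The first step is to estimate the coefficient discrepancies $|a_i - b_i|$, where $b_i$ denote the coefficients of $R$. Setting $q_i := p_{1,i}^3$ for $i = 1,2$ and using $q_i \leq \varepsilon k_1$ together with the trivial bound $p_{1,j}^s \leq k_1$, direct expansion of the formulas in Lemma~\ref{rank-4-roots} yields $|a_1 - b_1| \leq 2\varepsilon k_1$, $|a_2 - b_2| \leq k_1 + O(\varepsilon k_1^2)$, and $|a_3 - b_3| \leq p_{1,2}^2 k_1 + O(\varepsilon k_1^2) \leq k_1^2 + O(\varepsilon k_1^2)$. The ``intrinsic'' summands $k_1$ in $|a_2-b_2|$ and $p_{1,2}^2 k_1$ in $|a_3-b_3|$ come from the $-(k_1-q_1)$ term in $a_2$ and the $(p_{1,2}^2-q_2)(k_1-q_1)$ term in $a_3$, which have no counterpart in $R$. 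These contributions are not themselves small; the hypothesis $k_1 \geq 1/\varepsilon$ is used crucially to absorb them into $O(\varepsilon k_1^i)$ via $k_1 \leq \varepsilon k_1^2$ and $k_1^2 \leq \varepsilon k_1^3$. This bookkeeping is the only genuine technical difficulty.

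In the final step we apply Theorem~\ref{poly-approx} with $n = 3$ and $M = 2k_1$; the choice of $M$ is valid since each $|a_i|$ and $|b_i|$ is bounded by $O(k_1^i)$. The resulting error estimate simplifies to
\[\varepsilon_{\text{approx}} = 6\bigl(|a_1-b_1|(2M)^2 + |a_2-b_2|(2M) + |a_3-b_3|\bigr)^{1/3} = O(\varepsilon^{1/3} k_1),\]
and a careful accounting of the constants gives $\varepsilon_{\text{approx}} \leq 25\varepsilon^{1/3} k_1$. Theorem~\ref{poly-approx} then produces a bijection between the roots of $P$ and $\{0, \eta_+, \eta_-\}$ such that paired roots differ by at most $25\varepsilon^{1/3} k_1$. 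Since $\max(0, |\eta_+|, |\eta_-|) = \eta_+$, every non-trivial eigenvalue $\rho$ of $A_1$ satisfies $|\rho| \leq \eta_+ + 25\varepsilon^{1/3} k_1$, and taking the maximum over $\rho$ yields the desired bound on $\xi(X_1)$.
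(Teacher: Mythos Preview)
Your proposal is correct and follows essentially the same approach as the paper: both compare the cubic of Lemma~\ref{rank-4-roots} to the reference polynomial $R(\eta)=\eta(\eta^2-(p_{1,1}^1+p_{1,2}^2)\eta+p_{1,1}^1p_{1,2}^2-p_{1,1}^2p_{1,2}^1)$, bound the coefficient discrepancies using $p_{1,i}^3\le\varepsilon k_1$ and $k_1\ge 1/\varepsilon$, and apply Theorem~\ref{poly-approx} with $M=2k_1$. You are in fact slightly more explicit than the paper in noting that $\max(0,|\eta_+|,|\eta_-|)=\eta_+$, which is needed to pass from the root-matching to the stated bound on $\xi(X_1)$.
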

\begin{proof}
By Lemma \ref{rank-4-roots}, every non-trivial eigenvalue of $X_1$ is a root of the polynomial
\[\eta^3+a_1\eta^2+a_2\eta+a_3, \]
where $a_1$, $a_2$ and $a_3$ are as in Lemma~\ref{rank-4-roots}. Observe, that for
\[ b_1  = -(p_{1,1}^{1}+p_{1,2}^{2}), \quad b_2 = p_{1,2}^2 p_{1,1}^{1} - p_{1,1}^{2} p_{1,2}^{1}, \quad b_3 = 0,\]
the following inequalities are true
\[ |a_1 - b_1|\leq 2\varepsilon k_1, \quad
|a_2 - b_2| \leq \left(4\varepsilon+2\varepsilon^2+\frac{1}{k_1}\right)k_1^2, \quad   |a_3 - b_3|\leq 2k_1^2\leq 2\varepsilon k_1^{3}.\]

Denote by $\nu_1, \nu_2, \nu_3$ the non-trivial eigenvalues of $A_1$. By Theorem \ref{poly-approx}, we can arrange the roots $x_1, x_2, x_3$ of  $x^3+b_1x^2+b_2x+b_3$ so that $|\nu_i - x_i|\leq \delta$, where
\[\delta = 6 \left(2\varepsilon k_1(4k_1)^2+6\varepsilon k_1^2 (4k_1)+2\varepsilon k_1^3\right)^{1/3}\leq 25\varepsilon^{1/3} k_1.\qedhere\]

\end{proof}

\begin{proposition}\label{assoc-x12-approx}
Fix $\varepsilon>0$. Let $\mathfrak{X}$ be an association scheme of rank 4. Suppose that the intersection numbers of $\mathfrak{X}$ satisfy ${1}/{\varepsilon}\leq k_1$, $p_{1,1}^{2}\leq \varepsilon k_1$ and $p^{3}_{i,j}\leq \varepsilon \min(k_{i}, k_{j})$ for $\{i,j\} = \{1,2\}$. Then the zero-weight spectral radius of $X_{1,2}$ satisfies
\[\xi(X_{1, 2})\leq \frac{p_{1,1}^{1}+p_{1,2}^{2}+p_{2,2}^{2}+\sqrt{(p_{2,2}^{2}+ p_{1,2}^{2}- p_{1,1}^{1})^2+4p_{1,2}^{2}p_{2,2}^{1}}}{2}+25\varepsilon^{1/3} (k_1+k_2). \]
\end{proposition}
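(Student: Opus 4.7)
The plan is to mimic the proof of Proposition~\ref{assoc-spectral-radius}: first establish that every non-trivial eigenvalue of $B := A_1+A_2$ satisfies a cubic polynomial, then approximate its coefficients by those of a reference cubic whose roots can be written down explicitly, and finally apply Theorem~\ref{poly-approx}. The reference cubic will be
\[
\tilde\chi(x) \;=\; x\bigl(x^2 - (p_{1,1}^{1}+p_{1,2}^{2}+p_{2,2}^{2})\,x + p_{1,1}^{1}(p_{1,2}^{2}+p_{2,2}^{2}) - p_{1,2}^{2}p_{2,2}^{1}\bigr),
\]
whose three roots are $0$ and the two eigenvalues $\lambda_{\pm}$ of the matrix
\[
N \;=\; \begin{pmatrix} p_{1,1}^{1} & p_{1,2}^{2} \\ p_{2,2}^{1} & p_{1,2}^{2}+p_{2,2}^{2} \end{pmatrix};
\]
a direct discriminant calculation (identical in spirit to the one hidden inside the proof of Proposition~\ref{assoc-spectral-radius}) verifies that $\lambda_{+}$ equals the main term in the stated bound.

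To derive the cubic $\chi(x) = x^3 + a_1 x^2 + a_2 x + a_3$ satisfied by the non-trivial eigenvalues of $B$, I would compute the $3\times 3$ matrix $M$ of left-multiplication by $B$ on the quotient algebra $\mathcal{A}/\mathbb{R}J$ in the basis $\{[I], [A_1], [A_2]\}$, using the identity $[A_3] = -[I]-[A_1]-[A_2]$ which holds on every non-trivial eigenspace (where $J$ acts as $0$). Expanding $B\cdot I$, $B\cdot A_1$, $B\cdot A_2$ via $A_iA_j = \sum_t p_{i,j}^{t}A_t$ produces explicit expressions for the entries of $M$, and hence for the coefficients $a_i$, in terms of the intersection numbers.

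To bound $|a_i - b_i|$ (where $b_1, b_2, b_3$ are the coefficients of $\tilde\chi$) I would use: (a) the hypothesis $p_{1,1}^{2}\le\varepsilon k_1$; (b) the hypothesis $p_{i,j}^{3}\le\varepsilon\min(k_i,k_j)$ applied throughout $\{i,j\}\subseteq\{1,2\}$, giving $p_{1,1}^{3},p_{1,2}^{3}\le\varepsilon k_1$ and $p_{2,2}^{3}\le\varepsilon k_2$; (c) the derived inequality $p_{1,2}^{1}=p_{1,1}^{2}k_2/k_1\le\varepsilon k_2$ obtained from the duality $p_{i,j}^{s}k_s = p_{s,j}^{i}k_i$; and (d) the key duality $k_1\,p_{2,2}^{1} = k_2\,p_{1,2}^{2}$, which cancels the two leading $O((k_1+k_2)^{2})$ contributions in the constant coefficient $a_3$. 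The expected outcome is $|a_1 - b_1|\le O(\varepsilon(k_1+k_2))$ and $|a_2 - b_2|\le O(\varepsilon(k_1+k_2)^{2})$ directly from (a)--(d), while $|a_3 - b_3| = |a_3| \le O((k_1+k_2)^{2}) \le O(\varepsilon(k_1+k_2)^{3})$, the last step using $1/\varepsilon \le k_1 \le k_1+k_2$. Applying Theorem~\ref{poly-approx} with $n=3$ and $M = O(k_1+k_2)$ then yields a root-approximation error bounded by $25\varepsilon^{1/3}(k_1+k_2)$, exactly the additive error in the proposition.

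The main obstacle is item (d): once $\chi$ is written out, several $O((k_1+k_2)^{2})$ terms in $a_3$ do not vanish individually and must be paired off using the duality identity $k_1\,p_{2,2}^{1} = k_2\,p_{1,2}^{2}$ to see that they telescope. This cancellation is the analogue of the one that legitimizes the choice $b_3 = 0$ in the proof of Proposition~\ref{assoc-spectral-radius}, but it is more delicate here because $B$ mixes two constituents rather than one, so the book-keeping for which $O((k_1+k_2)^{2})$ terms collapse (and which only shrink to that order via the row-sum identities $\sum_j p_{i,j}^{t} = k_i$) is where the bulk of the technical work lies.
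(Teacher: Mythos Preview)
Your proposal is correct and follows the paper's approach: derive a monic cubic satisfied by the non-trivial eigenvalues of $B=A_1+A_2$ (the paper does this by computing $B^2$, $B^3$ and eliminating $A_3$, then $A_2$; you do it via the $3\times3$ multiplication matrix $M$ on $\mathcal{A}/\mathbb{R}J$, which yields the same polynomial), compare to the same reference cubic $\tilde\chi$ with $b_3=0$, and invoke Theorem~\ref{poly-approx}.

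One correction to your self-diagnosis: the ``main obstacle'' you flag in item~(d) is not actually present. Since the first column of $M$ is $(0,1,1)^{T}$, every term in the $3\times3$ determinant expansion of $\det M$ is a product of one entry from column~1 (size $O(1)$), one from column~2 (size $O(k_1)$), and one from column~3 (size $O(k_2)$); hence $|a_3|=|\det M|=O(k_1k_2)\le O((k_1+k_2)^2)$ holds termwise, with no cancellation required. The duality $k_1p_{2,2}^{1}=k_2p_{1,2}^{2}$ is a true identity but plays no role here; the hypothesis $k_1\ge 1/\varepsilon$ alone converts $O((k_1+k_2)^2)$ into $O(\varepsilon(k_1+k_2)^3)$, exactly as in Proposition~\ref{assoc-spectral-radius}. (The analogous remark applies to the $-(k_1+k_2)$ contribution you will find in $a_2-b_2$: it is $O(k_1+k_2)\le O(\varepsilon(k_1+k_2)^2)$ by the same hypothesis.)
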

\begin{notation} We use the non-asymptotic notation $y = \square(x)$ to say that $|y|\leq x$.
\end{notation}
\begin{proof}
The proof is similar to the proofs of Proposition \ref{assoc-spectral-radius} and Lemma \ref{rank-4-roots}.
Denote $k = k_1+k_2$.  By Eq.~\eqref{eq-int-num} we have
\begin{equation}
(A_1+A_2)^{2} = (p_{1,1}^{1}+2p_{1,2}^{1}+p_{2,2}^{1})A_1+(p_{1,1}^{2}+2p_{1,2}^{2}+p_{2,2}^{2})A_2+(p_{1,1}^{3}+2p_{1,2}^{3}+p_{2,2}^{3})A_3+kI.
\end{equation}
Note that, by assumptions of this proposition 
\[0\leq p_{1,1}^{3}+2p_{1,2}^{3}+p_{2,2}^{3} \leq 2\varepsilon k\quad \text{and} \quad 0\leq p_{1,2}^{1} = \dfrac{k_2}{k_1}p_{1,1}^2\leq \varepsilon k.\]
 Using Eq. \eqref{eq-constit-sum}, we eliminate $A_3$.
\begin{equation}\label{eq-eigx12-sq}
\begin{multlined}
(A_1+A_2)^{2} = (p_{1,1}^{1}+p_{2,2}^{1}+2\square(\varepsilon k))A_1+ (2p_{1,2}^{2}+p_{2,2}^{2}+2\square(\varepsilon k))A_2+
\\
+(k+2\square(\varepsilon k))I+2\square(\varepsilon k)J = \\
 = (p_{1,1}^{1}+p_{2,2}^{1}+2\square(\varepsilon k))(A_1+A_2)+2\square(\varepsilon k^2)I+2\square(\varepsilon k)J+\\
 +\left(2p_{1,2}^{2}+p_{2,2}^{2}-p_{1,1}^{1}-p_{2,2}^{1}+4\square(\varepsilon k)\right)A_2.
\end{multlined}
\end{equation}
Denote by $R = 2p_{1,2}^{2}+p_{2,2}^{2}-p_{1,1}^{1}-p_{2,2}^{1}+4\square(\varepsilon k)$ the last coefficient in Eq.~\eqref{eq-eigx12-sq}. Multiplying Eq. \eqref{eq-eigx12-sq} by $(A_1+A_2)$ we get
\begin{equation}\label{eq-eigx12-cub}
\begin{multlined}
(A_1+A_2)^{3} = (p_{1,1}^{1}+p_{2,2}^{1}+2\square(\varepsilon k))(A_1+A_2)^{2}+2\square(\varepsilon k^2)(A_1+A_2)+2\square(\varepsilon k^2)J+\\
+R\left(  (p_{1,2}^{1}+p_{2,2}^{1}+\square(\varepsilon k))(A_1+A_2)+(k_2+\square(\varepsilon k))I+\square(\varepsilon k)J \right)+\\
+R(p_{1,2}^{2}+p_{2,2}^{2}-p_{1,2}^{1}+p_{2,2}^{1})A_2 = \\
 = (p_{1,1}^{1}+p_{2,2}^{1}+2\square(\varepsilon k))(A_1+A_2)^{2}+9\square(\varepsilon k^2)(A_1+A_2)+5\square(\varepsilon k^2)J+\\
 + (2p_{1,2}^{2}+p_{2,2}^{2}-p_{1,1}^{1}-p_{2,2}^{1})p_{2,2}^{1}(A_1+A_2)+3\square(\varepsilon k^{2})I\\
 +R(p_{1,2}^{2}+p_{2,2}^{2} - p_{1,2}^{1} - p_{2,2}^{1})A_2.
\end{multlined}
\end{equation}

Let us multiply Eq. \eqref{eq-eigx12-sq} by $p_{1,2}^{2}+p_{2,2}^{2} - p_{1,2}^{1} - p_{2,2}^{1} = p_{1,2}^{2}+p_{2,2}^{2} - p_{2,2}^{1}+\square(\varepsilon k)$ to eliminate $A_2$ from Eq.~\eqref{eq-eigx12-cub}. Observe first, that 
\[(2p_{1,2}^{2}+p_{2,2}^{2} - p_{1,1}^{1} - p_{2,2}^{1})p_{2,2}^{1} - (p_{1,1}^{1}+p_{2,2}^{1})(p_{1,2}^{2}+p_{2,2}^{2} - p_{2,2}^{1}) = p_{1,2}^{2}p_{2,2}^{1} - p_{1,1}^{1}p_{1,2}^{2} - p_{1,1}^{1}p_{2,2}^{2}.\]
Thus, 
\begin{equation}\label{eq-poly-for-x12}
\begin{multlined}
(A_1+A_2)^{3} - (p_{1,1}^{1}+p_{2,2}^{2} +p_{1,2}^{2})(A_1+A_2)^{2}-\\-(p_{1,2}^{2}p_{2,2}^{1} - p_{1,1}^{1}p_{1,2}^{2} - p_{1,1}^{1}p_{2,2}^{2})(A_1+A_2)+\\
+3\square(\varepsilon k)(A_1+A_2)^{2}+13\square(\varepsilon k^{2})(A_1+A_2)+5\square(\varepsilon k^{3})I+8\square(\varepsilon k^{2})J = 0.
\end{multlined}
\end{equation}
Consider
\[ b_1 = -(p_{1,1}^{1}+p_{2,2}^{2} +p_{1,2}^{2}), \quad b_2 =  p_{1,1}^{1}(p_{1,2}^{2} + p_{2,2}^{2}) - p_{1,2}^{2}p_{2,2}^{1}, \quad b_3 = 0.\]
Then, Eq. \eqref{eq-poly-for-x12} implies that every non-trivial eigenvalue $\eta$ of $X_{1,2}$ satisfies the polynomial equation $\eta^{3}+a_1\eta^{2}+a_2\eta+a_3 = 0$, where
\[ |a_1 - b_1|\leq 3\varepsilon k, \quad |a_2 - b_2|\leq 13\varepsilon k^2, \quad |a_3 - b_3|\leq 5\varepsilon k^{3}.\]

Denote by $\nu_1, \nu_2, \nu_3$ the non-trivial eigenvalues of $A_1+A_2$. By Theorem \ref{poly-approx}, we can permute the roots $x_1, x_2, x_3$ of  $x^3+b_1x^2+b_2x+b_3$ so that $|\nu_i - x_i|\leq \delta$, where
\[\delta = 6 \left(3\varepsilon k(2k)^2+13\varepsilon k^2 (2k)+5\varepsilon k^3\right)^{1/3}\leq 25\varepsilon^{1/3} k.\]
Here we use that the inequalities $b_{1} = p_{1,1}^{1}+(p_{2,1}^{2}+p_{2,2}^{2})\leq k$ and $b_2\leq k^{2}$ hold, by Eq. \eqref{eq-sum-param}.

\end{proof}

\section{Reduction to the case of a constituent with a clique geometry}\label{sec-coh-reduction} 

In this section we show that the motion of a rank-4 association scheme of diameter 2 is linear in the number of vertices, unless one of its constituents, or its complement, has a clique geometry.

First, we show that one can assume that some intersection numbers are small. 

\begin{lemma}\label{assoc-k2-large}
Let $\mathfrak{X}$ be an association scheme of rank 4 and diameter $2$ with the constituents ordered by degree. If $k_2\geq \gamma k_3$, then every pair of distinct vertices is distinguished by at least $\gamma n/6$ vertices.
\end{lemma}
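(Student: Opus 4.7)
The plan is to bound $D_{\min}(\mathfrak{X})$ from below in three steps: (i) express $D(i)$ via the intersection numbers $p_{j,j}^{i}$; (ii) find a color with $D$-value at least $\gamma n/3$ by an averaging argument that exploits $k_2 \geq \gamma k_3$; and (iii) propagate this lower bound to every color using Babai's triangle inequality (Lemma~\ref{babai-dist}) together with the diameter-$2$ hypothesis.

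For step (i), since $\mathfrak{X}$ is an association scheme, $D(u,v)$ depends only on $c(u,v)$, and I write $D(i)$ for this common value. A vertex $x$ fails to distinguish $u,v$ with $c(u,v)=i\ne 0$ exactly when $c(x,u)=c(x,v)=j$ for some $j\in\{1,2,3\}$, because $c(u,u)=c(v,v)=0\ne i$ rules out $j=0$. This yields $D(i) = n - \sum_{j=1}^{3} p_{j,j}^{i}$. Applying $A_j^{2}\mathbf{1}=k_j^{2}\mathbf{1}$ to the identity $A_j^{2}=\sum_{i}p_{j,j}^{i}A_i$ gives $\sum_{i\ge 0}k_i p_{j,j}^{i}=k_j^{2}$, and since $p_{j,j}^{0}=k_j$ we obtain $\sum_{i\ge 1}k_i p_{j,j}^{i}=k_j(k_j-1)$.

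For step (ii), summing $D(i)$ weighted by $k_i$ yields
\[ \sum_{i=1}^{3}k_i D(i)=(n-1)(n+1)-\sum_{j=1}^{3}k_j^{2}. \]
The maximum of $D(i)$ is at least the $k_i$-weighted average of the $D(i)$'s. Using $\sum_{j}k_j^{2}\le k_3(n-1)$ (since $k_j\le k_3$), we get $\max_{i}D(i)\ge (n+1)-k_3$. The hypothesis $k_2\ge\gamma k_3$ combined with $k_1+k_2+k_3=n-1$ gives $(1+\gamma)k_3\le n-1$, i.e.\ $k_3\le (n-1)/(1+\gamma)$. A short calculation (using $\gamma\le 1$, which is forced by $k_2\le k_3$) then yields $\max_{i}D(i)\ge \gamma n/3$.

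For step (iii), Lemma~\ref{babai-dist} gives $D(j)\le\dist_i(j)\,D(i)$ for all non-diagonal $i,j$. The diameter-$2$ hypothesis implies $\dist_{i}(j)\le 2$ for all $i,j\in\{1,2,3\}$: the distance is $1$ when $j=i$, and exactly $2$ when $j\ne i$ because $X_i$ is connected (by primitivity) of diameter $2$. Hence $\min_i D(i)\ge\tfrac{1}{2}\max_i D(i)\ge \gamma n/6$, which is the required bound. I do not foresee a serious obstacle: once the key identity $\sum_{i}k_i p_{j,j}^{i}=k_j^{2}$ is noted, the remainder is routine estimation, and the only delicate point is the verification $\dist_i(j)\le 2$ for $i\ne j$, which follows at once from the diameter hypothesis and primitivity.
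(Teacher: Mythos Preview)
Your proof is correct. Both your argument and the paper's reach the bound $\gamma n/6$ via the same two-stage structure---first exhibit one color $i$ with $D(i)\ge\gamma n/3$, then halve via Lemma~\ref{babai-dist} and the diameter-$2$ hypothesis---but the middle step is carried out differently. The paper looks only at the colour-$2$ neighbourhoods, bounding $D(i)\ge |N_2(u)\triangle N_2(v)|=2(k_2-p_{2,2}^{i})$, and then uses the single identity $k_2(k_2-1)=\sum_{i\ge 1} k_i\,p_{2,2}^{i}$ together with $k_3\ge k_2$ to deduce $\min(p_{2,2}^{2},p_{2,2}^{3})\le(k_2-1)/2$; since $k_3\ge(n-1)/3$, this gives some $D(i)\ge k_2+1\ge\gamma n/3$. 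You instead compute the full formula $D(i)=n-\sum_{j} p_{j,j}^{i}$, take the $k_i$-weighted average, and bound $\sum_j k_j^{2}\le k_3(n-1)$ combined with $k_3\le(n-1)/(1+\gamma)$; in fact your calculation gives $\max_i D(i)>\gamma n/(1+\gamma)\ge\gamma n/2$, a slightly sharper intermediate constant than the $\gamma n/3$ you record. Your averaging route is more systematic and would generalize more cleanly to higher rank, while the paper's argument is quicker and uses only one of the three identities $\sum_i k_i p_{j,j}^i=k_j(k_j-1)$.
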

\begin{proof}
Since $\mathfrak{X}$ has diameter 2 it is enough to show that some pair of vertices is distinguished by at least ${\gamma n}/{3}$ vertices, as then result follows by Lemma \ref{babai-dist}. Observe that vertices $u,v$, connected by an edge of color $i$, are distiguished by at least $|N_2(u)\bigtriangleup N_2(v)| = 2(k_2 - p_{2,2}^i)$ vertices. At the same time, we have
\[k_2 (k_2 - 1) = \sum\limits_{i = 1}^{3} k_i p_{2, 2}^{i}\geq k_2 p_{2, 2}^2+k_3 p_{2, 2}^3.\]
Thus, $k_3\geq k_2$ implies $k_2-1\geq p_{2,2}^{2}+p_{2,2}^{3}$. So $\min(p_{2,2}^{2}, p_{2,2}^{3})\leq (k_2-1)/{2}$. Hence, a pair of vertices connected by an edge of color $i$, which minimizes $p_{2,2}^{i}$, is distinguished by at least $k_2+1\geq \gamma k_3+1\geq {\gamma n}/{3}$ vertices.
\end{proof}
\begin{remark}
Note that the result of the lemma can also be derived directly from Proposition~6.3 proven by Babai in \cite{Babai-annals}.
\end{remark}

\begin{lemma}\label{assoc-param-ineq}
Let $\mathfrak{X}$ be an association scheme of rank 4 and diameter $2$ with the largest degree equal $k_3$. Fix some $\varepsilon>0$. 
Assume $\max(k_1, k_2)\leq \varepsilon k_3/2$. Then
\begin{equation}\label{eq-statement-2} 
\quad p_{1,2}^{3}\leq \varepsilon k_1, \quad p_{1,1}^{3}\leq \varepsilon k_1, \quad p_{2,2}^{3}\leq \varepsilon k_2, \quad \text{and}
\end{equation}
\begin{equation}
p^{1}_{3,3}\geq k_3(1-\varepsilon), \quad p^{2}_{3,3}\geq k_3(1-\varepsilon).
\end{equation}

\end{lemma}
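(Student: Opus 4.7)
The plan is to exploit the two basic identities for intersection numbers in a homogeneous coherent configuration (Eq.~\eqref{eq-sum-param}): the row-sum $\sum_{j} p_{i,j}^{t} = k_{i}$ and the symmetry relation $p_{i,j}^{s} k_{s} = p_{s,j^{*}}^{i} k_{i}$. Since $\mathfrak{X}$ is an association scheme, every color is self-paired, so the symmetry relation reads $p_{i,j}^{s}\,k_{s} = p_{s,j}^{i}\,k_{i}$. In addition, I will use the completely trivial bound $p_{s,j}^{i} \le k_{j}$, which follows from the combinatorial meaning of intersection numbers: if $c(u,v)=i$, the vertices $w$ counted by $p_{s,j}^{i}$ all lie in $N_{j}(v)$, a set of size $k_{j}$.

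For the first group of inequalities, I apply the symmetry relation with $s = 3$ and exchange to the constituents with small degree. Concretely,
\[
p_{1,2}^{3} = \frac{k_{1}}{k_{3}}\,p_{3,2}^{1} \le \frac{k_{1}k_{2}}{k_{3}}, \qquad p_{1,1}^{3} = \frac{k_{1}}{k_{3}}\,p_{3,1}^{1} \le \frac{k_{1}^{2}}{k_{3}}, \qquad p_{2,2}^{3} = \frac{k_{2}}{k_{3}}\,p_{3,2}^{2} \le \frac{k_{2}^{2}}{k_{3}}.
\]
Plugging in the hypothesis $\max(k_{1},k_{2}) \le \varepsilon k_{3}/2$, each of the three right-hand sides is at most $(\varepsilon/2)k_{i}$, which is stronger than the claimed bound $\varepsilon k_{i}$.

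For the second group of inequalities, I use the row-sum identity with $i=3$ to write
\[
p_{3,3}^{t} \;=\; k_{3} - p_{3,0}^{t} - p_{3,1}^{t} - p_{3,2}^{t} \qquad (t \in \{1,2\}).
\]
The term $p_{3,0}^{t}$ vanishes because $c(w,v)=0$ forces $w=v$, which would require $c(u,v)=3\ne t$. The remaining two terms satisfy the trivial bound $p_{3,j}^{t}\le k_{j}$, so
\[
p_{3,3}^{t} \;\ge\; k_{3} - k_{1} - k_{2} \;\ge\; k_{3} - \varepsilon k_{3} \;=\; (1-\varepsilon)k_{3},
\]
again by the hypothesis $\max(k_{1},k_{2}) \le \varepsilon k_{3}/2$.

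There is no real obstacle here: the lemma is a direct consequence of the two standard identities combined with the elementary ``containment in a neighborhood'' bound. The only thing to keep in mind is that the symmetry relation must be invoked to convert $p_{i,j}^{3}$ (whose total over $j$ equals $k_{i}$, hence is not automatically small) into the dual quantity $p_{3,j}^{i}$, where the hypothesis on $k_{1}$ and $k_{2}$ can be brought to bear through the ratio $k_{i}/k_{3}$.
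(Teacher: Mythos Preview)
Your proof is correct and follows essentially the same approach as the paper: both arguments use the symmetry relation $p_{i,j}^{s}k_{s}=p_{s,j}^{i}k_{i}$ together with the trivial bound $p_{s,j}^{i}\le k_{j}$ to handle the $p_{i,j}^{3}$ inequalities, and the row-sum identity $\sum_{j}p_{3,j}^{t}=k_{3}$ for the $p_{3,3}^{t}$ inequalities. The only cosmetic difference is that for $p_{i,i}^{3}$ the paper invokes the identity $k_{i}(k_{i}-p_{i,i}^{i}-1)\ge k_{3}p_{i,i}^{3}$ rather than the symmetry relation directly, but this yields the same bound $p_{i,i}^{3}\le k_{i}^{2}/k_{3}$ (up to lower-order terms) and is not a substantive distinction.
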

\begin{proof}
 Note that for $i=1,2$,
 \[k_i(k_i-p_{i,i}^{i}-1)\geq k_3p^{3}_{i,i},\quad \text{so}\quad 
 p^{3}_{i,i}\leq \varepsilon k_i/2.\]
 Additionally, $p^{1}_{2,3}\leq k_2\leq \varepsilon k_3/2$. Thus, by Eq.~\eqref{eq-sum-param}, $p_{1,2}^{3} = k_1 p_{2,3}^{1}/ k_3 \leq \varepsilon k_1/2$.

Finally, by Eq.~\eqref{eq-sum-param}, for $j\in \{1, 2\}$, we have \[p_{3,3}^i+p_{3, 2}^i+p_{3, 1}^i = k_3\quad \text{and} \quad p_{3,j}^{i}\leq k_j\leq \varepsilon k_3/2. \] 
Therefore, $k_{3,3}^{i}\geq (1-\varepsilon)k_3$.

\end{proof}
\begin{remark} Note that the inequalities in Eq. \eqref{eq-statement-2} are still true if we replace $\varepsilon$  by $\varepsilon/2$.
\end{remark}

We need the following lemma, corollaries of which will be used several times.

\begin{lemma}\label{assoc-tr} Let $\mathfrak{X}$ be an association scheme. Suppose that there exists a triangle with sides of colors $(s,r,t)$. Then  
\[  p_{i,j}^{s}+p_{j,l}^{r}\leq k_j+p_{i,l}^{t}. \]
\end{lemma}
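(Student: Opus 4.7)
The plan is to realize each of the three intersection numbers as the size of an intersection of color-neighborhoods of the three triangle vertices, and then reduce the inequality to a one-line set-theoretic fact. Fix vertices $u, v, w$ forming a triangle with $c(u,v) = s$, $c(v,w) = r$, $c(u,w) = t$, which exist by hypothesis. For a vertex $x$ and color $a$, write $X_a(x) = \{z : c(x,z) = a\}$; since $\mathfrak{X}$ is an association scheme, every color is symmetric, so I can freely use these sets without worrying about directions.

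With this notation, the definition of intersection numbers in an association scheme gives
\[p_{i,j}^{s} = |X_i(u) \cap X_j(v)|, \qquad p_{j,l}^{r} = |X_j(v) \cap X_l(w)|, \qquad p_{i,l}^{t} = |X_i(u) \cap X_l(w)|,\]
and of course $|X_j(v)| = k_j$. So the claimed inequality becomes exactly
\[|A \cap B| + |B \cap C| \leq |B| + |A \cap C|\]
for $A = X_i(u)$, $B = X_j(v)$, $C = X_l(w)$.

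This abstract set inequality is immediate from inclusion-exclusion and the observation that $(A \cap B) \cup (B \cap C) = B \cap (A \cup C) \subseteq B$ and $A \cap B \cap C \subseteq A \cap C$:
\[|A \cap B| + |B \cap C| = |(A \cap B) \cup (B \cap C)| + |A \cap B \cap C| \leq |B \cap (A \cup C)| + |A \cap C| \leq |B| + |A \cap C|.\]

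There is no serious obstacle: the content of the lemma is entirely in the translation from intersection numbers to intersections of color-neighborhoods at the three vertices of the fixed triangle, after which the bound is a purely combinatorial identity. The role of the existence of a triangle with sides $(s,r,t)$ is precisely to guarantee that one can simultaneously pick $u, v, w$ realizing all three color relations, so that the same set $B = X_j(v)$ appears in both $p_{i,j}^{s}$ and $p_{j,l}^{r}$.
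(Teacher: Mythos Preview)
Your proof is correct and is essentially the same as the paper's: both fix a triangle $u,v,w$ with $c(u,v)=s$, $c(v,w)=r$, $c(u,w)=t$, identify the intersection numbers with $|A\cap B|$, $|B\cap C|$, $|A\cap C|$ for $A=N_i(u)$, $B=N_j(v)$, $C=N_l(w)$, and then invoke a one-line set inequality. The only cosmetic difference is that the paper phrases that inequality via the inclusion $A\setminus C\subseteq (A\setminus B)\cup(B\setminus C)$, while you phrase the equivalent bound via inclusion--exclusion on $(A\cap B)\cup(B\cap C)$.
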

\begin{proof}
Apply the inclusion
\[N_i(u)\setminus N_l(w) \subseteq (N_i (u)\setminus N_j(v))\cup (N_j(v)\setminus N_l(w)),\] to vertices $u,v,w$, where $c(u,v) = s$, $c(v,w) = r$, and $c(u,w) = t$.
\end{proof}
\begin{corollary}\label{cor1} Suppose that an association scheme $\mathfrak{X}$ of rank 4 satisfies $\max(k_1, k_2)\leq \varepsilon k_3/2$. Suppose also that there exists a triangle with sides $(s,t,3)$.
Then,  $p_{i,j}^{s}\leq p^{t}_{i,3}+\varepsilon k_j$, where $i,j\in \{1,2\}$.
\end{corollary}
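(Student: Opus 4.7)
The strategy is to apply Lemma \ref{assoc-tr} to the given triangle with the ``opposite side'' chosen to be of color $3$, and then exploit the fact that $p_{3,3}^{j}$ is close to $k_3$, as guaranteed by Lemma \ref{assoc-param-ineq}.

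Concretely, since the triangle has sides of colors $(s,t,3)$, we may label its vertices so that the two sides meeting at one vertex have colors $s$ and $3$ while the opposite side has color $t$. Applying Lemma \ref{assoc-tr} with $(s,r,t)\leftarrow (s,3,t)$ and choosing the free parameter $l=3$ yields
\[
p_{i,j}^{s}+p_{j,3}^{3}\le k_j+p_{i,3}^{t}
\]
for all $i,j\in\{1,2\}$. So it suffices to show $p_{j,3}^{3}\ge (1-\varepsilon)k_j$, which would give $p_{i,j}^{s}\le p_{i,3}^{t}+\varepsilon k_j$ after rearranging.

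For the lower bound on $p_{j,3}^3$, recall from Lemma \ref{assoc-param-ineq} that $p_{3,3}^{j}\ge (1-\varepsilon)k_3$ for $j\in\{1,2\}$. Then the standard duality relation in Eq.~\eqref{eq-sum-param}, applied in an association scheme (where $j^{*}=j$ and $3^{*}=3$), gives
\[
p_{j,3}^{3}\,k_3 \;=\; p_{3,3}^{j}\,k_j \;\ge\; (1-\varepsilon)\,k_3\,k_j,
\]
so $p_{j,3}^{3}\ge (1-\varepsilon)k_j$, as required.

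There is no real obstacle here: the only thing to verify carefully is that one may legitimately pick the orientation of the triangle so that color $3$ sits adjacent to the vertex that will play the role of $v$ in Lemma \ref{assoc-tr}, which is immediate because the scheme is symmetric and the lemma hypothesis only asks for \emph{existence} of such a triangle.
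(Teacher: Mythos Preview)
Your proof is correct and follows essentially the same approach as the paper: apply Lemma~\ref{assoc-tr} with $r=l=3$, and then use the bound $p_{3,3}^{j}\ge(1-\varepsilon)k_3$ from Lemma~\ref{assoc-param-ineq} together with the duality relation $p_{j,3}^{3}k_3=p_{3,3}^{j}k_j$ from Eq.~\eqref{eq-sum-param} to conclude $p_{j,3}^{3}\ge(1-\varepsilon)k_j$. The paper's proof is the one-line version of exactly this argument.
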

\begin{proof}
Take $r = l = 3$ in Lemma \ref{assoc-tr}, then by Lemma~\ref{assoc-param-ineq}, $p_{j,3}^{3} = {k_j}p_{3,3}^{j}/{k_3}\geq (1-\varepsilon)k_j$.
\end{proof}

\begin{corollary}\label{cor2} Suppose that an association scheme $\mathfrak{X}$ has rank 4 and diameter~2. Moreover, assume $\max(k_1, k_2)\leq {\varepsilon} k_3/2$.
Then,  $p_{i,j}^{s}\leq p^{s}_{i,3}+\varepsilon k_j$, where $i,j,s \in \{1,2\}$.
\end{corollary}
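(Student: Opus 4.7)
The plan is to imitate the proof of Corollary~\ref{cor1}, but to feed Lemma~\ref{assoc-tr} with a triangle of sides $(s, 3, s)$ rather than $(s, t, 3)$. With that choice, setting $r = l = 3$ and $t = s$ in Lemma~\ref{assoc-tr} produces
\[
p_{i,j}^s + p_{j,3}^3 \;\leq\; k_j + p_{i,3}^s,
\]
which is exactly the inequality we want, provided we know $p_{j,3}^3 \geq (1-\varepsilon) k_j$. The latter follows verbatim as in Corollary~\ref{cor1}: Lemma~\ref{assoc-param-ineq} gives $p_{3,3}^j \geq (1-\varepsilon) k_3$, and Eq.~\eqref{eq-sum-param} translates this to $p_{j,3}^3 = k_j p_{3,3}^j / k_3 \geq (1-\varepsilon) k_j$ for $j \in \{1, 2\}$. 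Rearranging then yields $p_{i,j}^s \leq p_{i,3}^s + \varepsilon k_j$.

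\textbf{The main obstacle.} The only real content of the corollary is ensuring that a triangle with sides $(s, 3, s)$ actually exists, i.e.\ that $p_{s,s}^3 \geq 1$ (equivalently $p_{s,3}^s \geq 1$ by Eq.~\eqref{eq-sum-param}). This is where the diameter-$2$ hypothesis of $\mathfrak{X}$, absent from Corollary~\ref{cor1}, must enter. I first reduce to the case $\varepsilon < 1$, since otherwise the bound holds trivially from $p_{i,j}^s \leq k_j \leq \varepsilon k_j$. Under $\varepsilon < 1$ we have $k_s \leq \varepsilon k_3 / 2 < k_3 \leq n - 1$, so $X_s$ is not complete; combined with the diameter-$2$ hypothesis (which forces $X_s$ to be connected of diameter at most~$2$), this shows that $X_s$ has diameter exactly~$2$.

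\textbf{Finishing.} Since $k_3 > 0$, we can pick any pair of vertices $u, v$ with $c(u, v) = 3$. Then $c(u,v) \neq s$, so $\dist_s(u, v) \geq 2$, and the diameter-$2$ property of $X_s$ forces $\dist_s(u, v) = 2$, producing a witness $w$ with $c(u, w) = c(w, v) = s$. Hence $p_{s,s}^3 \geq 1$, so a triangle of sides $(s, 3, s)$ exists, and the computation sketched in the first paragraph delivers the desired bound $p_{i,j}^s \leq p_{i,3}^s + \varepsilon k_j$.
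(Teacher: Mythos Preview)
Your proof is correct and follows essentially the same route as the paper: apply Lemma~\ref{assoc-tr} with $r=l=3$ and $t=s$, use Lemma~\ref{assoc-param-ineq} (via Eq.~\eqref{eq-sum-param}) to bound $p_{j,3}^{3}\geq(1-\varepsilon)k_j$, and invoke the diameter-$2$ hypothesis to guarantee the existence of the required $(s,3,s)$ triangle. The paper's proof is a one-liner that asserts the triangle exists ``as diameter of $\mathfrak{X}$ is $2$ and $s\neq 3$''; you have simply unpacked this (and additionally handled the degenerate case $\varepsilon\geq 1$, which the paper does not bother with since $\varepsilon$ is always tiny in applications).
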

\begin{proof}
Take $r = l = 3$ and $s = t$ in Lemma \ref{assoc-tr}. Observe, that a triangle with sides of colors $(s,s,3)$ exists, as diameter of $\mathfrak{X}$ is 2 and $s\neq 3$.
\end{proof}

\begin{corollary}\label{cor3} Suppose an association scheme $\mathfrak{X}$ has rank 4 and diameter~2. Assume $\max(k_1, k_2)\leq \varepsilon k_3/2$.
Then, $2p_{i,j}^{s}\leq k_j+\varepsilon k_i$, where $i,j,s\in \{1,2\}$. Moreover, if $k_1\leq k_2$, then $2p_{1,2}^{2}\leq (1+\varepsilon)k_1$.
\end{corollary}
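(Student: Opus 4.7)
The approach is to apply Lemma~\ref{assoc-tr} directly, with a choice of indices that exploits the symmetry $p_{i,j}^{s} = p_{j,i}^{s}$ available in an association scheme. The proofs of Corollaries~\ref{cor1} and~\ref{cor2} invoked Lemma~\ref{assoc-tr} with $l = 3$, producing $p_{i,3}^t + \varepsilon k_j$ on the right-hand side. Here I instead take $l = i$, which makes the right-hand side involve $p_{i,i}^3$ (a quantity known to be small) and, crucially, makes the two terms on the left-hand side coincide.

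For the first inequality, apply Lemma~\ref{assoc-tr} with $r = s$ and $l = i$, over a triangle of color signature $(s, s, 3)$. The existence of such a triangle is guaranteed by the diameter-$2$ assumption on $X_s$: a pair $(u,w)$ with $c(u,w) = 3$ has $\dist_s(u,w) \le 2$, and this distance cannot be $0$ or $1$ (since $u\ne w$ and $c(u,w)\neq s$), so $\dist_s(u,w) = 2$ and one finds $v$ with $c(u,v) = c(v,w) = s$. Lemma~\ref{assoc-tr} then yields
\[
  p_{i,j}^{s} + p_{j, i}^{s} \;\leq\; k_j + p_{i, i}^{3}.
\]
By the symmetry $p_{i,j}^{s} = p_{j,i}^{s}$, the left-hand side equals $2 p_{i,j}^{s}$. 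Since $\max(k_1,k_2) \le \varepsilon k_3/2$ means $k_3$ is the largest degree, Lemma~\ref{assoc-param-ineq} applies and gives $p_{i,i}^{3} \leq \varepsilon k_i$ for $i \in \{1, 2\}$. This closes the first part.

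For the \emph{moreover} clause, specialize the first part to $(i, j, s) = (2, 2, 1)$, obtaining $2 p_{2, 2}^{1} \leq (1 + \varepsilon) k_2$. Now convert to a bound on $p_{1,2}^{2}$ via the standard identity $p_{1,2}^{2} k_2 = p_{2,2}^{1} k_1$ from Eq.~\eqref{eq-sum-param}:
\[
  2 p_{1,2}^{2} \;=\; \frac{k_1}{k_2} \cdot 2 p_{2,2}^{1} \;\leq\; \frac{k_1}{k_2}\,(1 + \varepsilon) k_2 \;=\; (1 + \varepsilon) k_1.
\]

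There is no real obstacle; the one nontrivial step is recognizing that setting $l = i$ in Lemma~\ref{assoc-tr} creates the symmetric left-hand side $p_{i,j}^s + p_{j,i}^s$, converting what would otherwise be a one-sided bound into an inequality with a factor of $2$ on the left. Everything else is either the diameter-$2$ triangle existence or a direct appeal to Lemma~\ref{assoc-param-ineq} and Eq.~\eqref{eq-sum-param}.
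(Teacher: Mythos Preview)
Your proposal is correct and follows essentially the same route as the paper's proof: both apply Lemma~\ref{assoc-tr} with $t=3$, $r=s$, $l=i$ (exploiting the symmetry $p_{i,j}^s = p_{j,i}^s$ to double the left-hand side), bound $p_{i,i}^3$ via Lemma~\ref{assoc-param-ineq}, and for the \emph{moreover} clause specialize to $(i,j,s)=(2,2,1)$ and convert using the identity $p_{1,2}^2 k_2 = p_{2,2}^1 k_1$ from Eq.~\eqref{eq-sum-param}. Your write-up is simply more explicit about the triangle existence and the role of the symmetry than the paper's terse version.
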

\begin{proof}
 Take $t = 3$, $s = r$ and $l = i$ in Lemma \ref{assoc-tr} and we use Lemma \ref{assoc-param-ineq}. Take $s=1$ and $i=j=2$, then $2p_{2,2}^{1}\leq (1+\varepsilon)k_2$, so $2p_{1,2}^{2} \leq \dfrac{k_1}{k_2} (1+\varepsilon)k_2 = (1+\varepsilon)k_1$.
\end{proof}

We state the following simple corollary of Metsch's criteria (Theorem \ref{Metsch}) and of the classification of graphs with the smallest eigenvalue $\geq -2$ (Theorems \ref{geom-eig-more2} and \ref{geom-eig-2}), which will be used in the proof of Theorem~\ref{assoc-diam2}. 

\begin{lemma}\label{assoc-clique-geom}
Let $\mathfrak{X}$ be an association scheme of rank $r\geq 4$ and diameter 2 on $n$ vertices.
\begin{enumerate}
 \item Assume that for some $i$ the constituent $X_i$ satisfies the assumptions of Theorem \ref{Metsch} for $m=2$. Then $X_i$ is a strongly regular graph with smallest eigenvalue $-2$, or is the line graph of a regular triangle-free graph.
 
 Furthermore, $X_i$ satisfies the assumptions of Theorem \ref{Metsch} for $m=2$, if we have one of the following
 \begin{enumerate}
 \item $\lambda(X_i) = p_{i,i}^{i}\geq\dfrac{2}{5}k_i$ and $\mu(X_i) = \max\{p_{i,i}^{j}: 0<j\neq i\} \leq \dfrac{1}{30}k_i$, or
 \item $\lambda(X_i) = p_{i,i}^{i}\geq\left(\dfrac{1}{2}-\dfrac{1}{20}\right)k_i$ and $\mu(X_i) = \max\{p_{i,i}^{j}: 0<j\neq i\} \leq \dfrac{1}{11}\left(1+\dfrac{1}{100}\right)k_i$. 
\end{enumerate}

\item Assume for some $i$ the complement $\overline{X_{i}}$ of $X_i$ satisfies the assumptions of Theorem~\ref{Metsch} for $m=2$. If $n\geq 12$, then graph $\overline{X_{i}}$ is strongly regular with smallest eigenvalue $-2$.  
\end{enumerate} 
\end{lemma}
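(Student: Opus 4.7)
My plan has two parts, mirroring the statement. For the main claim of Part 1, assume Metsch's criteria hold for $X_i$ with $m=2$. Theorem~\ref{Metsch} then endows $X_i$ with a clique geometry in which each vertex lies on at most two lines, so Lemma~\ref{geom-smallest-eigenvalue} gives smallest eigenvalue at least $-2$. Since $r\geq 4$, $X_i$ is not complete. If the smallest eigenvalue were strictly greater than $-2$, Theorem~\ref{geom-eig-more2} would force $X_i$ to be an odd polygon, but an odd cycle of diameter at most $2$ has at most $5$ vertices and induces a rank-$3$ association scheme, contradicting $r\geq 4$. Hence the smallest eigenvalue equals $-2$, and since $X_i$ is edge-regular as a constituent of a homogeneous coherent configuration, Theorem~\ref{geom-eig-2}(ii) yields the desired dichotomy. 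For the ``Furthermore'' clause, edge-regularity collapses conditions~(3) and (4) of Theorem~\ref{Metsch} for $m=2$ to $\lambda(X_i) > 3\mu(X_i)-4$ and $k_i < 3\lambda(X_i) - 3\mu(X_i) + 6$; a direct calculation shows that (a) gives $3\lambda(X_i) - 3\mu(X_i) \geq (6/5 - 1/10)k_i = 11 k_i/10 > k_i$, and (b) gives $3\lambda(X_i) - 3\mu(X_i) \geq (27/20 - 303/1100)k_i = 1182 k_i/1100 > k_i$, with the first inequality trivially satisfied in each case.

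For Part 2, I apply the same machinery to $\overline{X_i}$, which is co-edge-regular as the complement of the edge-regular $X_i$. Metsch and Lemma~\ref{geom-smallest-eigenvalue} give smallest eigenvalue at least $-2$. If the smallest eigenvalue were strictly greater than $-2$, Theorem~\ref{geom-eig-more2} would make $\overline{X_i}$ complete (whence $X_i$ is edgeless, impossible) or an odd polygon (in which case either $r=3$ or some constituent partitioning the edges of the cycle is disconnected, contradicting diameter $2$). Thus the smallest eigenvalue equals $-2$, and Theorem~\ref{geom-eig-2}(iii) leaves three possibilities for $\overline{X_i}$: strongly regular (with smallest eigenvalue $-2$), an $m_1\times m_2$-grid $L(K_{m_1,m_2})$, or one of the two exceptional regular subgraphs of the Clebsh graph on $8$ or $12$ vertices. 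The exceptional cases are excluded by the hypothesis $n\geq 12$.

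The remaining step, and the main obstacle, is ruling out the grid $L(K_{m_1,m_2})$ with $m_1\neq m_2$. The key observation is that in such a grid, adjacent pairs split into two types with different numbers of common neighbors: pairs sharing an $A$-endpoint in $K_{m_1,m_2}$ have $m_2-2$ common neighbors, while those sharing a $B$-endpoint have $m_1-2$. Since in a coherent configuration the number of common $\overline{X_i}$-neighbors of a pair $(u,v)$ is a sum of intersection numbers and hence depends only on $c(u,v)$, the two edge-types must carry distinct colors $c_A$ and $c_B$ of $\mathfrak{X}$. But then the constituent $X_{c_A}$ is a disjoint union of $m_1$ cliques of size $m_2$ (one per $A$-vertex of $K_{m_1,m_2}$), hence disconnected, contradicting the hypothesis that $\mathfrak{X}$ has diameter $2$. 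Therefore $m_1=m_2$ and $\overline{X_i}=L_2(m_1)$, which is strongly regular with smallest eigenvalue $-2$, completing the proof.
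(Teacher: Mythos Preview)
Your proof is correct and follows essentially the same route as the paper's: both parts invoke Lemma~\ref{geom-smallest-eigenvalue} to get smallest eigenvalue $\geq -2$, rule out strict inequality via Theorem~\ref{geom-eig-more2}, and then apply the appropriate clause of Theorem~\ref{geom-eig-2} (edge-regular for $X_i$, co-edge-regular for $\overline{X_i}$). The numerical checks in the ``Furthermore'' clause match the paper's, and the grid case in Part~2 is eliminated by the same observation that the two edge-types in $L(K_{m_1,m_2})$ have different common-neighbor counts and must therefore receive different colors in $\mathfrak X$, forcing a disconnected constituent.

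Two small points on Part~2. In the grid argument you assert that $X_{c_A}$ \emph{is} the disjoint union of $m_1$ cliques of size $m_2$; strictly, you have only shown that type-$A$ and type-$B$ edges carry disjoint color sets, so a given color $c$ appearing on some type-$A$ edge yields a constituent $X_c$ that is merely a spanning subgraph of that disjoint union of cliques. Since $m_1\geq 2$ and $X_c$ is regular of positive degree, it is still disconnected, so the conclusion stands; the paper's proof is equally terse here (``Thus, in particular, $\mathfrak{X}$ is not primitive''). Second, your sentence ``the exceptional cases are excluded by the hypothesis $n\geq 12$'' does not actually dispose of the $12$-vertex exceptional subgraph of the Clebsch graph, since $12\geq 12$. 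The paper's proof is silent on this case as well, and in every application of the lemma one has $n\geq 29$, so this is a shared minor gap rather than a divergence from the paper.
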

\begin{proof}
\begin{enumerate}
\item First, we check that $X_i$ satisfies the conditions of Theorem \ref{Metsch} for $m=2$.
It is sufficient to verify that $\lambda(X_i)>3\mu(X_i)$ and $3\lambda(X_i) - 3\mu(X_i)>k_i$. 
\begin{enumerate}
\item We compute 
$$\lambda(X_i) \geq \frac{2}{5}k_i > \frac{3}{30}k_i\geq 3\mu(X_i)\quad \text{and}\quad 3\lambda(X_i)-3\mu(X_i)\geq \left(\frac{6}{5}-\frac{1}{10}\right)k_i>k_i.$$
\item We compute 
\[\lambda(X_i) - 3\mu(X_i) \geq \left(\frac{1}{2} - \frac{1}{20} - \frac{3}{11}\left(1+\frac{1}{100}\right)\right)k_i = \frac{48}{275}k_i>0,\]
\[3\lambda(X_i)-3\mu(X_i)\geq k_i+\left(\frac{1}{2} - \frac{3}{20} - \frac{3}{11}\left(1+\frac{1}{100}\right)\right)k_i = k_i+\frac{41}{550}k_i>k_i.\]
\end{enumerate}
Now, if $X_i$ satisfies the conditions of Theorem~\ref{Metsch} for $m=2$, by Lemma~\ref{m2-line}, it is a line graph, and, by Lemma \ref{geom-smallest-eigenvalue}, the smallest eigenvalue of $X_i$ is at least $-2$. Moreover, recall that $X_i$ is edge-regular. If the smallest eigenvalue is $>-2$, by Theorem~\ref{geom-eig-more2}, $X_i$ is a complete graph or an odd polygon. This is impossible, since $\mathfrak{X}$ has diameter $2$ and at least three non-empty constituents.  If the smallest eigenvalue is $-2$, then by Theorem~\ref{geom-eig-2}, we get that $X_i$ is a strongly regular graph, or is the line graph of a regular triangle-free graph.

\item Since $\overline{X_{i}}$ satisfies the conditions of Theorem~\ref{Metsch}, by Lemmas~\ref{m2-line} and~\ref{geom-smallest-eigenvalue}, graph $\overline{X_{i}}$ is a line graph and its smallest eigenvalue is at least $-2$. Note also that $\overline{X_{i}}$ is co-edge-regular. If the smallest eigenvalue is $>-2$, by Theorem~\ref{geom-eig-more2}, $\overline{X_{i}}$ is complete graph or an odd polygon. This is impossible, since $\mathfrak{X}$ has diameter $2$ and at least three non-empty constituents. If the smallest eigenvalue is $-2$, then by Theorem~\ref{geom-eig-2}, we get that $\overline{X_{i}}$ is strongly regular, an $m_1\times m_2$-grid or one of the two regular subgraphs of the Clebsh graph with 8 or 12 vertices.

Assume $\overline{X_{i}}$ is a $m_1\times m_2$-grid with $m_1\neq m_2$ and $m_1, m_2>1$. That is, $\overline{X_{i}}$ is the line graph of $K_{m_1, m_2}$. Denote the parts of $K_{m_1, m_2}$ by $U_1$ and $U_2$ with $|U_i| = m_i$. By symmetry, we can assume $m_1< m_2$. We compute $n = m_1m_2$, $k_1+k_2 = m_1+m_2 - 2$ and $\mu = 2$. Observe that, two edges of $K_{m_1, m_2}$ that share a vertex in $U_i$ have $m_i - 2$ common neighbors. Since $m_1 \neq m_2$, two pairs of edges in $K_{m_1, m_2}$ that share a vertex in $U_1$ and $U_2$, respectively, cannot be colored in the same color in  $\mathfrak{X}$. Thus, in particular, $\mathfrak{X}$ is not primitive.  

Therefore, $\overline{X_{i}}$ is strongly regular.  
\end{enumerate}
\end{proof}

\begin{theorem}\label{assoc-diam2}
Let $\mathfrak{X}$ be an association scheme of rank 4 on $n$ vertices  with diameter~2 and with constituents ordered by degree. Recall that $q(X_i) = \max\{p_{i,i}^{j}: j\in [3]\}$ is the maximal number of common neighbours of two distinct vertices in $X_i$. Fix $\varepsilon = 10^{-16}$. Then one of the following is true.
\begin{enumerate}
  \item Every pair of distinct vertices is distinguished by at least $\varepsilon n /12$ vertices.
  \item The zero-weight spectral radius $\xi(X_i)$ of $X_i$ satisfies $q(X_i)+\xi(X_i)\leq (1-\varepsilon)k_i$ for $i = 1$ or $i = 2$.  
  \item The graph $X_1$ is either strongly regular with smallest eigenvalue $-2$, or the line graph of a connected regular triangle-free graph. 
  \item The graph $X_2$ is either strongly regular with smallest eigenvalue $-2$, or the line graph of a connected regular triangle-free graph. Moreover, $k_2\leq \frac{101}{100}k_1$.
  \item If $n\geq 12$, then the graph $X_{1,2}$ is strongly regular with smallest eigenvalue $-2$ and $k_2\leq \frac{101}{100}k_1$.
  \end{enumerate}  
\end{theorem}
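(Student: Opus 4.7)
Setting $\gamma = \varepsilon/2$, Lemma~\ref{assoc-k2-large} immediately gives case~1 when $k_2 \geq \gamma k_3$ (every pair of vertices is then distinguished by at least $\gamma n/6 = \varepsilon n/12$ vertices). So I may assume $\max(k_1,k_2) \leq \varepsilon k_3/2$, which triggers Lemma~\ref{assoc-param-ineq} and Corollaries~\ref{cor1}--\ref{cor3}: the ``cross'' intersection numbers $p_{i,j}^3$ for $i,j\in\{1,2\}$ are $O(\varepsilon k_i)$, while the ``internal'' ones satisfy $2p_{i,j}^{s}\leq k_j + \varepsilon k_i$ for $i,j,s\in\{1,2\}$. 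In particular, up to a $O(\varepsilon k_i)$ correction, $q(X_i) = \max(p_{i,i}^1, p_{i,i}^2)$, so from here on I effectively deal with only four key intersection numbers per constituent.

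Next, I apply Proposition~\ref{assoc-spectral-radius} (and its variant with the roles of $1$ and $2$ swapped) to bound $\xi(X_1)$ and $\xi(X_2)$ by closed-form expressions with additive slack $25\varepsilon^{1/3} k_i$. If $q(X_i)+\xi(X_i)\leq (1-\varepsilon)k_i$ for $i=1$ or $i=2$, case~2 holds. Otherwise, expanding $\sqrt{(a-b)^2 + 4cd}\leq |a-b|+2\sqrt{cd}$ inside the spectral bound, the failure of case~2 for $X_1$ forces one of two regimes on the parameters: either (a) $p_{1,1}^1$ is comparable to $k_1/2$ or larger while $p_{1,1}^2$ stays small enough to trigger Lemma~\ref{assoc-clique-geom}(1a) or~(1b) on $X_1$ --- yielding case~3 --- or (b) $p_{1,1}^2$ is itself of order $O(\varepsilon k_1)$. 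In branch~(b) the same analysis applied to $X_2$ either gives case~2 for $X_2$ or pushes $X_2$ into the Metsch regime (case~4); and if both $X_1$ and $X_2$ avoid the Metsch trap, the smallness of $p_{1,1}^2$ activates Proposition~\ref{assoc-x12-approx}, and the analogous analysis on $X_{1,2}$, combined with Lemma~\ref{assoc-clique-geom}(2), delivers case~5 (under $n\geq 12$).

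The supplementary bound $k_2\leq (101/100)k_1$ in cases~4 and~5 comes from the scheme identity $p_{2,2}^{1}k_1 = p_{1,2}^{2}k_2$: Corollary~\ref{cor3} gives $p_{1,2}^{2}\leq (1+\varepsilon)k_1/2$, while the Metsch hypothesis forces $\mu(X_2) = p_{2,2}^1$ (respectively the analogous common-neighbor count for $X_{1,2}$) to be small. Combined, these pin the ratio $k_2/k_1$ to $1 + O(\varepsilon)$, well below $101/100$ for $\varepsilon = 10^{-16}$.

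The main obstacle is the precision of the casework in the middle step: the spectral approximation carries a $25\varepsilon^{1/3}k_i$ slack while the Metsch thresholds packaged in Lemma~\ref{assoc-clique-geom}(1b) involve surpluses of order $41/550$ and $48/275$. The choice $\varepsilon = 10^{-16}$ is calibrated precisely so that $25\varepsilon^{1/3}$ is orders of magnitude below all of these constant gaps, allowing the many chained inequalities (linking $\xi$, $q$, the $p_{i,j}^s$, and the ratio $k_2/k_1$) to close without interference, and forcing the argument into exactly one of cases~1--5.
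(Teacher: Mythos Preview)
Your argument for the supplementary bound $k_2\le \tfrac{101}{100}k_1$ in cases~4 and~5 does not work. From the identity $p_{2,2}^{1}k_1 = p_{1,2}^{2}k_2$ you get $k_2/k_1 = p_{2,2}^{1}/p_{1,2}^{2}$; knowing that $p_{2,2}^{1}$ is small (as required to feed $X_2$ into Metsch) and that $p_{1,2}^{2}\le (1+\varepsilon)k_1/2$ says nothing about this ratio, since $p_{1,2}^{2}$ could be equally small. In the paper the inequality $k_2\le (1+\tfrac{1}{100})k_1$ is not a consequence of the Metsch regime at all: it is a \emph{case hypothesis}. The proof splits on the ratio $k_2/k_1$ into three ranges ($k_1<\tfrac{1}{900}k_2$; $\tfrac{1}{900}k_2\le k_1\le \tfrac{100}{101}k_2$; $k_2\le \tfrac{101}{100}k_1$), and outcomes~4 and~5 arise \emph{only} in the third range. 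In the first two ranges the paper shows that one always lands in outcome~2 or~3, never~4 or~5.

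This structural point also undermines your middle step. The dichotomy ``either $p_{1,1}^{1}$ is large and $p_{1,1}^{2}$ small (Metsch on $X_1$), or $p_{1,1}^{2}=O(\varepsilon k_1)$'' is not what failure of case~2 for $X_1$ delivers. In the intermediate range $\tfrac{1}{900}k_2\le k_1\le \tfrac{100}{101}k_2$ the paper needs a further split on whether $p_{2,3}^{1}=0$ and on the size of $p_{1,2}^{1}$, and uses Corollary~\ref{cor1} (which requires the existence of an $(s,t,3)$ triangle) rather than only Corollary~\ref{cor3}; the conclusion in that range is always outcome~2 or~3 for $X_1$, with no appeal to $X_2$ or $X_{1,2}$. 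Your sketch collapses this genuine casework and, as a result, cannot explain why outcomes~4 and~5 come packaged with the ratio bound.
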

\begin{proof}
We may assume that parameters of $\mathfrak{X}$ satisfy $\max(k_1, k_2)\leq {\varepsilon} k_3/2$, as otherwise, by Lemma~\ref{assoc-k2-large} every pair of distinct vertices is distinguished by at least $\varepsilon n/12$ vertices. Therefore, all the inequalities provided by Lemma \ref{assoc-param-ineq} hold. 

Thus, by Proposition \ref{assoc-spectral-radius},
\[\xi(X_1)\leq \frac{p_{1,1}^{1}+p_{1,2}^{2}+\sqrt{(p_{1,1}^{1}-p_{1,2}^{2})^{2}+4p_{1,1}^{2}p_{1,2}^{1}}}{2}+\varepsilon_1 k_1, \text{ so}\]
\begin{equation}\label{x1-eq-bound}
\xi(X_1)\leq \max(p_{1,1}^{1}, p_{1,2}^{2})+\sqrt{p_{1,1}^{2} p_{1,2}^{1}}+\varepsilon_1 k_1,
\end{equation}
where $\varepsilon_1  = 25\varepsilon^{1/3}$. 
Similarly,
\begin{equation}\label{x2-eq-bound}
\xi(X_2) \leq \max(p_{2,2}^{2}, p_{2,1}^{1})+\sqrt{p_{2,2}^{1}p_{1,2}^{2}}+\varepsilon_1 k_2.
\end{equation}

We note that $k_1 \geq {p_{1,1}^3}/{\varepsilon}\geq {1}/{\varepsilon}$, by Eq.~\eqref{eq-statement-2}.

\noindent \textbf{Case 1.} Assume $\gamma k_2> k_1$, where $\gamma = \frac{1}{900}$. Then, using Lemma~\ref{assoc-param-ineq},
 
 \[p_{1,1}^2  = \dfrac{k_1}{k_2} p_{1,2}^{1}\leq \gamma p_{1,2}^{1},\quad \text{so} \quad \mu(X_1)\leq \max(\gamma, \varepsilon) k_1  = \frac{1}{900}k_1.\]
  Note that, by Corollary \ref{cor3}, inequality $\max(p_{1,1}^{1}, p_{1,2}^{2})\leq \dfrac{1+\varepsilon}{2}k_1$ holds. Hence, by Eq.~\eqref{x1-eq-bound},
\begin{equation}\label{eq-case-1}
q(X_1)+\xi(X_1)\leq ((\varepsilon+ \gamma)k_1+p_{1,1}^{1})+\left(\frac{1+\varepsilon}{2}k_1+\sqrt{\gamma}k_1+\varepsilon_1 k_1\right).
\end{equation}

If $p_{1,1}^{1}<\dfrac{2}{5}k_1$, then Eq. \eqref{eq-case-1} implies $q(X_1)+\xi(X_1)\leq (1-\varepsilon)k_1$. Otherwise, if $p_{1,1}^{1}\geq\dfrac{2}{5}k_1$, by Lemma \ref{assoc-clique-geom}, the graph $X_1$ satisfies the statement 3 of this proposition.

\noindent \textbf{Case 2.} Assume $k_1 = \gamma k_2$, where $(1+\varepsilon_3)^{-1} \geq \gamma\geq \frac{1}{900}$, for $\varepsilon_3 = \frac{1}{100}$. 

We consider two subcases.

\noindent\textbf{Case 2.1.}
Suppose $p_{2,3}^{1} = 0$. 

So, by Corollary \ref{cor2},
\begin{equation}
p_{2,2}^{1}\leq \varepsilon k_2+p_{2,3}^{1} = \frac{\varepsilon}{\gamma}k_1, \quad p_{1,2}^{2} = \frac{k_1}{k_2} p_{2,2}^{1} \leq \varepsilon k_1,
\end{equation}
\begin{equation}\label{eq212}
\quad p_{1,2}^{1}\leq \varepsilon k_1 + p_{2,3}^{1} = \varepsilon k_1, \quad p_{1,1}^{2} = \frac{k_1}{k_2} p_{1,2}^{1} \leq \varepsilon k_1.
\end{equation} 
 Then,
\begin{equation}\label{x1-eq-bound-neeew}
 \max_{i\in [3]}(p_{1,1}^{i})+\max(p_{1,1}^{1}, p_{1,2}^{2})+\sqrt{p_{1,1}^{2} p_{1,2}^{1}} \leq 3\varepsilon k_1 +2p_{1,1}^{1}.
\end{equation} 
 Thus, by Eq.~\eqref{x1-eq-bound-neeew}, $q(X_1)+\xi(X_1)<(1-\varepsilon)k_1$  if $p_{1,1}^{1}<\dfrac{2}{5}k_1$. Alternatively, if $p_{1,1}^{1}\geq \dfrac{2}{5}k_1$, by Lemma \ref{assoc-clique-geom}, graph $X_1$ satisfies the statement 3 of this proposition, as $\mu(X_1)\leq \varepsilon k_1$ by Eq.~\eqref{eq212}.

\noindent\textbf{Case 2.2.}
 Suppose $p_{2,3}^{1} \neq 0$. 
\begin{enumerate}
\item[]\textbf{Case 2.2.1.} Assume that $p_{1,1}^{1}\geq p_{1,1}^{2}$. 

By Corollary \ref{cor1},
 $$p_{1,2}^{2}\leq p_{1,3}^{1}+\varepsilon k_2 = p_{1,3}^{1}+\frac{\varepsilon}{\gamma}k_1 \quad \text{and} \quad p_{1,1}^{1}\leq p_{1,3}^{1}+\varepsilon k_1.$$
Then 
\begin{equation}\label{eq-case-2211}
\max_{i\in [3]}(p_{1,1}^{i})+p_{1,2}^{2}+\sqrt{\gamma}p_{1,2}^{1}\leq 
p_{1,1}^{1}+p_{1,3}^{1}+p_{1,2}^{1} - (1-\sqrt{\gamma})p_{1,2}^{1}+\left(\varepsilon+\frac{\varepsilon}{\gamma}\right)k_1,
\end{equation}
\begin{equation}
\max_{i\in [3]}(p_{1,1}^{i})+p_{1,1}^{1}+\sqrt{\gamma}p_{1,2}^{1}\leq 
p_{1,3}^{1}+p_{1,1}^{1}+p_{1,2}^{1} - (1-\sqrt{\gamma})p_{1,2}^{1}+\left(\varepsilon+\frac{\varepsilon}{\gamma}\right)k_1 .
\end{equation}

\item[]\textbf{Case 2.2.2.} Assume that $p_{1,1}^{2}\geq p_{1,1}^{1}$. 

By Corollary \ref{cor2}, 
\[p_{1,1}^{2} = \gamma p_{1,2}^{1}\leq \gamma (p_{1,3}^{1}+\varepsilon k_2) \leq \gamma p_{1,3}^{1}+\varepsilon k_1. \]
This implies
\begin{equation}
 \max_{i\in [3]}(p_{1,1}^{i})+p_{1,1}^{1}+\sqrt{\gamma}p_{1,2}^{1}\leq \gamma p_{1,3}^{1}+p_{1,1}^{1}+\sqrt{\gamma}p_{1,2}^{1}+\varepsilon k_1,
\end{equation}
\begin{equation}\label{eq-case-2222}
 \max_{i\in [3]}(p_{1,1}^{i})+p_{1,2}^{2}+\sqrt{\gamma}p_{1,2}^{1}\leq p^{2}_{1, 1}+p_{1,2}^{2}+p_{1,3}^{2} - (1-\sqrt{\gamma})p_{1,2}^{1} +\left(\varepsilon +\frac{\varepsilon}{\gamma}\right)k_1,
\end{equation}
where in Eq.~\eqref{eq-case-2222} we use the inequality $p_{1,2}^{1}\leq p_{1,3}^{2}+\varepsilon k_2$ given by Corollary \ref{cor1}.
\end{enumerate}
Therefore, using Eq. \eqref{eq-sum-param}, in both subcases by Eq.~\eqref{eq-case-2211}~-~\eqref{eq-case-2222} we get
\[ \max_{i\in [3]}(p_{1,1}^{i})+\left(\max(p_{1,1}^{1}, p_{1,2}^{2})+\sqrt{p_{1,1}^{2} p_{1,2}^{1}}\right) \leq k_1 - (1-\sqrt{\gamma})p_{1,2}^{1} +\left(\varepsilon +\frac{\varepsilon}{\gamma}\right)k_1, \text{ so}\]
\begin{equation}\label{eq-two-case-bound}
q(X_1)+\xi(X_1)\leq k_1 - (1-\sqrt{\gamma})p_{1,2}^{1} +\left(\varepsilon +\frac{\varepsilon}{\gamma}\right)k_1 +\varepsilon_1 k_1.
\end{equation}
We again consider two subcases.

\begin{enumerate}
\item[] \textbf{Case 2.2.a.} Suppose $p_{1,2}^{1}>\varepsilon_2 k_1$ for $\varepsilon_2 = \frac{1}{30}$.

Observe that
\[\varepsilon_2(1-\sqrt{\gamma}) - \varepsilon\left(2+\frac{1}{\gamma}\right)-\varepsilon_1\geq 10^{-4} - 902\varepsilon - 25\varepsilon^{1/3}>0,\]
 so by Eq.~\eqref{eq-two-case-bound},
\begin{equation}\label{eq-e2-small}
q(X_1)+\xi(X_1)  \leq (1-\varepsilon)k_1.
\end{equation}

\item[]\textbf{Case 2.2.b.} Suppose $p_{1,2}^{1}\leq \varepsilon_2 k_1$. 

This implies $p_{1,1}^{2}\leq \varepsilon_2 k_1$, so $\mu(X_1)\leq \frac{1}{30}k_1$. Recall, that by Corollary~\ref{cor3}, the  inequality $\max(p_{1,2}^{2}, p_{1,1}^{1})\leq \dfrac{1+\varepsilon}{2} k_1 $ holds. Then, by Eq~\eqref{x1-eq-bound},  we have 
\begin{equation}\label{eq-e2-large}
q(X_1)+\xi(X_1)\leq (\varepsilon+\varepsilon_2)k_1+p_{1,1}^{1}+\frac{1+\varepsilon}{2}k_1+ \varepsilon_2k_1+\varepsilon_1 k_1.
\end{equation} 
 Thus, either Eq.~\eqref{eq-e2-large} implies the inequality $q(X_1)+\xi(X_1)<(1-\varepsilon)k_1$, or $p_{1,1}^{1}\geq \dfrac{2}{5} k_1$. In the latter case, by Lemma \ref{assoc-clique-geom},  the statement 3 of this proposition holds for $X_1$.
 \end{enumerate}

\noindent \textbf{Case 3.} Suppose that $k_2\leq (1+\varepsilon_3)k_1$, where $\varepsilon_3 = \frac{1}{100}$. 

\noindent In this case, we work with both $X_1$ and $X_2$ in the same way. Additionally, we need to consider the graph $X_{1,2}$ with the set of vertices $V(X_{1,2}) = V(X_1) = V(X_2)$ and set of edges $E(X_{1,2}) = E(X_1)\cup E(X_2)$. The graph $X_{1,2}$ is regular of degree $k_1+k_2$, and every pair of non-adjacent vertices has 
\begin{equation}\label{eq-mux12-above}
\mu(X_{1,2}) = p_{1,1}^{3}+2 p_{1,2}^{3}+p_{2,2}^{3} \leq 2\varepsilon (k_1+k_2) \leq 4\varepsilon(1+\varepsilon_3)k_1 
\end{equation} 
common neighbors. Every pair of vertices connected by an edge of color $i$ has 
\begin{equation}\label{eq:lambda-i}
\lambda_i = p_{1,1}^{i}+2 p_{1,2}^{i}+p_{2,2}^{i}
\end{equation} common neighbors, for $i = 1,2$. We apply  the inequality 
\begin{equation}
 |N(u)\cap N(v)|+|N(v)\cap N(w)|\leq |N(v)|+|N(u)\cap N(w)|
\end{equation}
  to the graph $X_{1, 2}$ and vertices $u$, $v$, $w$ with $c(u, v) = c(v, w) = i$ and $c(u, w) = 3$, where $i\in \{1,2\}$. We get $2\lambda_i \leq k_1+k_2+\mu(X_{1,2})$, so by Eq.~\eqref{eq-mux12-above},
\begin{equation}\label{eq-lambda-above}
\lambda_i = p_{1,1}^{i}+2 p_{1,2}^{i}+p_{2,2}^{i}\leq \frac{1+2\varepsilon}{2}(k_1+k_2) \leq k_1(1+\varepsilon_3+2\varepsilon).
\end{equation} 
Let $\{i, j\} = \{1,2\}$, then by Eq.~\eqref{x1-eq-bound}-\eqref{x2-eq-bound},
\begin{equation}\label{eq-case-3}
\begin{aligned}
q(X_i)+\xi(X_i) & \leq q(X_i)+\max( p_{i,i}^{i}, p_{i,j}^{j})+\sqrt{p_{i,j}^{i}p_{i,i}^{j}} +\varepsilon_1 k_i \leq  \\ & \leq \left(\max( p_{i,i}^{i}, p_{i,i}^{j})+\varepsilon k_i\right)+ \max( p_{i,i}^{i}, p_{i,j}^{j})+p_{i,j}^{i}(1+\varepsilon_3)+\varepsilon_1 k_i.
\end{aligned}
\end{equation}
Consider all possible ways of opening the maximums in Eq.~\eqref{eq-case-3} (we only write terms without epsilons).
\begin{enumerate}
\item $2p_{i,i}^{i}+p_{i,j}^{i}$,
\item $p_{i,i}^{i}+p_{i,i}^{j}+p_{i,j}^{i}\leq (1+\varepsilon_3)(p_{i,i}^{i}+2p_{i,j}^{i}) = (1+\varepsilon_3)(\lambda_i - p_{j,j}^{i})\leq (1+\varepsilon_3)\lambda_i,$
\item $p_{i,j}^{j}+p_{i,i}^{i}+p_{i,j}^{i}\leq (1+\varepsilon_3)(p_{j,j}^{i}+p_{i,i}^{i}+p_{i,j}^{i}) = (1+\varepsilon_3)(\lambda_i - p_{i,j}^{i})\leq (1+\varepsilon_3)\lambda_i,$
\item $p_{i,j}^{j}+p_{i,i}^{j}+p_{i,j}^{i}\leq (1+\varepsilon_3)(p_{j,j}^{i}+2p_{i,j}^{i}) = (1+\varepsilon_3)(\lambda_i - p_{i,i}^{i})\leq (1+\varepsilon_3)\lambda_i.$
\end{enumerate}
Hence, by Corollary \ref{cor3} applied to $p_{i, j}^{i}$, Eq.~\eqref{eq-case-3} implies
\begin{equation}\label{eq-max-open}
q(X_i)+\xi(X_i)\leq \max(2p_{i,i}^{i}+p_{i,j}^{i}, (1+\varepsilon_3)\lambda_i)+k_i\left(\varepsilon_1+\frac{2}{3}\varepsilon_3+\varepsilon\right).
\end{equation}

\noindent \textbf{Case 3.1} Suppose $\lambda_t\geq (2/3+\frac{1}{300})k_t$ for both $t =1,2$.

\noindent Then in notation of Theorem~\ref{Metsch} 
\[\lambda^{(1)} \geq \left(\frac{2}{3}+\frac{1}{300}\right)k_1,\quad \text{and by Eq. \eqref{eq-lambda-above},}\quad \lambda^{(2)} \leq \frac{11}{10}k_1.\]
We check that
\[2\lambda^{(1)} - \lambda^{(2)}\geq 20\varepsilon(1+\varepsilon_3)k_1\geq 5\mu, \quad \text{and} \quad 3\lambda^{(1)} - 3\mu \geq 2k_1+\frac{1}{100}k_1\geq k_1+k_2.\] Thus, $X_{1,2}$ satisfies conditions of Theorem~\ref{Metsch} for $m=2$, so  the statement 5 of this proposition holds by Lemma \ref{assoc-clique-geom}.

\noindent \textbf{Case 3.1} Suppose that $\lambda_i\leq (2/3+\frac{1}{300})k_i$ for some $i\in \{1, 2\}$. 

\noindent If $2p_{i,i}^{i}+p_{i,j}^{i}\leq k_i - k_i(\varepsilon_3 +2\varepsilon+\varepsilon_1)$, then Eq.~\eqref{eq-max-open} implies
 \[q(X_i)+\xi(X_i)\leq (1-\varepsilon)k_i.\]
Hence, we can assume $2p_{i,i}^{i}+p_{i,j}^{i}\geq k_i - k_i(\varepsilon_3 +2\varepsilon+\varepsilon_1)$. Recall, that 
\[ 2p_{i,i}^{i}+p_{i,j}^{i} = \lambda_i+(p_{i,i}^{i} - p_{i,j}^{i}- p_{j,j}^{i})\leq \lambda_i+\frac{1+\varepsilon}{2}k_i - (p_{i,j}^{i}+ p_{j,j}^{i}) .\]
Thus,
\begin{equation}\label{eq:32-new}
 p_{i,j}^{i}+ p_{j,j}^{i}\leq \lambda_i+\frac{1+\varepsilon}{2}k_i-k_i(1-\varepsilon_3 - 2\varepsilon - \varepsilon_1)\leq k_i \frac{51}{300} + k_i (\varepsilon_3+3\varepsilon+\varepsilon_1)\leq \frac{2}{11}k_i.
 \end{equation}
This implies, 
\begin{equation}
\min(p_{j,j}^{i}, p_{i,i}^{j}) \leq (1+\varepsilon_3)\min(p_{i,j}^{i}, p_{j,j}^{i})\leq \frac{1+\varepsilon_3}{11}k_1.
\end{equation}
Take $\{s,t\} = \{1,2\}$, so that $p_{s,s}^{t}\leq \dfrac{1+\varepsilon_3}{11}k_1$. Then 
\[\mu(X_s)\leq \max\left(\varepsilon k_s, \dfrac{1+\varepsilon_3}{11}k_1\right)=\dfrac{1+\varepsilon_3}{11}k_1.\]
 
 We consider two possibilities. 
First, assume that $p_{s,s}^{s}\geq \left(\dfrac{1}{2} - \dfrac{1}{20}\right)k_s$. 
Then, by Lemma~\ref{assoc-clique-geom} graph $X_s$ satisfies the statement 3 or 4 of this proposition.

Assume now that $p_{s,s}^{s}\leq \left(\dfrac{1}{2}-\dfrac{1}{20}\right)k_s$, then
\begin{equation}\label{eq31}
2p_{s,s}^{s}+p_{s,t}^{s}\leq k_s - \frac{1}{10}k_s +\frac{(1+\varepsilon_3)^2}{11}k_s \leq \left(1-2\varepsilon-\frac{2}{3}\varepsilon_3-\varepsilon_1\right)k_s,
\end{equation}
and by Eq.~\eqref{eq:32-new},
\begin{equation}\label{eq32}
\begin{gathered}
(1+\varepsilon_3)\lambda_s\leq p_{s,s}^{s}+2p_{s,t}^{s}+2p_{t,t}^{s}+2\varepsilon_3k_s \leq \\
\\ \leq \left(\frac{1}{2}-\frac{1}{20}+\frac{4+4\varepsilon_3}{11}+2\varepsilon_3\right)k_s<\left(1-2\varepsilon-\frac{2}{3}\varepsilon_3-\varepsilon_1\right)k_s.
\end{gathered}
\end{equation}
Thus, by Eq. \eqref{eq-max-open}, equations \eqref{eq31} and \eqref{eq32} imply
\[q(X_s)+\xi(X_s)\leq  \max(2p_{s,s}^{s}+p_{s,t}^{s}, (1+\varepsilon_3)\lambda_s)+\left(\varepsilon+\frac{2}{3}\varepsilon_3+\varepsilon_1\right)k_s\leq (1-\varepsilon)k_s.\]

\end{proof}

\section{Case of a constituent with a clique geometry}\label{sec-subsec-str-reg}

In the previous subsection, Theorem~\ref{assoc-diam2} reduces the diameter 2 case of Theorem \ref{main-coherent} to the case when one of the constituents is a strongly regular graph with smallest eigenvalue $-2$, or is the line graph of a triangle-free regular graph. In this subsection we resolve the remaining cases.
 
In the case when the dominant constituent $X_3$ is strongly regular we introduce an additional tool (Lemma~\ref{sun-wilmes-tool}), which allows us to bound the order of the group and its minimal degree, when vertices inside a clique are well-distinguished.

In the cases when the constituent $X_1$ or $X_2$ is strongly regular, we prove upper bounds on the quantity $q(X_J)+\xi(X_J)$ for $J\in \{1, 2, \{1, 2\}\}$ with the consequence that the spectral tool (Lemma \ref{mixing-lemma-tool}) can be applied effectively. 

The hardest case in our analysis is the case when the constituent  with the smallest degree, $X_1\,$, is strongly regular. This case is settled in Theorem~\ref{assoc-x1-strongly-reg} (Sec. \ref{sec-x1}) and it requires considerable preparatory work to establish a constant upper bound on the quotient $k_2/k_1$ in certain range of parameters.

\subsection{Triangular graph with well-distinguished cliques}

In the case when the union of some constituents of a homogeneous coherent configuration is a triangular graph we prove the following statement inspired by Lemma 3.5 in \cite{Sun-Wilmes}.

\begin{lemma}\label{sun-wilmes-tool}
Let $\mathfrak{X}$ be a homogeneous coherent configuration on $n$ vertices. Let $I$ be a set of colors, such that if $i\in I$, then $i^{*}\in I$. Suppose that graph $X_I$ is the triangular graph $T(s)$ for some $s$. Let $\mathcal{C}$ be a Delsarte clique geometry in $X_I$. Assume there exists a constant $0<\alpha<\frac{1}{2}$, such that for every clique $C\in \mathcal{C}$ and every pair of distinct vertices $x, y \in C$ there exist at least $\alpha |C|$ elements $z\in C$ which distinguish $x$ and $y$, i.e., $c(z,x) \neq c(z,y)$. Then
 \begin{enumerate}
 \item There exists a set of vertices of size $O(\log(n))$ that completely splits $\mathfrak{X}$. Hence, $|\Aut(\mathfrak{X})| = n^{O(\log(n))}$,
 \item $\motion(\mathfrak{X})\geq \alpha n/2$.
 \end{enumerate}

\end{lemma}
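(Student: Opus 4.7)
The plan is to recast everything through the natural $S_s$-action on pairs. Since the Delsarte clique geometry $\mathcal{C}$ of $T(s)$ is canonical (every edge lies in a unique Delsarte clique), every $\sigma \in \Aut(\mathfrak{X})$ permutes $\mathcal{C}$, inducing a permutation $\tau \in \Sym(\mathcal{C})$. Identifying $\mathcal{C}$ with $[s]$, the vertices of $X_I \cong T(s)$ correspond to pairs $\{a,b\}$ (where $\{a,b\}$ is the unique vertex in $C_a \cap C_b$), and $\sigma(\{a,b\}) = \{\tau(a), \tau(b)\}$.

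For part (2), let $M = |\{i \in [s] : \tau(i) \neq i\}|$. I claim $M \geq \alpha(s-1)$ whenever $\sigma \neq \mathrm{id}$. This is trivial when $M = s$; otherwise $M < s$, so pick any moved $i$ (noting $\tau(i)$ is also moved by the bijectivity of $\tau$), and pick a $\tau$-fixed $j \in [s] \setminus \{i, \tau(i)\}$, which exists because $s - M \geq 1$. The distinct vertices $v = \{i, j\}$ and $\sigma(v) = \{\tau(i), j\}$ both lie in $C_j$, so the hypothesis supplies $\alpha(s-1)$ elements of $C_j$ distinguishing $v$ from $\sigma(v)$; each such element must be moved by $\sigma$, since a fixed vertex cannot distinguish $v$ from $\sigma(v)$. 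Because the moved vertices of $C_j$ are exactly $\{j, k\}$ with $k$ moved by $\tau$, they number $M$, proving the claim. A direct count of $\sigma$-fixed pairs (requiring $\tau$ either to fix both indices or swap them) bounds fixed vertices by $\binom{s-M}{2} + M/2$, giving $\motion(\sigma) \geq M(2s - M - 2)/2$, which exceeds $\alpha n / 2$ by elementary algebra using $M \geq \alpha(s-1)$ and $s \geq 4$.

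For part (1), apply Lemma \ref{Babai-disting-order-group} to the induced coherent configuration $\mathfrak{X}_C$ on a fixed clique $C \in \mathcal{C}$: since $D_{\min}(\mathfrak{X}_C) \geq \alpha|C| = \alpha(s-1)$, there is a distinguishing set $S \subset C$ for $\mathfrak{X}_C$ of size at most $(2/\alpha)\log(s-1) + 1 = O(\log n)$; padding if necessary, assume $|S| \geq 2$. I claim $S$ completely splits $\mathfrak{X}$ under Weisfeiler-Leman refinement: (i) every $v \in C$ acquires a unique refined color because the tuple $(c(x, v))_{x \in S}$ separates any pair in $C$; (ii) an automorphism fixing $S$ pointwise must fix $C$ setwise, since two elements of $S$ pin down the index of $C$ in $[s]$ via case analysis on how $\tau$ can permute their underlying pairs; (iii) each $v \notin C$ has exactly two neighbors in $C$ in $T(s)$, which after (i) have distinct unique colors identifying $v$, so subsequent W-L rounds give $v$ a unique color as well. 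The bound $|\Aut(\mathfrak{X})| \leq n^{|S|} = n^{O(\log n)}$ follows.

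The main obstacle is the contradiction step in part (2): converting the intra-clique hypothesis into a global constraint on $\tau$. The key observation is that moving even a single index $i$ forces the pair $v, \sigma(v)$ into the clique $C_j$ for any $\tau$-fixed $j$, where the hypothesis can be applied in full strength to lower-bound $M$. For part (1), the delicate point is that W-L refinement must propagate a discrete coloring from one clique to the rest, which works because $T(s)$ is sufficiently rigid: each vertex outside $C$ is pinned down by its two neighbors in $C$.
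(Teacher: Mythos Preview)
Your proof is correct and takes a genuinely different route from the paper's. For part~(2), the paper argues locally: assuming $|\supp(\sigma)| < \alpha n/2$, a pigeonhole finds a vertex $x$ with most of $N_I(x)\cup\{x\}=C_1\cup C_2$ fixed, and then the distinguishing hypothesis forces $\sigma$ to fix $C_1,C_2$ pointwise, which propagates to all of $T(s)$. You instead work globally through the induced permutation $\tau\in\Sym(\mathcal{C})\cong S_s$: the distinguishing hypothesis, applied inside one clique $C_j$ with $j$ fixed by~$\tau$, lower-bounds the number $M$ of $\tau$-moved indices, and then an explicit count of $\sigma$-moved pairs finishes. This is a cleaner, more quantitative argument (you get the exact expression $M(2s-M-2)/2$), at the cost of relying on the canonicity of the Delsarte geometry so that $\tau$ is well-defined; this is unproblematic for $s\ge 5$, which suffices for all applications. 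For part~(1), the paper individualizes a distinguishing set in \emph{two} intersecting cliques $C_1,C_2$ and then observes every other clique is determined by its two intersection points with $C_1,C_2$; you individualize in a \emph{single} clique $C$ and propagate via the fact that each vertex outside $C$ is pinned down by its two $X_I$-neighbours in~$C$. Your version halves the size of the splitting set, though both are $O(\log n)$. One small point: to guarantee that vertices of $C$ acquire \emph{globally} unique colours after one W--L round (so that step~(iii) goes through cleanly), you should pad $|S|$ to at least~$3$ rather than~$2$; then any $w\notin C$ has at most two $X_I$-neighbours in $S$ and is immediately separated from every $v\in C$. Your step~(ii) is redundant once (i) and (iii) establish complete splitting, but it does no harm.
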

\begin{proof} Consider a clique $C\in \mathcal{C}$. Since every pair of distinct vertices $x,y\in C$ is distinguished by at least $\alpha |C|$ vertices of $C$, by Lemma \ref{Babai-disting-order-group}, there is a set of size at most $\dfrac{2}{\alpha}\log(|C|)+1$ that splits $C$ completely.

Take any vertex $x\in \mathfrak{X}$. By the assumptions of the lemma, $\{x\}\cup N_I(x) = C_1\cup C_2$ for some $C_1, C_2\in \mathcal{C}$. Then there exists a set $S$ of size $\dfrac{4}{\alpha}\log(|C|)+2\leq \dfrac{4}{\alpha}\log(n)+2$, that splits both $C_1$ and $C_2$ completely. Note that every clique $C\in \mathcal{C}$, distinct from $C_1$ and $C_2$, intersects each of them in exactly one vertex, and is uniquely determined by $C\cap C_1$ and $C\cap C_2$. Therefore, the pointwise stabilizer $\Aut(\mathfrak{X})_{(S)}$ fixes every clique $C\in \mathcal{C}$ as a set. At the same time, every vertex $v$ is uniquely defined by the collection of cliques in $\mathcal{C}$ that contain~$v$. Therefore, $S$ splits $X$ completely. 

Suppose $\sigma \in \Aut(\mathfrak{X})$ and $|\supp(\sigma)|< \dfrac{\alpha}{2} n$. Then, by the pigeonhole principle, there exists a vertex $x$, such that $\sigma$ fixes at least $\left\lceil\left(1 - \dfrac{\alpha}{2}\right)(|N_{I}(x)|+1)\right\rceil$ vertices in $N_I(x)\cup \{x\}$. Since $X_I = T(s)$, we have $\{x\}\cup N_I(x) = C_1\cup C_2$ for some $C_1, C_2\in \mathcal{C}$ and $|C_1| = |C_2| = 1+\dfrac{|N_I|}{2}$. Thus $\sigma$ fixes more than $(1-\alpha)|C_i|$ vertices in $C_i$ for every $i\in \{1,2\}$. This means that any pair of vertices  $x, y\in C_i$ is distinguished by at least one vertex fixed by $\sigma$. Hence, $\sigma(x)\neq y$. At the same time, since $(1-\alpha)|C_i|>1$, $\sigma(x)\in C_i$ for every $x\in C_i$. Therefore, $\sigma$ fixes poinwise both $C_1$ and $C_2$. Finally, by the argument in the previous paragraph, we get that $\sigma$ fixes every vertex, so $\sigma$ is the identity.

\end{proof}

\subsection{Constituent $X_3$ is strongly regular} 

In the following theorem we consider the case when the constituent $X_3$ is a strongly regular graph (case of the statement 5 in Theorem~\ref{assoc-diam2}).
 
\begin{theorem}\label{assoc-x3-strongly-reg}
Let $\mathfrak{X}$ be an association scheme of rank 4 and diameter~2 on $n\geq29$ vertices. Assume that the constituents of $\mathfrak{X}$ are ordered by degree and $k_2\leq {\varepsilon}k_3/2$ holds for $\varepsilon < \frac{1}{100} $. Suppose that $k_2\leq \frac{11}{10}k_1$ and $X_{1,2}$ is strongly regular with smallest eigenvalue~$ -2$. Then neither $X_1$, nor $X_2$ is strongly regular with smallest eigenvalue $-2$, and one of the following is true.
\begin{enumerate}
\item The association scheme $\mathfrak{X}$ satisfies the assumptions of Lemma \ref{sun-wilmes-tool} for $I = \{1, 2\}$ and  $\alpha  = 1/{16}$.
\item $X_1$ or $X_2$ is the line graph of a regular triangle-free graph.
\end{enumerate} 
\end{theorem}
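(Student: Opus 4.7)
The plan uses that the Delsarte clique geometry of $X_{1,2}$ is almost rigid. By Theorem~\ref{geom-eig-2}(i) and $n\ge 29$, $X_{1,2}\in\{T(s),L_2(s)\}$ for some $s$, carrying a canonical Delsarte clique geometry $\mathcal{C}$: every vertex lies in exactly two cliques, of size $s-1$ in the triangular case and $s$ in the lattice case, and every edge of $X_{1,2}$ lies in a unique member of $\mathcal{C}$.

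First I would rule out $X_{1,2}=L_2(s)$. Inside each row or column $C\in\mathcal{C}$, every edge has color in $\{1,2\}$, and homogeneity of $\mathfrak{X}$ forces the color-$1$ row-degree and column-degree of a vertex to be constants $a,b$ with $a+b=k_1$. If $a\in\{0,s-1\}$, then all row edges share a color, and combining with the analogous dichotomy for columns one finds that $X_1$ or $X_2$ is a disjoint union of cliques, contradicting primitivity. If $0<a<s-1$, then restricting the intersection numbers $p_{ij}^{1}$ to a single row (where all common $X_{1,2}$-neighbors of a row edge lie) produces equalities on the internal row coloring that, together with $k_2\le\tfrac{11}{10}k_1$ and $k_1+k_2=2(s-1)$, are incompatible. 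Hence $X_{1,2}=T(s)$. Next, to rule out $X_i$ being strongly regular with smallest eigenvalue $-2$ for $i\in\{1,2\}$: Theorem~\ref{geom-eig-2}(i) forces $X_i\in\{T(s'),L_2(s')\}$, and matching $|V(X_i)|=\binom{s}{2}$ either gives $s'=s$ (triangular case: $k_i=k_{1,2}$, making the other constituent empty) or $s'^2=\binom{s}{2}$ (lattice case); the degree arithmetic combined with $k_1\le k_2\le\tfrac{11}{10}k_1$ rules out the lattice case for all admissible $s$.

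For the dichotomy, assume the distinguishing hypothesis of Lemma~\ref{sun-wilmes-tool} with $\alpha=1/16$ and $I=\{1,2\}$ fails: there is a line $C\in\mathcal{C}$ (with $|C|=s-1$) and a pair $x,y\in C$ with $c(x,y)=i\in\{1,2\}$ such that at least $\tfrac{15}{16}(s-1)-2$ vertices $z\in C\setminus\{x,y\}$ satisfy $c(z,x)=c(z,y)\in\{1,2\}$. Since such $z$'s contribute to $p_{11}^{i}$ or $p_{22}^{i}$, I get $p_{11}^{i}+p_{22}^{i}\ge \tfrac{15}{16}(s-1)-2$. Combined with the triangular identity $p_{11}^{i}+2p_{12}^{i}+p_{22}^{i}=\lambda(T(s))=s-2$, this forces $p_{12}^{i}$ to be at most a small fraction of $s$, and the relation $p_{12}^{i}k_i=p_{ii}^{3-i}k_{3-i}$ from Eq.~\eqref{eq-sum-param} then bounds $\mu(X_i)$. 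Now either the color-$i$ half of the non-distinguishing sum dominates, which pushes $p_{ii}^{i}=\lambda(X_i)$ above the threshold $(1/2-1/20)k_i$ of Lemma~\ref{assoc-clique-geom}(1)(b), or the color-$(3-i)$ half dominates, in which case the ceiling $2p_{3-i,3-i}^{i}\le(1+\varepsilon)k_{3-i}$ from Corollary~\ref{cor3} combined with $k_2\le\tfrac{11}{10}k_1$ forces the complementary term up to the same threshold. Either way, one of $X_1,X_2$ satisfies Lemma~\ref{assoc-clique-geom}(1)(b); by the previous step it cannot be strongly regular with smallest eigenvalue $-2$, so it is the line graph of a triangle-free regular graph, giving conclusion~(2).

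The main obstacle is fine-tuning the constants in the last paragraph. The threshold $\alpha=1/16$ in the failure of Lemma~\ref{sun-wilmes-tool} and the window $\lambda\ge (9/20)k_i$, $\mu\le \tfrac{101}{1100}k_i$ in Lemma~\ref{assoc-clique-geom}(1)(b) are closely tied: the ratio $(15/16)\cdot 2-11/10=31/80$ from the row-edge count is exactly the gap one has to absorb, and the bookkeeping is tight. Step~1 on ruling out $L_2(s)$ is also delicate because the row/column symmetry permits colorings that are neither ``monochromatic rows'' nor ``monochromatic columns,'' so one must carry the $a,b$ analysis through several subcases, using coherence to pin down the induced row-subconfiguration.
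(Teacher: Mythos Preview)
Your overall architecture matches the paper's: rule out $L_2(s)$, then on $T(s)$ run a dichotomy between ``cliques are internally well-distinguished'' and ``some $X_i$ satisfies Metsch with $m=2$,'' and separately exclude the SRG-with-eigenvalue-$-2$ option for $X_i$. Two places, however, need repair.

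\textbf{Eliminating $L_2(s)$.} The paper does this in one line: if $X_{1,2}=L_2(s)$ then $n=s^2$ and $k_1\le\tfrac12(k_1+k_2)=s-1$, but $X_1$ has diameter $2$ (since $\mathfrak X$ has diameter $2$), forcing $k_1^2\ge n-1=s^2-1$, a contradiction. Your row/column analysis is much heavier, and the key premise ``homogeneity forces the color-$1$ row-degree and column-degree to be constants $a,b$'' is not justified. Homogeneity only gives that the total color-$1$ degree is $k_1$; nothing in the rank-$4$ coherent data canonically labels the two Delsarte cliques through a vertex as ``row'' versus ``column,'' so even the unordered pair $\{a(v),b(v)\}$ being constant would require an argument beyond homogeneity. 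Use the diameter inequality instead.

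\textbf{The dichotomy constants.} Your contrapositive is the right idea but, as you suspect, the bookkeeping does not close in the case $c(x,y)=2$. From the failure at $\alpha=1/16$ you get $p_{11}^{2}+p_{22}^{2}\ge \tfrac{15}{16}(s-1)-2$ and $p_{12}^{2}\le \tfrac1{32}(s-1)+O(1)$. Using $p_{11}^{2}\le \tfrac{1+\varepsilon}{2}k_1$ (Corollary~\ref{cor3}) and $k_1+k_2=2(s-2)$ gives
\[
p_{22}^{2}\;\ge\;(s-2)-\tfrac{1}{16}(s-1)-\tfrac{1+\varepsilon}{2}k_1+O(1)
\;=\;\tfrac{1+\varepsilon}{2}k_2-\tfrac{1}{16}(s-1)+O(\varepsilon s+1),
\]
so $p_{22}^{2}/k_2\le \tfrac12-\tfrac1{16}+o(1)=\tfrac{7}{16}\approx 0.437$, below the $9/20$ threshold of Lemma~\ref{assoc-clique-geom}(1)(b). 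The ``color-$(3-i)$ half dominates'' branch does not rescue this: it only sharpens a lower bound on $k_{3-i}$, not on $\lambda(X_i)$. The paper avoids the squeeze by splitting directly on the intersection numbers: if $p_{i,i}^{j}<k_i/30$ for some $\{i,j\}=\{1,2\}$, then $\mu(X_i)\le k_i/30$ and the identity $\tfrac{k_1+k_2}{2}=p_{ii}^{i}+2p_{ij}^{i}+p_{jj}^{i}$ together with $p_{ij}^{i}=(k_j/k_i)p_{ii}^{j}$ and Corollary~\ref{cor3} force $p_{ii}^{i}\ge \tfrac25 k_i$, so Lemma~\ref{assoc-clique-geom}(1)(a) applies. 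Conversely, if both $p_{1,1}^{2}\ge k_1/30$ and $p_{2,2}^{1}\ge k_2/30$, then $2p_{12}^{i}\ge k_1/15\ge |C|/16$ for each $i$, which is exactly the distinguishing hypothesis. Replacing your contrapositive with this direct split on $p_{ii}^{j}\gtrless k_i/30$ fixes the constants.

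Your exclusion of $X_i$ being strongly regular with smallest eigenvalue $-2$ is essentially the paper's: $T(s')$ is impossible since it would force $k_i=k_1+k_2$, and for $L_2(s')$ the relation $\binom{s}{2}=s'^2$ gives $k_1+k_2=2(s-2)<2\sqrt2(s'-1)+2=\sqrt2\,k_i+2$, contradicting $k_i\le\tfrac{11}{10}k_{3-i}$.
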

\begin{proof}
By the assumptions of the theorem, all inequalities from Lemma \ref{assoc-param-ineq} hold. 

Since $X_{1,2}$ is strongly regular with smallest eigenvalue $-2$, by Seidel's classification (see Theorem \ref{geom-eig-2}), $X_{1,2} = T(s)$ or $X_{1,2} = L_{2}(s)$ for some $s$.  Suppose that $X_{1,2}$ is $L_{2}(s)$, then $n = s^2$, $k_1+k_2 = 2(s-1)$, so $k_1\leq (s-1)$. At the same time, since $X_1$ has diameter $2$, degree $k_1$ should satisfy $k_1^{2}\geq n-1$, which gives us a contradiction. Therefore, $X_{1,2} = T(s)$. Consider 2 cases.

\noindent \textbf{Case 1.} Assume $p_{1,1}^{2}\geq k_1/30$ and $p_{2,2}^{1}\geq k_2/30$.

We can rewrite the assumptions of this case in the form $p_{1,2}^{1} = p_{1,1}^{2}\dfrac{k_2}{k_1}\geq \dfrac{k_2}{30}$ and $p_{1,2}^{2}\geq \dfrac{k_1}{30}$. We know that $X_{1,2} = T(s)$ for some $s$. Let $\mathcal{C}$ be a Delsarte clique geometry of $X_{1,2}$. Then every clique $C\in \mathcal{C}$ has size
 $$1+p_{1,1}^{i}+p_{2,2}^{i}+2p_{1,2}^{i} = 1+\lambda_i(X_{1,2})= \dfrac{k_1+k_2}{2}+1\leq \frac{21}{20}k_1+1$$ 
 for $i\in \{1, 2\}$. Every pair of distinct vertices $x, y\in C$ with $c(x,y) = i\in \{1, 2\}$ is distinguished by at least $|C| - p_{1,1}^{i} - p_{2,2}^{i} = 2p_{1,2}^{i}+1\geq k_1/15+1\geq |C|/16$ vertices in $C$.

\noindent \textbf{Case 2.} Assume $p_{i,i}^{j}<k_i/30$ for $\{i,j\} = \{1,2\}$.

\noindent Using Corollary \ref{cor3} and the inequality $k_1\leq k_2 \leq \frac{11}{10}k_1$, we get 
\[\frac{k_i+k_j}{2} = \lambda_i(X_{1,2}) = p_{i,i}^{i}+2p_{i,j}^{i}+p_{j,j}^{i}\leq p_{i,i}^{i} +2\frac{11}{10}\frac{k_i}{30}+\frac{(1+\varepsilon)}{2}k_j.\]
Thus, 
\[\frac{2}{5}k_i\leq \left(\frac{1}{2} - \frac{11}{150} - \frac{11\varepsilon}{20}\right)k_i \leq p_{i,i}^{i}.\]

Therefore, by Lemma \ref{assoc-clique-geom}, the graph $X_i$ is strongly regular with smallest eigenvalue $-2$, or $X_i$ is the line graph of a regular triangle-free graph. 

Assume that for some $i\in \{1,2\}$ the graph $X_i$ is strongly regular with smallest eigenvalue~$-2$, then $X_i$, as well as $X_{1,2}$, is either $T(s)$ or $L_{2}(s)$. Since $X_i$ and $X_{1,2}$ have the same number of vertices, the only possibility is $X_{1,2} = T(s_1)$ and $X_{i} = L_2(s_2)$. Then $s_1(s_1 - 1)/{2} = s_2^{2}$, so $\sqrt{2}s_2> (s_1-1)$. This implies 
\[k_i+k_j = 2(s_1 - 2) \leq 2\sqrt{2}(s_2 - 1)+1  = \sqrt{2}k_i+1,\]
 so $k_j \leq (\sqrt{2} - 1)k_i+1$ and we get a contradiction with $k_i\leq \frac{11}{10}k_j$ and $k_i\geq \sqrt{n-1}>3$.

\begin{remark}\label{remark-x1-x12}
Observe that the argument in the last paragraph of the proof shows that $X_1$ and $X_{1,2}$ cannot be simultaneously strongly regular with smallest eigenvalue $-2$ even if the assumption that $k_2\leq \frac{11}{10}k_1$ does not hold (we assume that all other assumptions of the theorem are satisfied).
\end{remark}
 
\end{proof}

\subsection{Constituent $X_2$ is strongly regular}
Next, we consider the case when $X_2$ is strongly regular, i.e., we assume that the assumptions of the statement 4 of Theorem~\ref{assoc-diam2} hold.

\begin{theorem}\label{assoc-x2-strongly-reg}
Let $\mathfrak{X}$ be an association scheme  of rank 4 and diameter~2 on $n\geq 29$ vertices. Assume additionally, that the constituents of $\mathfrak{X}$ are ordered by degree and the inequality $ k_2\leq {\varepsilon} k_3/2$ holds for some $\varepsilon < 10^{-11}$. Suppose that $k_2\leq \frac{101}{100}k_1$ and $X_2$ is strongly regular with smallest eigenvalue $-2$. Then 
 \[q(X_{1,2})+\xi(X_{1,2})\leq \frac{99}{100}(k_1+k_2).\]
\end{theorem}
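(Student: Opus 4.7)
By Seidel's theorem (Theorem~\ref{geom-eig-2}) combined with $n \geq 29$, the strongly regular graph $X_2$ with smallest eigenvalue $-2$ must be $T(s)$ or $L_2(s)$. Write $r$ for the positive non-trivial eigenvalue of $X_2$ (so $r \in \{s-4, s-2\}$) and $m_+, m_-$ for the multiplicities of $r$ and $-2$. Since $n \geq 29$ forces $s \geq 9$ (triangular) or $s \geq 6$ (lattice), we have $r \geq 4$, $r + 2 \leq k_2/2$, and $k_2 - r \leq k_2/2 + 2$. Strong regularity gives $p_{2,2}^1 = p_{2,2}^3 = \mu(X_2) \in \{2,4\}$, and by Eq.~\eqref{eq-sum-param} the positive intersection number $p_{1,2}^2 = \mu(X_2)\,k_1/k_2 \in [\mu(X_2)\cdot 100/101,\,\mu(X_2)]$ must be an integer; the only possibility is $p_{1,2}^2 = \mu(X_2)$, which forces $k_1 = k_2$. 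Lemma~\ref{assoc-param-ineq} then yields $p_{1,1}^3, p_{1,2}^3 \leq \varepsilon k_1/2$, so $\mu(X_{1,2}) \leq 2\varepsilon k_1 + 4$, and Eq.~\eqref{eq-lambda-above} gives $\lambda_1, \lambda_2 \leq (1+2\varepsilon)\,k_1$. Hence $q(X_{1,2}) \leq (1+2\varepsilon)\,k_1$.

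The main step is to show $\xi(X_{1,2}) \leq k_1/2 + O(\sqrt{k_1})$. Split the non-trivial common eigenspaces of the scheme by the value $\eta_2 \in \{r, -2\}$ into $V_+$ (dimension $m_+$) and $V_-$ (dimension $m_-$). Since $A_3 = -I - A_1 - \eta_2 I$ on any non-trivial eigenspace, the relation $A_1 A_2 = \sum_j p_{1,2}^j A_j$ reduces to the linear operator equation
\[
(\eta_2 - p_{1,2}^1 + p_{1,2}^3)\,A_1 \;=\; (p_{1,2}^2 - p_{1,2}^3)\,\eta_2 - p_{1,2}^3.
\]
On $V_+$ ($\eta_2 = r$) this equation is non-degenerate: if its coefficient vanished, non-triviality of $V_+$ would force the right-hand side to vanish as well, giving the integer equation $(1+r)\,p_{1,2}^3 = r\,p_{1,2}^2$; since $\gcd(r,r+1) = 1$ this would require $(r+1)\mid p_{1,2}^2$, which is impossible for $p_{1,2}^2 \in \{2,4\}$ and $r \geq 4$. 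Hence $A_1|_{V_+}$ acts as a scalar $\eta_1^{V_+}$, which I evaluate using the trace identities $\mathrm{tr}(A_1) = 0$ and $\mathrm{tr}(A_1 A_2) = 0$ (the latter holds since $X_1$ and $X_2$ have disjoint edge sets). Writing $T_- = \mathrm{tr}(A_1|_{V_-})$, these read $k_1 + m_+ \eta_1^{V_+} + T_- = 0$ and $k_1^2 + r\, m_+ \eta_1^{V_+} - 2 T_- = 0$; eliminating $T_-$ gives $\eta_1^{V_+} = -k_1(k_1+2)/[m_+(r+2)]$. Plugging in the $T(s)$ and $L_2(s)$ parameters yields $\eta_1^{V_+} \in \{-4,-2\}$ respectively, so $|\eta_1^{V_+}| \leq 4 \leq r$, and the $A_{1,2}$-eigenvalue $\eta_1^{V_+} + r$ satisfies $|\eta_1^{V_+} + r| \leq r \leq k_1/2$.

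On $V_-$ ($\eta_2 = -2$) there are two subcases. In the non-degenerate subcase the same trace argument gives $\eta_1^{V_-} = k_1(k_1 - r)/[m_-(r+2)] = O(k_1/k_2^2)$, so $|\eta_1^{V_-} - 2| \leq 2 + o(1)$. In the degenerate subcase, both $-2 - p_{1,2}^1 + p_{1,2}^3 = 0$ and $-2p_{1,2}^2 + p_{1,2}^3 = 0$ must hold, forcing $p_{1,2}^3 = 2 p_{1,2}^2 \in \{4,8\}$ and $p_{1,2}^1 = 2p_{1,2}^2 - 2 \in \{2,6\}$; consequently $p_{1,1}^2 = p_{1,2}^1 \leq 6$ (using $k_1 = k_2$) is a small constant. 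Restricting $A_1^2 = k_1 I + \sum_j p_{1,1}^j A_j$ to $V_-$ yields the quadratic $\eta_1^2 - (p_{1,1}^1 - p_{1,1}^3)\eta_1 - (k_1 - 2p_{1,1}^2 + p_{1,1}^3) = 0$, whose roots are bounded in absolute value by $p_{1,1}^1 + \sqrt{k_1(1+\varepsilon)}$; using $p_{1,1}^1 \leq (1+\varepsilon)k_1/2$ from Corollary~\ref{cor3}, the $A_{1,2}$-eigenvalues on $V_-$ are bounded by $k_1/2 + O(\sqrt{k_1})$.

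Combining, $\xi(X_{1,2}) \leq k_1/2 + O(\sqrt{k_1})$ and therefore $q(X_{1,2}) + \xi(X_{1,2}) \leq \tfrac{3}{2}k_1 + O(\varepsilon k_1 + \sqrt{k_1})$. Since $k_1 = k_2$, we have $\tfrac{99}{100}(k_1+k_2) = 1.98\,k_1$, leaving a gap of $0.48\,k_1$ that comfortably absorbs the error terms for $\varepsilon \leq 10^{-11}$ and $k_1 \geq 1/\varepsilon$. The main obstacle is the careful handling of the degenerate cases of the linear $A_1 A_2$-equation: an integer divisibility argument rules them out on $V_+$, while on $V_-$ the degeneracy crucially forces $p_{1,1}^2 \leq 6$, so that the quadratic from $A_1^2$ yields the required bound.
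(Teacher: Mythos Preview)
Your argument is correct and takes a genuinely different route from the paper's proof. The key divergence is your opening observation: the integrality of $p_{1,2}^2 = \mu(X_2)\,k_1/k_2$ together with $k_1/k_2 \in [100/101,1]$ and $\mu(X_2)\in\{2,4\}$ forces $k_1=k_2$ exactly. This lets you work directly with the eigenspace decomposition of $A_2$ and compute the $A_{1,2}$-eigenvalues essentially by hand, avoiding the root-perturbation machinery (Proposition~\ref{assoc-x12-approx}) and the three-case analysis on $p_{1,2}^1$ that the paper uses, as well as the appeal to Theorem~\ref{assoc-x3-strongly-reg} to rule out $X_{1,2}$ being strongly regular. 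The paper's approach, in return, is more robust: it does not depend on the tight constant $101/100$ and would survive a relaxation of that hypothesis, whereas your integrality trick is specific to it.

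Two minor remarks. First, your stated inequality $r+2\le k_2/2$ fails for $L_2(s)$ (there $r+2=s>s-1=k_2/2$), but you never actually use it, so this is harmless. Second, your ``non-degenerate'' subcase on $V_-$ is in fact vacuous: since you show $A_1$ is scalar on $V_+$, if it were also scalar on $V_-$ then every $A_i$ would be scalar on both $V_+$ and $V_-$, giving only three common eigenspaces and contradicting rank~$4$. So the degenerate $V_-$ branch is the only one that occurs, and indeed your computed value $\eta_1^{V_-}=4/(s-3)$ or $2/(s-1)$ in the non-degenerate branch would fail to be an algebraic integer. This does not affect correctness, but you could streamline the write-up by noting it.
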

\begin{proof} The assumptions of the theorem imply that the inequalities from Lemma~\ref{assoc-param-ineq} hold.

Since $X_{2}$ is strongly regular with smallest eigenvalue $-2$ and $n\geq 29$, by Seidel's classification (Theorem \ref{geom-eig-2}), by the discussion in Section~\ref{sec-subsec-exceptions}, and by Lemma~\ref{assoc-param-ineq}, we conclude that 
\begin{equation}\label{eq-x2-seidel-conseq}
{k_2}/{2}\geq p_{2,2}^{2} \geq {k_2}/{2} - 1\quad \text{and} \quad p_{2,2}^{1} = p_{2,2}^{3} \leq \varepsilon k_2.
\end{equation}
By Proposition \ref{assoc-x12-approx}, for $\varepsilon_1 = 25\varepsilon^{1/3}$, we have 
\begin{equation}\label{eq-x2-xi}
\xi(X_{1,2}) \leq \frac{p_{2,2}^{2}+p_{2,1}^{1}+p_{1,1}^{1}+\sqrt{(p_{2,1}^{1}+p_{1,1}^{1} - p_{2,2}^{2})^2+4p_{1,1}^{2}p_{2,1}^{1}}}{2}+\varepsilon_{1}(k_1+k_2), \, \text{ so}
\end{equation}
\begin{equation}\label{eq-x2-xi-2}
\begin{aligned}
\xi(X_{1,2}) & \leq \max(p_{2,2}^{2}, p_{2,1}^{1}+p_{1,1}^{1})+\sqrt{p_{1,1}^{2}p_{1,2}^{1}}+\varepsilon_{1}(k_1+k_2)\leq \\
 & \leq \max(p_{2,2}^{2}+p_{2,1}^{1}, 2p_{2,1}^{1}+p_{1,1}^{1})+\varepsilon_{1}(k_1+k_2)\leq \\
& \leq \max(p_{2,2}^{2}+p_{2,1}^{1}, \lambda_1(X_{1,2}))+\varepsilon_{1}(k_1+k_2).
\end{aligned} 
\end{equation}
Recall also, that similarly as in Eq. \eqref{eq-mux12-above} and Eq. \eqref{eq-lambda-above}, we have
\begin{equation}\label{eq-x12-lambda-mu-resrt}
\mu(X_{1,2})\leq 2\varepsilon(k_1+k_2)\quad  \text{and} \quad \max(\lambda_1(X_{1, 2}), \lambda_2(X_{1,2}))\leq \frac{1+2\varepsilon}{2}(k_1+k_2).
\end{equation}

\noindent \textbf{Case 1.} Assume $p_{1,2}^{1}>2 k_1 /5$.

\noindent Then, using that $k_2\leq \frac{101}{100}k_1\leq \frac{11}{10}k_1$, and using Eq.~\eqref{eq-x2-seidel-conseq}, 
\[\lambda_1(X_{1,2}) = p_{1,1}^{1}+2p_{1,2}^{1}+p_{2,2}^{1}\geq \frac{4}{5}k_1\geq \frac{4}{11}(k_1+k_2),\text{ and}\]  
\[\lambda_2(X_{1,2}) = p_{2,2}^{2}+2p_{1,2}^{2}+p_{1,1}^{2}\geq \frac{k_2}{2}+\frac{2}{5}\frac{10}{11} k_1\geq \frac{4}{11}(k_1+k_2).\] 
Since Eq. \eqref{eq-x12-lambda-mu-resrt} holds, $X_{1,2}$ satisfies the assumptions of Theorem \ref{Metsch} for $m=2$. So, by Lemma \ref{assoc-clique-geom}, it is strongly regular with smallest eigenvalue $-2$. However, by Theorem~\ref{assoc-x3-strongly-reg}, under the assumptions of this proposition $X_{1,2}$ and $X_2$ cannot be strongly regular with smallest eigenvalue $-2$ simultaneously. Hence, this case is impossible.

\noindent \textbf{Case 2.} Assume $2 k_1/5\geq p_{1,2}^{1}\geq k_1/8$.

\noindent \textbf{Case 2.a} Suppose $\lambda_2(X_{1,2}) \geq \lambda_1(X_{1,2})$.

\noindent Then, since $k_2\leq \frac{11}{10} k_1$, using Eq.~\eqref{eq-x2-seidel-conseq}, the inequality 
\[q(X_{1,2}) \leq \lambda_2(X_{1,2}) = p_{2,2}^{2}+2p_{1,2}^{2}+p_{1,1}^{2} \leq \frac{k_2}{2}+2\varepsilon k_1 + \frac{2}{5}k_1 \leq \frac{49}{100}(k_1+k_2)\]
 holds. At the same time, by Eq. \eqref{eq-x2-xi-2}, we get 
\[\xi(X_{1,2}) \leq \max\left(\frac{1}{2}k_2+\frac{2}{5}k_1, \lambda_1(X_{1,2})\right)+\varepsilon_1(k_1+k_2)\leq \]
\[\leq \max\left(\frac{49}{100}(k_1+k_2), \lambda_2(X_{1,2})\right)+\varepsilon_1(k_1+k_2) \leq \frac{1}{2}(k_1+k_2).\] 
Therefore, $q(X_{1,2})+\xi(X_{1,2})\leq \frac{99}{100}(k_1+k_2)$.

\noindent \textbf{Case 2.b} Suppose $\lambda_1(X_{1,2}) \geq \lambda_2(X_{1,2})$.

\noindent We may assume that $\lambda_1(X_{1,2})\geq \frac{49}{100}(k_1+k_2)$. Otherwise, by Eq. \eqref{eq-x2-xi-2},
 \[q(X_{1,2})+\xi(X_{1,2})\leq \lambda_1(X_{1,2})+\max\left(\frac{k_2}{2}+\frac{2 k_1}{5}, \lambda_1(X_{1,2})\right)+\varepsilon_1(k_1+k_2)\leq \frac{99}{100}(k_1+k_2).\] 
 Let $p_{1,2}^{1} = \alpha k_1$, then  $1/8\leq \alpha\leq 2/5$. The inequality 
 \[p_{1,1}^{1}+p_{2,2}^{1}+2p_{1,2}^{1} = \lambda_1(X_{1,2})\geq \frac{49}{100}(k_1+k_2)\]
and Eq.~\eqref{eq-x2-seidel-conseq}  imply that 
 \begin{equation}\label{eq-x2-p111-bel}
  p_{1,1}^{1}+ p_{1,2}^{1} \geq \frac{49}{100}(k_1+k_2)-\varepsilon k_2 - \alpha k_1\geq \left(\frac{49}{50} - 2\varepsilon-\alpha\right) k_1\geq \frac{28}{50}k_1\geq \frac{28}{55}k_2>p_{2,2}^{2}.
 \end{equation}
On the other hand, Eq. \eqref{eq-x12-lambda-mu-resrt} implies
\begin{equation}\label{eq-x2-p111-ab}
 p_{1,1}^{1}+p_{1,2}^{1} \leq \lambda_1(X_{1,2}) - p_{1,2}^{1}\leq  \left(\frac{1}{2}+\varepsilon\right)(k_1+k_2) - \alpha k_1\leq \left(\frac{101}{100} - \alpha \right)k_1 . 
\end{equation}
Hence, as $p_{2,2}^{2}\geq (1/2-\varepsilon)k_2\geq (1/2-\varepsilon)k_1$,  Eq. \eqref{eq-x2-p111-bel} and \eqref{eq-x2-p111-ab} imply that
\[\left\vert p_{1,1}^{1} +p_{1,2}^{1}-p_{2,2}^{2}\right\vert\leq \left(\frac{101}{100} - \alpha \right)k_1 - \left(\frac{1}{2}-\varepsilon\right)k_1 \leq \left(\frac{52}{100} - \alpha\right)k_1.\]
Therefore, using Eq.~\eqref{eq-x2-xi}, Eq.~\eqref{eq-x2-p111-ab} and $p_{2,2}^{2}\leq k_2/2\leq \frac{101}{200}k_1$, we get for $1/8\leq \alpha \leq 2/5$
\begin{equation}
\xi(X_{1,2}) \leq \frac{\frac{101}{200}+\left(\frac{101}{100} - \alpha\right)+\sqrt{\left(\frac{52}{100} - \alpha\right)^{2} +4\alpha^{2}}}{2}k_1+\frac{201}{100}\varepsilon_1 k_1\leq \frac{195}{200}k_1, 
\end{equation}
Thus, $$q(X_{1,2})+\xi(X_{1,2})\leq \frac{1+2\varepsilon}{2}(k_1+k_2)+\frac{195}{200}k_1\leq \frac{99}{100}(k_1+k_2).$$

\noindent \textbf{Case 3.} Assume $p_{1,2}^{1}< k_1/8$.

\noindent Then, using Eq.~\eqref{eq-x2-xi-2}, Corollary~\ref{cor3} and inequality $k_2\leq \frac{101}{100}k_1$,  
\[\xi(X_{1,2})\leq \max(p_{2,2}^{2}+p_{2,1}^{1}, 2p_{2,1}^{1}+p_{1,1}^{1})+\varepsilon_{1}(k_1+k_2)\leq \]
\[ \leq \max\left(\frac{1}{2}k_2+\frac{1}{8}k_1, \frac{1}{4}k_1+\frac{1+\varepsilon}{2}k_1 \right)+\varepsilon_1(k_1+k_2) \leq \frac{2}{5}(k_1+k_2).\]
Combining this with Eq.~\eqref{eq-x12-lambda-mu-resrt} we get
$q(X_{1,2})+\xi(X_{1,2})\leq \frac{99}{100}(k_1+k_2)$.

\end{proof}

\subsection{Constituent $X_1$ is strongly regular}\label{sec-x1}

The common strategy of our proofs is to prove a good spectral gap for a certain union of the constituents, or to apply Metsch's criteria (Theorem \ref{Metsch}) to a certain union of the constituents. The next lemma covers the range of parameters for which spectral gap is hard to achieve, and the conditions of Metsch's criteria are not satisfied for $X_{2}$ and $X_{1,2}$. However, in this range of parameters, we are still able to use the idea of Metsch's proof to show that $k_2$ does not differ much from $k_1$. This will suffice for our purposes.  
\begin{definition}
For a homogeneous configuration $\mathfrak{X}$ and disjoint non-empty sets of edge colors $I$ and $J$ we say that vertices $x, y_1, y_2, ..., y_t$ form a \textit{$t$-claw} (claw of size $t$) in colors $(I, J)$ if $c(x,y_i)\in I$ and $c(y_i, y_j)\in J$ for all distinct $1\leq i,  j\leq t$.  
\end{definition}
\begin{lemma}\label{assoc-claw-proof-bound}
Let $\mathfrak{X}$ be an association scheme of rank 4 and diameter~2 with constituents ordered by degree. Suppose that the inequality $k_2\leq {\varepsilon} k_3/2$ holds for some $0< \varepsilon \leq \frac{1}{100}$.  Assume additionally, that for some $0<\delta\leq \frac{1}{100}$ we have
\[p_{2,2}^{2}\geq \frac{1 - \delta}{2}k_2\quad  \text{and} \quad \frac{1}{8}k_2 \leq p_{2,2}^{1} \leq \frac{1}{3} k_2.\]
Then $k_2\leq 20 k_1$.
\end{lemma}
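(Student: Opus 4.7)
My approach will be a proof by contradiction via Metsch-style claw counting inside the dense constituent $X_2$. Suppose $k_2 > 20 k_1$. The intersection-number symmetry $k_s\, p_{i,j}^s = k_i\, p_{s,j}^i$ (Eq.~\eqref{eq-sum-param}) combined with the hypotheses gives
\[
p_{1,2}^2 \;=\; \tfrac{k_1}{k_2}\, p_{2,2}^1 \;\in\; [k_1/8,\, k_1/3], \qquad p_{1,1}^2 \;=\; \tfrac{k_1}{k_2}\, p_{1,2}^1 \;<\; k_1/20.
\]
Together with $p_{2,2}^3 \leq \varepsilon k_2$ from Lemma~\ref{assoc-param-ineq} and the hypothesis $p_{2,2}^2 \geq (1-\delta) k_2/2$, this pins down the ``color profile'' inside every $X_2$-neighborhood $N_2(v)$: about $(1-\delta)/2$ of the $\binom{k_2}{2}$ pairs are color $2$, at most $O(k_1 k_2)$ are color $1$, and only $O(\varepsilon k_2^2)$ are color $3$.

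The plan is then to count, in two ways, the ordered quadruples
\[
\Theta \;=\; \bigl\{ (v, y, u_1, u_2) \;:\; c(v, y) = 2, \ u_1, u_2 \in N_2(v) \cap N_2(y), \ c(u_1, u_2) = 1 \bigr\}.
\]
Fixing $(v, y)$ first and using the degree bound $p_{1,2}^2 \leq k_1/3$ inside the set $N_2(v) \cap N_2(y)$ of size $p_{2,2}^2$ bounds $|\Theta|$ above by $n k_2 p_{2,2}^2 p_{1,2}^2$. Fixing the color-$1$ edge $(u_1, u_2)$ first expresses $|\Theta|$ as $n k_1$ times the number of color-$2$ ordered pairs inside the set $N_2(u_1) \cap N_2(u_2)$ of size $p_{2,2}^1 \geq k_2/8$; a lower bound on this inner count---obtained by subtracting the non-color-$2$ contributions (color-$1$ pairs bounded above by $p_{2,2}^1 \cdot p_{1,2}^2$; color-$3$ pairs bounded via $p_{2,2}^3 \leq \varepsilon k_2$ together with a Metsch-style aggregation)---produces a quantity of order $n k_1 (p_{2,2}^1)^2$. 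Comparing the two expressions, the factor $k_2/k_1$ enters on only one side, and after routine simplification one extracts $k_2 \leq 20 k_1$, the desired contradiction.

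The principal obstacle will be the lower bound on color-$2$ pairs inside the size-$p_{2,2}^1$ set $N_2(u_1) \cap N_2(u_2)$---specifically, controlling the color-$3$ pairs there. Metsch's criterion (Theorem~\ref{Metsch}) does \emph{not} apply to $X_2$ in this parameter range, because $p_{2,2}^1 \geq k_2/8$ is far too large relative to $p_{2,2}^2 \approx k_2/2$ for the hypothesis $2\lambda - \lambda > (2m-1)(\mu - 1) - 1$ to hold; accordingly, only the counting \emph{method} of Metsch's proof survives, with color-$3$ non-adjacencies handled pointwise via $p_{2,2}^3 \leq \varepsilon k_2$, and color-$1$ non-adjacencies handled only in aggregate through $p_{1,2}^2 \leq k_1/3$. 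The remaining work is book-keeping to ensure that the small parameters $\delta \leq 1/100$ and $\varepsilon \leq 1/100$ contribute only lower-order terms and do not spoil the explicit constant $20$ in the conclusion.
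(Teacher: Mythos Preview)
Your double-counting scheme does not produce the asymmetry in $k_2/k_1$ that you claim. Using the identity $p_{1,2}^2 = (k_1/k_2)\,p_{2,2}^1$, your upper bound becomes
\[
|\Theta| \;\leq\; n\,k_2\,p_{2,2}^2\,p_{1,2}^2 \;=\; n\,k_1\,p_{2,2}^2\,p_{2,2}^1,
\]
while the target lower bound is of order $n\,k_1\,(p_{2,2}^1)^2$. Both sides carry the same factor $n k_1$, and the remaining comparison $p_{2,2}^1 \lesssim p_{2,2}^2$ is already given by the hypotheses ($p_{2,2}^1 \le k_2/3$ versus $p_{2,2}^2 \ge (1-\delta)k_2/2$). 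No bound on $k_2/k_1$ falls out.

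More seriously, the lower bound itself fails. Writing $S = N_2(u_1)\cap N_2(u_2)$ and subtracting non-color-$2$ pairs, for each $v\in S$ the number of $y\in S$ with $c(v,y)=3$ is at most $p_{2,3}^2$ (using $c(v,u_1)=2$), and $p_{2,3}^2 = k_2 - p_{2,2}^2 - p_{2,1}^2$ is of size roughly $k_2/2$, which exceeds $|S|=p_{2,2}^1\le k_2/3$. So the pointwise lower bound on color-$2$ pairs in $S$ is negative. Aggregating over all $(u_1,u_2)$ via the identity $k_3\,p_{2,2}^3 = k_2\,p_{2,3}^2$ gives the same $p_{2,3}^2$-sized loss and does not help; the parameter $p_{2,2}^3\le\varepsilon k_2$ only controls the \emph{size} of $N_2(v)\cap N_2(y)$ when $c(v,y)=3$, not the abundance of color-$3$ pairs inside~$S$.

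The paper's argument is structurally different. It first shows (by the $p_{2,2}^2\ge(1-\delta)k_2/2$ and $p_{2,2}^3\le\varepsilon k_2$ bounds) that there is no $3$-claw in colors $(2,3)$, and then uses this to build, for every color-$2$ edge, a clique in $X_{1,2}$ of size at least $p_{2,2}^2 - p_{2,2}^3 - p_{2,1}^3$. The ratio $k_2/k_1$ enters because such a clique, if it contains a color-$1$ edge $\{y_1,y_2\}$, must fit inside $N_1(y_1)\cup N_1(y_2)\cup (N_2(y_1)\cap N_2(y_2))$, whose size is at most $2k_1 + p_{2,2}^1 \le 2k_1 + k_2/3$; comparing with the clique size $\gtrsim k_2/2$ forces $k_2\le 20k_1$. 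The remaining case, where every such clique lies entirely in $X_2$, is disposed of by a short clique-intersection argument. The appearance of $k_1$ comes from bounding a neighborhood in color~$1$, not from any counting of color-$1$ pairs; that is the mechanism your proposal is missing.
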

\begin{proof}
The assumptions of the lemma ensure that the inequalities from Lemma~\ref{assoc-param-ineq} hold.

First, we show that under the assumptions of the lemma there are no 3-claw in colors $(2,3)$ in $\mathfrak{X}$. That is, for $x, y_1, y_2, y_3\in \mathfrak{X}$ it is not possible that $c(x,y_i) = 2$ and $c(y_i,y_j) = 3$ for all distinct $i,j\in [3]$. Indeed, suppose such $x, y_i$ exist. Let $U_i = N_2(x)\cap N_2(y_i)$. Then
\[
|U_{i}| = p_{2,2}^{2}\geq \frac{1 - \delta}{2}k_2,\qquad  |U_{i}\cap U_{j}| \leq |N_2(y_i)\cap N_2(y_j)|= p_{2,2}^{3}\leq \varepsilon k_2 \quad \text{ and } \]
\[ 
|U_1 \cup U_2 \cup U_3|\leq |N_2(x)| = k_2.
\]
Therefore, we should have $k_2 \geq 3\dfrac{(1 - \delta)}{2}k_2-3\varepsilon k_2$, a contradiction.
Hence, the size of a maximal claw in colors $(2,3)$ is 2. 

Now, we claim that any edge of color 2 lies inside a clique of size at least $p_{2,2}^{2} - p_{2,2}^{3} - p_{2,1}^{3}$ in $X_{1,2}$. Consider any edge $\{x,y\}$ of color 2. Let $z$ be a vertex which satisfies $c(x,z) = 2$ and $c(y,z) = 3$. Define 
\begin{equation}
C(x,y) = \{x, y\}\cup\{ w: c(z,w) = 3\, \text{ and } \, c(x,w) = 2,\, c(y,w) = 2\}.
\end{equation} 
Observe that 
\begin{equation}\label{eq:C-size}
|C(x,y)|\geq 2+ p_{2,2}^{2} - p_{2,2}^{3} - p_{2,1}^{3}.
\end{equation}

 At the same time, if $z_1, z_2 \in C(x,y)$ satisfy $c(z_1, z_2) = 3$, then $x, z, z_1, z_2$ form a 3-claw in colors $(2,3)$, which contradicts our claim above. Hence, $C(x,y)$ is a clique in $X_{1,2}$.

Assume that there is an edge $\{y_1, y_2\}$ in $C(x,y)$ of color $1$ for some $x,y$. Then \[2k_1+\frac{1}{3}k_2\geq 2\sum\limits_{i = 1}^{3} p_{1,i}^{1} +p_{2,2}^{1}\geq p_{1,1}^{1}+2p_{1,2}^{1}+p_{2,2}^{1}\geq |C(x,y)|-2\geq \frac{1 - \delta - 2\varepsilon}{2}k_2 - k_1.\]
Therefore, $k_2\leq 20 k_1$.

Assume now that all edges in $C(x,y)$ are of color $2$ for all $x,y$, that is, $C(x,y)$ is a clique in $X_{2}$. Let $\mathcal{C}$ be the set of all maximal cliques in $X_2$ of size at least $p_{2,2}^{2} - p_{2,2}^{3} - p_{2,1}^{3}$. Then we have proved that every edge of color $2$ is covered by at least one clique in $\mathcal{C}$. Consider, two distinct cliques $C_1, C_2 \in \mathcal{C}$. There is a pair of vertices $v\in C_1\setminus C_2$ and $u\in C_2\setminus C_1$ with $c(v, u) \neq 2$. Thus,
\begin{equation}
|C_1\cap C_2|\leq \max(p_{2,2}^{1}, p_{2,2}^{3})\leq k_2/3.
\end{equation}

Suppose first that some pair of distinct cliques $C_1, C_2\in \mathcal{C}$ satisfies $|C_1\cap C_2|\geq 2$ and let $\{x,y\}\subseteq C_1\cap C_2$. Then $c(x,y) = 2$ and every vertex in $C_1\cup C_2$ is adjacent to both $x$ and $y$ by an edge of color $2$. Thus, 
\[p_{2,2}^{2}\geq |C_1\cup C_2| - 2 = |C_1|+|C_2| - |C_1\cap C_2| - 2 \geq  2(p_{2,2}^{2} - p_{2,2}^{3} - k_1) - \frac{1}{3}k_2,\]  
 so, using Lemma~\ref{assoc-param-ineq},
\[\left(\frac{1}{3}+2\varepsilon\right)k_2+2k_1 \geq p_{2,2}^{2}\geq \frac{1 - \delta}{2}k_2. \]
Hence, $k_2\leq 20k_1$.

Finally, if for every pair of distinct cliques $C_1, C_2\in \mathcal{C}$ we have $|C_1\cap C_2|\leq 1$, then every edge of color $2$ lies in at most one clique of $\mathcal{C}$. Above we proved that every edge of color $2$ lies in at least one clique of $\mathcal{C}$, so it lies in exactly one. 

Therefore, since $p_{2,2}^{2}\geq \dfrac{1 - \delta}{2}k_2$, we get that either $p_{2,1}^3\geq k_2/10$, and so $k_2\leq 10k_1$; or, by Eq.~\eqref{eq:C-size}, $|C|> k_2/3+1$ for every $C\in \mathcal{C}$, and so every vertex lies in at most $2$ cliques from~$\mathcal{C}$. In the latter case, by Lemma~\ref{clique-mu-bound}, we get that $p_{2,2}^{1}\leq 4$, which contradicts $p_{2,2}^{1}\geq k_2/8$, since $k_2\geq {p_{2,2}^{3}}/{\varepsilon}\geq {1}/{\varepsilon}$ by Lemma~\ref{assoc-param-ineq}.
\end{proof}

Furthermore, we can get a linear inequality between $k_1$ and $k_2$ if we know that $X_{1,2}$ has a clique geometry.
\begin{lemma}\label{assoc-k1-k2-bound}
Let $\mathfrak{X}$ be an association scheme of rank 4 on $n\geq 29$ vertices, with diameter~2 and constituents ordered by degree. Assume the inequality $k_2\leq {\varepsilon}k_3/2$ holds for some $\varepsilon<\frac{1}{10}$. Suppose $X_{1,2}$ has a clique geometry such that every vertex belongs to at most $m$ cliques. Then
\[ p_{2,3}^{1} \leq \frac{m^2 -2}{2}k_1\quad \text{ and } \quad k_2 \leq \frac{3}{2-4\varepsilon}(m^{2} - 2) k_1. \]
If, additionally,  $X_1$ is a strongly regular graph with smallest eigenvalue $-2$, then
\[ p_{2,3}^{1} \leq \frac{m^2 -4}{8}k_1\quad \text{ and } \quad k_2 \leq \frac{3}{8(1-2\varepsilon)}(m^{2} - 4) k_1. \]
\end{lemma}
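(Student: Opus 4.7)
The plan is to exploit the clique geometry of $X_{1,2}$ via Lemma~\ref{clique-mu-bound}, which gives $\mu(X_{1,2}) \leq m^2$, i.e.,
\[ p_{1,1}^3 + 2p_{1,2}^3 + p_{2,2}^3 \leq m^2. \]
In particular, $p_{1,2}^3 \leq (m^2 - p_{1,1}^3 - p_{2,2}^3)/2$. Together with the identity $p_{2,3}^1 k_1 = p_{1,2}^3 k_3$ from Eq.~\eqref{eq-sum-param}, this reduces the first claim to producing a sufficiently tight bound on $p_{1,2}^3$ (or equivalently, to exploiting the local clique structure near a color-$1$ pair).

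I would carry out the first step as a clique-geometric double count. Fix a pair $u, v$ with $c(u, v) = 1$, and let $C$ be the unique clique of the geometry containing $\{u, v\}$. Any $w$ contributing to $p_{2,3}^1$ must lie in some clique $C' \ni u$ with $C' \neq C$ (since $c(v, w) = 3$ forces $v \notin C'$), must satisfy $c(u, w) = 2$, and must avoid all cliques through $v$ distinct from $C$. Enumerate such $w$ by running over the at most $m - 1$ cliques through $u$ other than $C$ and subtracting off the color-$(1{,}2)$ pairs that land at intersections of pairs of cliques through $u$ and $v$. Using that any two distinct cliques in the geometry intersect in at most one vertex, the color-$(1{,}2)$ intersection pairs satisfy the same identity as $p_{1,2}^3$, so one can absorb them into the $\mu$-bound; the ``$-2$'' correction in $(m^2 - 2)/2$ reflects the fact that the intersection points partially come with colors $(1{,}1)$ or $(2{,}2)$, already accounted for by $p_{1,1}^3$ and $p_{2,2}^3$.

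For the bound on $k_2$, I would use $k_2 = p_{2,1}^1 + p_{2,2}^1 + p_{2,3}^1$ and bound the remaining two terms by analogous clique-geometric counts on the common $X_{1,2}$-neighbors of $u, v$ (splitting these into the $|C| - 2$ neighbors inside $C$ and the at most $(m-1)^2$ neighbors coming from intersections of other cliques through $u$ and $v$). Combining with the $\mu$-bound and plugging in the estimates on $p_{i,j}^3$ supplied by Lemma~\ref{assoc-param-ineq} (which gives $p_{1,2}^3 \leq \varepsilon k_1/2$, etc.) yields the $(2 - 4\varepsilon)^{-1}$ factor appearing in the final bound $k_2 \leq \frac{3(m^2 - 2)}{2 - 4\varepsilon} k_1$.

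Finally, for the strengthened bound under the extra hypothesis that $X_1$ is strongly regular with smallest eigenvalue $-2$, Seidel's classification (Theorem~\ref{geom-eig-2}) implies that $X_1$ itself admits a Delsarte clique geometry in which every vertex lies in at most two cliques. Intersecting this geometry with the $X_{1,2}$-geometry imposes that the color-$1$ edges within each $X_{1,2}$-clique form a union of at most two sub-cliques, which drastically restricts the color-$(1{,}2)$ intersection patterns counted in Step 1, improving the ``$-2$'' correction to ``$-4$'' and gaining an extra factor of $1/4$ to give $p_{2,3}^1 \leq (m^2 - 4)k_1/8$ and the corresponding sharper bound on $k_2$. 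The main obstacle is the combinatorial count in Step 1: extracting the tight coefficient $(m^2-2)/2$ rather than the naive $m^2/2$ requires carefully tracking how the four possible color profiles $(1{,}1), (1{,}2), (2{,}1), (2{,}2)$ distribute over the $\leq m^2$ intersections of cliques through $u$ and $v$.
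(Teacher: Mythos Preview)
Your opening move is right: Lemma~\ref{clique-mu-bound} gives $p_{1,1}^3 + 2p_{1,2}^3 + p_{2,2}^3 \leq m^2$, and the identity $p_{2,3}^1 k_1 = p_{1,2}^3 k_3$ from Eq.~\eqref{eq-sum-param} is indeed the bridge. But from that point on you overcomplicate matters and miss two elementary steps that do all the work in the paper's proof.

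First, the ``$-2$'' correction needs no clique double-count at all. Since $\mathfrak{X}$ has diameter $2$, both $X_1$ and $X_2$ are connected of diameter at most $2$, so for any pair $u,v$ with $c(u,v)=3$ there is a $2$-path in $X_1$ and a $2$-path in $X_2$; that is exactly $p_{1,1}^3 \geq 1$ and $p_{2,2}^3 \geq 1$. Plugging into the $\mu$-bound gives $p_{1,2}^3 \leq (m^2-2)/2$ immediately.

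Second, and more seriously, you never explain how to pass from a bound on $p_{1,2}^3$ (a constant) to a bound on $p_{2,3}^1$ that is linear in $k_1$. The identity only gives $p_{2,3}^1 \leq \frac{m^2-2}{2}\cdot \frac{k_3}{k_1}$; the paper closes this by observing that diameter $2$ of $X_1$ forces $k_3 \leq k_1(k_1-1)$, hence $k_3/k_1 \leq k_1$. Your proposed direct clique enumeration of the vertices $w$ with $c(u,w)=2$, $c(v,w)=3$ does not naturally yield such a bound: those $w$ are \emph{not} $X_{1,2}$-neighbors of $v$, so the clique geometry tells you almost nothing about them beyond which clique through $u$ they sit in, and each such clique can have size of order $k_1+k_2$.

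For the $k_2$-bound, the paper again avoids clique bookkeeping: Corollary~\ref{cor2} gives $p_{2,3}^1 + \varepsilon k_2 \geq \max(p_{2,1}^1,p_{2,2}^1)$, and combining with $p_{2,1}^1+p_{2,2}^1+p_{2,3}^1 = k_2$ yields $p_{2,3}^1 \geq \frac{1-2\varepsilon}{3}k_2$, whence the stated inequality.

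For the strengthening when $X_1$ is strongly regular with smallest eigenvalue $-2$, there is no need to intersect two clique geometries. Seidel's classification says $X_1$ is $T(s)$ or $L_2(s)$, which immediately gives two numerical facts: (i) $4k_3 \leq k_1^2$ (so $k_3/k_1 \leq k_1/4$, accounting for the extra factor $1/4$), and (ii) $p_{1,1}^3 = \mu(X_1) \geq 2$. The paper then shows $p_{2,2}^3 \geq 2$ as well, via $p_{2,2}^3 k_3 = p_{2,3}^2 k_2 \geq \frac{1-2\varepsilon}{3}k_2^2$ and $k_2^2 \geq k_3$. Together these turn the ``$-2$'' into ``$-4$''.
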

\begin{proof}
By Lemma \ref{clique-mu-bound} applied to $X_{1,2}$, we know
\begin{equation}\label{eq:mu-sum}
p_{1,1}^{3}+2p_{1,2}^{3}+p_{2,2}^{3} = \mu(X_{1,2}) \leq m^2.
\end{equation}
Since $\mathfrak{X}$ is of diameter $2$, we have that $p_{1,1}^{3}\geq 1$ and $p_{2,2}^{3}\geq 1$, and $k_1(k_1 -1)\geq k_3$. Thus
\begin{equation}\label{eq-k1-k2-p123}
p_{1,2}^{3}\leq \frac{m^{2} - 2}{2}, \, \text{ so }\, p_{2,3}^{1}\leq \frac{m^2 - 2}{2}\frac{k_3}{k_1} \leq \frac{m^2 - 2}{2}k_1.
\end{equation} 
By Eq.~\eqref{eq-sum-param}, $p_{2,1}^{1}+p_{2,2}^{1}+p_{2,3}^{1} = k_2$, and Corollary \ref{cor2} implies that
$ p_{2,3}^{1}+\varepsilon k_2 \geq \max(p_{2,2}^{1}, p_{2,1}^{1})$. Thus, combining with Eq.~\eqref{eq-k1-k2-p123}, we get
 $$ \frac{1-2\varepsilon}{3}k_2 \leq p_{2,3}^{1} \leq \frac{m^2 - 2}{2}k_1.
 $$
If $X_1$ is strongly regular with smallest eigenvalue $-2$, we can get better estimates. By Seidel's classification, $X_1$ is either $T(s)$ or $L_{2}(s)$ for some $s$. Thus, either $n = {s(s-1)}/{2}$ and $k_1 = 2(s-2)$, or $n = s^2$ and $k_1 = 2(s-1)$. In any case, $4k_3\leq k_1^{2}$.  Observe that Corollary~\ref{cor2} implies 
$ p_{2,3}^{2}+\varepsilon k_2 \geq \max(p_{2,2}^{2}, p_{2,1}^{2}) $. Hence, $p_{2,3}^{2} \geq \dfrac{1-2\varepsilon}{3}k_2$, so $$\displaystyle{p_{2,2}^{3}\geq \frac{(1-2\varepsilon)(k_2)^2}{3k_3}\geq \dfrac{4(1-2\varepsilon)}{3}}.$$
 At the same time, $p^3_{1,1}=\mu(X_1)\geq 2$ for $X_1 = T(s)$, or $X_1 = L_2(s)$. Thus, $p_{i,i}^{3}\geq 2$ for $i=1$ and $i=2$. Therefore, as in Eq. \eqref{eq-k1-k2-p123}, by Eq.~\eqref{eq:mu-sum},
\[ p_{2,3}^{1}\leq \frac{m^2 - 4}{2}\cdot \frac{k_3}{k_1} \leq \frac{m^2 - 4}{8}k_1. \]
Again, $p_{2,3}^{1}\geq  \dfrac{1-2\varepsilon}{3}k_2$ implies the desired inequality between $k_1$ and $k_2$. 
\end{proof} 

Now, we are ready to consider the case when the constituent $X_1$ is strongly regular (case of statement 3 of Theorem~\ref{assoc-diam2}). 

\begin{theorem}\label{assoc-x1-strongly-reg}
Let $\mathfrak{X}$ be an association scheme of rank 4 on $n\geq 29$ vertices  with diameter~2 and constituents ordered by degree. Assume additionally, that the parameters of $\mathfrak{X}$ satisfy $k_2\leq {\varepsilon} k_3/2$ for $\varepsilon = 10^{-26}$. Suppose that $X_1$ is a strongly regular graph with smallest eigenvalue $-2$. Then
\begin{equation}\label{eq-x1-goal}
 q(Y)+\xi(Y)\leq (1-\varepsilon)k_Y,
 \end{equation}
where either $Y = X_{2}$ and $k_Y = k_2$, or $Y = X_{1,2}$ and $k_Y = k_1+k_2$.

\end{theorem}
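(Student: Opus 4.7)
Since $n \geq 29$ and $X_1$ is strongly regular with smallest eigenvalue $-2$, Seidel's classification (Theorem~\ref{geom-eig-2}) forces $X_1 \in \{T(s), L_2(s)\}$. In particular $p_{1,1}^1 = s-2$ is comparable to $k_1/2$, while $\mu(X_1) = p_{1,1}^2 + p_{1,1}^3 \leq 4$, so $p_{1,1}^2 \leq 4 \leq \varepsilon k_1$ once $k_1$ exceeds an absolute constant (which we may assume, else the conclusion is immediate via Observation~\ref{obs1} and Lemma~\ref{babai-dist}). Combined with Lemma~\ref{assoc-param-ineq}, this verifies the hypotheses of both Proposition~\ref{assoc-spectral-radius} and Proposition~\ref{assoc-x12-approx}, yielding usable upper bounds on $\xi(X_2)$ and $\xi(X_{1,2})$ in terms of the intersection numbers $p_{i,j}^t$ with $i,j \in \{1,2\}$.

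The plan is first to reduce to the regime $k_2 \leq C k_1$ for an absolute constant $C$, and then derive the spectral gap by an analysis analogous to the one in Theorem~\ref{assoc-x2-strongly-reg}. For the reduction I would split on the magnitudes of $p_{2,2}^2$ and $p_{2,2}^1$. If $p_{2,2}^2 \leq (\tfrac{1}{2} - \delta_0) k_2$ for a small absolute $\delta_0$, then Proposition~\ref{assoc-spectral-radius} combined with Corollary~\ref{cor3} and the small-$p_{2,2}^3$ bound from Lemma~\ref{assoc-param-ineq} already yields $q(X_2) + \xi(X_2) \leq (1-\varepsilon) k_2$ directly. Otherwise $p_{2,2}^2 \geq (\tfrac{1}{2} - \delta_0) k_2$, and the intermediate range $k_2/8 \leq p_{2,2}^1 \leq k_2/3$ is handled by Lemma~\ref{assoc-claw-proof-bound}, giving $k_2 \leq 20 k_1$. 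The range $p_{2,2}^1 > k_2/3$ forces both $\lambda_1(X_{1,2})$ and $\lambda_2(X_{1,2})$ close to $(k_1+k_2)/2$ while $\mu(X_{1,2}) \leq 2\varepsilon(k_1+k_2)$, so Lemma~\ref{assoc-clique-geom} endows $X_{1,2}$ with a clique geometry and Lemma~\ref{assoc-k1-k2-bound} supplies $k_2 \leq O(k_1)$. The range $p_{2,2}^1 < k_2/8$ makes $\mu(X_2)$ small enough that Metsch's criterion applies to $X_2$ itself; by Lemma~\ref{assoc-clique-geom}, $X_2$ is then either SRG with smallest eigenvalue $-2$ or the line graph of a triangle-free regular graph. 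In the former case a size-matching comparison of $X_1$ and $X_2$ similar to Remark~\ref{remark-x1-x12} either produces a contradiction or pins $k_2$ to a constant multiple of $k_1$, and in the latter case the interaction of the clique structure on $X_2$ with the clique geometry of $X_1 \in \{T(s), L_2(s)\}$ again delivers $k_2 = O(k_1)$.

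Once $k_2 \leq C k_1$ is established, every intersection number entering Proposition~\ref{assoc-x12-approx} is bounded by $O(k_1)$, and the approximation reduces to a finite-variable optimization. One then opens the maximum in the estimate $\xi(X_{1,2}) \leq \max(p_{2,2}^2 + p_{2,1}^1,\, 2p_{2,1}^1 + p_{1,1}^1) + 25\varepsilon^{1/3}(k_1+k_2)$ and splits on the size of $p_{1,2}^1$, following the template of the proof of Theorem~\ref{assoc-x2-strongly-reg}, to extract the gap. The main obstacle is the sub-case where $X_2$ is the line graph of a triangle-free regular graph: here the smallest eigenvalue of $X_2$ is only $-2$, so the spectral tool applied to $X_2$ alone need not give a gap, and the argument must be carried out on $X_{1,2}$, simultaneously exploiting the rigidity of $X_1$ and the line-graph structure of $X_2$ to forbid the worst configurations. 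A secondary technical complication is that the two sub-cases $X_1 = T(s)$ and $X_1 = L_2(s)$ have slightly different parameters ($\mu(X_1) \in \{2,4\}$, $k_1 \in \{2(s-2), 2(s-1)\}$), so all numerical constants in the case analysis must be chosen uniformly tight enough to cover both; the tiny $\varepsilon = 10^{-26}$ leaves ample slack for this.
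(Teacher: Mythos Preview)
Your initial case split is the main gap. You propose that whenever $p_{2,2}^2\le(\tfrac12-\delta_0)k_2$ the spectral estimate on $X_2$ closes directly, but this is false when $p_{2,2}^1$ is large. From Proposition~\ref{assoc-spectral-radius} (with indices $1$ and $2$ swapped) one gets $q(X_2)+\xi(X_2)\le \max(p_{2,2}^1,p_{2,2}^2)+p_{2,2}^2+p_{2,2}^1\sqrt{k_1/k_2}+O(\varepsilon_1 k_2)$, and nothing you have cited prevents, say, $p_{2,2}^1\approx k_2/2$, $p_{2,2}^2\approx k_2/3$, $k_1\approx k_2$; then the right-hand side is roughly $\tfrac43 k_2$. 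This is exactly why the paper splits not on the size of $p_{2,2}^2$ alone but on the \emph{ratio} $p_{2,2}^2/p_{2,2}^1$: in Case~B (ratio $\le 2-2\delta$) the sub-case $p_{2,2}^1\le k_2/5$ forces $p_{2,2}^2\le \tfrac25 k_2$ as well, and only then does the direct $X_2$ estimate go through; the remaining sub-cases ($p_{2,2}^1$ between $k_2/5$ and $k_2/3$, and $p_{2,2}^1\ge k_2/3$) require a separate bootstrap through Metsch on $X_{1,2}$ and Lemma~\ref{assoc-k1-k2-bound}, ultimately contradicting Remark~\ref{remark-x1-x12}.

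Two further points. First, your ``finish once $k_2\le Ck_1$'' by following the template of Theorem~\ref{assoc-x2-strongly-reg} does not work as stated: that theorem needs $k_2\le\tfrac{101}{100}k_1$, not $k_2\le 20k_1$. The paper's Case~A instead derives a quantitative bound $\xi(X_{1,2})\le\tfrac12(k_1+k_2)-\tfrac{k_1^2}{13(k_1+k_2)}+\varepsilon_5(k_1+k_2)$, and it is the $k_1^2/(k_1+k_2)$ term that makes $k_2\le 20k_1$ sufficient. Second, in the $p_{2,2}^1<k_2/8$ sub-case you propose classifying $X_2$ via Lemma~\ref{assoc-clique-geom} and then handling the line-graph outcome separately; you correctly identify this as the main obstacle but do not resolve it. The paper avoids classifying $X_2$ entirely here: Metsch with $m=2$ gives only $p_{2,2}^3\le\mu(X_2)\le 4$ via Lemma~\ref{clique-mu-bound}, and then the relation $p_{2,2}^3 k_3=p_{2,3}^2 k_2\ge\tfrac{1-2\varepsilon}{3}k_2^2$ combined with $k_1^2\ge 4k_3$ (a consequence of $X_1\in\{T(s),L_2(s)\}$) yields $k_2\le 4k_1$ directly. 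This is the key idea your sketch is missing.
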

\begin{proof}
The assumptions of the lemma ensure that the inequalities from Lemma~\ref{assoc-param-ineq} hold.

Since $X_1$ is strongly regular with smallest eigenvalue $-2$, Seidel's classification and Lemma~\ref{assoc-param-ineq} implies that
\begin{equation}\label{eq-x1-seidel-conseq}
  \dfrac{1}{2}k_1 \geq p_{1,1}^1\geq \dfrac{1}{2}k_1-1 \geq \left(\frac{1}{2}-\varepsilon\right)k_1 \quad \text{and} \quad p_{1,1}^2 =p_{1,1}^3\leq \varepsilon k_1 
  \end{equation}
By Lemma \ref{assoc-x12-approx}, for $\varepsilon_1 = 25\varepsilon^{1/3}\leq \frac{2}{3}10^{-7}$ we have 
\begin{equation}\label{eq-x12-spectral-gap-in-x1}
\xi(X_{1,2}) \leq \frac{p_{1,1}^{1}+p_{1,2}^{2}+p_{2,2}^{2}+\sqrt{(p_{1,2}^{2}+p_{2,2}^{2}-p_{1,1}^{1})^{2}+4p_{2,2}^{1}p_{1,2}^{2}}}{2}+\varepsilon_1(k_1+k_2).
\end{equation} 
Since $\sqrt{x+y}\leq \sqrt{x}+\sqrt{y}$,
\begin{equation}
\xi(X_{1,2})\leq \max(p_{1,1}^{1}, p_{2,2}^{2}+p_{2,1}^{2})+\sqrt{p_{2,2}^{1}p_{2,1}^{2}}+\varepsilon_1 (k_1+k_2).
\end{equation}
Using that $\lambda_{2}(X_{1,2}) \geq p_{2,2}^{2}+p_{2,1}^{2}$ (see Eq.~\eqref{eq:lambda-i}) and $p_{2,1}^{2} = p_{2,2}^{1}{k_1}/{k_2}\leq p_{2,2}^{1}$, we can simplify it even more
\begin{equation}
\xi(X_{1,2})\leq \max\left(\frac{k_1}{2}, \lambda_2(X_{1,2})\right)+p_{2,2}^{1}+\varepsilon_1 (k_1+k_2).
\end{equation}
Recall that as in Eq.~\eqref{eq-mux12-above} and Eq.~\eqref{eq-lambda-above}, we have
\begin{equation}\label{eq-x12-mu-lambda-resrt-in-x1}
\mu(X_{1,2})\leq 2\varepsilon(k_1+k_2)\quad  \text{and} \quad \max(\lambda_1(X_{1, 2}), \lambda_2(X_{1,2}))\leq \frac{1+2\varepsilon}{2}(k_1+k_2).
\end{equation}

\noindent \textbf{Case A.} Suppose $p_{2,2}^{2}\geq (2-2\delta) p_{2,2}^{1}$ for $\delta = 10^{-7}$.

\noindent Using Corollary \ref{cor3} for $p_{2,2}^{2}$, we get 
\begin{equation}\label{eq-p212-u}
p_{2,2}^{1}\leq \frac{1+\varepsilon}{4(1-\delta)}k_2 \quad \text{and} \quad  p_{2,1}^{2} =\frac{k_1}{k_2} p_{2,2}^{1} \leq \frac{1+\varepsilon}{4(1-\delta)}k_1.
\end{equation}
Note, by Corollary \ref{cor2} and Eq.~\eqref{eq-x1-seidel-conseq}, $\varepsilon k_1+p_{1,3}^1\geq p_{1,1}^1\geq \left(1/2-\varepsilon\right)k_1$. So, Eq.~\eqref{eq-sum-param} implies $p_{1,2}^{1}\leq 3\varepsilon k_1$. Therefore, by Eq.~\eqref{eq-x1-seidel-conseq},
\begin{equation}\label{eq-x1-lambda1-boundd}
\lambda_{1}(X_{1,2}) = p_{1,1}^{1}+p_{2,2}^{1}+2p_{1,2}^{1} \leq \frac{1}{2}k_1+p_{2,2}^1+6\varepsilon k_1.
\end{equation}
Assume that Eq. \eqref{eq-x1-goal} is not satisfied, then
\begin{equation}\label{eq-x1-caseA}
\begin{multlined}
 (1-\varepsilon)(k_1+k_2)\leq q(X_{1,2})+\xi(X_{1,2})\leq \\
 \leq \max\left(\lambda_2(X_{1,2}), \frac{1}{2}k_1+p_{2,2}^{1}+6\varepsilon k_1\right)+\max\left(\lambda_2(X_{1,2}), \frac{k_1}{2}\right)+p_{2,2}^{1}+\varepsilon_1(k_1+k_2)
 \end{multlined}
 \end{equation}
Observe, that if $\lambda_2(X_{1,2})\leq k_1/2+p_{2,2}^{1}+6\varepsilon k_1$, then using Eq. \eqref{eq-p212-u} we get a contradiction
$$(1-\varepsilon)(k_1+k_2)\leq k_1+3\frac{1+\varepsilon}{4(1-\delta)}k_2+\varepsilon_1(k_1+k_2)+6\varepsilon k_1.$$ 
Otherwise, Eq. \eqref{eq-x1-caseA} implies
 $$(1-\varepsilon-\varepsilon_1)(k_1+k_2)\leq 2\lambda_2(X_{1,2})+\frac{1+\varepsilon}{4(1-\delta)}k_2, \text{ so }\quad \lambda_2(X_{1,2}) \geq \frac{5}{6}k_1.$$
We estimate the expression under the root sign in Eq.~\eqref{eq-x12-spectral-gap-in-x1}, using that $\lambda_2(X_{1,2})\geq \dfrac{5}{6}k_1$, using Eq.~\eqref{eq-p212-u}, Eq.~\eqref{eq-x1-seidel-conseq}, and inequality $\dfrac{1+\varepsilon}{4(1-\delta)^2}\leq \dfrac{1+\varepsilon}{4}+\delta$ for $0\leq\varepsilon,  \delta \leq \dfrac{1}{2}$.

\[(p_{1,2}^{2}+p_{2,2}^{2}-p_{1,1}^{1})^{2}+4p_{2,2}^{1}p_{1,2}^{2} = (p_{1,2}^{2}+p_{2,2}^{2})^{2} - 2p_{1,1}^{1}(p_{1,2}^{2}+p_{2,2}^{2})+(p_{1,1}^{1})^{2}+4p_{2,2}^{1}p_{1,2}^{2} \leq \]
\[\leq (p_{1,2}^{2}+p_{2,2}^{2})^{2} -2p_{1,1}^{1}p_{1,2}^{2} - (1 - 2\varepsilon)k_1 p_{2,2}^{2} +\frac{(k_1)^{2}}{4} +\frac{1+\varepsilon}{2(1-\delta)^{2}}k_1 p_{2,2}^{2} \leq \]
\[ \leq  (p_{1,2}^{2}+p_{2,2}^{2})^{2} -k_1p_{1,2}^{2} +\left(\frac{(k_1)^2}{4} - \frac{1}{2}k_1p_{2,2}^{2}\right)+(3\varepsilon+2\delta)k_1k_2 \leq \]
\[ \leq  (p_{1,2}^{2}+p_{2,2}^{2})^{2} -\frac{k_1}{2}\left(\lambda_2(X_{1,2}) - \varepsilon k_1 -\frac{k_1}{2}\right)+(3\varepsilon+2\delta)k_1k_2 \leq\]
\[\leq (p_{1,2}^{2}+p_{2,2}^{2})^{2} - \frac{1}{6}(k_1)^{2} + (4\varepsilon+2\delta)k_1k_2.\]
Thus, since $\sqrt{x}$ is concave, using Eq.~\eqref{eq-x12-mu-lambda-resrt-in-x1}, and inequalities $\sqrt{y^{2} - x^{2}} \leq y-{x^{2}}/{(2y)}$ and $p_{2, 1}^{2}+p_{2, 2}^{2}\leq \lambda_{2}(X_{1,2})$, we obtain
\begin{equation}\label{eq-x12-root-est}
\sqrt{(p_{1,2}^{2}+p_{2,2}^{2}-p_{1,1}^{1})^{2}+4p_{2,2}^{1}p_{1,2}^{2}} \leq p_{1,2}^{2}+p_{2,2}^{2}- \frac{2(k_1)^{2}}{13(k_1+k_2)}+\sqrt{(4\varepsilon+2\delta)k_1k_2}.
\end{equation}
Denote $\varepsilon_4 = \sqrt{4\varepsilon+2\delta}<2^{-1}\cdot 10^{-3}$. Hence, by Eq.~\eqref{eq-x1-seidel-conseq}, Eq. \eqref{eq-x12-spectral-gap-in-x1} and Eq. \eqref{eq-x12-root-est},  
\[
\xi(X_{1,2}) \leq \frac{k_1}{4}+(p_{1,2}^{2}+p_{2,2}^{2}) - \frac{(k_1)^{2}}{13(k_1+k_2)}+\left(\frac{1}{2}\varepsilon_4\sqrt{k_1k_2}+\varepsilon_1 (k_1+k_2)\right).
\]
Using Corollary \ref{cor3} for $p_{2,2}^{2}$ and Eq. \eqref{eq-p212-u}, we get

\begin{equation}\label{eq-xi-bound-k1-k2}
\begin{multlined}
\xi(X_{1,2}) \leq \frac{k_1}{4}+\left(\frac{(1+\varepsilon)}{4(1-\delta)}k_1+\frac{(1+\varepsilon)}{2}k_2\right) - \frac{(k_1)^{2}}{13(k_1+k_2)}+\left(\frac{\varepsilon_4}{4}+\varepsilon_1\right) (k_1+k_2)\leq \\
\leq \frac{1}{2}(k_1+k_2) - \frac{(k_1)^{2}}{13(k_1+k_2)^{2}}(k_1+k_2)+ \varepsilon_5(k_1+k_2),
\end{multlined}
\end{equation}
where $\varepsilon_5 = \varepsilon_1+\frac{1}{4}\varepsilon_4+\delta+\varepsilon<6^{-1}\cdot 10^{-3}$. Thus, we want either $q(X_{1,2})$ to be bounded away from $(k_1+k_2)/{2}$, or to have $k_1\leq c k_2$ for some absolute constant $c$.

Observe, by Eq.~\eqref{eq-p212-u}, Eq.~\eqref{eq-x1-lambda1-boundd}  and Eq.~\eqref{eq-xi-bound-k1-k2},
\begin{equation}\label{eq-xi-lambda-1-k1-k2}
\begin{multlined}
\lambda_1 (X_{1,2}) +\xi(X_{1,2}) \leq \left(\frac{k_1}{2}+p_{2,2}^{1}+2\varepsilon k_1\right) + \frac{k_1+k_2}{2}+\varepsilon_5(k_1+k_2)\leq \\
\leq k_1+\frac{3}{4}k_2+(3\varepsilon+\delta)(k_1+k_2)+\varepsilon_5k_2 
\leq (1-\varepsilon)(k_1+k_2),
\end{multlined}
\end{equation}
\begin{equation}
\lambda_{2}(X_{1,2})+\xi(X_{1,2}) \leq \lambda_{2}(X_{1,2})+\frac{k_1+k_2}{2} +\varepsilon_5(k_1+k_2).
\end{equation}
Clearly, 
$$\mu(X_{1,2})+\xi(X_{1,2}) \leq 2\varepsilon(k_1+k_2)+\xi(X_{1,2})\leq (1-\varepsilon)(k_1+k_2).$$
Thus, either we have $q(X_{1,2})+\xi(X_{1,2})\leq (1-\varepsilon)(k_1+k_2)$, or $\lambda_2(X_{1,2})\geq (1/2-\varepsilon_5 - \varepsilon)(k_1+k_2)$.

\noindent Suppose that $\lambda_2(X_{1,2})\geq (1/2-\varepsilon_5 - \varepsilon)(k_1+k_2)$. By the assumption of Case A, we have $p^{2}_{2, 2}\geq (2-2\delta)p^{2}_{1,2}k_2/k_1$, so Eq~\eqref{eq-x1-seidel-conseq}  implies
$$\lambda_2(X_{1,2}) = p_{2,2}^{2}+2p^{2}_{1,2}+p^{2}_{1,1} \leq p_{2,2}^{2}+\frac{1}{1-\delta}\frac{k_1}{k_2}p_{2,2}^{2}+\varepsilon k_1\leq \left(\frac{1}{1-\delta}\frac{p_{2,2}^{2}}{k_2}+\varepsilon\right)(k_1+k_2).$$
Hence, in this case 
\begin{equation}\label{eq-caseA-p222-bound}
p_{2,2}^{2}\geq \left(\frac{1}{2} - \varepsilon_5 - 2\varepsilon\right)(1-\delta)k_2 .
\end{equation}

\begin{enumerate}
\item[]\textbf{Case A.1} Assume $p_{2,2}^{1}< k_2/8$.

\noindent Then $\mu(X_{2})\leq k_2/8$ and $\lambda(X_2) = p_{2,2}^{2}$. Therefore, $X_2$ satisfies the assumptions of Theorem~\ref{Metsch} for $m=2$. Thus, by Lemma \ref{clique-mu-bound} for graph $X_2$, we get $p_{2,2}^{3}\leq m^2 = 4$. At the same time, by Crorllary \ref{cor2} and Eq. \eqref{eq-sum-param} we have $p_{2,3}^{2}\geq \dfrac{1 - 2\varepsilon}{3}k_2$. Therefore, \[4k_3\geq p_{2, 2}^{3}k_3 = p_{2,3}^{2}k_2\geq k_2\dfrac{(1 - 2\varepsilon)}{3}k_2\geq \frac{1}{4}(k_2)^{2}.\]
Combining with $(k_1)^{2}\geq k_3$, we obtain $k_2\leq 4k_1$.
\item[]\textbf{Case A.2} Assume $p_{2,2}^{1}\geq k_2/8$.

\noindent Then, since Eq.~\eqref{eq-p212-u} and Eq.~\eqref{eq-caseA-p222-bound} hold, by Lemma~\ref{assoc-claw-proof-bound},   we get that $k_2\leq 20k_1$. 
\end{enumerate}

\noindent Hence, Eq. \eqref{eq-xi-bound-k1-k2} and Eq. \eqref{eq-x12-mu-lambda-resrt-in-x1} imply
\[\lambda_{2}(X_{1,2})+\xi(X_{1,2})\leq (1-\varepsilon)(k_1+k_2).\]
Therefore, using bound on $\mu(X_{1,2})$ and Eq. \eqref{eq-xi-lambda-1-k1-k2}, we get $q(X_{1,2})+\xi(X_{1,2})\leq (1-\varepsilon)(k_1+k_2)$.

\noindent \textbf{Case B.} Suppose $p_{2,2}^{2}\leq (2-2\delta) p_{2,2}^{1}$.

\noindent In this case, in several ranges of parameters we will show that $X_{1,2}$ has a clique geometry. We first establish the following bounds. 

By Corollary \ref{cor3}, $2p_{2,1}^{2}\leq (1+\varepsilon)k_1$. Assume $p_{2,2}^{1}\geq k_2/5$ and $m\leq 5$, then Eq.~\eqref{eq-x12-mu-lambda-resrt-in-x1} and Eq.~\eqref{eq-x1-seidel-conseq} implies
\begin{equation}\label{eq-x1-l1bound}
2\lambda_1(X_{1,2})-\lambda_{2}(X_{1,2})\geq k_1 +2p_{2,2}^{1} - p_{2,2}^{2} - 2p_{21}^{2} - 3\varepsilon k_1 \geq 
2\delta p_{2,2}^{1} - 4\varepsilon k_1\geq (2m-1)\mu(X_{1,2}).
\end{equation}
Suppose that $\lambda_2(X_{1,2})\geq (1/4+2m\varepsilon)(k_1+k_2)$, then Eq. \eqref{eq-x12-mu-lambda-resrt-in-x1} implies
\begin{equation}\label{eq-x1-l2bound}
2\lambda_2(X_{1,2}) - \lambda_1(X_{1,2})\geq (2m-1)\mu(X_{1,2}).
\end{equation}

\noindent \textbf{Case B.1.} Assume $p_{2,2}^{1}\geq {k_2}/{3}$.

\noindent Then, by Eq.~\eqref{eq-x1-seidel-conseq}, 
\begin{equation}\label{eq-x1-l1-13}
\lambda_1(X_{1,2})\geq p_{1,1}^{1}+p_{2,2}^{1} \geq \left(\frac{1}{2} - \varepsilon\right)k_1+\frac{1}{3}k_2\geq \frac{1}{3}(k_1+k_2).
\end{equation}
\begin{enumerate}
\item[]\textbf{Case B.1.a.} Suppose $p_{2,2}^{2}\geq k_2/3$.

Then $\lambda_2(X_{1,2})\geq (k_1+k_2)/3$. Thus, in notations of Theorem \ref{Metsch} we get for $X_{1,2}$ that $4\lambda^{(1)} - 6\mu(X_{1,2})\geq k_1+k_2$, and by Eq. \eqref{eq-x1-l1bound}-\eqref{eq-x1-l2bound}, inequality
$2\lambda^{(1)} - \lambda^{(2)}\geq 5\mu(X_{1,2})$ holds.
 Hence, by Theorem \ref{Metsch}, the graph $X_{1,2}$ has a clique geometry with $m=3$. Thus, by Lemma \ref{assoc-k1-k2-bound}, we have  $k_2 \leq \dfrac{15}{8(1-2\varepsilon)}k_1\leq 2k_1$. Therefore,
\[\lambda_{1}(X_{1,2})\geq p_{1,1}^{1}+p_{2,2}^{1}\geq \left(\frac{1}{2} - \varepsilon\right)k_1+\frac{1}{3}k_2 >\left(\frac{1}{3}+4\varepsilon\right)(k_1+k_2),\]
\[\lambda_{2}(X_{1,2})\geq p_{2,2}^{2}+2p_{1,2}^{2}\geq \frac{k_2}{3}+\frac{2k_1}{3}> \left(\frac{1}{3}+4\varepsilon\right)(k_1+k_2).\]

Therefore, $X_{1,2}$ satisfies Theorem \ref{Metsch} for $m = 2$, and so by Lemma \ref{assoc-clique-geom}, it is strongly regular with smallest eigenvalue $-2$. However,  by Theorem~\ref{assoc-x3-strongly-reg} and Remark~\ref{remark-x1-x12}, under the assumptions of this theorem the graphs $X_1$ and $X_{1,2}$ cannot be simultaneously strongly regular with smallest eigenvalue $-2$.

\item[]\textbf{Case B.1.b.} Suppose $p_{2,2}^{2}< k_2/3$.

Then, in particular, $p^{1}_{2,2}\geq p_{2,2}^{2}$, so $q(X_2) = p_{2,2}^{1}$. Take $0\leq \alpha  \leq \dfrac{1+\varepsilon}{2}\leq \dfrac{51}{100}$, and $0\leq \gamma \leq 1$, so that $p_{2,2}^{1} = \alpha k_2$ and $k_1 = \gamma k_2$. Using Eq. \eqref{x2-eq-bound} and Eq.~\eqref{eq-x1-seidel-conseq}, compute
\begin{equation}
q(X_2)+\xi(X_2) \leq p_{2,2}^{1}+p_{2,2}^{2}+\varepsilon k_2+ \sqrt{p_{2,2}^{1}p_{1,2}^{2}} +\varepsilon_1 k_2 = p_{2,2}^{2}+(\alpha+\alpha\sqrt{\gamma}+\varepsilon+\varepsilon_1)k_2
\end{equation}

If $p_{2,2}^{2} \leq (1-\alpha(1+\sqrt{\gamma})-\varepsilon_1 - 2\varepsilon) k_2$, then $q(X_2)+\xi(X_2)\leq (1-\varepsilon)k_2$ and we reached our goal. So, assume that $p_{2,2}^{2} \geq (1-\alpha(1+\sqrt{\gamma})-\varepsilon_1 - 2\varepsilon) k_2$. We compute
\begin{equation}
\begin{multlined}
\lambda_2(X_{1,2}) = p_{2,2}^{2}+2p_{1,2}^{2}+p_{1,1}^{2} \geq (1-\alpha(1+\sqrt{\gamma})-\varepsilon_1 - 2\varepsilon) k_2+2\alpha\gamma k_2 \geq \\
\geq \left(\frac{1 - \alpha(1+\sqrt{\gamma}-2\gamma) - \varepsilon_1 - 2\varepsilon}{1+\gamma}\right)(k_1+k_2) \geq \frac{3}{10}(k_1+k_2),
\end{multlined}
\end{equation}
where we use that $1+\sqrt{\gamma} - 2\gamma\geq 0$ for $0\leq \gamma \leq 1$, so expression is minimized for $\alpha = (1+\varepsilon)/{2}$ and after that we compute the minimum of the expression for $0\leq\gamma \leq 1$. Thus, by Eq. \eqref{eq-x1-l1bound}-\eqref{eq-x1-l1-13}, the graph $X_{1,2}$ has a clique geometry for $m = 3$. Hence, by Lemma~\ref{assoc-k1-k2-bound}, we have $k_2\leq 2k_1$. This implies that $\frac{1}{2}\leq \gamma \leq 1$. We compute,
\begin{equation}
\begin{gathered}
\min_{1/2\leq \gamma\leq 1}\min_{0\leq\alpha\leq \frac{51}{100}}\left(\frac{1 - \alpha(1+\sqrt{\gamma}-2\gamma) - \varepsilon_1 - 2\varepsilon}{1+\gamma}\right)= \\
= \min_{1/2\leq \gamma\leq 1}\left(\frac{1 - \frac{51}{100}(1+\sqrt{\gamma}-2\gamma) - \varepsilon_1 - 2\varepsilon}{1+\gamma}\right)\geq \frac{9}{25}>\frac{1}{3}+2\varepsilon.
\end{gathered}
\end{equation}

Therefore, using also Eq.~\eqref{eq-x1-l1bound}-\eqref{eq-x1-l1-13}, we get that $X_{1,2}$ satisfies conditions of Theorem~\ref{Metsch} for $m=2$, so by Lemma \ref{assoc-clique-geom}, the graph $X_{1,2}$ is strongly regular with smallest eigenvalue $-2$. However, by Theorem~\ref{assoc-x3-strongly-reg} and Remark~\ref{remark-x1-x12}, this is impossible, since $X_1$ is also strongly regular with smallest eigenvalue $-2$.
\end{enumerate}

\noindent \textbf{Case B.2.} Assume ${k_2}/{3}\geq p_{2,2}^{1}\geq  {k_2}/{5}$.

\noindent Then 
\begin{equation}\label{eq-x1-l1-b2}
\lambda_{1}(X_{1,2})\geq p_{1,1}^{1}+p_{2,2}^{1}\geq \left(\frac{1}{2}-\varepsilon\right)k_1 +\frac{1}{5}k_2.
\end{equation}
If $p_{2,2}^{2}\leq (1/3 - \varepsilon -\varepsilon_1) k_2$, then by Eq.~\eqref{x2-eq-bound},
\begin{equation}
\begin{aligned}
q(X_2)+\xi(X_2) &\leq \max(p_{2,2}^{2}, p_{2,2}^{1})+p_{2,2}^{2}+\sqrt{p_{2,2}^{1}p_{1,2}^{2}}+\varepsilon_1 k_2\leq \\
& \leq \frac{k_2}{3}+\left(\frac{1}{3} - \varepsilon -\varepsilon_1\right) k_2 + \frac{k_2}{3} +\varepsilon_1 k_2 \leq (1-\varepsilon)k_2.
\end{aligned}
\end{equation}
Else, $p_{2,2}^{2}\geq (1/3 - \varepsilon -\varepsilon_1) k_2\geq (1/4+10\varepsilon) k_2$, so
\begin{equation}\label{eq-x1-l2-b2}
\lambda_2(X_{1,2})\geq p_{2,2}^{2}+2p_{1,2}^{2}\geq \left(\frac{1}{4}+10\varepsilon\right)k_2+\frac{2}{5}k_1.
\end{equation}
Thus, Eq.~\eqref{eq-x1-l1bound}-\eqref{eq-x1-l2bound} and Eq. \eqref{eq-x1-l1-b2}-\eqref{eq-x1-l2-b2} imply, using Theorem \ref{Metsch}, that the graph $X_{1,2}$ has a clique geometry with $m = 5$. Therefore, using Eq.~\eqref{eq-x1-seidel-conseq} and Eq.~\eqref{eq-sum-param}, by Lemma \ref{assoc-k1-k2-bound}, 
$$\left(\frac{2}{3} - \varepsilon\right)k_2\leq (1 - \varepsilon)k_2 - p_{2,2}^{1}\leq p_{2,3}^{1}\leq \frac{m^2 - 4}{8}k_1,\, \text{ so }\, k_2\leq 4 k_1.$$
Hence, in fact, Eq. \eqref{eq-x1-l1-b2} implies  
%$\lambda_{1}(X_{1,2})\geq \frac{1}{5}(k_1+k_2)+\frac{1}{5}k_1 \geq (\frac{1}{5}+3\varepsilon)(k_1+k_2)$. 
\[\lambda_{1}(X_{1,2})\geq \frac{1}{5}k_2+\left(\frac{1}{2}-\varepsilon\right)k_1 \geq \left(\frac{1}{4}+6\varepsilon\right)(k_1+k_2).\]
Thus, using Eq. \eqref{eq-x1-l2-b2} and Eq. \eqref{eq-x1-l1bound} - \eqref{eq-x1-l2bound}, by Theorem \ref{Metsch}, we get that $X_{1,2}$ has a clique geometry for $m = 3$. Thus, we can get a better estimate, as

$$\left(\frac{2}{3} - \varepsilon\right)k_2\leq \frac{m^2 - 4}{8}k_1,\quad \text{ implies }\quad k_2\leq \frac{15}{16(1-2\varepsilon)} k_1<k_1.$$

However, this contradicts our assumption that $k_2\geq k_1$, so $p_{2,2}^{2}\geq (1/3 - \varepsilon -\varepsilon_1) k_2$ is impossible in this case.

\noindent \textbf{Case B.3.} Assume $p_{2,2}^{1}\leq {k_2}/{5}$.

\noindent Then, by the assumption of Case B, $p_{2,2}^{2}\leq (2-2\delta) p_{2,2}^{1}\leq (2-2\delta)k_2/5$, so 
\begin{equation}
\begin{multlined}
q(X_2)+\xi(X_2) \leq \max(p_{2,2}^{2}, p_{2,2}^{1}, p_{2,2}^{3})+p_{2,2}^{2}+\sqrt{p_{2,2}^{1}p_{1,2}^{2}}+\varepsilon_1 k_2\leq 
\\ \leq 2(2-2\delta)\frac{k_2}{5}+ \frac{k_2}{5} +\varepsilon_1 k_2  
\leq \left(1 - \frac{4}{5}\delta+\varepsilon_1\right)k_2\leq (1-\varepsilon)k_2.
\end{multlined}
\end{equation}

\end{proof}

\subsection{Constituent that is the line graph of a triangle-free regular graph}

Finally, we consider the case of the last possible outcome provided by Theorem~\ref{assoc-diam2}, the case when one of the constituents is the line graph of a regular triangle-free graph and is not strongly regular. 

First recall the following classical result due to Whitney.
\begin{theorem}[Corollary to Whitney's Theorem, \cite{Whitney}]\label{Whitney} Let $X$ be a connected graph on $n\geq 5$ vertices. Then the natural homomorphism $\phi :\Aut(X) \rightarrow \Aut(L(X))$ is an isomorphism $\Aut(L(X)) \cong \Aut(X)$. 
\end{theorem}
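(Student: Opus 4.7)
The plan is to prove injectivity and surjectivity of $\phi$ separately. First I would write $\phi$ down explicitly: to $\sigma \in \Aut(X)$ it assigns the permutation of $E(X) = V(L(X))$ sending $\{u,v\}$ to $\{\sigma(u), \sigma(v)\}$; a routine check confirms this lies in $\Aut(L(X))$ and that $\phi$ is a group homomorphism.

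For injectivity, I would assume $\sigma \in \ker(\phi)$, so that $\sigma$ fixes every edge of $X$ setwise. For any vertex $u$ of degree at least $2$, two distinct edges incident at $u$ are both fixed setwise and share only $u$, which forces $\sigma(u) = u$. Since $X$ is connected on $n \geq 3$ vertices, every leaf $u$ has a neighbor $v$ of degree at least $2$, hence $\sigma(v) = v$ by the previous observation; combined with $\sigma$ fixing the unique edge $\{u,v\}$ setwise, this gives $\sigma(u) = u$. Thus $\sigma = \mathrm{id}$.

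The substance is in surjectivity. The idea is to reconstruct the vertex set of $X$ from $L(X)$ as follows: each $u \in V(X)$ corresponds to the clique $S_u \subseteq V(L(X))$ consisting of edges of $X$ incident to $u$. Whitney's original analysis shows that for $n \geq 5$ the collection of such \emph{star cliques} can be canonically identified inside $L(X)$ without reference to $X$; the only obstruction to this identification is the coincidence $L(K_3) = L(K_{1,3}) = K_3$, which is excluded by the hypothesis $n \geq 5$. Given any $\psi \in \Aut(L(X))$, $\psi$ must permute star cliques among themselves and hence induces a permutation $\sigma$ of $V(X)$; a direct verification shows $\sigma \in \Aut(X)$ and $\phi(\sigma) = \psi$.

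The main obstacle is the reconstruction step inside surjectivity, specifically distinguishing star cliques from triangle cliques (a triangle in $X$ also produces a $3$-clique in $L(X)$). However, since the statement is explicitly framed as a corollary to Whitney's theorem (already cited as \cite{Whitney}), I would invoke Whitney's structural theorem directly rather than redo the reconstruction, making the overall write-up short.
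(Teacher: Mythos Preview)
The paper does not prove this theorem at all; it is simply stated as a classical result with a citation to \cite{Whitney} and then used as a black box in the proof of Theorem~\ref{assoc-line-triangle}. Your outline is a correct and standard derivation of the corollary from Whitney's theorem, and since you explicitly invoke Whitney's structural result for the reconstruction step, the argument goes through.

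One minor remark on your exposition: you write that the only obstruction to identifying star cliques inside $L(X)$ is the coincidence $L(K_3)=L(K_{1,3})$, but the hypothesis $n\geq 5$ is also needed to exclude $X=K_4$. In $L(K_4)$ the four star cliques and the four triangle cliques are all $3$-cliques and are interchanged by an automorphism of $L(K_4)$, so $|\Aut(L(K_4))|=48$ while $|\Aut(K_4)|=24$, and $\phi$ fails to be surjective. This does not affect your proof since you defer to Whitney's theorem rather than carrying out the reconstruction by hand, but it is worth being aware of if you ever spell out that step.
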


Observe, that the restriction on the diameter of the line graph gives quite strong bound on the degree of the base graph, as stated in the following lemma.

\begin{lemma}\label{line-so-k-big}
Let $X$ be a $k$-regular graph on $n$ vertices. If the line graph $L(X)$ has diameter $2$, then $k\geq n/8$. 
\end{lemma}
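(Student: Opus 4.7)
The plan is to fix an arbitrary edge $e=\{u,v\}$ of $X$ and extract from the diameter-$2$ hypothesis on $L(X)$ a structural constraint on $X$. Set
\[
S = N_X(u)\cup N_X(v), \qquad T = V(X)\setminus S.
\]
Since $X$ is simple and $\{u,v\}\in E(X)$, we have $u,v\in S$ and $|S|\leq 2k$.

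The key step is to observe that $T$ is an independent set in $X$. Suppose, toward a contradiction, that $\{a,b\}$ is an edge of $X$ with $a,b\in T$. For $\dist_{L(X)}(e,\{a,b\})\leq 2$, there must exist an edge $f$ of $X$ sharing a vertex with $e$ and a vertex with $\{a,b\}$; such an $f$ necessarily has one endpoint in $\{u,v\}$ and the other in $\{a,b\}$, which forces one of $a,b$ to lie in $N_X(u)\cup N_X(v)=S$, contradicting $a,b\in T$. Hence no such edge $\{a,b\}$ exists, since $L(X)$ has diameter $2$.

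With $T$ independent in hand, I would finish by a double count of the edges between $T$ and $S$. From the $T$-side: every vertex of $T$ has all $k$ of its neighbors in $S$, giving exactly $k|T|$ edges. From the $S$-side: each vertex of $S$ has degree $k$ in $X$, so at most $k|S|$ edges. Combining, $|T|\leq |S|$, hence $n=|T|+|S|\leq 2|S|\leq 4k$, yielding in fact the stronger bound $k\geq n/4 \geq n/8$.

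There is no real obstacle in this argument. The only nontrivial point is the translation of the diameter-$2$ hypothesis on $L(X)$ into the statement ``$V(X)\setminus(N_X(u)\cup N_X(v))$ is independent in $X$,'' which is a one-line unpacking of the definition of line graph. Everything else is elementary edge counting.
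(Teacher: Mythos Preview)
Your argument is correct and in fact yields the sharper bound $k\geq n/4$. The paper takes a different, shorter route: it applies the Moore bound directly to $L(X)$, observing that $L(X)$ has $kn/2$ vertices and is $2(k-1)$-regular, so diameter~$2$ forces $kn/2 \leq 1+2(k-1)+2(k-1)\bigl(2(k-1)-1\bigr)\leq 4k^2$, whence $k\geq n/8$. Your approach instead translates the diameter condition back into $X$, obtaining the structural fact that the complement of an edge-neighborhood is independent, and then finishes with a bipartite edge count. The paper's proof is a one-line application of a standard bound; yours is slightly longer but extracts more information about $X$ and gains a factor of~$2$ in the conclusion (which the paper does not need anyway).
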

\begin{proof}
Recall that $L(X)$ has $kn/2$ vertices and degree $2(k-1)$. Since $L(X)$ has diameter $2$, the degree of the graph satisfies $4k^2 \geq 4(k-1)^2+2(k-1)+1\geq kn/2$, i.e., $k\geq n/8$.
\end{proof}

\begin{theorem}\label{assoc-line-triangle}
Let $X$ be a connected $k$-regular triangle-free graph on $n\geq 5$ vertices, where $k\geq 3$. Suppose $\mathfrak{X}$ is an association scheme of rank 4 and diameter 2 on $V(L(X)) = E(X)$, such that one of the constituents is equal to $L(X)$ and is not strongly regular. Then every pair of vertices $u,v\in X$ is distinguished by at least $n/8$ vertices. Therefore,   $\Aut(L(X))$ has order $n^{O(\log(n))}$ and the motion of $L(X)$ is at least $|V(L(X))|/16$. 
\end{theorem}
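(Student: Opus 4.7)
By Whitney's theorem (Theorem~\ref{Whitney}), since $X$ is connected with $n \geq 5$, the natural map $\Aut(X) \to \Aut(L(X))$ is an isomorphism. Since $L(X)$ is a constituent of the diameter-$2$ scheme $\mathfrak{X}$, it has diameter~$2$, and Lemma~\ref{line-so-k-big} gives $k \geq n/8$. I first identify $\Aut(L(X))$ with $\Aut(\mathfrak{X})$: the Weisfeiler--Leman refinement of $L(X)$ produces the coarsest coherent configuration $\mathfrak{Y}$ in which $L(X)$ is a union of constituents, so $\mathfrak{X}$ refines $\mathfrak{Y}$; if $\mathfrak{Y}$ had rank~$3$ then $L(X)$ would be strongly regular, contradicting our hypothesis, so $\mathrm{rank}(\mathfrak{Y}) \geq 4 = \mathrm{rank}(\mathfrak{X})$, forcing $\mathfrak{Y} = \mathfrak{X}$. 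By WL-canonicity every automorphism of $L(X)$ preserves every color of $\mathfrak{X}$.

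For $u, w \in V(X)$, write $E(u) = \{e \in E(X) : u \in e\}$ for the star at $u$ and set
\[
C_i(u, w) = \bigl|\bigl\{(e_u, e_w) \in E(u) \times E(w) : c(e_u, e_w) = i\bigr\}\bigr|, \qquad i \in \{0,1,2,3\}.
\]
The tuple $c_V(u, w) := (C_0(u, w), \ldots, C_3(u, w))$ defines an association configuration on $V(X)$, invariant under $\Aut(X)$ via the identification above. Call $w$ a \emph{distinguisher} of the pair $(u, v)$ when $c_V(u, w) \neq c_V(v, w)$. My plan is to show that every pair $u \neq v$ in $V(X)$ has at least $n/8$ distinguishers; Observation~\ref{obs1} then gives that every nontrivial $\bar\sigma \in \Aut(X)$ moves at least $n/8$ vertices of $X$, and Lemma~\ref{Babai-disting-order-group} yields a distinguishing set of size $O(\log n)$ in $V(X)$, whence $|\Aut(L(X))| = |\Aut(X)| \leq n^{O(\log n)}$.

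A direct computation using triangle-freeness of $X$ gives $C_0(u, w) = k$ when $w = u$, $C_0(u, w) = 1$ when $w \sim u$ in $X$ with $w \neq u$, and $C_0(u, w) = 0$ otherwise; moreover, for $w \neq u$ with $w \not\sim u$, $C_3(u, w) = |N_X(u) \cap N_X(w)|$. Hence any $w$ with $(w \sim u) \neq (w \sim v)$ distinguishes $u$ and $v$ through $C_0$. If $u \sim v$ in $X$, triangle-freeness forces $N(u) \cap N(v) = \emptyset$, so $|N(u) \bigtriangleup N(v)| = 2k \geq n/4$; if $u \not\sim v$ with $|N(u) \cap N(v)| \leq k - n/16$, a similar count yields at least $n/8$ distinguishers via $C_0$ alone. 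The remaining \emph{near-twin case}, $u \not\sim v$ with $|N(u) \cap N(v)| > k - n/16$, is the main obstacle. Here adjacency alone is insufficient, and one must use finer scheme structure: for $w$ non-adjacent to both $u$ and $v$, the values $C_3(u, w), C_3(v, w)$ equal $|N(u) \cap N(w)|, |N(v) \cap N(w)|$ respectively and distinguish whenever they differ, while for $w$ adjacent to both we need $C_1$ or $C_2$ to separate. I expect to complete this case by contradiction: if fewer than $n/8$ vertices distinguish the near-twin pair $(u, v)$, the color counts between $E(u), E(v)$ and the remaining stars would match rigidly, and the rank-$4$ coherence of $\mathfrak{X}$ would then force the $\{X_1, X_2\}$ split of non-$L(X)$ pairs to collapse in a way incompatible with $L(X)$ being non-strongly-regular.

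Finally, the edge-level motion bound follows from the vertex-level one by a standard counting argument: if $\bar\sigma \in \Aut(X)$ is nontrivial and moves a set $T \subseteq V(X)$ of size $|T| \geq n/8$, then each $v \in T$ has at most one edge of $E(v)$ that can be fixed (the putative swap-edge $\{v, \bar\sigma(v)\}$), so $\bar\sigma$ moves at least $k-1$ edges incident to $v$; since each moved edge has at most two moved endpoints, the total number of moved edges is at least $\tfrac{|T|(k-1)}{2} \geq \tfrac{n(k-1)}{16} \geq \tfrac{nk}{32} = \tfrac{|V(L(X))|}{16}$ (using $k \geq 3$). Via Whitney's identification this lower bounds the motion of $L(X)$, completing the proof.
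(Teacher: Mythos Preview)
Your framework (Whitney, $k\ge n/8$, the identification $\Aut(L(X))=\Aut(\mathfrak X)$ via WL, and the edge-level motion count at the end) is fine and matches the paper. The gap is exactly where you flag it: the \emph{near-twin case} $u\not\sim v$ with $|N(u)\cap N(v)|>k-n/16$. What you write there (``the color counts\ldots would match rigidly, and the rank-4 coherence\ldots would force the split to collapse'') is not an argument; no mechanism is given, and I do not see how to make one work along those lines. Without this case the proof is incomplete.

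The paper's proof avoids the near-twin case entirely by proving a rigid structural fact about $X$: either $X$ is complete bipartite (and then $L(X)$ is strongly regular, contradiction), or \emph{every} non-adjacent pair $u,v$ in $X$ has \emph{exactly} $k/2$ common neighbors. The argument is short and concrete. Since $L(X)$ has diameter~$2$, any induced cycle of $X$ has length $4$ or $5$; if there is no $5$-cycle, regularity and connectedness force $X=K_{k,k}$. Otherwise, take any $5$-cycle $v_1\cdots v_5$; for any further neighbor $u$ of $v_1$, the edges $\{v_1,u\}$ and $\{v_3,v_4\}$ must be at distance $\le 2$ in $L(X)$, forcing $u$ adjacent to exactly one of $v_3,v_4$. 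Summing, $r_{i,i+2}+r_{i,i+3}=k$ for all $i$ (indices mod~$5$), whence $r_{i,i+2}=k/2$. One then checks (using the two constituents $Y_2,Y_3$ to rule out $N(u)=N(v)$) that every $u\in N_2(v)$ lies on a $5$-cycle through $v$, so $|N(u)\cap N(v)|=k/2$ in general. Thus $|N(u)\triangle N(v)|=k\ge n/8$ for all non-adjacent pairs and $2k$ for adjacent pairs, and distinguishing is achieved already by plain adjacency in $X$---your richer $c_V$-coloring is unnecessary, and the near-twin case simply does not occur.
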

\begin{proof}
Denote the constituents of $\mathfrak{X}$ by $Y_i$, $0\leq i\leq 3$, where $Y_0$ is the diagonal constituent and $Y_1 = L(X)$.

Since $Y_1$ has diameter $2$, any induced cycle of $X$ has length at most 5. The graph $X$ is triangle-free, so every induced cycle in $X$ has length 4 or 5, and every cycle of length 4 or 5 is induced.

\noindent \textbf{Case 1.} Suppose that there are no cycle of length 5 in $X$, i.e., it is bipartite. 

Then for $v\in X$ there are no edges between vertices in $N_2(v)$. The graph $X$ is regular, and every induced cycle has length 4, so for every vertex $w\in N_2(v)$ the neighborhoods $N(w)$ and $N(v)$ coincide. Hence, as $X$ is connected, $X$ is a complete regular bipartite graph. However, in this case, $L(X)$ is strongly regular.

\noindent \textbf{Case 2.} Suppose there is a cycle of length 5.  

Let $v_1v_2v_3v_4v_5$ be any cycle of length 5. Take $u$ different from  $v_2, v_5$ and adjacent to $v_1$. Since the constituent $Y_1$ has diameter $2$, the edges $v_1u$ and $v_3v_4$ are at distance 2 in $L(X)$, thus there is one of the edges $uv_3$ or $uv_4$ in $X$. Again, $X$ is triangle free, so exactly one of them is in $X$. Without loss of generality, assume that $uv_3$ is in $X$. In particular, we get that there is a cycle of length 4 $uv_1v_2v_3$. Denote by $r_{i,j}$ the number of common neighbors of $v_i$ and $v_j$. Then, our argument shows that $r_{i, i+2}+r_{i,i+3} = k$ for every $i$, where indices are taken modulo 5. Thus, $r_{i, i+2} = k/2$ for every $1\leq i\leq 5$. 

  Observe, that $v_1v_2$ and $v_3v_4$ have exactly one common neighbor in $L(X)$. At the same time, for any cycle $u_1u_2u_3u_4$ edges $u_1u_2$ and $u_3u_4$ have exactly two common neighbors in $L(X)$. Thus, the pairs $(u_1u_2, u_3u_4)$ and $(v_1v_2, v_3v_4)$ belong to different constituents of the association scheme, say $Y_2$ and $Y_3$, respectively. Note, that the triple of edges $v_1v_2, v_2v_3, v_3v_4$ shows that $p^{1}_{1,3}$ is non-zero.

Take any $v\in X$ and $u\in N_2(v)$. Suppose that there is no $w\in N_2(v)$ adjacent to $u$. Then by regularity of $X$ we get $N(v) = N(u)$. For any $x,y \in N(v)$ the triple $vx, xu, uy$ form a triangle with side colors $(1,1,2)$ and we get a contradiction with $p^{1}_{1,3}\neq 0$. 

Hence, for every $u\in N_2(v)$ there exists $w\in N_2(v)$ adjacent to $u$. Take $x\in N(v)\cap N(u)$ and $y\in N(v)\cap N(w)$. Consider the cycle $vxuwy$, then as shown above, vertices $v$ and $u$ have exactly $k/2$ common neighbors. Thus, they are distinguished by at least $|N(u)\triangle N(v)| = 2(k-k/2) = k$ vertices.

Every pair of adjacent vertices has no common neighbors, so they are distinguished by at least $2k$ vertices. Thus, every pair of distinct vertices is distinguished by at least $k$ vertices. Therefore, by Lemma \ref{line-so-k-big}, every pair of distinct vertices is distinguished by at least $n/8$ vertices.

 By Lemma \ref{Whitney}, $\Aut(X) \cong \Aut(L(X))$ via natural inclusion $\phi$. Thus, bound on the order of $\Aut(L(X))$ follows from Lemma \ref{Babai-disting-order-group}. Let $W$ be the support of $\sigma \in \Aut(X) \cong \Aut(L(X))$. We show that every vertex in $W$ is incident to at most one edge fixed by $\sigma$. Consider an edge $e$ with ends $w_1, w_2$, where $w_1\in W$. Since $\sigma(w_1)\neq w_1$ the only possibility for $e$ to be fixed is $\sigma(w_1) = w_2$ and $\sigma(w_2) = w_1$. This, in particular implies that $w_2\in W$ as well. Every edge incident with $w_1$ and different from $e$ is sent by $\sigma$ to an edge incident with $w_2$, so is not fixed. Therefore, the support of $\phi(\sigma)\in \Aut(L(X))$ is at least \[\frac{|W|(k-1)}{2}\geq \dfrac{n}{8}\cdot \dfrac{(k -1)}{2}\geq \frac{nk}{32} = \frac{|V(L(X))|}{16}.\] 
\end{proof}

\section{Putting it all together}\label{sec-coherent-thm-subsec}

Finally, we are ready to combine the preceding results into our main theorem.

\begin{theorem}\label{main-coherent-2}
There exists an absolute constant $\gamma_4>0$ such that for every  primitive coherent configuration $\mathfrak{X}$ of rank $4$ on $n$ vertices either 
$$\motion(\mathfrak{X})\geq \gamma_4 n,$$
 or $\mathfrak{X}$ is a Cameron scheme.
\end{theorem}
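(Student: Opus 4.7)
The plan is to partition rank-4 primitive coherent configurations into three natural structural classes and dispatch each using the tools assembled above. Order the edge colors so that the non-diagonal constituents have degrees $k_1\le k_2\le k_3$. The three classes are: (i) an association scheme with a constituent of diameter $3$; (ii) an association scheme with every constituent of diameter $2$; and (iii) a configuration with oriented constituents (necessarily one undirected and one pair $i, i^*$ of oriented ones, since rank is $4$).

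For class (i), Lemma~\ref{lem:drg-metric} identifies the diameter-$3$ constituent as a distance-regular graph, so Theorem~\ref{thm:main-motion-drg} yields $\motion(\mathfrak{X})\ge \gamma'_3 n$, except when that constituent is a Johnson graph $J(m,3)$, a Hamming graph $H(3,m)$, or a crown graph; a crown graph is imprimitive, leaving precisely the Johnson and Hamming schemes as exceptions. For class (iii), the unique undirected non-diagonal constituent $X$ is regular with constant common-neighbor counts on adjacent and non-adjacent pairs, hence strongly regular; Theorem~\ref{babai-str-reg-thm} then gives either $\motion(X)\ge n/8$ (which transfers to $\mathfrak{X}$), or $X$ (or its complement) is $T(s)$ or $L_2(s)$. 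In the latter exceptional cases one applies Lemma~\ref{sun-wilmes-tool} to the triangular/lattice structure: the orientation of the remaining two oriented colors must distinguish pairs inside each Delsarte clique by a constant fraction, or else one can extract a system of imprimitivity, contradicting primitivity.

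For class (ii) — the bulk of the argument — apply Theorem~\ref{assoc-diam2} with $\varepsilon = 10^{-16}$, producing five mutually exhaustive outcomes. Outcome~1 directly yields $\motion(\mathfrak{X}) \ge \varepsilon n/12$ via Observation~\ref{obs1}. Outcome~2 produces a bound $q(X_i)+\xi(X_i)\le (1-\varepsilon)k_i$ for some $i\in\{1,2\}$, which feeds into Lemma~\ref{mixing-lemma-tool} to give motion linear in $n$. Outcomes~3, 4, 5 are structural: some $Y\in\{X_1, X_2, X_{1,2}\}$ is either strongly regular with smallest eigenvalue~$-2$ (so $Y$ is $T(s)$ or $L_2(s)$, modulo small $n$), or the line graph of a connected regular triangle-free graph. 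Feed the line-graph alternatives into Theorem~\ref{assoc-line-triangle} to obtain $\motion\ge n/16$. Feed the strongly-regular alternatives into Theorems~\ref{assoc-x3-strongly-reg}, \ref{assoc-x2-strongly-reg}, \ref{assoc-x1-strongly-reg} according to which constituent is strongly regular: Theorem~\ref{assoc-x3-strongly-reg} reduces to the hypotheses of Lemma~\ref{sun-wilmes-tool} (giving $\motion \ge \alpha n/2$) or again to the line-graph alternative; Theorems~\ref{assoc-x2-strongly-reg} and \ref{assoc-x1-strongly-reg} produce spectral gaps $q(Y)+\xi(Y)\le (1-c)k_Y$ from which Lemma~\ref{mixing-lemma-tool} delivers linear motion. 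Finally, gather the small-$n$ cases (say $n<29$) by trivially bounding motion from below by $1$ and absorbing these finitely many exceptions into the constant $\gamma_4$.

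The main obstacle is bookkeeping. Each intermediate theorem introduces its own $\varepsilon$-constants (for instance $10^{-16}$ in Theorem~\ref{assoc-diam2}, $10^{-11}$ in Theorem~\ref{assoc-x2-strongly-reg}, $10^{-26}$ in Theorem~\ref{assoc-x1-strongly-reg}), and one must verify that the hypothesis $k_2\le \varepsilon k_3/2$ required by the $X_1$-strongly-regular analysis is compatible with the $\varepsilon$ chosen in Theorem~\ref{assoc-diam2} — picking $\gamma_4$ equal to the minimum of $\varepsilon/12$, the Lemma~\ref{mixing-lemma-tool} outputs $(1-\xi/k-q/k)$, the Lemma~\ref{sun-wilmes-tool} output $\alpha/2 = 1/32$, and the Theorem~\ref{assoc-line-triangle} output $1/16$. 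The genuinely delicate step is ensuring that the case $X_1$ strongly regular with smallest eigenvalue $-2$ (Theorem~\ref{assoc-x1-strongly-reg}) is never vacuous: we must verify its hypothesis $k_2\le \varepsilon k_3/2$ via Lemma~\ref{assoc-k2-large} whenever we land in it. With these checks the pieces compose into the desired absolute constant $\gamma_4>0$.
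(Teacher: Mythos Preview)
Your overall architecture matches the paper's exactly: the same three-way split into (i) metric schemes via Theorem~\ref{thm:main-motion-drg}, (ii) diameter-2 association schemes via the chain Theorem~\ref{assoc-diam2} $\to$ Theorems~\ref{assoc-x3-strongly-reg}--\ref{assoc-x1-strongly-reg} and~\ref{assoc-line-triangle} $\to$ Lemmas~\ref{mixing-lemma-tool} and~\ref{sun-wilmes-tool}, and (iii) the oriented case via Theorem~\ref{babai-str-reg-thm}. Classes (i) and (ii) are handled correctly, and your bookkeeping remark about reconciling the different $\varepsilon$'s via Lemma~\ref{assoc-k2-large} is exactly right.

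The gap is in class (iii). You propose to dispatch all four exceptional subcases ($X_t$ or $\overline{X_t}$ equal to $T(s)$ or $L_2(s)$) by Lemma~\ref{sun-wilmes-tool}, with the clique-distinguishing hypothesis justified by an appeal to primitivity. This does not work. First, Lemma~\ref{sun-wilmes-tool} is stated only for $X_I = T(s)$, not $L_2(s)$. Second, and more seriously, when $X_t$ itself is $T(s)$ (so $I=\{t\}$), every edge inside a Delsarte clique has color $t$, so for $x,y,z$ all in the clique $c(z,x)=c(z,y)=t$ identically and the hypothesis of Lemma~\ref{sun-wilmes-tool} holds only with $\alpha=0$; no imprimitivity argument rescues this. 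Third, the imprimitivity dichotomy you invoke is not established anywhere and is not obviously true. The paper instead treats the four subcases separately: it rules out $\overline{X_t}=L_2(s)$ by a degree/diameter count ($2k_i = 2(s-1)$ versus $n=s^2$ contradicts $k_i^2\ge n-1$); for $X_t\in\{T(s),L_2(s)\}$ it computes directly that $p_{i,i}^i+p_{i^*,i^*}^i\ge (2k_i-k_t-1)/3\ge k_i/3$, which gives $D(i)\ge k_i/3\ge n/9$ and hence motion $\ge n/18$ via Lemma~\ref{babai-dist}; and only for $\overline{X_t}=T(s)$ does it invoke Lemma~\ref{sun-wilmes-tool}, with the $\alpha$-hypothesis verified by the same intersection-number inequality rather than by any primitivity argument.
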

\begin{proof} By taking $\gamma_4<1/100$ we may assume that $n>100$.

First, assume that there is an oriented color. Since the rank of $\mathfrak{X}$ is 4, the only possibility is to have two oriented colors $i, j= i^*$ and one undirected color $t$. It is easy to see that $X_t$ is a strongly regular graph.
For $n\geq 29$, by Babai's theorem (Theorem \ref{babai-str-reg-thm}), $\motion(X_t)\geq n/8$, or $X_t$ is  a triangular graph $T(s)$, a lattice graph $L_{2}(s)$, for some $s$, or their complement. 

The constituent $X_t$ cannot be the complement of $L_{2}(s)$, since the oriented diameter of $X_{i}$ should be $2$, which contradicts $k_i^{2}\geq n-1$. Indeed, in this case, $2k_i =k_i+k_i^{*} = 2(s-1)$, while $n = s^2$.   

Now, observe that $p_{i,i^{*}}^{i} = p_{i^{*},i}^{i} = p_{i,i}^{i}$. Moreover, by Eq. \eqref{eq-sum-param},
$$k_i+k_{i^*} = \left(p_{i,i}^{i}+p_{i,i^{*}}^{i} +p^{i}_{i, t}+p_{i, 0}^{i}\right)+ \left(p_{i^{*},i}^{i} + p_{i^*, i^*}^{i}+p^{i}_{i^*, t}\right).$$ 
Thus, using Eq. \eqref{eq-sum-param} again, $p_{i, i}^{i}+p_{i^*,i^*}^i\geq (2k_i-k_t-1)/{3}$. If $X_t$ is either $T(s)$ or $L_{2}(s)$, then $k_i = k_{i^*}> n/3$ and $k_t< n/3$ for $n>100$. Thus every pair of vertices connected by an edge of color $i$ is distinguished by at least $k_i/3\geq n/9$ vertices. Hence, by primitivity of $\mathfrak{X}$ and Lemma~\ref{babai-dist}, the motion of $\mathfrak{X}$ is at least $n/18$. In the last case, when $X_{t}$ is a complement of $T(s)$, the result follows from Lemma~\ref{sun-wilmes-tool} and the inequality $p_{i, i}^{i}+p_{i^*,i^*}^i\geq k_i/3$.

 Next, assume that all colors in $\mathfrak{X}$ are undirected, i.e., $\mathfrak{X}$ is an association scheme. Every constituent of $\mathfrak{X}$ has diameter at most $3$, as rank of $\mathfrak{X}$ is 4. Moreover, as discussed in Lemma~\ref{lem:drg-metric}, if there is a constituent of diameter $3$, then $\mathfrak{X}$ is induced by a distance-regular graph. In this case the statement follows from Theorem \ref{thm:main-motion-drg}. None of the components can have diameter $1$ as the rank is not 2. 
 
 Finally, if $\mathfrak{X}$ is an association scheme of rank 4 and diameter~2, then the statement of the theorem follows from Lemma~\ref{assoc-k2-large}, Theorems~\ref{assoc-diam2}, \ref{assoc-x3-strongly-reg}, \ref{assoc-x2-strongly-reg}, \ref{assoc-x1-strongly-reg}    and Theorem~\ref{assoc-line-triangle}, Observation~\ref{obs1} and Lemma~\ref{mixing-lemma-tool}.
\end{proof}

\section{Summary and open questions}\label{sec-summary}
%Recall that the motion of a structure is the minimal degree of its automorphism group.
 In this paper we studied the problem of classifying primitive coherent configurations with sublinear motion in the case of rank 4. Earlier, in \cite{kivva-drg, kivva-geometric} this problem was studied by the author for metric schemes of bounded rank, or equivalently for distance-regular graphs of bounded diameter. 
%In the case of rank 4 we achieved a full classification confirming Conjecture~\ref{conj-3} for this case. The case of rank $\leq 3$ was settled by Babai in \cite{Babai-str-reg}.

A significant obstacle for our approach, in the case of general primitive coherent configurations of rank $r\geq 5$, is the difficulty of spectral analysis for the constituents of the coherent configuration. Namely, for configurations of rank 4 we analyzed the spectral gap ``by hand'' through Propositions \ref{assoc-spectral-radius} and \ref{assoc-x12-approx}. For coherent configurations of higher rank we need more general techniques.

\begin{problem}
Do there exist $\varepsilon, \delta>0$ such that the following statement holds? If the minimal distinguishing number  $D_{\min}(\mathfrak{X})$ of a primitive coherent configuration $\mathfrak{X}$ satisfies $D_{\min}(\mathfrak{X})<\varepsilon n$, then the spectral gap for the symmetrization of one of the constituents $X_i$ of $\mathfrak{X}$ is $\geq \delta k_i$. What $\delta$ can be achieved? 
\end{problem}

We would like to point out, that even $\delta k_i$ spectral gap for one of the constituents is not sufficient for an efficient application of the spectral tool (Lemma \ref{mixing-lemma-tool}). However, we expect that a result of this flavor would introduce important techniques to the analysis.

We also would like to mention that there should be a reasonable hope to prove Conjecture~\ref{conj-3} for the case when no color is overwhelmingly dominant. The following result easily follows from minimal distinguishing number analysis. In particular, in the case of bounded rank it gives an $\Omega(n)$ bound on the motion.

\begin{proposition}\label{mindeg-bounded-degree}
Fix $0<\delta <1$ and an integer $r\geq 3$. Let $\mathfrak{X}$ be a primitive coherent configuration of rank $r$ on $n$ vertices. Assume that each constituent $X_i$ has degree $k_i\leq \delta n$.
Then \[\motion(\mathfrak{X})\geq D_{\min}(\mathfrak{X})\geq\frac{\min(\delta, 1-\delta)}{6(r-1)}n.\]  
\end{proposition}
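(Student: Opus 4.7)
The plan is to lower-bound $D_{\min}(\mathfrak X)$ by an averaging computation and then transfer the bound to every color via primitivity; by Observation~\ref{obs1} this suffices, since $\motion(\mathfrak X)\ge D_{\min}(\mathfrak X)$.

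For any edge color $i$ and any pair $u,v$ with $c(u,v)=i$, the number of vertices $x$ with $c(x,u)=c(x,v)=l$ equals $p_{l^*,l}^{\,i}$, so $D(i)=n-\sum_{l\ne 0}p_{l^*,l}^{\,i}$. Taking row sums in $A_{l^*}A_l=\sum_i p_{l^*,l}^{\,i} A_i$ yields $\sum_i k_i\, p_{l^*,l}^{\,i}=k_l^2$, and since $p_{l^*,l}^{\,0}=k_l$ we obtain $\sum_{i\ne 0}k_i\, p_{l^*,l}^{\,i}=k_l(k_l-1)$. Summing over $l\ne 0$ and using $\sum_l k_l=n-1$ gives the closed form
\[
\sum_{i\ne 0}k_i\,D(i)\;=\;n(n-1)-\sum_{l\ne 0}k_l(k_l-1)\;=\;n^2-1-\sum_{l\ne 0}k_l^2.
\]

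The degree hypothesis $k_l\le\delta n$ together with $\sum_l k_l=n-1$ forces $\sum_l k_l^2\le\delta n(n-1)$, so the $k$-weighted average of $D$ satisfies $\sum_{i\ne 0}k_i\,D(i)/(n-1)\ge(n+1)-\delta n\ge(1-\delta)n$. By pigeonhole some edge color $i_0$ has $D(i_0)\ge(1-\delta)n$. Primitivity ensures every constituent is strongly connected, so Observation~\ref{obs-color-dist} gives $\dist_i(i_0)\le r-1$ for every edge color $i$, and Lemma~\ref{babai-dist} then yields $D(i)\ge D(i_0)/(r-1)\ge(1-\delta)n/(r-1)$. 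Since $1-\delta\ge\min(\delta,1-\delta)$, we conclude $D_{\min}(\mathfrak X)\ge\min(\delta,1-\delta)\,n/(6(r-1))$, in fact with a factor of $6$ to spare. The only non-routine step is the closed-form identity in the second paragraph; the remaining steps are direct invocations of prior tools, so I do not expect any substantive obstacle.
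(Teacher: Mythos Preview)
Your proof is correct and in fact establishes the stronger bound $D_{\min}(\mathfrak X)\ge(1-\delta)n/(r-1)$, improving the paper's constant by a factor of~$6$. The closed-form identity $\sum_{i\ne 0}k_i\,D(i)=n^2-1-\sum_{l\ne 0}k_l^2$ is correct (your derivation via row sums of $A_{l^*}A_l$ is fine, and $p_{l^*,l}^0=k_l$ holds because $c(u,w)=l^*$ forces $c(w,u)=l$), and the subsequent pigeonhole plus Lemma~\ref{babai-dist} transfer is exactly right.

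The paper's argument is genuinely different in its first half. Rather than computing the exact weighted average of $D(i)$, the paper selects a set $I$ of colors with total degree $\alpha n$ in the window $[\min(\delta,1-\delta)/2,\,1/2]$, fixes a vertex $u$, and double-counts pairs $(v,z)$ with $c(u,z),c(v,z)\in I$ to force some $v$ with $D(u,v)\ge\alpha n/3$; the final transfer via Lemma~\ref{babai-dist} is the same. Your global averaging over all colors is cleaner and avoids the somewhat artificial choice of the color set $I$; it also explains why the factor~$6$ is unnecessary. The paper's argument, on the other hand, is more hands-on and localizes the counting at a single vertex, which is sometimes useful when one wants a specific witnessing pair rather than just existence.
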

\begin{proof}
The condition $k_i\leq \delta n$, for all $i$, implies that there exists a set $I$ of colors  such that $\sum\limits_{i\in I}k_i = \alpha n$ for some $\min(\delta, 1-\delta)/2 \leq \alpha \leq 1/2$. Fix any vertex $u$ of $\mathfrak{X}$. 
We want to show that for some vertex $v$ the inequality $D(u,v)\geq \alpha n/3$ holds. 

Assume this is not true. Denote $N_I(u) = \{z|\, c(u,z)\in I\}$. Let us count the number of pairs $(v,z)$ with $c(u,z)\in I$ and $c(v,z)\in I$ in two different ways. Since $\sum\limits_{i^*\in I}k_i = \alpha n$ and $z\in N_I(u)$, there are $\alpha^2 n^2$ such pairs. On the other hand, for every $v$ we have $D(u,v) \leq \alpha n/3$, so at least $2\alpha n/3$ vertices $z\in N_{I}(u)$ are paired with $v$. Therefore, the number of pairs in question is at least $n\cdot \dfrac{2\alpha}{3} n$. This contradicts the condition $0<\alpha\leq\dfrac{1}{2}$.  

Therefore, there exists a pair of vertices with $D(u,v)\geq \alpha n/3$. Finally, the configuration $\mathfrak{X}$ is primitive, so by Lemma~\ref{babai-dist} we get that $\motion(\mathfrak{X})\geq D_{\min}(\mathfrak{X})\geq \dfrac{\alpha}{3(r-1)}n$.
\end{proof}

However, when the rank is unbounded, this seemingly simple case of Conjecture \ref{conj-3} (every constituent has degree $\leq \delta n$) is still open. To avoid exceptions we relax the conjectured lower bound to $\Omega(n/\log(n))$.

\begin{conjecture}\label{conj-bound-degree}
Fix $0<\delta<1$. Let $\mathfrak{X}$ be a primitive coherent configuration on $n$ vertices. Assume that every constituent has degree $\leq \delta n$. Then $\motion(\mathfrak{X}) = \Omega(n/\log(n))$.
\end{conjecture}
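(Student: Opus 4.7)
The goal is to show $D_{\min}(\mathfrak{X}) = \Omega(n/\log n)$; the motion lower bound then follows from Observation~\ref{obs1}. The first step is to extend the averaging argument used in Proposition~\ref{mindeg-bounded-degree}. Since every constituent has degree at most $\delta n$, we can select a set $I$ of non-diagonal colors with $\sum_{i\in I} k_i$ lying in an interval $[\alpha n, (1-\alpha)n]$ for some $\alpha=\alpha(\delta)>0$ depending only on $\delta$. The same double counting as in the proof of Proposition~\ref{mindeg-bounded-degree} then produces a pair $u,v$ with $D(u,v) \geq c_\delta\, n$; setting $i_0 = c(u,v)$ we obtain a single color $i_0$ with $D(i_0) \geq c_\delta\, n$.

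The second step propagates this lower bound to all colors via Babai's triangle inequality (Lemma~\ref{babai-dist}), which gives
\[
D(j) \;\geq\; \frac{D(i_0)}{\dist_j(i_0)} \;\geq\; \frac{c_\delta\, n}{\dist_j(i_0)}
\]
for every non-diagonal color $j$. Hence it suffices to prove $\dist_j(i_0) = O(\log n)$ uniformly in $j$, or equivalently, to bound the diameter of the constituent graph $X_j$ by $O(\log n)$ insofar as it concerns $i_0$.

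The third step---which I expect to be the main obstacle---is establishing this uniform logarithmic diameter bound. For constituents of moderately large degree (say $k_j \geq (\log n)^C$ for a large constant $C=C(\delta)$), the plan is to adapt the polynomial-approximation techniques of Section~\ref{sec-coh-approx} to an arbitrary-rank setting to produce a nontrivial spectral gap $\xi(X_j) \leq (1-\eta)k_j$ with $\eta=\eta(\delta)>0$; a Moore-type argument then yields $\mathrm{diam}(X_j) = O(\log n)$. The genuinely difficult case is that of constituents of very small degree, where no such spectral bound is available and the bare graph-theoretic diameter can be as large as $\Theta(n/k_j)$. A potential way around this is to refine Step~2 by choosing the ``good'' color depending on $j$: rather than fixing a single $i_0$ for all $j$, a covering argument should produce, for each $j$, some $i_0=i_0(j)$ still satisfying $D(i_0(j)) \geq c'_\delta\, n$ and reachable from $j$ within $O(\log n)$ steps in $X_j$. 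Making this matching rigorous seems to require new structural insight into how constituents of very small degree can coexist with the rest of a primitive coherent configuration, and this appears to be the essential difficulty of the conjecture.
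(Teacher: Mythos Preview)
The statement you are addressing is a \emph{conjecture} in the paper (Conjecture~\ref{conj-bound-degree}); the paper offers no proof. It is explicitly flagged as open in Section~\ref{sec-summary}, where the paper notes that the bounded-rank analogue (Proposition~\ref{mindeg-bounded-degree}) succeeds precisely because Observation~\ref{obs-color-dist} supplies the bound $\dist_i(j)\le r-1$, and that dropping the rank assumption is the whole difficulty.

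Your proposal is therefore not to be compared against a proof but assessed on its own merits---and by your own account it is not a proof. Steps~1 and~2 are fine and reproduce exactly the argument behind Proposition~\ref{mindeg-bounded-degree}: greedy color-selection plus double counting yields some $i_0$ with $D(i_0)\ge c_\delta n$, and Lemma~\ref{babai-dist} then gives $D(j)\ge D(i_0)/\dist_j(i_0)$. The entire content of the conjecture is thus compressed into your Step~3, the uniform bound $\dist_j(i_0)=O(\log n)$, which you do not establish. Your plan to extract a spectral gap for ``moderate'' $k_j$ by extending the perturbation analysis of Section~\ref{sec-coh-approx} to arbitrary rank is not supported: those arguments hinge on the characteristic polynomial being cubic and have no evident analogue when the number of constituents is unbounded (this is essentially the paper's own Problem in Section~\ref{sec-summary}). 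For constituents of very small degree you correctly observe that no spectral route is available and that the raw diameter can be $\Theta(n/k_j)$; your proposed fix of letting $i_0$ depend on $j$ is a restatement of the difficulty rather than a resolution. In short, you have isolated the right obstacle, but the proposal does not get past it---and neither does the paper.
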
  

Next, we observe that Cameron schemes satisfy Conjecture \ref{conj-bound-degree}.

\begin{proposition}\label{prop-camer-nlogn}
Fix $0<\delta<1$.  Consider a Cameron group $(A_m^{(k)})^d \leq G\leq S_m\wr S_d$ acting on $n = \binom{m}{k}^d$ points and let $\mathfrak{X} = \mathfrak{X}(G)$ be the corresponding Cameron scheme. Assume that every constituent of $\mathfrak{X}$ has degree $\leq \delta n$. Then $\motion(\mathfrak{X}) = \Omega(n/\log(n))$. 
\end{proposition}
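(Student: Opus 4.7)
The plan is to combine two ingredients: (a) an upper bound on $\Aut(\mathfrak{X})$ that lets us replace it with the full Cameron group $H := S_m^{(k)}\wr S_d$; and (b) a quantitative translation of the hypothesis into a lower bound on $k$, which via the calculation behind Lemma~\ref{cameron-min-deg} yields the required bound on $\mindeg(H)$.

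By complementation (an isomorphism of the Cameron action on $\binom{[m]}{k}^d$ with that on $\binom{[m]}{m-k}^d$) I reduce to $2k \leq m$, and by primitivity assume $m \geq 2k+1$. Since $(A_m^{(k)})^d \leq G \leq H$, the configuration $\mathfrak{X}(G)$ is a refinement of $\mathfrak{X}(H)$, so $\Aut(\mathfrak{X}) \subseteq \Aut(\mathfrak{X}(H))$; the standard fact $\Aut(\mathfrak{X}(H)) = H$ (generalizing Whitney's theorem for Johnson and Hamming schemes) then gives $\mindeg(\Aut(\mathfrak{X})) \geq \mindeg(H)$.

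Two tuples $U, V \in \binom{[m]}{k}^d$ lie in the same $(A_m^{(k)})^d$-orbit iff their coordinate-wise intersection profile $(|U_i\cap V_i|)_i$ agrees. For any $a \in \{0,\ldots,k-1\}$ the diagonal profile $(a,\ldots,a)$ is $S_d$-invariant, so the corresponding $(A_m^{(k)})^d$-orbit coincides with its $G$-orbit, with out-degree $\binom{k}{a}^d\binom{m-k}{k-a}^d = n\cdot Q(a)^d$, where
\[
Q(a) := \binom{k}{a}\binom{m-k}{k-a}\bigg/\binom{m}{k}
\]
is the hypergeometric pmf with parameters $(m,k,k)$. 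The hypothesis thus forces $\max_a Q(a) \leq \delta^{1/d}$. A standard Stirling/Laplace computation gives $\max_a Q(a) \asymp \exp(-k^2/m)$ if $k\leq\sqrt m$ (mode at $a=0$) and $\max_a Q(a) \asymp \sqrt{m}/k$ if $k \geq \sqrt{m}$, so the hypothesis yields
\[
\text{either }\ dk^2/m \geq \log(1/\delta), \quad \text{or }\ k \geq \sqrt{m}\,\delta^{-1/d}.
\]

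Finally, by the calculation in the proof of Lemma~\ref{cameron-min-deg}, $\mindeg(H)$ is realized (up to lower-order terms) by a single transposition in one coordinate and satisfies $\mindeg(H) \geq 2k(m-k) n/(m(m-1)) \geq (k/m)\cdot n$; coordinate permutations in $S_d$ move almost every tuple and are not binding. Combined with $\log n \geq dk\log(m/k)$, in both regimes one checks $dk^2\log(m/k)/m \geq c(\delta)$ (in the first, use $dk^2/m \geq \log(1/\delta)$ and $\log(m/k) \geq (\log m)/2$; in the second, use $dk^2/m \geq d\delta^{-2/d} \geq 1$ and $\log(m/k) \geq \log 2$), yielding $\mindeg(H) \geq c(\delta)\cdot n/\log n$. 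The main obstacle is the identification $\Aut(\mathfrak{X}(H)) = H$, which is a standard but nontrivial fact for Cameron schemes and may require separate treatment in a few small exceptional cases; the hypergeometric estimates and the case-analysis are routine, provided the constants are tracked uniformly over all $d \geq 1$.
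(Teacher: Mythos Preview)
Your approach differs substantially from the paper's. The paper splits on the size of $k$ relative to $m$: for $k \leq m/3$ it notes $n = \binom{m}{k}^d \geq 2^{kd}$, so the rank $r = kd+1$ of $\mathfrak{X}$ satisfies $r-1 \leq \log_2 n$, and then invokes Proposition~\ref{mindeg-bounded-degree} directly to obtain $\motion(\mathfrak{X}) \geq \dfrac{\min(\delta,1-\delta)}{6(r-1)}\, n = \Omega(n/\log n)$. For $m/3 < k \leq m/2$ it cites Lemma~\ref{cameron-min-deg} together with a finiteness argument (the minimal-degree element lives in a single coordinate, so the bound is uniform in $d$, and only finitely many pairs $(m,k)$ can be exceptional) to get $\motion(\mathfrak{X}) = \Omega(n)$ outright.

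The essential difference is that the paper's first case uses Proposition~\ref{mindeg-bounded-degree}, a purely combinatorial lower bound on the minimal distinguishing number valid for \emph{any} primitive coherent configuration with all constituent degrees at most $\delta n$, and hence never needs to identify $\Aut(\mathfrak{X})$. This completely bypasses the step you single out as your main obstacle, namely $\Aut(\mathfrak{X}(H)) = H$. Your route --- convert the degree hypothesis into $\max_a Q(a) \leq \delta^{1/d}$, estimate the hypergeometric mode in two regimes, and feed the resulting constraint on $k$ into the explicit formula for $\mindeg(H)$ --- is correct in outline and gives more explicit dependence on $\delta$, but it is strictly more work and hinges on a nontrivial external fact about automorphism groups of Cameron schemes that itself needs a separate case analysis for small parameters.
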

\begin{proof}
We can assume $k\leq m/2$. Note that then the rank of $\mathfrak{X}$ is equal to $kd+1$. 

Case 1. Suppose that $k\leq m/3$. Then
\[n = \binom{m}{k}^d\geq \left(\frac{m-k}{k}\right)^{kd}\geq \left(\frac{2k}{k}\right)^{kd} = 2^{kd}\]
Thus, $kd \leq \log(n)$ in this case, and the statement follows from Proposition~\ref{mindeg-bounded-degree}.

Case 2. Suppose that $m/3<k\leq m/2$. By Lemma~\ref{cameron-min-deg}, we have that as $m\rightarrow \infty$ the inequality $\motion(\mathfrak{X}) \geq \alpha n$ holds for some $\alpha>0$. At the same time, by the proof of Lemma~\ref{cameron-min-deg} we know that the motion of $\mathfrak{X}$ does not depend on $d$. Thus as $\motion(X) \geq \alpha n$ is violated just by finite number of pairs $(m,k)$, we still have $\motion(\mathfrak{X}) = \Omega(n)$ in this case.
\end{proof}

We observe that the bound in Conjecture \ref{conj-bound-degree}, if true, is nearly tight, for $\delta \in (1/e, 1)$, as the example of Hamming schemes $\mathfrak{H}(tm,m)$ with $t = -\lfloor \log(\delta) m\rfloor/m$ shows. Note that for $m\geq 3$ the Hamming scheme $\mathfrak{H}(k,m)$ is primitive.

\begin{proposition} Consider Hamming scheme $\mathfrak{H}(tm, m)$ with $\displaystyle{t= -\frac{\lfloor \log(\delta)m\rfloor}{m}}$ on $n = m^{tm}$ points, for $\delta \in (1/e, 1)$. Then its maximum constituent degree satisfies $k_{\max} \leq \delta n$ and the motion satisfies \[\motion(\mathfrak{H}(tm, m))= O\left(\frac{n\log\log(n)}{\log(n)}\right).\]
\end{proposition}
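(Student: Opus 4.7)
The plan is to verify the two claims by direct computation with the degrees of the constituents of the Hamming scheme.

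For the degree bound, I would use that the $i$-th constituent of $\mathfrak{H}(d,m)$ has degree $k_i = \binom{d}{i}(m-1)^i$, so $\sum_{i=0}^d k_i = m^d = n$. With $d = tm$ and $t = -\lfloor \log(\delta) m\rfloor / m$, one has $d \geq |\log \delta|\,m = \log(1/\delta)\,m$. Examining $k_{i+1}/k_i = (d-i)(m-1)/(i+1)$ shows that $k_i$ is nondecreasing in $i$ on $\{0, 1, \dots, d\}$ whenever $d \le m - 1$, so the maximum constituent degree equals $k_d = (m-1)^d$. Using the elementary inequality $(1-1/m)^m \le e^{-1}$ gives
\[
\frac{k_{\max}}{n} = \left(1 - \frac{1}{m}\right)^{d} \le e^{-d/m} \le e^{-\log(1/\delta)} = \delta,
\]
as required. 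The side condition $d \le m-1$ reduces to $m \ge 1/(1 - |\log \delta|)$, which is satisfied for all sufficiently large $m$ because $\delta > 1/e$; since the proposition is asymptotic in nature, we may assume $m$ is this large.

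For the motion bound, I would invoke the explicit bound on the motion of a Hamming scheme already recorded in Section~2.8.2 of the paper. Namely, a transposition in $S_m$ acting on a single coordinate of $\Omega^d$ is an automorphism of $H(d, m)$ of degree exactly $2m^{d-1}$ (it moves precisely those tuples whose specified coordinate takes one of the two swapped values), and $\Aut(\mathfrak{H}(d,m)) = \Aut(H(d,m))$. Hence
\[
\motion(\mathfrak{H}(tm, m)) \le 2m^{d-1} = \frac{2n}{m}.
\]
It remains to relate $1/m$ to $n$. Taking logarithms of $n = m^{tm}$ yields $\log n = tm \log m$, and iterating gives $\log\log n = \log t + \log m + \log \log m$, so $\log m = (1 + o(1))\log\log n$ as $m \to \infty$. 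Consequently $m = \log n / (t \log m) = (1 + o(1))\log n / (t \log\log n)$, which gives $1/m = O(\log\log n / \log n)$ with implicit constant depending on $\delta$ (through $t$). Substituting, $\motion(\mathfrak{H}(tm, m)) \le 2n/m = O(n \log\log n/\log n)$.

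There is no substantive obstacle to this argument; it is essentially a bookkeeping exercise once the correct constituent is identified. The only point requiring mild attention is the boundary case $d \approx m$ in the first step (which would arise if $\delta$ were at or below $1/e$), and this is why the hypothesis $\delta \in (1/e, 1)$ enters: it forces $|\log \delta| < 1$, hence $d < m$ for $m$ large, putting us squarely in the regime where the maximum is at $i = d$.
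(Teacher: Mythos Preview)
Your proposal is correct and follows essentially the same approach as the paper: both identify $k_{\max}=(m-1)^{tm}$ via the condition $tm<m$, bound $(1-1/m)^{tm}\le e^{-t}\le\delta$, use the single-coordinate transposition to get $\motion\le 2n/m$, and invert $\log n = tm\log m$ to obtain $m\gtrsim \log n/(t\log\log n)$. Your version is slightly more explicit in justifying that the maximum degree occurs at $i=d$ (via the ratio $k_{i+1}/k_i$) and in isolating the role of the hypothesis $\delta>1/e$, but the argument is the same.
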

\begin{proof}
Note that since $tm<m$ the maximum degree is $k_{\max} = (m-1)^{tm}$. Then 
\[k_{\max} = \left(\frac{m-1}{m}\right)^{mt}n \leq \eee^{-t}n\leq \delta n.\]

The motion of $\mathfrak{H}(tm, m)$ is realized by a 2-cycle in the first coordinate, and is equal to $2n/m$. The number of vertices is $n = m^{mt} = \eee^{tm\log(m)}$, so $m\log(m) = \log(n)/t$. Thus $m>\log(n)/(t\log\log(n))$. Hence, 
\[\motion(\mathfrak{H}(tm, m))\leq \frac{2n\log\log(n)t}{\log(n)} = O\left(\frac{n\log\log(n)}{\log(n)}\right).\]
\end{proof}

\bibliographystyle{plain}

\bibliography{refer}

\end{document}